\numberwithin{equation}{section}
\newtheorem{theorem}{Theorem}
\newtheorem{proposition}[theorem]{Proposition}
\newtheorem{lemma}[theorem]{Lemma}
\newtheorem{corollary}[theorem]{Corollary}
\newtheorem{conjecture}[theorem]{Conjecture}
\theoremstyle{remark}
\newtheorem*{remark}{Remark}
\newtheorem{remarknu}[theorem]{Remark}
\def\al{\alpha}
\def\be{\beta}
\def\ga{\gamma}
\def\om{\omega}
\def\Z{{\mathbb Z}}
\def\coef#1{\left\langle#1\right\rangle}
\def\fl#1{\left\lfloor#1\right\rfloor}
\def\cl#1{\left\lceil#1\right\rceil}
\begin{document}
\title[Romik's sequence of Taylor
  coefficients of Jacobi's theta function $\theta_3$]{The
  congruence properties of Romik's sequence of Taylor
  coefficients of Jacobi's theta function $\theta_3$}
\author[C. Krattenthaler]{C. Krattenthaler$^\dagger$}

\address{Fakult\"at f\"ur Mathematik, Universit\"at Wien,
Oskar-Morgenstern-Platz~1, A-1090 Vienna, Austria.
WWW: {\tt http://www.mat.univie.ac.at/\lower0.5ex\hbox{\~{}}kratt}.}

\author[T.\,W. M\"uller]{T. W. M\"uller} 

\address{Fakult\"at f\"ur Mathematik, Universit\"at Wien,
Oskar-Morgenstern-Platz~1, A-1090 Vienna, Austria.}

\thanks{$^\dagger$Research partially supported by the Austrian
Science Foundation FWF, grant S50-N15,
in the framework of the Special Research Program
``Algorithmic and Enumerative Combinatorics"}

\subjclass[2020]{Primary 11F37;
Secondary 11B83 14K25}

\keywords{Modular forms of half integer weight,
  Jacobi theta function, Taylor coefficients, congruences}

\begin{abstract}
In [{\em Ramanujan J.} {\bf52} (2020), 275--290], Romik considered the
Taylor expansion of Jacobi's theta function $\theta_3(q)$ at $q=e^{-\pi}$ and encoded it
in an integer sequence $(d(n))_{n\ge0}$ for which he provided a recursive procedure
to compute the terms of the sequence.
He observed intriguing behaviour of $d(n)$ modulo primes and prime powers.
Here we prove (1) that $d(n)$ eventually vanishes modulo any prime power $p^e$
with $p\equiv3$~(mod~4), (2) that $d(n)$ is eventually periodic
modulo any prime power $p^e$
with $p\equiv1$~(mod~4), and (3) that $d(n)$ is purely periodic
modulo any 2-power~$2^e$. Our results also provide more detailed information on
period length, respectively from when on the sequence vanishes or becomes periodic.
The corresponding bounds may not be optimal though, as computer data suggest.
Our approach shows that the above congruence properties
hold at a much finer, polynomial level.
\end{abstract}
\maketitle

\section{Introduction}
\label{sec:1}

The focus of this article is on Jacobi's theta function $\theta_3$
defined by (cf.\ \cite[top of p.~464 with $z=0$]{WhWaAA}) 
$$
\theta_3(\tau)=\sum_{n=-\infty}^\infty q^{n^2}, \quad \text{with $q=e^{i\pi\tau}$}.
$$
In \cite{RomikAA}, Romik considered the Taylor expansion of
$\theta_3(\tau)$ at $\tau=i$ in the form (cf.\ \cite[display below Eq.~(8)]{RomikAA})
\begin{equation} \label{eq:Taylor}
\theta_3\left(i\frac {1+z} {1-z}\right)=
\theta_3(i)(1-z)^{1/2}\sum_{n=0}^\infty \frac {d(n)} {(2n)!}\Phi^{n}z^{2n},
\end{equation}
where $\Phi=\Gamma^8(1/4)/(128\pi^4)$ and, as is well-known
(cf.\ \cite[p.~325, Entry~1(i)]{Bern5}),
$\theta_3(i)=\pi^{1/4}/\Gamma(3/4)$. He showed that the sequence
$\big(d(n)\big)_{n\ge0}$ is an integer sequence, and he provided a
highly non-trivial recursive procedure for computing the
coefficients~$d(n)$ (see Section~\ref{sec:uvr}).
The first few values turn out to be
\begin{multline*}
1, 1, -1, 51, 849, -26199, 1341999, 82018251, 18703396449, \\
-993278479599, -78795859032801,
38711746282537251, -923351332174412751, \dots
\end{multline*}
The signs of these numbers seem very irregular. (In fact, Problem~5 in Section~8 of
\cite{RomikAA} asks for figuring out the pattern of signs. So far,
there has been no progress on that question.) However,
computer experiments led Romik~\cite[Conj.~13]{RomikAA} to conjecture
that $\big(d(n)\big)_{n\ge0}$ is (eventually) periodic when taken
modulo a prime~$p$ with $p\equiv1$~(mod~4), and that the sequence
eventually vanishes when taken
modulo a prime~$p$ with $p\equiv3$~(mod~4). More computer experiments
suggest (cf.\ also \cite[Conjecture~18(3)]{WakhAA})
that analogous assertions hold modulo {\it any} prime {\it
power} (including powers of~2). To be more precise, such experiments suggest that:

\begin{enumerate} 
\item $d(n)$ eventually vanishes modulo any prime power $p^e$
  with $p\equiv3$~(mod~4);
\item $d(n)$ is eventually periodic modulo any prime power $p^e$
  with $p\equiv1$~(mod~4);
\item $d(n)$ is purely periodic modulo any 2-power~$2^e$.
\end{enumerate}

Item~(1) was proved for primes (i.e., for $e=1$) by
Scherer~\cite{ScheAA}. He also obtained partial results on Items~(2)
and~(3) by proving that
$d(n)\equiv(-1)^{n+1}$~(mod~5) for $n\ge1$, and that $d(n)$ is odd for
all~$n$.
Guerzhoy, Mertens and Rolen~\cite{GuMRAA} claim to have proved Item~(2) 
in full, as a special case of a more general result for a whole family of modular forms
of half integer weight.\footnote{The authors of the present article admit that they
are not able to comprehend what the result(s) in~\cite{GuMRAA} say about Romik's
sequence $\big(d(n)\big)_{n\ge0}$.
The first problem is that $\tilde\Omega$ in Theorem~1.3 of~\cite{GuMRAA}
is nowhere defined. One may guess that $\tilde\Omega^2 = \omega$,
with the $\omega$
in the proof of Theorem~1.3 on pages~148/149 of~\cite{GuMRAA}.
We assume this guess in the following.

In the Remark on top of page~150 of~\cite{GuMRAA}, Romik's sequence is
addressed explicitly.
There, $\omega$ is computed as $\Gamma^2(1/4)/(8 \pi)^{1/2}$.

In Theorem 1.3, the coefficients $\partial^n f(\tau_0)$ are considered.
If, in the case of $f(\tau)=\Theta(\tau)=\theta_3(2\tau)$,
we make the comparison of coefficients
in (1-1) and (1-3) of~\cite{GuMRAA}, then we obtain the relation
$$
   (2 \pi)^n \partial^n \Theta(i/2) = \Theta(i/2) \Phi^n D(n),
$$
where we write $D(n)$ for the Taylor coefficients in Romik's series,
that is, $D(2n)=d(n)$ and\break $D(2n+1)=0$.

In order to apply Theorem~1.3 to Romik's sequence, we must choose $k=1$
there. In particular, we should divide the above relation by
$\tilde\Omega^{4n+1}$:
$$
\frac {(2 \pi)^n \partial^n \Theta(i/2)} {\tilde\Omega^{4n+1}}
=\frac {\Theta(i/2) \Phi^n D(n)} {\tilde\Omega^{4n+1}}.
$$
Now we use the assumption that $\tilde\Omega = \omega^{1/2}$ and the relation
$\omega^2 = 2^{1/2} \pi \Phi$ from the Remark on top of page 150 of~\cite{GuMRAA}.
We substitute and get, after some cancellation,
$$
\frac {(2 \pi)^n \partial^n \Theta(i/2)} {\tilde\Omega^{4n+1}}
=\frac {D(n)} {2^{n/2-1/4} \pi^{n+1/2}}.
$$
Here is the first problem: $\pi$ does not drop out.

This could be easily fixed: just
change the normalisation by that power of $\pi$. However, that
would create another, equally serious problem. Then the (re)normalised
sequence equals Romik's sequence (with 0s for odd
indexed coefficients) up to some power of $2^{1/2}$.
Then Theorem~1.3, say with $A=0$,
predicts a period length of the (re)normalised sequence, taken
modulo~$p$, of $2(p-1)$.
(The factor~2 comes from the period length of~$2^{1/2}$ modulo~$p$.)
Translated to Romik's original sequence $\big(d(n)\big)_{n\ge0}$ (i.e.,
without 0s), this implies
a period length of $p-1$. However, this is definitely wrong. For
example, for $p=13$, Romik's sequence, taken modulo~13,
begins
\begin{multline*}
1, 1, 12, 12, 4, 9, 9, 3, 10, 10, 12, 1, 1, 9, 4, 4, 10, 3, 3,\\
   1, 12, 12, 4, 9, 9, 3, 10, 10, 12, 1, 1, 9, 4, 4, 10, 3, 3,
   1, 12, 12, 4, \dots
\end{multline*}
The period length is visibly 18 ($= (p-1)^2/8$); see Conjecture~\ref{conj:4}(1) in
Section~\ref{sec:Z}. Despite correspondence with the authors of~\cite{GuMRAA},
these concerns were not dispelled.}
In~\cite{WakhAA}, Wakhare revisited Item~(2) for primes (i.e., for
$e=1$). He showed that $d(n)$ is (eventually) periodic modulo any
prime number~$p$ with $p\equiv1$~(mod~4) by
proving the refinement
(see \cite[Theorem~2]{WakhAA}\footnote{\label{foot:1}
Wakhare did not simplify $-2^{(p-1)/2}$ modulo~$p$ on the
right-hand side of the displayed congruence in \cite[Theorem~1]{WakhAA}
to~$(-1)^{(p-5)/4}$.})
\begin{multline} \label{eq:Wakh} 
d\left(n+\tfrac {p-1} {2}\right)\equiv
(-1)^{(p-5)/4}
(3\cdot7\cdot11\cdots(2p-3))^2d(n)\pmod p,\\
\text{for
}p\equiv1~(\text{mod }4)\text{ and }n\ge\tfrac {p+1} {2}.
\end{multline}
Since on the right-hand side of this congruence we have a square,
$\frac {p-1} {2}$-fold iteration of this congruence
in combination with Fermat's Little Theorem shows that $d(n)$
is (eventually) periodic modulo~$p$ with (not necessarily minimal)
period length $\frac {(p-1)^2} {4}$.\footnote{Wakhare 
  concludes a period length of only $\frac {(p-1)^2} {2}$. The reason is
  probably the missed simplification pointed out in Footnote~\ref{foot:1}.}

What all these authors did not notice is that periodicity was already
(implicitly) known. Namely,
Rodr\'\i guez Villegas and Zagier showed in~\cite[\S\S~6,~7]{RVZaAA}
that Taylor coefficients of entire modular forms at complex multiplication
points\footnote{$\tau=i$ is such a ``complex multiplication point" for~$\theta_3$}
--- suitably normalised ---
can be computed via a certain recursive (polynomial) scheme, and they indicated
that this scheme can also be adapted to cover half-integral
weights.\footnote{We have worked out such a recursive scheme for producing
the Taylor coefficients~$d(n)$: let the polynomials
$p_n(t)$, $n\ge0$, be given by
$$
p_{n+1}(t)=(\tfrac 16 - 96 t^2) p'_n(t) + 16 (4 n +1) t p_n(t)
   - n (n - \tfrac 12) (256 t^2 + \tfrac 43) p_{n - 1}(t),
$$
with $p_{-1}(t)=0$ and $p_0(t)=1$. Then $p_{2n+1}(0)=0$ and
$$
d(n)=2^{-n}p_{2n}(0)
$$
for all $n\ge0$.}
Moreover, O'Sullivan and Risager proved in \cite[Theorem~6.1]{OSRiAA}
(in a special case, but the argument is completely general) that integral number
sequences that are produced by such a scheme are automatically periodic modulo
any prime power. On the other hand, this argument only leads to very crude,
super-exponential bounds on the period length, it cannot tell from when on
periodicity occurs, and it also cannot predict
whether such a sequence vanishes eventually modulo a prime power (thus
producing a trivial period).

\medskip
The purpose of the present article is to provide full proofs of all
the above items that do include explicit statements about 
period lengths, respectively from when on
periodicity or vanishing modulo a prime power holds. 
The theorem below collects our corresponding findings
(see Theorems~\ref{thm:10}, \ref{thm:13}, and \ref{thm:11}).

\begin{theorem} \label{thm:main}
{\em(1)} Let $p$ be a prime number with $p\equiv3$~{\em(mod~4),} and
let $e$ be an integer with $e\ge2$.
Then $d(n)\equiv0$~{\em(mod~$p^e$)} for $n\ge \cl{\frac {(e-1)p^2} {2}}$.

{\em(2)} Let $p$ be a prime number with $p\equiv1$~{\em(mod~4),} and
let $e$ be a positive integer.
Then the sequence $\big(d(n)\big)_{n\ge e+1}$ is purely periodic
modulo~$p^e$ with {\em(}not necessarily minimal\/{\em)} period
length~$\frac {1} {4}p^{e-1}(p-1)^2$.

{\em(3)} Let $e$ be a positive integer.
The sequence $(d(n))_{n\ge0},$ when taken modulo any fixed $2$-power $2^e$ with $e\ge3,$
is purely periodic with {\em(}not necessarily minimal\/{\em)} 
period length~$2^{e-1}$. Modulo~$4,$ the sequence is 
purely periodic with period length~$4,$ the first few
values of the sequence {\em(}modulo~$4${\em)} being given by 
$$
1,1, 3,  3  , 1 ,\dots.
$$
%
\end{theorem}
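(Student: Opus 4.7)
The plan is to transfer the question for the integer sequence $d(n) = 2^{-n}p_{2n}(0)$ to a question about the polynomial sequence $p_n(t)$ supplied by the recurrence quoted in the footnote of the introduction. Since that recurrence features the denominators $6$, $2$, $3$, I would first introduce a rescaled sequence $\widetilde p_n(t) = a_n\, p_n(b\, t)$ with $a_n,\,b \in \mathbb{Q}^\times$ chosen so that: (i)~$\widetilde p_n(t) \in \mathbb{Z}[t]$; (ii)~$d(n)$ equals a fixed $p$-adic unit times $\widetilde p_{2n}(0)$; and (iii)~the resulting three-term recurrence has $\mathbb{Z}[t]$-coefficients. For part~(3), where $p=2$, the rescaling must additionally absorb the $2$-adic denominators $\tfrac{1}{6}$, $\tfrac{1}{2}$, $\tfrac{4}{3}$ and the $2^{-n}$ factor.

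The central step is to establish, by induction on $n$, a polynomial-level shift congruence modulo $p^e$ of the form
\[
\widetilde p_{n + N_p}(t) \;\equiv\; \Lambda_p(t)\, \widetilde p_n(t) \pmod{p^e},
\]
valid for all $n$ above a certain threshold, where $N_p$ is an explicit shift and $\Lambda_p(t) \in (\mathbb{Z}/p^e\mathbb{Z})[t]$ is an explicit polynomial. Specialising at $t=0$ must recover and refine Wakhare's congruence~\eqref{eq:Wakh}. The driving dichotomy is already visible at $t=0$: for $p\equiv 3\pmod 4$ the product $3\cdot 7\cdots(2p-3)$ appearing in $\Lambda_p(0)$ contains the factor $p$ (namely the term $4j+3$ with $j=(p-3)/4$), whereas for $p\equiv 1\pmod 4$ no such factor arises and $\Lambda_p(0)$ is even a perfect square modulo~$p$.

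The three parts then follow by iteration and careful bookkeeping of thresholds. For part~(1), each further application of the shift contributes at least one additional factor of $p$ (because $\Lambda_p(0) \in p\,\mathbb{Z}_{(p)}$), so that an appropriate number of iterations suffices to reach divisibility by $p^e$; tracking the accumulated shift produces the bound $\lceil (e-1)p^2/2 \rceil$. For part~(2), the unit $\Lambda_p(0)$, being a square modulo $p$, has multiplicative order dividing $\tfrac{1}{2}(p-1)$ in $\mathbb{F}_p^\times$, and Hensel-type lifting gives an order dividing $\tfrac{1}{2}p^{e-1}(p-1)$ in $(\mathbb{Z}/p^e\mathbb{Z})^\times$; iterating a shift of length $\tfrac{1}{2}(p-1)$ that many times produces the period $\tfrac{1}{4}p^{e-1}(p-1)^2$. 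For part~(3), the $2$-adic analogue of the master congruence---after absorbing denominators---should hold already from $n=0$, with no initial threshold, and the period $2^{e-1}$ emerges from the combined $2$-adic shift length and the multiplicative order of $\Lambda_2(0)$ in $(\mathbb{Z}/2^e\mathbb{Z})^\times$.

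The hardest step, in my view, is the central induction establishing the polynomial shift congruence with a sufficiently clean remainder to iterate freely. Identifying $N_p$, $\Lambda_p(t)$, and the initial threshold requires a detailed structural analysis of the recurrence modulo $p$---exploiting the vanishings of the factors $16(4n+1)t$ and $n(n-\tfrac{1}{2})$ at specific residues of $n$ modulo $p$---and the error accumulated by each iteration must be carefully bounded; this is also the most likely source of the suboptimality of the bounds acknowledged in the abstract.
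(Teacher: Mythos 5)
Your proposal is a programme rather than a proof: everything rests on the ``master congruence'' $\widetilde p_{n+N_p}(t)\equiv\Lambda_p(t)\,\widetilde p_n(t)\pmod{p^e}$, and you neither specify $N_p$, $\Lambda_p(t)$, or the threshold, nor give any argument for it beyond naming it as the hardest step. That is precisely where the entire difficulty of the theorem lives. The paper itself flags this route: sequences produced by a Rodr\'\i guez Villegas--Zagier type polynomial recursion are known (O'Sullivan--Risager) to be periodic modulo prime powers, but that argument yields only crude super-exponential period bounds, gives no starting point for periodicity, and cannot detect eventual vanishing. So the added value of the theorem is exactly the quantitative content your sketch leaves unproved. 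Note also that the paper does \emph{not} argue via the polynomial recurrence at all; it works with Romik's auxiliary sequences $u(n)$, $v(n)$, inverts the matrix $\mathbf R$ by Lagrange inversion, and deduces everything from $d(n)=\sum_k R^{-1}(2n,2k)v(k)$ together with a long $p$-adic analysis of factorial ratios (Kummer/Legendre). Your route is genuinely different, and there is no evidence in your write-up that it can be pushed to the stated precision.

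Two concrete points where the plan, even granted its shape, does not obviously deliver the claimed statements. First, for part (1): if the shift is $N_p\approx(p-1)/2$ (as in \eqref{eq:Wakh}) and $\Lambda_p(0)\in p\,\mathbb{Z}_{(p)}$, then iterating from a threshold $T$ gives $d(n)\equiv0\pmod{p^e}$ only for $n\gtrsim T+e(p-1)/2$; to recover the bound $\bigl\lceil(e-1)p^2/2\bigr\rceil$ you would need the threshold $T=T(e)$ of validity of the mod-$p^e$ congruence to grow like $(e-1)p^2/2$, and you give no mechanism for controlling $T(e)$ --- ``tracking the accumulated shift'' is not an argument. (In the paper this is where the divisibility of $u(n)$ and $v(n)$ by $p^e$ for $n\ge\lfloor(e-1)p^2/2\rfloor$, Theorems~\ref{thm:1} and~\ref{thm:2}, and the estimate on $R^{-1}$ in Theorem~\ref{thm:9} enter; there is no analogue in your sketch.) Second, for part (2): pure periodicity of $\bigl(d(n)\bigr)_{n\ge e+1}$ requires the shift congruence to hold with no error term from $n=e+1$ on; an ``eventually valid'' congruence with an unquantified threshold only gives eventual periodicity. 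The arithmetic you do carry out (order of a square in $(\mathbb{Z}/p^e\mathbb{Z})^\times$, the dichotomy $p\mid 3\cdot7\cdots(2p-3)$ iff $p\equiv3\pmod4$) is correct and consistent with \eqref{eq:d-u-p}, but it is the easy part.
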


Our proof of Item~(2) in fact generalises the
congruence~\eqref{eq:Wakh} to prime powers; see~\eqref{eq:d-u-p}.
We remark that Larson and Smith~\cite{LaSmAA} prove a result
analogous to Theorem~\ref{thm:main}(1) for Taylor expansions
of modular forms of integral weight at complex multiplication
points in some imaginary quadratic number field, for primes
$p\ge5$.

\medskip
The proofs of the congruence properties of~$d(n)$ listed in
Theorem~\ref{thm:main} that we provide here are elementary throughout.
They do however reveal that these congruences hold at a much 
finer, polynomial level; see Remarks~\ref{rem:1}, \ref{rem:2},
\ref{rem:3}, \ref{rem:4}, Theorems~\ref{prop:2}--\ref{thm:4}, and~\ref{thm:12}.

More specifically, our proofs require a careful $p$-adic analysis of the earlier
mentioned recursive procedure of computation of the
numbers~$d(n)$. This procedure involves two further sequences, namely
$\big(u(n)\big)_{n\ge0}$ and $\big(v(n)\big)_{n\ge0}$, and a
lower triangular matrix $\big(r(n,k)\big)_{n,k\ge0}$. We refer the
reader to Section~\ref{sec:uvr} for the corresponding definitions.
In order to accomplish the proofs of the congruence assertions in
Theorem~\ref{thm:main}, we must first analyse $u(n)$, $v(n)$, and
$r(n,k)$ modulo the three families of prime powers that feature in the
theorem, before we can make conclusions about~$d(n)$.

Accordingly, our article is organised as follows.
In the next section, we review Romik's recursive procedure to compute
the Taylor coefficients~$d(n)$. This section contains in fact a
notable novelty that is crucial for our proofs. 
Briefly, Romik defines a sequence $\big(v(n)\big)_{n\ge0}$ and shows
that it equals the product of the matrix $\big(r(n,k)\big)_{n,k\ge0}$ ---
which in its definition involves the sequence
$\big(u(n)\big)_{n\ge0}$ ---
and the sequence $\big(d(n)\big)_{n\ge0}$ (seen as column vector).
At this point, one wants to invert this relation to have direct
access to the numbers~$d(n)$. This requires the computation of
the {\it inverse} of the matrix $\big(r(n,k)\big)_{n,k\ge0}$. In all previous
papers, this point is somehow by-passed. However, as it turns out,
Lagrange inversion permits one to give a compact formula for the entries
of this inverse matrix, again in terms of the sequence
$\big(u(n)\big)_{n\ge0}$; see~\eqref{eq:R^{-1}}. More specifically,
we extend the matrix $\big(r(n,k)\big)_{n,k\ge0}$ to the larger matrix
$\mathbf R=\big(R(n,k)\big)_{n,k\ge0}$ in which the former matrix is
embedded as the submatrix indexed by even  labelled rows and columns;
that is, $r(n,k)=R(2n,2k)$ for all $n$ and~$k$. Our inversion result then applies
to this larger matrix which, in view of the Lagrange inversion
formula, is the one that should be looked at.

In Section~\ref{sec:aux}, we recall the standard results from
elementary number theory concerning the divisibility of factorials and
binomial coefficients by prime powers that we use ubiquitously in our
article.

Subsequently, we embark on the $p$-adic analysis of our
sequences. Section~\ref{sec:2} is devoted to the proof that $u(n)$
vanishes modulo any fixed odd prime prime power~$p^e$ for large  enough~$n$.
It contains two main results:
Theorem~\ref{thm:1} treats the case where $p\equiv3$~(mod~4), while
Theorem~\ref{thm:1A} contains the (significant) improvement for the
case where $p\equiv1$~(mod~4). The corresponding results for $v(n)$
modulo odd prime powers are presented in Section~\ref{sec:3}.
Again there are two main results:
Theorem~\ref{thm:2} treats the ``generic case", while
Theorem~\ref{thm:2A} contains the (significant) improvement for the
case where $p\equiv1$~(mod~4). 
The final preparations for our first main result on $d(n)$ is done in
Section~\ref{sec:4}, where we prove a $p$-divisibility result for the matrix
entries of the {\it inverse} of the matrix~$\mathbf R$
for primes~$p$ with $p\equiv3$~(mod~4);
see Theorem~\ref{thm:9}.
Theorem~\ref{thm:10} in Section~\ref{sec:5} is then our first main
result for the sequence $\big(d(n)\big)_{n\ge0}$.
It says that $d(n)$ is divisible by
prime powers~$p^e$ with $p\equiv3$~(mod~4) for $n\ge \cl{{(e-1)p^2}/
 {2}}$ and $e\ge2$, thus establishing Part~(1) of Theorem~\ref{thm:main}.

The next sections are devoted to the analysis of our sequences and our
matrix~$\mathbf R$ modulo powers of~2. The subject of
Section~\ref{sec:6} is to show that the sequence $(v(n))_{n\ge0}$ is
purely periodic modulo any fixed power of $2$, together with a precise
statement concerning the period length. As a matter of fact,
Theorems~\ref{prop:2} and~\ref{prop:2a} contain a polynomial
refinement of these assertions.
In Section~\ref{sec:7} we establish a periodicity result for the entries
of the {\it inverse} of the matrix $\mathbf R$ modulo powers of $2$; see
Theorem~\ref{thm:4}.
These findings are then put together to obtain our second main result
for the sequence $\big(d(n)\big)_{n\ge0}$. Namely, $\big(d(n)\big)_{n\ge0}$ is purely
periodic modulo~$2^e$ with (not necessarily minimal) period
length~$2^{e-1}$ for $e\ge3$, while modulo~4 it starts with $1,1,3,3$ and
then repeats itself; see Theorem~\ref{thm:11} in Section~\ref{sec:8}.
These results establish Part~(3) of Theorem~\ref{thm:main}.

The purpose of Sections~\ref{sec:9} and \ref{sec:10} is to prove
periodicity of $\big(d(n)\big)_{n\ge0}$ modulo prime powers $p^e$ with
$p\equiv1$ (mod $4$), accompanied by a precise statement on the
period length. The preparatory work in the former section
concerns the analysis of the entries of the {\it inverse} 
of the matrix $\mathbf R$ modulo these prime powers; see Theorem~\ref{thm:12}.
Again, this theorem actually contains a polynomial refinement.
This is then used to prove our third main result on $d(n)$, namely
that $\big(d(n)\big)_{n\ge e+1}$ is purely periodic modulo $p^e$ with
$p\equiv1$~(mod~4) with (not necessarily minimal) period length $\frac
{1} {4}p^{e-1}(p-1)^2$, thus establishing Part~(2) of Theorem~\ref{thm:main};
see Theorem~\ref{thm:13} in Section~\ref{sec:10}.

The final section, Section~\ref{sec:Z}, addresses the question
of whether our results can be improved concerning period length,
respectively point of vanishing. As is reported in that
section, (computer) data suggest that our results are not far from
being optimal, but that it may be possible to improve them by ---
roughly --- a factor of~2. (Our result on $d(n)$ modulo powers of~2 in
Theorem~\ref{thm:main}(3) seems to be optimal, though.)
A proof of such strengthenings would however
require considerable effort involving highly technical considerations.

\section{Romik's recursive procedure for the computation of $d(n)$}
\label{sec:uvr}

In order to prove integrality of the coefficients $d(n)$
in~\eqref{eq:Taylor}, Romik sets up a recursive scheme to compute
these numbers from which it is immediate that it produces integers.
This scheme involves two further sequences, $\big(u(n)\big)_{n\ge0}$
and $\big(v(n)\big)_{n\ge0}$, and a lower triangular matrix
$\big(r(n,k)\big)_{n,k\ge0}$, which we are going to define next.

Let $(u(n))_{n\ge0}$ be the sequence defined by an exponential
generating function $U(t)$ via (cf.\ \cite[Eq.~(9) and Lemma~5]{RomikAA}\footnote{%
The reader should be warned that our definitions of the series $U(t)$ and the later
defined series~$V(t)$ slightly deviate from Romik's definitions; one gets Romik's series
from ours by dividing our $U(t)$ by~$t$, and by then replacing~$t$ by~$t^{1/2}$.
Our convention is crucial for the application of Lagrange inversion in
Proposition~\ref{prop:1}.
})
\begin{equation} \label{eq:u-def} 
  U(t):=
  \sum_{n\ge0}\frac {u(n)} {(2n+1)!}t^{2n+1}=
t\frac {
  {} _{2} F _{1} \!\left [ \begin{matrix} 
  \frac {3} {4},\frac {3} {4}\\\frac {3} {2}\end{matrix} ; 
  {\displaystyle 4t^2}\right ]  }
{
  {} _{2} F _{1} \!\left [ \begin{matrix} 
  \frac {1} {4},\frac {1} {4}\\\frac {1} {2}\end{matrix} ; 
  {\displaystyle 4t^2}\right ]  }.
\end{equation}
Here, $_2F_1[\dots]$ is the usual Gau{\ss} hypergeometric series. The
reader is referred to any standard book on special functions for the
definition, such as
e.g.\ \cite[Eq.~(2.1.2)]{AAR}.
It is not difficult to see (cf.\ \cite[Eq.~(17)]{RomikAA})
that an equivalent definition of
$u(n)$ is by the recurrence
\begin{equation} \label{eq:Rek1} 
u(n)=
\prod _{j=1} ^{n}(4j-1)^2-\sum_{m=0}^{n-1}\binom {2n+1}{2m+1}
\Bigg(\prod _{j=1} ^{n-m}(4j-3)^2\Bigg) u(m),\quad \text{with }u(0)=1.
\end{equation}
The first few values turn out to be
\begin{multline*}
1, 6, 256, 28560, 6071040, 2098483200, 1071889920000, 
758870167910400, \\
711206089850880000, 852336059876720640000, 
1271438437097485762560000, \dots
\end{multline*}

For convenience, we shall later frequently use the short notations
\begin{equation} \label{eq:Pi13} 
\Pi_1(N):=\prod _{j=1} ^{N}(4j-1)^2
\quad \text{and}\quad 
\Pi_3(N):=\prod _{j=1} ^{N}(4j-3)^2.
\end{equation}
Then, using these, the above recurrence can be rewritten as
\begin{equation} \label{eq:Rek1-Pi} 
u(n)=\Pi_1(n)-\sum_{m=0}^{n-1}\binom {2n+1}{2m+1}
\Pi_3(n-m)u(m),\quad \text{with }u(0)=1.
\end{equation}

The sequence $(v(n))_{n\ge0}$ is also defined via an exponential
generating function, namely by (cf.\ \cite[Eq.~(10) and Lemma~6]{RomikAA})
\begin{equation} \label{eq:v-def} 
\sum_{n\ge0}\frac {v(n)} {2^n(2n)!}t^{2n}
=   {} _{2} F _{1} \!\left [ \begin{matrix} 
  \frac {1} {4},\frac {1} {4}\\\frac {1} {2}\end{matrix} ; 
  {\displaystyle 4t^2}\right ] ^{1/2}
=\left(\sum_{j\ge0}\frac {
\prod _{\ell=1} ^{j}(4\ell-3)^2} {(2j)!}t^{2j}\right)^{1/2}.
\end{equation}
Again, it is not difficult to see (cf.\ \cite[Eq.~(20)]{RomikAA})
that an equivalent definition of
$v(n)$ is by the recurrence
\begin{equation} \label{eq:Rek2} 
v(n)=2^{n-1}\Pi_3(n)-\frac {1} {2}\sum_{m=1}^{n-1}\binom {2n}{2m}
v(m)v(n-m),\quad \text{with }v(0)=1.
\end{equation}
The first few values turn out to be
\begin{multline*}
1, 1, 47, 7395, 2453425, 1399055625, 1221037941375, 1513229875486875, \\
2526879997358510625, 5469272714829657020625, 
14892997153152592003359375, \dots
\end{multline*}

The lower triangular matrix $\big(r(n,k)\big)_{n,k\ge0}$ is defined via the generating
function $U(t)$ for the $u(n)$'s in~\eqref{eq:u-def}, namely by
\begin{equation} \label{eq:r2} 
r(n,k):=2^{n-k}\frac {(2n)!} {(2k)!}\coef{t^{2n}}U^{2k}(t).
\end{equation}
The matrix is indeed lower triangular since $r(n,k)=0$ for $n<k$, due
to the fact that $U(t)$ has zero constant term.
For  the convenience of the reader, we display
the table of numbers $r(n,k)$ with $0\le n,k\le 6$:
$$\begin{matrix}
1& 0& 0& 0& 0& 0& 0\\
0& 1& 0& 0& 0& 0& 0\\
0& 48& 1& 0& 0& 0& 0\\
0& 7584& 240& 1& 0& 0& 0\\
0& 2515968& 97664& 672& 1& 0& 0\\
0& 1432498176& 63221760& 560448& 1440& 1& 0\\
0& 1247557386240& 60299053056& 628024320& 2141568& 2640& 1
\end{matrix}
$$

Romik showed that the Taylor coefficients~$d(n)$ in~\eqref{eq:Taylor} are related to 
the numbers~$v(n)$ via the triangular system of equations
\begin{equation} \label{eq:d-def2} 
v(n)=\sum_{k=0}^{n}r(n,k)d(k), \quad 
\text{for all }n\ge0.
\end{equation}
This may be ``turned around" to obtain a recursion for the $d(n)$'s
(cf.\ \cite[Theorem~7]{RomikAA}),
\begin{equation} \label{eq:d-def} 
d(n)=v(n)-\sum_{k=0}^{n-1}r(n,k)d(k), \quad 
\text{and }d(0)=1.
\end{equation}

Without any doubt, Equation~\eqref{eq:d-def} does provide a
recursive way to compute the coefficients~$d(n)$. Indeed,
Scherer~\cite{ScheAA} and Wakhare~\cite{WakhAA} used it for the proof
of their results. However, in our opinion the suitability of~\eqref{eq:d-def}
for the proof of congruence relations satisfied by the
$d(n)$'s using inductive arguments is limited. It is more conceptual
to invert the relation~\eqref{eq:d-def2} and express the $d(n)$'s
entirely in terms of the $v(n)$'s and the {\it inverse} of the matrix 
$\big(r(n,k)\big)_{n,k\ge0}$.

In order to carry out this programme, we need an explicit formula
for the entries of this inverse matrix. It turns out that this can best be achieved if one
defines the (larger) infinite matrix $\mathbf R=(R(n,k))_{n,k\ge 0}$ by
\begin{equation} \label{eq:R} 
R(n,k)=2^{(n-k)/2}\frac {n!} {k!}\coef{t^{n}}U^k(t).
\end{equation}
Since $U(t)$ is a power series in which even powers of $t$ do not
occur, the matrix $\mathbf R$ has a ``checkerboard pattern"; more
precisely, $R(n,k)\ne0$ if, and only if, $n$ and $k$ have the same parity.
The entries $R(n,k)$ are in fact all integers. This follows from this
checkerboard pattern of~$\mathbf  R$  (implying that $2^{(n-k)/2}$ on
 the right-hand side of~\eqref{eq:R} is always an integer) and from the
exponential formula of combinatorics as Romik explains in \cite[Proof
  of Theorem~7]{RomikAA} in a special case (namely in the case where
both~$n$ and~$k$ are even; the general case can be treated in the
same manner).
The matrix $\big(r(n,k)\big)_{n,k\ge0}$ is a submatrix of~$\mathbf R$
since $R(2n,2k)=r(n,k)$ for all $n$ and~$k$ (compare with~\eqref{eq:r2}).

\begin{proposition} \label{prop:1}
The inverse $\mathbf R^{-1}$ of $\mathbf R$ is given by
$(R^{-1}(n,k))_{n,k\ge0}$ with
\begin{equation} \label{eq:R^{-1}} 
R^{-1}(n,k)=2^{(n-k)/2}\frac {(n-1)!} {(k-1)!}\coef{t^{-k}}U^{-n}(t).
\end{equation}
Here, the case $k=0$ has to be interpreted as $R^{-1}(0,0)=1$ and
$R^{-1}(n,0)=0$ for $n\ge1$.
Moreover, $\mathbf R^{-1}$ has integer entries and
$R^{-1}(n,k)\ne0$ only if $n$ and $k$ have the same parity.
\end{proposition}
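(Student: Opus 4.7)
The plan is to recognise $\mathbf R$ as (a conjugate of) the matrix of the composition-with-$U$ operator on formal power series, invoke the existence of a compositional inverse of~$U$, and then extract the explicit form of $\mathbf R^{-1}$ from the Lagrange inversion formula.

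First I would verify that $\mathbf R$ is lower triangular with $1$'s on the diagonal. Since the ratio of hypergeometric series in~\eqref{eq:u-def} takes the value~$1$ at $t=0$, we have $U(t)=t+O(t^3)$, so $U^k(t)$ has valuation exactly~$k$ with leading coefficient~$1$; this gives $R(n,k)=0$ for $n<k$ and $R(n,n)=1$. Combined with the integrality of the entries of~$\mathbf R$ recorded after the definition~\eqref{eq:R}, this immediately yields that $\mathbf R^{-1}$ also has integer entries. Moreover, the first column of~$\mathbf R$ is $(1,0,0,\ldots)^T$, so the same is true of $\mathbf R^{-1}$, giving the boundary values $R^{-1}(0,0)=1$ and $R^{-1}(n,0)=0$ for $n\ge 1$.

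For the explicit formula, I would make the composition-operator interpretation precise. Given a formal power series $f(t)=\sum_k a_k t^k$, the coefficients $b_n$ of $g(t):=f(U(t))$ satisfy $b_n=\sum_k a_k\,[t^n]U^k(t)$; after the rescalings $\tilde a_k:=a_k\,k!\,2^{-k/2}$ and $\tilde b_n:=b_n\,n!\,2^{-n/2}$, this becomes $\tilde b_n=\sum_k R(n,k)\,\tilde a_k$. Since $U(t)=t+O(t^3)$ admits a compositional inverse $V(t)=t+O(t^3)$, the inverse of~$\mathbf R$ in the same rescaled basis must represent composition with~$V$, giving entries $S(n,k)=2^{(n-k)/2}\tfrac{n!}{k!}\,[t^n]V^k(t)$.

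The claimed formula~\eqref{eq:R^{-1}} then drops out of Lagrange inversion in the form $n\,[t^n]V^k(t)=k\,[t^{n-k}](t/U(t))^n$. Writing $U(t)=t\phi(t)$ with $\phi(0)=1$ and $\phi(t)\in\mathbb Q[[t^2]]$, one has $(t/U(t))^n=\phi(t)^{-n}$ and $U^{-n}(t)=t^{-n}\phi(t)^{-n}$, whence $[t^{n-k}](t/U(t))^n=[t^{-k}]U^{-n}(t)$. Substituting and regrouping the factorials via $\tfrac{n!}{k!}\cdot\tfrac{k}{n}=\tfrac{(n-1)!}{(k-1)!}$ converts $S(n,k)$ into the right-hand side of~\eqref{eq:R^{-1}} for $n,k\ge 1$. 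The checkerboard pattern follows immediately from $\phi(t)\in\mathbb Q[[t^2]]$, which forces $[t^{-k}]U^{-n}(t)$ to vanish unless $n\equiv k\pmod 2$. The only genuine obstacle here is the bookkeeping of the $2^{(n-k)/2}$ factors and the ratio of factorials under the diagonal conjugation, together with selecting the right form of Lagrange inversion; once the set-up is in place cleanly, no further computation is required.
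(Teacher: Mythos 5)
Your proposal is correct and follows essentially the same route as the paper: identify $\mathbf R$ as the (rescaled) matrix of composition with $U$, note that its inverse is the matrix of composition with the compositional inverse of $U$, and apply Lagrange inversion in the form $\coef{t^n}\big(U^{(-1)}(t)\big)^k=\frac kn\coef{t^{-k}}U^{-n}(t)$ to obtain \eqref{eq:R^{-1}}, with integrality and the checkerboard pattern handled exactly as in the paper's argument.
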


\begin{proof}
That $\mathbf R^{-1}$ has integer entries is obvious since it is the
inverse of a triangular matrix with integer entries
(cf.\ \cite[Theorem~7]{RomikAA}) and $1$'s on the main diagonal.

The first assertion is a consequence of Lagrange inversion (cf.\ 
\cite[Theorem~5.4.2]{StanBI}): if $F(t)$ is a formal power series with
$F(0)=0$ and $F'(0)\ne0$, and $F^{(-1)}(t)$ is its compositional
inverse, then
$$
\coef{t^n}\big(F^{(-1)}(t)\big)^k=\frac {k} {n}\coef{t^{-k}}F^{-n}(t).
$$
In order to apply this theorem to our situation, it must be observed
that under the above conditions the matrix
$\big(\coef{t^n}F^k(t)\big)_{n,k\ge0}$ is inverse to
$\big(\coef{t^n}\big(F^{(-1)}(t)\big)^k\big)_{n,k\ge0}$.
If one applies this observation with $F(t)=U(t)$, then the assertion
of the proposition follows upon little manipulation.

With the formula \eqref{eq:R^{-1}} established,
the parity condition follows again from the fact that $U(t)$ is a
power series in which only odd powers of~$t$ occur.
\end{proof}

Using the entries of the matrix $\mathbf R$, the system of
equations~\eqref{eq:d-def2} can be rewritten as
$$
v(n)=\sum_{k=0}^{n}R(2n,2k)d(k), \quad 
\text{for all }n\ge0.
$$
If we invert this relation using the inverse matrix~$\mathbf R^{-1}$,
then we obtain
\begin{equation} \label{eq:d-v} 
d(n)=\sum_{k=0}^{n}R^{-1}(2n,2k)v(k).
\end{equation}
We are going to use this formula in the proofs in Sections~\ref{sec:8}
and~\ref{sec:10}.

\section{Classical criteria for divisibility of factorials and binomial
  coefficients by prime powers}
\label{sec:aux}

Here and in the sequel, for a prime number~$p$,
let $v_p(\al)$ denote the $p$-adic valuation of
the integer (or rational number)~$\al$, defined as 
the maximal exponent $e$ such that $\al/p^e$
is an integer (respectively a rational number with numerator and denominator
coprime to~$p$).

There are essentially two formulae for the $p$-adic valuation of a
factorial. The one that we need in this article is Legendre's formula.

\begin{lemma}[{\sc Legendre's formula \cite[p.~12]{LegeAA}}]
\label{lem:4}
Let $N$ be a positive integer and $p$ a prime number.
Then
$$
v_p(N!)=\frac {N-s_p(N)} {p-1},
$$
where $s_p(N)$ denotes the sum of digits in the $p$-adic
representation of~$N$.
\end{lemma}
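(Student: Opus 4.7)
The plan is to use the classical counting identity
\[
v_p(N!)=\sum_{i\ge1}\left\lfloor\frac{N}{p^i}\right\rfloor,
\]
which itself is obtained by observing that each integer $m$ with $1\le m\le N$ contributes exactly $v_p(m)$ to $v_p(N!)$, and that $v_p(m)$ equals the number of indices $i\ge1$ such that $p^i\mid m$; swapping the order of summation then counts, for each $i$, the number of multiples of $p^i$ up to $N$, which is $\lfloor N/p^i\rfloor$.

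Next I would expand $N$ in base $p$ as $N=\sum_{j=0}^{k}a_jp^{j}$ with $0\le a_j\le p-1$, so that $s_p(N)=\sum_{j=0}^{k}a_j$. For each $i\ge1$ one has
\[
\left\lfloor\frac{N}{p^i}\right\rfloor=\sum_{j=i}^{k}a_jp^{j-i},
\]
since the lower-order digits contribute less than~$1$ after division by~$p^i$. Substituting into the counting identity and switching the order of summation gives
\[
v_p(N!)=\sum_{j=1}^{k}a_j\sum_{i=1}^{j}p^{j-i}=\sum_{j=1}^{k}a_j\,\frac{p^{j}-1}{p-1}.
\]

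Finally, I would rewrite the right-hand side as
\[
\frac{1}{p-1}\left(\sum_{j=0}^{k}a_jp^{j}-\sum_{j=0}^{k}a_j\right)=\frac{N-s_p(N)}{p-1},
\]
which is the claimed formula. There is no real obstacle here: the only subtle point is justifying the interchange of summations, which is trivially valid because only finitely many terms are nonzero (the digit expansion terminates). Since this is a classical result cited as Legendre's formula, the ``proposal'' is really just a reminder of the standard two-line derivation.
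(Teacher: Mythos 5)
Your derivation is correct: the counting identity $v_p(N!)=\sum_{i\ge1}\lfloor N/p^i\rfloor$, the digit-wise evaluation of each floor, and the geometric-sum telescoping all check out (the $j=0$ term contributes $a_0-a_0=0$, so extending the final sums to $j=0$ is harmless). The paper gives no proof of this lemma at all --- it is stated as a classical result with a citation to Legendre --- so there is nothing to compare against; your argument is the standard textbook derivation and is complete.
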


Similarly, there are essentially two formulae for the $p$-adic
valuation of a binomial coefficient, one in terms of the number of
carries when adding the involved numbers, the other in terms of digit sums; 
we shall need the former one. 

\begin{lemma}[{\sc Kummer's theorem \cite[pp.~115--116]{KummAA}}]
\label{lem:5}
Let $N$ and $K$ be positive integers and $p$ a prime number.
Then
$
v_p\left(\binom NK\right)
$
equals the number of carries when $K$ and $N-K$ are added in terms of
their $p$-adic representations.
\end{lemma}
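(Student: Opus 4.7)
The plan is to derive Kummer's theorem directly from Legendre's formula (Lemma~\ref{lem:4}), by bookkeeping the carries that occur in base-$p$ addition. Applying Legendre's formula to each of the three factorials in
\[
v_p\!\left(\binom{N}{K}\right)=v_p(N!)-v_p(K!)-v_p((N-K)!)
\]
yields the compact expression
\[
v_p\!\left(\binom{N}{K}\right)=\frac{s_p(K)+s_p(N-K)-s_p(N)}{p-1},
\]
so the whole theorem reduces to showing that $s_p(K)+s_p(N-K)-s_p(N)$ equals $(p-1)$ times the number of carries in the base-$p$ addition of $K$ and $N-K$.

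For the carry count, I would write $K$, $N-K$ and $N$ in base~$p$ with digits $k_i,m_i,n_i\in\{0,\dots,p-1\}$, and simulate schoolbook addition of $K+(N-K)=N$. At each position $i$, one has the local identity
\[
k_i+m_i+c_i=n_i+p\,c_{i+1},
\]
where $c_i\in\{0,1\}$ denotes the carry entering position~$i$, with the boundary condition $c_0=0$. Summing this identity over all $i\ge 0$ (only finitely many terms being nonzero) gives
\[
s_p(K)+s_p(N-K)+\sum_{i\ge 0}c_i=s_p(N)+p\sum_{i\ge 1}c_i,
\]
and, using $c_0=0$, this rearranges to $s_p(K)+s_p(N-K)-s_p(N)=(p-1)\sum_{i\ge 1}c_i$, which is exactly $(p-1)$ times the total number of carries. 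Substituting back into the Legendre-based formula above yields the claim.

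There is no substantial obstacle here; the argument is a clean manipulation of digit sums once the schoolbook recursion for the carries has been written down. The only points requiring a little care are the boundary condition $c_0=0$ and the finiteness of the base-$p$ expansions of $K$, $N-K$, and $N$, which together justify the termwise summation above.
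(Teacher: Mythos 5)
Your proof is correct. The paper does not actually prove this lemma --- it cites Kummer's original work --- but your derivation (Legendre's formula applied to the three factorials, reducing the claim to the identity $s_p(K)+s_p(N-K)-s_p(N)=(p-1)\cdot\#(\text{carries})$, which you then verify by summing the schoolbook carry recursion $k_i+m_i+c_i=n_i+p\,c_{i+1}$) is the standard argument and is precisely the relationship the paper itself records as Lemma~\ref{lem:4-5}, so your route is fully consistent with the framework the paper sets up.
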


The following relation forms the link between the previous two lemmas,
and it provides, at the same time, the previously mentioned alternative
in computing the $p$-adic valuation of a binomial coefficient.

\begin{lemma}
\label{lem:4-5}
Let $A$ and $B$ be positive integers and $p$ a prime number.
Then
\begin{multline*}
\frac {1} {p-1}\Big(s_p(A)+s_p(B)-s_p(A+B)\Big)\\
=\#(\text{\em carries when adding $A$ and $B$ in their $p$-adic representations}).
\end{multline*}
\end{lemma}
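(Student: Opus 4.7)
The plan is to give a direct carry-propagation argument. Write the $p$-adic expansions
$A=\sum_{i\ge0}a_ip^i$ and $B=\sum_{i\ge0}b_ip^i$ with $0\le a_i,b_i\le p-1$, and introduce the carries $c_i\in\{0,1\}$ produced by schoolbook addition in base~$p$: set $c_0=0$, and for $i\ge0$ let $c_{i+1}=1$ if $a_i+b_i+c_i\ge p$ and $c_{i+1}=0$ otherwise. Denote by $d_i$ the $i$-th digit of $A+B$, so that by definition of the algorithm
\[
d_i=a_i+b_i+c_i-p\,c_{i+1}\qquad(i\ge0).
\]

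The key step is to sum this identity over~$i$. Doing so gives
\[
s_p(A+B)=\sum_{i\ge0}d_i=s_p(A)+s_p(B)+\sum_{i\ge0}c_i-p\sum_{i\ge0}c_{i+1}.
\]
Since $c_0=0$, the two carry sums coincide and equal the total number $C$ of carries, so the right-hand side simplifies to $s_p(A)+s_p(B)-(p-1)C$. Rearranging yields the claimed identity.

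Alternatively, and perhaps more in keeping with the remark that Lemma~\ref{lem:4-5} \emph{links} the two preceding lemmas, one can deduce it directly by combining them: apply Legendre's formula (Lemma~\ref{lem:4}) to the three factorials in
\[
v_p\!\left(\binom{A+B}{A}\right)=v_p\bigl((A+B)!\bigr)-v_p(A!)-v_p(B!),
\]
which telescopes to $\tfrac{1}{p-1}\bigl(s_p(A)+s_p(B)-s_p(A+B)\bigr)$, and then invoke Kummer's theorem (Lemma~\ref{lem:5}) to identify this $p$-adic valuation with the number of carries.

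There is no real obstacle in either approach; the only subtlety worth flagging is the bookkeeping with the initial carry $c_0=0$ so that the two sums $\sum c_i$ and $\sum c_{i+1}$ genuinely coincide. I would present the first (direct) proof, since it makes the claim transparent and does not rely on either of the preceding, more substantial lemmas.
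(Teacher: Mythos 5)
Your proof is correct. The paper itself gives no proof of this lemma at all: it is stated as a classical fact that ``forms the link'' between Legendre's formula and Kummer's theorem, so there is nothing to compare against. Both of your arguments are sound. The direct carry-propagation computation is clean: the digit recursion $d_i=a_i+b_i+c_i-p\,c_{i+1}$ with $c_0=0$ sums correctly to $s_p(A+B)=s_p(A)+s_p(B)-(p-1)C$, and your flag about the shift between $\sum_i c_i$ and $\sum_i c_{i+1}$ is exactly the right point to be careful about. The second route via $v_p\bigl(\binom{A+B}{A}\bigr)$ is also valid, but it is arguably circular in spirit for this paper, since Lemma~\ref{lem:4-5} is presented there precisely as the bridge that makes Legendre and Kummer equivalent; the usual textbook proof of Kummer's theorem itself goes through the digit-sum identity you prove directly. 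So your instinct to present the first, self-contained argument is the right one.
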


\section{The sequence $(u(n))_{n\ge0}$ modulo odd prime powers}
\label{sec:2}

In this section, we analyse the numbers $u(n)$ modulo odd prime
powers~$p^e$. The first result is Theorem~\ref{thm:1} which states that,
for primes $p$ congruent to~3 modulo~4 and all integers $e\ge2$,
the number $u(n)$ vanishes modulo~$p^e$ for $n\ge\fl{(e-1)p^2/2}$.
The proof of the theorem is inductive, the
start of the induction being given in Proposition~\ref{prop:u-p^2},
which is itself based on auxiliary results in Lemma~\ref{lem:u-klein}.
In case the prime $p$ should be congruent to~$1$
modulo~4, however, a much stronger result holds; see
Theorem~\ref{thm:1A}. The proof of that theorem requires two
auxiliary results which we state and prove separately; cf.\ 
Lemmas~\ref{lem:u1} and~\ref{lem:u2}.

\medskip
We begin by providing lower bounds on
the  $p$-adic valuations of the products $\Pi_1(N)$ and $\Pi_3(N)$
defined in \eqref{eq:Pi13} that will be ubiquitously used in this and
the next section.

\begin{lemma} \label{lem:Pi13}
For all odd primes $p$ and non-negative integers $N,$ we have
\begin{equation} \label{eq:vp-prod} 
v_p\big(\Pi_1(N)\big)\ge 2\fl{\tfrac {N} {p}}
\quad \text{and}\quad 
v_p\big(\Pi_3(N)\big)\ge 2\fl{\tfrac {N} {p}}.
\end{equation}
If $p\equiv1$~{\em(mod $4$),}  then we even have 
\begin{equation} \label{eq:4j-3+} 
v_p\big(\Pi_3(N)\big)\ge 2\fl{\tfrac {N+\frac {3} {4}(p-1)} {p}}.
\end{equation}
\end{lemma}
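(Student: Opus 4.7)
The plan is to count, for each odd prime $p$, the number of indices $j\in\{1,\dots,N\}$ for which $p$ divides $4j-1$ (respectively $4j-3$), and to observe that each such index contributes at least~$2$ to the corresponding $p$-adic valuation, owing to the squares in the definitions of $\Pi_1(N)$ and $\Pi_3(N)$. The remaining factors contribute non-negatively, so the desired lower bounds will follow immediately.

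For the two inequalities in \eqref{eq:vp-prod}, I would argue as follows. Since $p$ is odd, $\gcd(4,p)=1$, so each of the congruences $4j\equiv1\pmod p$ and $4j\equiv3\pmod p$ has a unique solution modulo~$p$. Choose the representative $j_0$ (respectively $j_1$) lying in $\{1,\dots,p\}$. Among any $p$ consecutive positive integers, exactly one is congruent to $j_0$ modulo~$p$, so
\[
\#\bigl\{j:\,1\le j\le N,\ p\mid 4j-1\bigr\}\ \ge\ \fl{\tfrac{N}{p}},
\]
and identically for $4j-3$. Multiplication by $2$ (from the squared factors) yields both inequalities of~\eqref{eq:vp-prod}.

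For the refined bound \eqref{eq:4j-3+} when $p\equiv1\pmod 4$, the point is to pin down~$j_1$ precisely and to convert its position into the asserted floor. Writing $p=4k+1$, a direct computation gives $4(3k+1)=3p+1$, so $4^{-1}\equiv 3k+1\pmod p$, and hence $j_1\equiv 3(3k+1)\equiv k+1=(p+3)/4\pmod p$. Thus the unique representative in $\{1,\dots,p\}$ is $j_1=(p+3)/4$. A short count then gives
\[
\#\bigl\{j:\,1\le j\le N,\ j\equiv (p+3)/4\pmod p\bigr\}\ =\ \fl{\tfrac{N+3(p-1)/4}{p}},
\]
which multiplied by~$2$ delivers~\eqref{eq:4j-3+}.

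I do not expect a real obstacle here; this is a bookkeeping lemma. The only point that needs a tiny bit of care is the range $N<(p+3)/4$, where the counted set is empty: I would verify separately that $\fl{(N+3(p-1)/4)/p}=0$ in that range, which holds because $(p+3)/4-1+3(p-1)/4=p-1<p$. Once this case distinction is dispatched, the identification of the count with the claimed floor is a one-line manipulation valid for all $N\ge 0$, and the lemma is complete.
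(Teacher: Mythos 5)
Your proof is correct: the identification of the residue class $j\equiv 4^{-1}\cdot 3\equiv(p+3)/4\pmod p$ for $p\equiv1\pmod 4$, the conversion of the count into the floor $\fl{(N+\frac34(p-1))/p}$, and the factor of $2$ coming from the squares are all verified, including the edge case of small $N$. The paper states this lemma without proof (treating it as routine bookkeeping), and your argument is precisely the standard verification one would supply, so there is nothing to contrast it with.
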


Next we state and prove the announced auxiliary results,
which afterwards lead to Proposition~\ref{prop:u-p^2}.

\begin{lemma} \label{lem:u-klein}
Let $p$ be an odd prime number,
and let $(u(n))_{n\ge0}$ be defined by the recurrence \eqref{eq:Rek1}. 
Then we have
\begin{equation} \label{eq:u-klein1} 
u(ap+b)\equiv0~(\text{\em mod }p),\quad  \text{for }1\le a\le
\tfrac {p-1} {2}\text{ and\/ }0\le b\le a-1,
\end{equation}
and
\begin{equation} \label{eq:u-klein2} 
u\left(ap+\tfrac {p-1} {2}+b\right)\equiv0~(\text{\em mod }p),\quad 
\text{for }1\le a\le
\tfrac {p-1} {2}\text{ and\/ }0\le b\le a.
\end{equation}
Moreover, if $p\equiv3$~{\em(mod~$4$)},
the second congruence also holds for $a=0$, that is,\break
$u\left(\frac {p-1} {2}\right)\equiv0$~{\em(mod~$p$)} for primes~$p$
with $p\equiv3$~{\em(mod~$4$)}.
\end{lemma}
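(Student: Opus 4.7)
The plan is to prove the lemma by strong induction on $n$, ordering the cases so that the special assertion $u((p-1)/2) \equiv 0 \pmod p$ for $p \equiv 3 \pmod 4$ is disposed of first, as it will be invoked by the inductive arguments for the two main clauses. For this base case, the recurrence~\eqref{eq:Rek1-Pi} gives $2n+1 = p$, so every binomial $\binom{p}{2m+1}$ with $0 \le m \le (p-3)/2$ is divisible by $p$, and $\Pi_1((p-1)/2)$ contains the squared factor $(4j-1)^2$ at $j = (p+1)/4$, which (since $p \ge 3$ forces $(p+1)/4 \le (p-1)/2$) contributes an outright factor $p^2$. Hence $u((p-1)/2) \equiv 0 \pmod p$.

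For the inductive step, I would fix $n$ in one of the families~\eqref{eq:u-klein1} or \eqref{eq:u-klein2} and assume the result for all smaller $n'$ in the claimed ranges. Since $n \ge p$, the product $\Pi_1(n)$ contains the squared factor $(4j-1)^2$ at $j = (p+1)/4$ (when $p \equiv 3 \pmod 4$) or $j = (3p+1)/4$ (when $p \equiv 1 \pmod 4$), both at most $p$, so $\Pi_1(n) \equiv 0 \pmod{p^2}$. It remains to check that each summand $\binom{2n+1}{2m+1} \Pi_3(n-m) u(m)$ vanishes modulo $p$. I would partition the indices $m$ into those lying within the scope of the induction hypothesis, for which $u(m) \equiv 0 \pmod p$ closes the argument trivially, and the remaining \emph{bad} indices, for which the task is to establish $p \mid \binom{2n+1}{2m+1} \Pi_3(n-m)$.

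For the bad indices I would invoke Kummer's theorem (Lemma~\ref{lem:5}): $\binom{2n+1}{2m+1}$ fails to be divisible by $p$ precisely when $2m+1$ and $2(n-m)$ add in base $p$ without any carry. In the first family $n = ap+b$ the base-$p$ digits of $2n+1$ are $(2a,\,2b+1)$, and writing $m = \mu p + \nu$ I would case-split on whether $\nu \in \{0,\dots,(p-3)/2\}$, $\nu = (p-1)/2$, or $\nu \in \{(p+1)/2,\dots,p-1\}$, since this regime determines the digits of $2m+1$. A direct enumeration in each regime shows that any Kummer-surviving bad $m$ satisfies $\lfloor m/p \rfloor \le a-1$, whence $n - m \ge p$, and hence $v_p(\Pi_3(n-m)) \ge 2$ by Lemma~\ref{lem:Pi13}. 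The second family $n = ap + (p-1)/2 + b$ is treated identically, with digit pattern $(2a+1,\,2b)$. The main technical burden will be precisely this combinatorial book-keeping, and two points warrant particular care: first, when $p \equiv 1 \pmod 4$ and $m = (p-1)/2$ only $n - m \ge (p+1)/2 < p$ is available, and here the refined bound~\eqref{eq:4j-3+} of Lemma~\ref{lem:Pi13} (valid exactly when $p \equiv 1 \pmod 4$) is essential in order to still conclude $v_p(\Pi_3(n-m)) \ge 2$; second, the boundary case $a = (p-1)/2$ of the second family must be handled separately, since then $2n+1 = p^2 + 2b$ overflows into a third base-$p$ digit $(1,0,2b)$, after which one verifies that the Kummer-surviving indices split into $\{0,\dots,b-1\}$ and $(p^2-1)/2 + \{0,\dots,b-1\}$, the latter already lying inside the second-family induction scope.
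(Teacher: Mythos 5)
Your proposal follows essentially the same route as the paper: induction on $n$ through the recurrence \eqref{eq:Rek1-Pi}, with the special value $u\big(\tfrac{p-1}{2}\big)$ for $p\equiv3$~(mod~$4$) settled first via the factor $p^2$ coming from $j=\tfrac{p+1}{4}$ in $\Pi_1\big(\tfrac{p-1}{2}\big)$, Kummer's theorem (Lemma~\ref{lem:5}) to dispose of most binomial coefficients, and a dichotomy for the surviving indices $m$ between those covered by the induction hypotheses \eqref{eq:u-klein1}/\eqref{eq:u-klein2} and those for which $\Pi_3(n-m)$ supplies the factor of~$p$. In fact your enumeration of the surviving indices is \emph{more} complete than the paper's own write-up: the indices with $2m+1\equiv0$~(mod~$p$) (your regime $\nu=\tfrac{p-1}{2}$), in particular $m=\tfrac{p-1}{2}$ itself for $p\equiv1$~(mod~$4$) where the refined bound \eqref{eq:4j-3+} really is indispensable, and the second batch $\tfrac{p^2-1}{2}+\{0,\dots,b-1\}$ in the overflow case $a=\tfrac{p-1}{2}$, are genuine Kummer-surviving cases that the paper's stated exceptional set ($2m=cp+d$ with $0\le c\le 2a$ and $0\le d\le 2b$) does not capture, so flagging them is not pedantry. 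One caveat: the inference ``$\fl{m/p}\le a-1$, whence $n-m\ge p$'' is not valid as written --- with $n=ap+b$ and $b\le a-1$, the bound $\fl{m/p}\le a-1$ alone only gives $n-m\ge b+1$; what the Kummer constraint combined with badness actually yields (in the even-digit regime, say) is that \emph{both} base-$p$ digits of $m$ are at most $b$, whence $n-m\ge(a-b)p\ge p$. Since you explicitly defer this to the ``combinatorial book-keeping'' and correctly identify the one genuine exception where $n-m<p$, I read this as a presentational slip rather than a gap, but the book-keeping must be done with the full digit information, not just the leading digit of~$m$.
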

\begin{proof}
We prove the assertions by induction on the size of $ap+b$
and of $ap+\frac {p-1} {2}+b$, simultaneously. The induction
is based on the recurrence~\eqref{eq:Rek1}.

\medskip
Now let first $n=ap+b$ with $a$ and $b$ satisfying the conditions
in~\eqref{eq:u-klein1}. We want to prove that $u(n)\equiv0$~(mod~$p$).

It is clear that the first term on the right-hand side of~\eqref{eq:Rek1},
the product $\Pi_1(n)=\Pi_1(ap+b)$, is
divisible by~$p$ because of~\eqref{eq:vp-prod}.

From Lemma~\ref{lem:5} we infer that
the binomial coefficient $\binom {2n+1}{2m+1}
=\binom {2ap+2b+1}{2m+1}$ is divisible by~$p$, except when
$2m=cp+d$ with both $0\le c\le 2a$ and $0\le d\le 2b$.
It is important to note that at this point the upper bounds of
$\frac {p-1} {2}$ for~$a$ and~$b$ enter crucially.

If $c=0$ (and hence $d$ even since $2m=cp+d=d$), then
$\Pi_3(n-m)=\Pi_3\left(ap+b-\frac {d} {2}\right)$ is divisible by~$p$
according to~\eqref{eq:vp-prod}.
From now on we may assume that $c\ge1$.

We distinguish whether $c$ and~$d$ are both even or both odd.
(There are no other cases since $2m=cp+d$ is even.)

If both $c$ and~$d$ are even, then, provided $c>d$, we may use the
induction hypothesis~\eqref{eq:u-klein1} to infer that
$u(m)=u\left(\frac {c} {2}p+\frac 
{d} {2}\right)$ is divisible by~$p$. On the other hand, if $c\le d$,
then we have
$$
n-m=\left(a-\tfrac {c} {2}\right)p+b-\tfrac {d} {2}.
$$
Since, by assumption, we have $a>b$, we have
$$
a-\tfrac {c} {2}>b-\tfrac {d} {2}\ge0.
$$
Consequently, again using~\eqref{eq:vp-prod}, we
obtain that $\Pi_3(n-m)$ is divisible by~$p$.

Now let both $c$ and~$d$ be odd. Here we may rewrite
$m=\frac {c} {2}p+\frac {d} {2}$ as
$$m=\tfrac {c-1} {2}p+\tfrac {p-1} {2}+\tfrac {d+1} {2}.$$
Furthermore, we
may write
$$
n-m=\left(a-\tfrac {c+1} {2}\right)p
+\tfrac {p-1} {2}+b-\tfrac {d-1} {2}.
$$
It should be noted that, if $c\le d$, we have $a-\tfrac {c+1} {2}>
b-\tfrac {d+1} {2}\ge0.$
As a consequence,
by~\eqref{eq:vp-prod},
we have $\Pi_3(n-m)\equiv0$~(mod~$p$).
If, on the other hand, we have $c> d$, then
$u(m)\equiv0$~(mod~$p$), again
by the induction hypothesis~\eqref{eq:u-klein2}.

\medskip
Next we discuss the case where
$n=ap+\frac {p-1} {2}+b$ with $a$ and $b$ satisfying the conditions
in~\eqref{eq:u-klein2}. We want to prove that $u(n)\equiv0$~(mod~$p$).

Again it is clear that the first term on the right-hand side
of~\eqref{eq:Rek1},
the product $\Pi_1(n)=\Pi_1(ap+\frac {p-1} {2}+b)$, is
divisible by~$p$ because of~\eqref{eq:vp-prod}.

In the current case, the binomial coefficient on the right-hand side
of~\eqref{eq:Rek1} becomes $\binom {2n+1}{2m+1}
=\binom {(2a+1)p+2b}{2m+1}$. Here it must be observed that,
because of the upper bounds on~$a$ and~$b$ in~\eqref{eq:u-klein2}, we
have $2a+1\le p$ and $2b\le p-1$.
If $a=\frac {p-1} {2}$, so that $2a+1=p$, then by Lemma~\ref{lem:5}
$\binom {2n+1}{2m+1}=\binom {p^2+2b}{2m+1}$ is divisible by~$p$, except
when $1\le 2m+1\le 2b$. In this exceptional case, we have
$\Pi_3(n-m)\equiv0$~(mod~$p$) by~\eqref{eq:vp-prod}.

We assume from now on that $a<\frac {p-1} {2}$.
From Lemma~\ref{lem:5} we infer that
the binomial coefficient $\binom {2n+1}{2m+1}
=\binom {(2a+1)p+2b}{2m+1}$ is divisible by~$p$, except when
$2m=cp+d$ with both $0\le c\le 2a+1$ and $0\le d\le 2b-1$.

We distinguish whether $c$ and~$d$ are both even or both odd.
(There are no other cases since $2m=cp+d$ is even.)

If both $c$ and~$d$ are even, then, provided $c>d$, we may use the
induction hypothesis~\eqref{eq:u-klein1} to infer that
$u(m)=u\left(\frac {c} {2}p+\frac 
{d} {2}\right)$ is divisible by~$p$. On the other hand, if $c\le d$,
then we have
$$
n-m=\left(a-\tfrac {c} {2}\right)p+\tfrac {p-1} {2}+b-\tfrac {d} {2}.
$$
Since, by assumption, we have $a\ge b$, we have
$$
a-\tfrac {c} {2}\ge b-\tfrac {d} {2}\ge1.
$$
Consequently, again using~\eqref{eq:vp-prod}, we
obtain that $\Pi_3(n-m)$ is divisible by~$p$.

Now let both $c$ and~$d$ be odd. We may again rewrite
$m=\frac {c} {2}p+\frac {d} {2}$ as
$$m=\tfrac {c-1} {2}p+\tfrac {p-1} {2}+\tfrac {d+1} {2}.$$
Furthermore, we
may write
$$
n-m=\left(a-\tfrac {c-1} {2}\right)p+b-\tfrac {d+1} {2}.
$$
Hence, according to the induction hypothesis~\eqref{eq:u-klein2},
we have $u(m)\equiv0$~(mod~$p$) if $c> d$.
Otherwise we have
$$
a-\tfrac {c-1} {2}>b-\tfrac {d+1} {2}\ge0.
$$
Hence, $\Pi_3(n-m)\equiv0$~(mod~$p$) due to~\eqref{eq:vp-prod}.

\medskip
Finally, the congruence for $u\left(\frac {p-1} {2}\right)$
in the case where $p\equiv3$~(mod~$4$) holds because, as is seen by
inspection, the first term on the right-hand side of~\eqref{eq:Rek1},
the product $\Pi_1(n)=\Pi_1(\frac {p-1} {2})$, is divisible
by~$p^2$ (here, the condition $p\equiv3$~(mod~$4$) enters crucially).
\end{proof}

\begin{proposition} \label{prop:u-p^2}
Let $(u(n))_{n\ge0}$ be defined by the recurrence \eqref{eq:Rek1}. 
Then, given a prime $p\equiv3$~{\em(mod~$4$)} and a positive integer~$e,$ 
the number $u(n)$ is divisible by $p^2$ for $n\ge \fl{\frac {p^2} {2}}$.
\end{proposition}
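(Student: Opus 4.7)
The plan is to argue by strong induction on $n \ge n_0 := \fl{p^2/2} = (p^2-1)/2$, using the recurrence \eqref{eq:Rek1-Pi}. We must show that both the leading term $\Pi_1(n)$ and each summand $\binom{2n+1}{2m+1}\Pi_3(n-m)u(m)$ have $p$-adic valuation at least $2$.

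For the leading term, observe that $p\equiv 3\pmod 4$ makes $j_0:=(p+1)/4$ a positive integer with $4j_0-1=p$, so the factor $(4j_0-1)^2=p^2$ appears in $\Pi_1(n)$ as soon as $n\ge j_0$, which is comfortably satisfied for $n\ge n_0$. Hence $v_p(\Pi_1(n))\ge 2$.

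For each summand I would split on the size of $m$. If $m\ge n_0$, the induction hypothesis directly gives $v_p(u(m))\ge 2$. For $m<n_0$, I would appeal to Lemma~\ref{lem:u-klein} (including its special assertion $u((p-1)/2)\equiv 0\pmod p$, which is valid precisely because $p\equiv 3\pmod 4$). That lemma yields $v_p(u(m))\ge 1$ for a large class of $m$'s; for those, I need to supply one additional factor of $p$ from the complementary product $\binom{2n+1}{2m+1}\Pi_3(n-m)$, while for the remaining ``good'' $m$'s (those for which Lemma~\ref{lem:u-klein} gives no information) I need to supply two factors of $p$ from that product. Both requirements are to be verified by a case analysis on the base-$p$ digits of $2m$ — writing $2m=cp+d$ with $0\le d<p$ and splitting on the parities of $c$ and $d$ — using Kummer's theorem (Lemma~\ref{lem:5}) to count carries in the binomial coefficient and the bound \eqref{eq:vp-prod} to count factors of $p$ coming from multiples of $p$ among the numbers $4k-3$ in $\Pi_3(n-m)$. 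The case structure parallels the one in the proof of Lemma~\ref{lem:u-klein}, with one additional layer of bookkeeping needed to guarantee the second factor of $p$.

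The main obstacle is the treatment of the ``good'' $m$'s, i.e.\ those for which $u(m)$ may be a $p$-adic unit. There, producing one carry via Kummer or one multiple of $p$ inside $\Pi_3(n-m)$ no longer suffices; two must be produced. The essential input is that $n\ge (p^2-1)/2$ forces $2n+1\ge p^2$, so that the base-$p$ expansion of $2n+1$ has at least two significant digits; this second digit is precisely what allows the carry count in Kummer's theorem, or the count of multiples of $p$ among the $(4k-3)$'s in $\Pi_3(n-m)$, to deliver an additional factor of $p$ for each residue class of $m$ modulo $p^2$. Organising this dichotomy between ``extra carry'' and ``extra multiple'' uniformly over all residue classes is where the technical work lies; once this is in place, summing the contributions of the recurrence gives $v_p(u(n))\ge 2$ and closes the induction.
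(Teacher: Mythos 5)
Your strategy is the paper's strategy: induction on $n$ based on \eqref{eq:Rek1-Pi}, with $\Pi_1(n)$ handled by \eqref{eq:vp-prod}, the factor $u(m)$ handled by the induction hypothesis for $m\ge\fl{p^2/2}$ and by Lemma~\ref{lem:u-klein} below that, and the remaining powers of $p$ extracted from $\binom{2n+1}{2m+1}$ via Kummer's theorem and from $\Pi_3(n-m)$ via \eqref{eq:vp-prod}. You also correctly locate the one place where $p\equiv3$~(mod~$4$) is indispensable, namely the assertion $u\big(\frac{p-1}{2}\big)\equiv0$~(mod~$p$). So the approach is sound and would close.

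However, as written the proposal stops exactly where the work begins: the ``case analysis on the base-$p$ digits of $2m$'' that is supposed to produce the second factor of $p$ for the good $m$'s is announced but not carried out, and this analysis is the entire content of the paper's proof. Two organising devices are needed to make it go through, and neither appears in your sketch. First, one reduces to $n\le p^2$ (for $n>p^2$ either $m$ or $n-m$ is at least $\frac{p^2+1}{2}$, so $u(m)$ or $\Pi_3(n-m)$ already contributes $p^2$ on its own), after which $2n=p^2+ap+b$ with $0\le a,b\le p-1$ of opposite parity, and the digit bookkeeping becomes finite. Second, the summands are classified not by which $m$'s are ``good'' for Lemma~\ref{lem:u-klein} but by the number of carries (two, one, or zero) in the addition $(2m+1)+(2n-2m)=2n+1$: two carries finishes immediately by Lemma~\ref{lem:5}; zero carries forces one of $2m+1$, $2(n-m)$ to exceed $p^2$, so the induction hypothesis or \eqref{eq:vp-prod} gives $p^2$; and only the one-carry case requires pairing the single $p$ from the binomial coefficient with a single $p$ from $u(m)$ or $\Pi_3(n-m)$, obtained by the even/odd split on the digits $c,d$ of $2m=cp+d$ exactly as in Lemma~\ref{lem:u-klein} (using $d\le b$, which holds since $d>b$ would force a second carry). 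The base case $2n+1=p^2$ also needs its own treatment, since there the one-carry situation occurs precisely when $p\mid 2m+1$ or $p\mid 2(n-m)$, and it is here that $u\big(\frac{p-1}{2}\big)\equiv0$~(mod~$p$) is invoked. Until these steps are actually executed, the argument is an outline rather than a proof.
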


\begin{proof}
Let $n\ge \fl{\frac {p^2} {2}}=\frac {p^2-1} {2}$.
As in the proof of Lemma~\ref{lem:u-klein}, we use again induction on~$n$.
Also here, the induction will be based on~\eqref{eq:Rek1}, and it
proceeds by showing that
each summand on the right-hand side is divisible by~$p^2$.

For the start of the induction, we consider $n=\frac {p^2-1} {2}$,
so that $2n+1=p^2$. In that case, there are two carries when
adding $2m+1$ and $2(n-m)$ for $0\le m\le n-1$, except when
both of $2m+1$ or $2(n-m)$ are divisible by~$p$. In the former case,
Lemma~\ref{lem:5} implies that
the binomial coefficient $\binom {2n+1}{2m+1}$ is divisible by~$p^2$.
In the latter case, there is only one carry and so the binomial
coefficient $\binom {2n+1}{2m+1}$ is only divisible by~$p$.
In its turn, we see that $u(m)$ is divisible by~$p$
due to~\eqref{eq:u-klein2} with $b=0$ and the last assertion in
Lemma~\ref{lem:u-klein}.
In either case,
each summand on the right-hand side
of~\eqref{eq:Rek1} is divisible by~$p^2$.
Furthermore, it is again clear that the first term on the right-hand side,
the product $\Pi_1(n)$, is
divisible by~$p$ for all $n\ge\fl{\frac {p^2} {2}}$
because of~\eqref{eq:vp-prod}.

We will assume $n\ge \frac {p^2+1} {2}$ from now on.

\medskip
We may restrict our attention to $n\le p^2$ because otherwise
either $m\ge \frac {p^2+1} {2}$ or
$n-m\ge \frac {p^2+1} {2}$; whence, either
$u(m)\equiv0$~(mod~$p^2$)
or $\Pi_3(n-m)\equiv0$~(mod~$p^2$). Consequently, in this case
each term on the
right-hand side of~\eqref{eq:Rek1} would be divisible by~$p^2$.

To summarise the discussion so far: we may write $2n$ as
$2n=p^2+ap+b$ for some $a$ and $b$ not of the same parity
and with $0\le a,b\le p-1$.

\medskip
If there are two carries when adding $2m+1$ and $2(n-m)$, then
by Lemma~\ref{lem:5}
the binomial coefficient $\binom {2n+1}{2m+1}$ is divisible by~$p^2$.
Hence the corresponding summand on the right-hand side
of~\eqref{eq:Rek1} is divisible by~$p^2$.

\medskip
Now let us assume that there is only one carry when adding $2m+1$ and
$2(n-m)$. As earlier, if either $m$ or~$n-m$ are larger
than~$\frac {p^2} {2}$, then $u(m)$ or $\Pi_3(n-m)$ are divisible
by~$p^2$, and thus as well the corresponding summand on the right-hand
side of~\eqref{eq:Rek1}. We may therefore assume without loss of
generality that $2m=cp+d$ with $c\le \frac {p-1} {2}$. We may
furthermore assume that $d\le b$ since, otherwise, there would be
two carries when adding $2m+1$ and $2(n-m)$.

We distinguish again whether $c$ and~$d$ are both even or both odd.

Let first $c$ and $d$ be even. We write $m=\frac {c} {2}p+\frac {d} {2}$.
If $c>d$, then by~\eqref{eq:u-klein1},
we infer $u(m)\equiv0$~(mod~$p$). If $c\le d$, then we may write
$$
n-m=\left(\tfrac {p+a-c} {2}\right)p+\tfrac {b-d} {2}
=\left(\tfrac {p+a-c-1} {2}\right)p+\tfrac {p-1} {2}+\tfrac {b-d+1} {2}
$$
and, depending on the parities of $a$ and~$b$, use the alternative
which has an integer coefficient in front of~$p$.
By our assumptions, we have
$$
\tfrac {p+a-c} {2}> \tfrac {b+a-c} {2}
\ge \tfrac {b-c} {2}
\ge \tfrac {b-d} {2}\ge0.
$$
If $a=0$ then $b$ must be odd, implying that the last inequality is
strict so that $\tfrac {p+a-c-1} {2}=\tfrac {p-c-1} {2}$ is also positive.
By \eqref{eq:vp-prod}, regardless whether $a=0$ or not, this implies that
$\Pi_3(n-m)\equiv0$~(mod~$p$).
In total, in both cases this shows that the
corresponding summand in~\eqref{eq:Rek1} is divisible by~$p^2$.

Now let $c$ and $d$ be odd.
Here we write $m=\frac {c-1} {2}p+\frac {p-1} {2}+\frac {d+1} {2}$.
If $c>d$, then by~\eqref{eq:u-klein2},
we infer $u(m)\equiv0$~(mod~$p$). If $c\le d$, then we may again write
$$
n-m=\left(\tfrac {p+a-c} {2}\right)p+\tfrac {b-d} {2}
=\left(\tfrac {p+a-c-1} {2}\right)p+\tfrac {b-d+1} {2}.
$$
Arguing as before, we conclude that
$\Pi_3(n-m)\equiv0$~(mod~$p$).
This shows again that the
corresponding summand in~\eqref{eq:Rek1} is divisible by~$p^2$.

\medskip
If there is no carry when adding $2m+1$ and $2(n-m)$, then
necessarily one of $2m+1$ or $2(n-m)$ is at least $p^2+1$.
So, again, one of $u(m)$ or $\Pi_3(n-m)$ is divisible by~$p^2$
due to the induction hypothesis, respectively due to~\eqref{eq:vp-prod}.

\medskip
This completes the proof of the proposition.
\end{proof}

The following theorem proves Conjecture~18(1) in \cite{WakhAA} for $u(n)$.

\begin{theorem} \label{thm:1}
Let $(u(n))_{n\ge0}$ be defined by the recurrence \eqref{eq:Rek1}. 
Then, given a prime $p\equiv3$~{\em(mod~$4$)} and an integer~$e\ge2,$ 
the number $u(n)$ is divisible by $p^e$ for $n\ge \fl{\frac {(e-1)p^2} {2}}$.
\end{theorem}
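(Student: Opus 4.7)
The plan is to induct on $e$, with base case $e = 2$ supplied by Proposition~\ref{prop:u-p^2}. For the inductive step with $e \ge 3$, I assume that $v_p(u(m)) \ge e'$ holds for every $2 \le e' \le e - 1$ and every $m \ge T(e') := \lfloor (e'-1) p^2/2 \rfloor$, and I fix $n \ge T(e)$. Using the recurrence~\eqref{eq:Rek1-Pi}, it suffices to show that $\Pi_1(n)$ and every summand $\binom{2n+1}{2m+1} \Pi_3(n-m) u(m)$, with $0 \le m \le n-1$, is divisible by $p^e$.

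The term $\Pi_1(n)$ is handled directly: Lemma~\ref{lem:Pi13} gives $v_p(\Pi_1(n)) \ge 2 \lfloor n/p \rfloor$, which is $\ge e$ for $n \ge (e-1)p^2/2 - 1/2$ and $p \ge 3$ by direct estimation. For the summands I decompose $v_p$ as $\kappa(m) + v_p(\Pi_3(n-m)) + v_p(u(m))$, where $\kappa(m) = v_p\!\left(\binom{2n+1}{2m+1}\right)$ is the number of base-$p$ carries in $(2m+1) + 2(n-m) = 2n+1$ (Kummer's theorem, Lemma~\ref{lem:5}), and stratify $m \in [0,n-1]$ by the largest $e'$ with $m \ge T(e')$, setting $T(1) := 0$. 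In the ``bulk'' strata $T(e') \le m < T(e'+1)$ with $1 \le e' \le e-2$, the bound $n - m \ge T(e) - T(e'+1) \ge (e-e'-1) p^2/2$, combined with Lemma~\ref{lem:Pi13}, yields $v_p(\Pi_3(n-m)) \ge e - e'$ for $p \ge 3$; together with $v_p(u(m)) \ge e'$ (invoking Lemma~\ref{lem:u-klein} in the stratum $e' = 1$), this gives a total of at least $e$.

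The genuine obstacle is the boundary stratum $m \ge T(e-1)$, where only $v_p(u(m)) \ge e - 1$ is guaranteed and $n - m$ can be as small as $1$, so $\kappa(m)$ and $v_p(\Pi_3(n-m))$ may both collapse to zero. I plan to attack this sub-case in the spirit of the proof of Proposition~\ref{prop:u-p^2}: if $n - m \ge p$ (or, more generally, if $n - m$ reaches the smallest positive integer $j_0'$ with $p \mid 4j_0' - 3$), then $\Pi_3(n-m)$ already contributes $\ge 2$; otherwise $2(n-m) < 2p$ perturbs only the two lowest base-$p$ digits of $2m+1$, so the carry-free scenario forces $2m+1$ and $2n+1$ to agree at every base-$p$ digit of weight $\ge p^2$. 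Since $n \ge T(e)$ makes those higher digits of $2n+1$ nontrivial, $m$ is pinned to one of the residue classes treated by Lemma~\ref{lem:u-klein}, whose second clause, crucial for $p \equiv 3 \pmod 4$, handles $m = (p-1)/2$ and provides the extra factor of $p$ needed to upgrade $v_p(u(m))$ from $e - 1$ to $e$. The main technical labour therefore lies in the detailed case analysis for this boundary stratum, directly extending the case tree already worked out in the proof of Proposition~\ref{prop:u-p^2}.
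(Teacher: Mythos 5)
Your overall strategy --- induction on $e$ anchored at Proposition~\ref{prop:u-p^2}, a term-by-term $p$-adic analysis of the recurrence \eqref{eq:Rek1-Pi}, and a stratification of $m$ by the thresholds $T(f)=\fl{(f-1)p^2/2}$ --- is exactly the paper's, and your treatment of $\Pi_1(n)$ and of the strata with $2\le e'\le e-2$ is sound. Two steps fail as written, however. First, in the stratum $e'=1$ (i.e., $0\le m<\fl{p^2/2}$) you invoke Lemma~\ref{lem:u-klein} to assert $v_p\big(u(m)\big)\ge1$. That lemma only covers $m$ of the special shapes $ap+b$ and $ap+\frac{p-1}{2}+b$ with restricted $a,b$; the blanket claim is false ($u(0)=1$, $u(1)=6$, \dots). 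The missing unit must come from elsewhere: either from a sharper use of \eqref{eq:vp-prod} with the exact bound $n-m\ge \fl{\frac{(e-1)p^2}{2}}-\fl{\frac{p^2}{2}}+1$, or, as the paper does, from a base-$p$ carry in $\binom{2n+1}{2m+1}$ --- when $2m+1<p^2$ while $2(n-m)$ and $2n+1$ have leading parts $(e-2)p^2$ and $(e-1)p^2$ respectively, Kummer's theorem (Lemma~\ref{lem:5}) forces $v_p\big(\binom{2n+1}{2m+1}\big)\ge1$.

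Second, and more seriously, you set up only the outer induction on $e$, with no inner induction on $n$. For any $n>T(e)$ the sum in \eqref{eq:Rek1-Pi} contains terms with $T(e)\le m\le n-1$, for which your hypothesis yields only $v_p\big(u(m)\big)\ge e-1$ while $n-m$ can be as small as $1$ and the addition carry-free, so nothing closes the gap. Your proposed escape --- that the carry-free scenario ``pins $m$ to a residue class treated by Lemma~\ref{lem:u-klein}'', which then ``upgrades $v_p(u(m))$ from $e-1$ to $e$'' --- cannot work: Lemma~\ref{lem:u-klein} is a statement modulo the \emph{first} power of $p$ about $m$ smaller than roughly $p^2/2$, whereas every $m$ in your boundary stratum satisfies $m\ge\fl{(e-2)p^2/2}\ge\fl{p^2/2}$. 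What the carry-free scenario actually pins down is $m\ge T(e)$ itself, and the only way to conclude $v_p\big(u(m)\big)\ge e$ for such $m<n$ is a simultaneous (strong) inner induction on $n$ for fixed $e$ --- precisely the double-induction structure of the paper's proof. Adding that inner induction and replacing both appeals to Lemma~\ref{lem:u-klein} by the carry argument for the binomial coefficient would bring your argument in line with the paper's; Lemma~\ref{lem:u-klein} is genuinely needed only inside the base case, Proposition~\ref{prop:u-p^2}.
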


\begin{proof}
We proceed by a double induction on~$e$
and~$n$, the outer induction being on~$e$.
For the start of the induction, we use Proposition~\ref{prop:u-p^2}
which proves the assertion of the theorem for $e=2$.
From now on let $e\ge3$.

We assume that $\fl{\frac {(e-1)p^2} {2}}\le n<\fl{\frac {ep^2} {2}}$.
We claim that 
the first term on the right-hand side of~\eqref{eq:Rek1},
namely $\Pi_1(n)$, is always
divisible by~$p^e$ under this assumption. Indeed,
by \eqref{eq:vp-prod} we have
\begin{equation} \label{eq:vpPi1} 
v_p\left(\Pi_1(n)\right)
\ge
2\fl{\tfrac {n} {p}}
\ge
2\fl{\tfrac {1} {p}\fl{\tfrac {(e-1)p^2} {2}}}
\ge
2\fl{\tfrac {(e-1)p} {2}-\tfrac {1} {2p}}
\ge
2\left(\tfrac {3(e-1)} {2}-1\right)
\ge e
\end{equation} 
for $e\ge3$. The conclusion $v_p\left(\Pi_1(n)\right)\ge e$ holds
however as well for $e=2$ as shown by a modification of the final
part of the estimation:
\begin{equation} \label{eq:vpPi2} 
v_p\left(\Pi_1(n)\right)
\ge
2\fl{\tfrac {n} {p}}
\ge\dots\ge
2\fl{\tfrac {p} {2}-\tfrac {1} {2p}}
=2\left(\tfrac {p} {2}-\tfrac {1} {2}\right)
\ge 2.
\end{equation}

Next we consider the summand on the right-hand side
of~\eqref{eq:Rek1} for $\fl{\frac {(f-1)p^2} {2}}\le m<\fl{\frac {fp^2}
  {2}}$ with $2\le f\le e-2$.
By the induction hypothesis applied to $u(m)$, we have $v_p(u(m))\ge f$.
Furthermore we have
$$
n-m> \fl{\tfrac {(e-1)p^2} {2}}-\fl{\tfrac {fp^2} {2}}
\ge \fl{\tfrac {(e-f-1)p^2} {2}}.
$$
Therefore, by replacing $n$ by $n-m$ and $e$ by $e-f$
in~\eqref{eq:vpPi1} and~\eqref{eq:vpPi2}, we get
\begin{equation} \label{eq:Pi_1>} 
v_p\left(\Pi_3(n-m)\right)
\ge e-f
\end{equation}
since $e-f\ge 2$.
We infer that $v_p\big(u(m)\Pi_3(n-m)\big)\ge f+(e-f)=e$, and hence the
corresponding summand in \eqref{eq:Rek1} is divisible by~$p^e$.

Let now $0\le m< \fl{\tfrac {p^2} {2}}$.
In that case, the previous argument yields that
$n-m> \fl{\tfrac {(e-2)p^2} {2}}$, with the consequence that we only
have $v_p(\Pi_3(n-m))\ge e-1$.
It may be that actually $n-m\ge \fl{\tfrac {(e-1)p^2} {2}}$.
Then the estimation~\eqref{eq:Pi_1>} implies that
$v_p(\Pi_3(n-m))\ge e$, so that 
the corresponding summand in \eqref{eq:Rek1} is divisible by~$p^e$.
On the other hand, if
$\fl{\tfrac {(e-2)p^2} {2}}\le n-m< \fl{\tfrac {(e-1)p^2} {2}}$,
then we may write
$2n=(e-1)p^2+ap+b$ and $2(n-m)=(e-2)p^2+cp+d$ for some $a,b,c,d$
with $0\le a,b,c,d\le p-1$. Since $2m+1<p^2$,
there is (at least) one carry
when adding $2(n-m)$ and $2m+1$. Therefore, by Lemma~\ref{lem:5},
the binomial coefficient
$\binom {2n+1}{2m+1}$ is divisible by~$p$. 
Together with
the previously observed fact that $v_p(\Pi_3(n-m))\ge e-1$ this shows that the
corresponding summand in \eqref{eq:Rek1} is divisible by~$p^e$.

Finally, let
$\fl{\tfrac {(e-2)p^2} {2}}\le m< \fl{\tfrac {(e-1)p^2} {2}}$.
The induction hypothesis (in~$n$) implies that
in this case we have $v_p(u(m))\ge e-1$.
We write
$2n+1=(e-1)p^2+ap+b$ and $2m+1=(e-2)p^2+cp+d$ for some $a,b,c,d$
with $0\le a,b,c,d\le p-1$.
If $n-m\ge \fl{\frac {p^2} {2}}$, then by~\eqref{eq:vpPi2}
we obtain $v_p(\Pi_3(n-m))\ge1$.
If, on the other hand, we have $n-m< \fl{\frac {p^2} {2}}$, then there
is (at least) one carry when adding $2m+1$ and $2(n-m)$ in their
$p$-adic representations.
Therefore, by Lemma~\ref{lem:5}, the binomial coefficient
$\binom {2n+1}{2m+1}$ is divisible by~$p$. 
In either case, together with
the previously observed fact that $v_p(u(m))\ge e-1$ this shows that the
corresponding summand in \eqref{eq:Rek1} is divisible by~$p^e$.

\medskip
This concludes the induction step, and, thus, the proof of the theorem.
\end{proof}

\begin{remarknu} \label{rem:1}
An examination of the above arguments reveals that the products
$\Pi_1(n)$ and $\Pi_3(n)$ in the definition~\eqref{eq:Rek1-Pi}
could have been replaced with any functions $f(n)$ and $g(n)$ that
satisfy the $p$-divisibility properties in~\eqref{eq:vp-prod}.
\end{remarknu}

\begin{theorem} \label{thm:1A}
Let $(u(n))_{n\ge0}$ be defined by the recurrence \eqref{eq:Rek1}. 
Then, given a prime\linebreak $p\equiv1$~{\em (mod~$4$)} and a positive integer~$e,$ 
the number $u(n)$ is divisible by $p^e$ for $n\ge \cl{\frac {ep}2}$.
Moreover, the number $u\big(\frac {p-1} {2}\big)$ is a quadratic
residue modulo~$p$ and relatively prime to~$p$.
\end{theorem}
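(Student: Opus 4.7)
The plan is to prove the two assertions in Theorem~\ref{thm:1A} separately, beginning with the easier ``moreover'' statement and then tackling the main divisibility claim by induction.

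For the ``moreover'' assertion I would evaluate the recurrence~\eqref{eq:Rek1-Pi} at $n=(p-1)/2$: since $0\le m\le (p-3)/2$ forces $1\le 2m+1\le p-2$, every binomial coefficient $\binom{p}{2m+1}$ appearing in the sum is divisible by~$p$, so the entire sum vanishes modulo~$p$. Hence $u((p-1)/2)\equiv\Pi_1((p-1)/2)\pmod p$. For $p\equiv 1\pmod 4$, the smallest $j$ satisfying $p\mid 4j-1$ is $j=(3p+1)/4>(p-1)/2$, so $\Pi_1((p-1)/2)$ is coprime to~$p$; being manifestly a perfect square, it is a nonzero quadratic residue modulo~$p$.

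For the main divisibility claim I would proceed by induction on~$e$, with the case $e=1$ itself established by an inner induction on~$n$ starting from $n=(p+1)/2$. For this base value, $2n+1=p+2$ has base-$p$ digits $(1,2)$, so by Lucas' theorem the only indices~$m$ with $\binom{p+2}{2m+1}\not\equiv0\pmod p$ in the valid range $0\le m\le n-1$ are $m=0$ and $m=(p-1)/2$, contributing $2\Pi_3((p+1)/2)$ and $u((p-1)/2)\equiv\Pi_1((p-1)/2)\pmod p$, respectively. Combined with $\Pi_1((p+1)/2)\equiv(2p+1)^2\Pi_1((p-1)/2)\equiv\Pi_1((p-1)/2)\pmod p$, the $\Pi_1$-terms cancel, leaving
$$u((p+1)/2)\equiv -2\Pi_3((p+1)/2)\pmod p.$$
The structural key is now that $p\equiv 1\pmod 4$ forces $(p+3)/4$ to be an integer with $(p+3)/4\le(p+1)/2$, so $\Pi_3((p+1)/2)$ contains the factor $(4\cdot(p+3)/4-3)^2=p^2$ and is divisible by~$p^2$. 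Hence $u((p+1)/2)\equiv 0\pmod p$. For $n>(p+1)/2$ still with $e=1$, an analogous Lucas-type analysis, combined with the induction hypothesis (giving $u(m)\equiv 0\pmod p$ for $(p+1)/2\le m<n$) and the sharpened estimate~\eqref{eq:4j-3+}, handles the remaining cases.

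For the inductive step from $e-1$ to $e\ge2$, I would show that every summand on the right side of~\eqref{eq:Rek1-Pi} for $n\ge\lceil ep/2\rceil$ is divisible by~$p^e$, by combining three sources of $p$-adic divisibility: (i) the inductive hypothesis on $v_p(u(m))$; (ii) Kummer's theorem (Lemma~\ref{lem:5}) applied to $\binom{2n+1}{2m+1}$; and (iii) the sharper bound~\eqref{eq:4j-3+} for $\Pi_3(n-m)$. Since the raw estimate $v_p(\Pi_1(n))\ge 2\lfloor n/p\rfloor$ does not suffice to dispatch the leading term alone when $e$ is odd, the argument must invoke a cancellation between $\Pi_1(n)$ and the summand indexed by $m=(p-1)/2$ (where $2m+1=p$ produces only one carry), analogous to the base case. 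I anticipate that the main obstacle will be the boundary regime, namely $m$ just below a threshold $\lceil fp/2\rceil$ with $f<e$, where only one carry occurs in $\binom{2n+1}{2m+1}$ and the three divisibility sources barely suffice when combined; I expect the two auxiliary lemmas~\ref{lem:u1} and~\ref{lem:u2} to codify precisely the refined boundary estimates (plausibly of the form $u(n)\equiv c_n\,\Pi_1(\cdot)\pmod{p^{f+1}}$ on transitional ranges, together with the arithmetic identity driving the cancellation) required to push the induction through.
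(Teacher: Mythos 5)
Your treatment of the ``moreover'' assertion is exactly the paper's: put $n=\frac{p-1}{2}$ in \eqref{eq:Rek1-Pi}, note that every $\binom{p}{2m+1}$ occurring in the sum is divisible by~$p$, and observe that $\Pi_1\big(\frac{p-1}{2}\big)$ is a square coprime to~$p$. Your base-case computation at $n=\frac{p+1}{2}$ (filtering the binomial coefficients down to $m=0$ and $m=\frac{p-1}{2}$, cancelling the $\Pi_1$-terms, and using $p^2\mid\Pi_3\big(\frac{p+1}{2}\big)$ because $4j-3=p$ at the integer $j=\frac{p+3}{4}$) is also correct, and it is precisely the $e=1$ instance of the mechanism the paper uses; the overall double-induction skeleton likewise matches.

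The gap is in the inductive step for odd $e\ge3$. After the Kummer-theorem filtering, the summands of \eqref{eq:Rek1-Pi} that survive modulo $p^e$ in the critical range $\lceil ep/2\rceil\le n<\lceil (e+1)p/2\rceil$ are \emph{all} of those with $m=\frac{p-1}{2}+ip$ for $i=0,1,\dots,\frac{e-1}{2}$, and for each such $i$ one only gets $v_p\big(\Pi_3(n-\frac{p-1}{2}-ip)\,u(\frac{p-1}{2}+ip)\big)\ge(e-2i-1)+2i=e-1$, so none of these terms can be discarded individually. The cancellation is therefore not a two-term affair between $\Pi_1(n)$ and the single summand at $m=\frac{p-1}{2}$, as your proposal asserts (``analogous to the base case''); it involves $\frac{e+1}{2}$ surviving summands simultaneously. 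The paper resolves this by first reducing the recurrence at $n=\frac{ep-1}{2}$ to the surviving terms (Lemma~\ref{lem:u1}), then iterating that reduction into a closed-form alternating sum over strictly decreasing sequences $\frac{e-1}{2}=\alpha_0>\alpha_1>\dots>\alpha_k\ge0$ (Lemma~\ref{lem:u2}), substituting this expansion for each $u\big(\frac{p-1}{2}+ip\big)$ --- which is legitimate only because the accompanying $\Pi_3$-factor already carries valuation $e-2i-1$ --- and then observing that the resulting double sum telescopes: each term with $k\ge1$ starting at $\alpha_0=\frac{e-1}{2}$ cancels against the term for its truncated sequence, leaving the single term $k=0$, $\alpha_0=\frac{e-1}{2}$, which is congruent to $\Pi_1(n)$ modulo $p^e$ and kills the leading term. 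Your proposal defers exactly this content --- the statements and proofs of Lemmas~\ref{lem:u1} and~\ref{lem:u2} and the sign-based pairing --- to auxiliary results whose form you only conjecture, so the hardest part of the argument is not actually supplied.
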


\begin{proof}
We begin with the second assertion. 
Setting $n=\frac {p-1} {2}$ in~\eqref{eq:Rek1}/\eqref{eq:Rek1-Pi}, we obtain
$$
u\left(\tfrac {p-1} {2}\right)=
\Pi_1\left(\tfrac {p-1} {2}\right)-\sum_{m=0}^{(p-3)/2}\binom {p}{2m+1}
\Pi_3\left(\tfrac {p-1} {2}\right) u(m).
$$
Due to the range of the sum on the right-hand side, the binomial
coefficient is always divisible by~$p$. Therefore, we have
\begin{equation} \label{eq:u-Pi1} 
u\left(\tfrac {p-1} {2}\right)\equiv\Pi_1\left(\tfrac {p-1} {2}\right)
\pmod p.
\end{equation}
The assertion now follows by observing that $\Pi_1\left(\tfrac {p-1}
{2}\right)$ is a square not divisible by~$p$.

\medskip
For the proof of the first assertion,
we proceed again by a double induction on~$e$
and~$n$, the outer induction being on~$e$.
The theorem is trivial for $e=0$, which serves as the start
of the induction.

We divide the proof into subtasks. At many places, the argument
depends on the parity of~$e$ and/or on the distance of $n$ from its
lower bound~$\cl{\frac {ep} {2}}$.

\medskip
{\sc Task 1: The first term on the left-hand side of \eqref{eq:Rek1-Pi}
vanishes modulo $p^e$ for even~$e$}.
By \eqref{eq:vp-prod}, we have
$$
v_p\big(\Pi_1(n)\big)\ge 2\fl{\tfrac {n} {p}}
\ge 2\fl{\tfrac {\cl{\frac{ep}2}}{p}}\ge 2\fl{\tfrac {e} {2}}=e,
$$
as desired, since $e$ is even.

\medskip
{\sc Task 2: The assertion holds for $n\ge\cl{\frac {(e+1)p} {2}}$.}
Under this assumption we have
$$
v_p\big(\Pi_1(n)\big)\ge 2\fl{\tfrac {n} {p}}
\ge 2\fl{\tfrac {\cl{\frac{(e+1)p}2}}{p}}\ge 2\fl{\tfrac {e+1} {2}}\ge e,
$$
which shows that the first term on the right-hand side of
\eqref{eq:Rek1-Pi} vanishes modulo~$p^e$.

Next we consider the summand on the right-hand side of
\eqref{eq:Rek1-Pi}. For $m\ge\cl{\frac {ep} {2}}$, the term $u(m)$ is
divisible by $p^e$ by the inductive hypothesis (with respect to~$n$),
hence the  corresponding summand vanishes modulo~$p^e$.
Now let $\cl{\frac {(f-1)p} {2}}\le m<\cl{\frac {fp}
  {2}}$ with $1\le f\le e$.
Our conditions imply that 
\[
n-m\ge \cl{\frac{(e+1)p}2}-\cl{\frac{fp}2}+1\ge
\cl{\frac{(e+1-f)p}2}
.
\]
Therefore, by using \eqref{eq:4j-3+} we obtain
$$
v_p\left(\big(\Pi_3(n-m)\big) u(m)\right)
\ge 2\fl{\tfrac {\cl{\frac{(e+1-f)p}2}+\frac {3} {4}(p-1)} {p}}+f-1\ge( e-f+1)+f-1=e,
$$
which shows that also in this case the corresponding summand
vanishes modulo~$p^e$.

\medskip
From now on we assume that $n<\cl{\frac {(e+1)p} {2}}$ or, more precisely,
integrating the overall assumption, that $\cl{\frac {ep} {2}}\le n<\cl{\frac {(e+1)p} {2}}$.

\medskip
{\sc Task 3: $p$-adic analysis of the summand in \eqref{eq:Rek1-Pi} for
$\cl{\frac {ep} {2}}\le n<\cl{\frac {(e+1)p} {2}}$.}
As before, by the induction hypothesis of the induction on
$n$, the term $u(m)$ in the sum in \eqref{eq:Rek1-Pi} is divisible
by~$p^e$ for $m\ge\cl{\frac {ep} {2}}$, and thus is the corresponding summand.

Next we consider the summand on the right-hand side of
\eqref{eq:Rek1-Pi} for $\cl{\frac{(f-1)p}2}\le m<\cl{\frac{fp}2}$ with
$1\le f\le e$.
Our conditions imply $n-m\ge \cl{\frac{ep}2}-\cl{\frac{fp}2}+1\ge
\cl{\frac{(e-f)p}2}$.
Therefore, by using \eqref{eq:4j-3+} we obtain
$$
v_p\left(\big(\Pi_3(n-m)\big) u(m)\right)
\ge 2\fl{\tfrac {\cl{\frac{(e-f)p}2}+\frac {3} {4}(p-1)} {p}}+f-1\ge( e-f)+f-1=e-1.
$$

Our goal is to show that the
summand is divisible by~$p^e$. 
We claim that this is indeed the case as long as $e$ is even, or $e$
is odd and  $m\ne \frac {fp-1} {2}$ for odd~$f$.
We must therefore prove
that the binomial coefficient in \eqref{eq:Rek1-Pi} is divisible by~$p$
in these cases.

To see this, we consider the $p$-adic representations
of $2m+1$ and $2n-2m$. Since $m\ne \frac {fp-1} {2}$,
(this condition is essential for the first line below), these are
\begin{alignat}2
\notag
(2m+1)_p&={}&(f-1)_p\ *\\
\big(2n-2m\big)_p&={}&(e-f)_p\ *
\label{eq:e-f3}
\end{alignat}
Here, $(\al)_p$ denotes the $p$-adic representation of the
integer~$\al$, and the stars on the right-hand sides indicate 
the right-most digits of $(2m+1)_p$ and $(2n-2m)_p$ whose precise
values are irrelevant. The sum of $2m+1$ and $2n-2m$ is $2n+1$
whose $p$-adic representation has the form $(e)_p\ *$. 
Hence, when adding the two numbers on the right-hand sides
of~\eqref{eq:e-f3}, at least one carry must occur --- namely one from the
$p^0$-digit to the $p^1$-digit. By Kummer's theorem in Lemma~\ref{lem:5},
the consequence is that the binomial
coefficient $\binom {2n+1}{2m+1}$ is divisible by~$p$. Therefore
the corresponding summand is divisible by~$p^e$.

\medskip
In order to summarise our findings so far:

\begin{enumerate} 
\item 
By Task~2 the
assertion of the theorem holds for $n\ge\cl{\frac{(e+1)p}2}$. 
\item For even~$e$ and $\cl{\frac {ep} {2}}\le n<\cl{\frac{(e+1)p}2}$ we have shown that
all summands of the sum on the right-hand side of \eqref{eq:Rek1-Pi}
are divisible by $p^e$. Since in Task~1 we have proved
that also the first term on the right-hand side of \eqref{eq:Rek1-Pi} is
divisible by~$p^e$, this establishes the assertion of the theorem for
even~$e$.
\item For odd~$e$ and $\cl{\frac {ep} {2}}\le
n<\cl{\frac{(e+1)p}2}$ 
we have shown that all summands of the
sum on the right-hand side of \eqref{eq:Rek1-Pi} vanish modulo~$p^e$
except for those where $m$ is of the form $\frac {p-1} {2}+ip$ for
some non-negative integer~$i$.
\end{enumerate}

Hence, it remains to discuss the case where $e$ is odd and 
$\cl{\frac {ep} {2}}\le
n<\cl{\frac{(e+1)p}2}$.

\medskip
{\sc Task 4: $e$ is odd and 
$\cl{\frac {ep} {2}}\le
n<\cl{\frac{(e+1)p}2}$.}
By Item~(3) above, 
the relation \eqref{eq:Rek1-Pi} reduces modulo~$p^e$ to
\begin{equation} \label{eq:u10} 
u(n)\equiv\Pi_1(n)
-\sum_{i=0}^{(e-1)/2}\binom {2n+1}{(2i+1)p}\Pi_3\big(n-\tfrac {p-1} {2}-ip\big)
u\big(\tfrac {p-1} {2}+ip\big)
\pmod{p^e}.
\end{equation}

By the induction hypothesis, we know that 
\begin{equation} \label{eq:u-val}
v_p\left(  u\left(\tfrac {p-1} {2}+ip\right)\right)\ge 2i.
\end{equation}
Furthermore, by \eqref{eq:vp-prod} we have
\begin{equation} \label{eq:Pi3-val}
v_p\left(\Pi_3\big(n-\tfrac {p-1} {2}-ip\big)\right)
\ge 2\fl{\tfrac {1} {p}\left(n-\tfrac {p-1} {2}-ip\right)}
\ge 2\fl{\tfrac {1} {p}\left(\tfrac {(e-2i-1)p} {2}\right)}
\ge e-2i-1,
\end{equation}
since $e$ is odd. In particular, the inequalities \eqref{eq:u-val}
and \eqref{eq:Pi3-val} together imply that
$$
v_p\left(\Pi_3\big(n-\tfrac {p-1} {2}-ip\big)
u\big(\tfrac {p-1} {2}+ip\big)\right)\ge e-1.
$$
Consequently, we may consider the binomial coefficient in
\eqref{eq:u10} modulo~$p$.
Now we use the elementary congruence
$$
\binom {ap+r}{bp}\equiv\binom {ap}{bp}\pmod{p}
$$
for positive integers $a,b,r$ with $1\le r<p$.
Since the assumptions of the case in which we are imply
that 
$ep+2\le 2n+1\le (e+1)p-1$, we infer that 
$$
\binom {2n+1}{(2i+1)p}\equiv\binom {ep}{(2i+1)p}\pmod{p}.
$$
Thus, we see that the congruence \eqref{eq:u10} is equivalent with
\begin{equation} \label{eq:u12} 
u(n)\equiv\Pi_1(n)
-\sum_{i=0}^{(e-1)/2}\binom {ep}{(2i+1)p}\Pi_3\big(n-\tfrac {p-1} {2}-ip\big)
u\big(\tfrac {p-1} {2}+ip\big)
\pmod{p^e}.
\end{equation}

On the other hand, the inequality \eqref{eq:Pi3-val} also implies that
we may consider the term $u\big(\tfrac {p-1} {2}+ip\big)$ in \eqref{eq:u12}
modulo~$p^{2i+1}$. This allows us to use Lemma~\ref{lem:u2} with $e$
replaced by $2i+1$. The corresponding substitution in \eqref{eq:u12} gives
\begin{multline} \label{eq:u11} 
u(n)\equiv\Pi_1(n)
-\sum_{i=0}^{(e-1)/2}\binom {ep}{(2i+1)p}\Pi_3\big(n-\tfrac {p-1}
{2}-ip\big)\\
\cdot
  \sum_{k\ge0}\sum_{i=\al_0>\al_1>\dots>\al_k\ge0}
  (-1)^k\Pi_1\left(\tfrac {p-1} {2}+\al_kp\right)\\
  \cdot
\prod _{j=0} ^{k-1}\binom {(2\al_j+1)p}
      {(2\al_{j+1}+1)p}\Pi_3\big((\al_j-\al_{j+1})p\big) 
\pmod{p^e}.
\end{multline}
We have
\begin{equation} \label{eq:Aufsp} 
\Pi_3\big(n-\tfrac {p-1}
{2}-ip\big)=\Pi_3\big(\tfrac {ep-2i-1} {2}\big)
\prod _{j=\frac {(e-2i-1)p} {2}+1} ^{n-\frac {p-1} {2}-ip}(4j-3)^2.
\end{equation}
Moreover, by \eqref{eq:vp-prod}, and remembering that $i=\al_0$, we get
\begin{multline*}
  v_p\left(\Pi_3\big(\tfrac {(e-2i-1)p} {2}\big)
  \Pi_1\left(\tfrac {p-1} {2}+\al_kp\right)
\prod _{j=0} ^{k-1}\Pi_3\big((\al_j-\al_{j+1})p\big) 
\right)\\
\ge (e-2i-1)+2\al_k+\sum _{j=0} ^{k-1}2(\al_j-\al_{j+1})=e-1.
\end{multline*}
This allows us to consider the product on the right-hand side of
\eqref{eq:Aufsp} modulo~$p$, once that equation is substituted in
\eqref{eq:u11}. The result then is
\begin{multline} \label{eq:u13} 
u(n)\equiv\Pi_1(n)
-\bigg(\prod _{j=1} ^{n-\frac {ep-1} {2}}(4j-3)^2\bigg)
\sum_{i=0}^{(e-1)/2}\binom {ep}{(2i+1)p}\Pi_3\big(\tfrac {(e-2i-1)p} {2}\big)\\
\cdot
  \sum_{k\ge0}\sum_{i=\al_0>\al_1>\dots>\al_k\ge0}
  (-1)^k\Pi_1\left(\tfrac {p-1} {2}+\al_kp\right)\\
  \cdot
\prod _{j=0} ^{k-1}\binom {(2\al_j+1)p}
      {(2\al_{j+1}+1)p}\Pi_3\big((\al_j-\al_{j+1})p\big) 
\pmod{p^e}.
\end{multline}
Here we see that, in the sum, for $k\ge1$ the term for
$$
i=\tfrac {e-1} {2}=\al_0>\al_1>\dots>\al_k\ge0
$$
cancels with the term for
$$
\tfrac {e-1} {2}>\al'_0>\al'_1>\dots>\al'_{k-1}\ge0,
$$
where $\al'_j=\al_{j+1}$ for $j=0,1,\dots,k-1$.
After this cancellation, the only remaining term in the sum is the one
for $k=0$ and $\al_0=\frac {e-1} {2}$.
In this regard, we have
\begin{align*}
\Bigg(\prod _{j=1} ^{n-\frac {ep-1} {2}}(4j-3)^2\Bigg)
\Pi_1\left(\tfrac {p-1} {2}+\al_0p\right)
&\equiv
\Bigg(\prod _{j=1} ^{n-\frac {ep-1} {2}}(4j+2p+4\al_0p-3)^2\Bigg)
\Pi_1\left(\tfrac {p-1} {2}+\al_0p\right)\\
&\equiv
\Pi_1(n)\pmod{p^e}
\end{align*}
since, again using \eqref{eq:vp-prod},
$$
v_p\left(\Pi_1\left(\tfrac {p-1} {2}+\al_0p\right)\right)
=
v_p\left(\Pi_1\left(\tfrac {p-1} {2}+\tfrac {e-1} {2}p\right)\right)
\ge e-1.
$$
Consequently, this term cancels with the first term on the right-hand
side of \eqref{eq:u13}.

\medskip
This concludes the induction step, and hence the proof of the theorem is now complete.
\end{proof}

\begin{remarknu} \label{rem:2}
An examination of the above arguments (including the proofs of
Lemmas~\ref{lem:u1} and~\ref{lem:u2} which are used in the above
proof) reveals that the products
$\Pi_1(n)$ and $\Pi_3(n)$ in the definition~\eqref{eq:Rek1-Pi}
could have been replaced with any functions $f(n)$ and $g(n)$ that
satisfy the $p$-divisibility properties in~\eqref{eq:vp-prod}
and~\eqref{eq:4j-3+}, and where $f(n)$
is a quadratic residue modulo~$p$ not divisible by~$p$.
\end{remarknu}

\begin{lemma} \label{lem:u1}
We assume that, given a prime~$p\equiv1$~{\em (mod~$4$)} and an odd
positive integer~$e,$ 
the number $u(n)$ is divisible by $p^h$ for $n\ge \cl{\frac {hp}2}$
and $h<e$. Then
\begin{equation} \label{eq:Rek-p^e1}
u\left(\tfrac {ep-1} {2}\right)\equiv\Pi_1\left(\tfrac {ep-1} {2}\right)
-\sum_{i=1}^{(e-1)/2}\binom {ep}{2ip}
\Pi_3(ip)\,u\!\left(\tfrac {ep-1} {2}-ip\right)
\pmod{p^e}.
\end{equation}
\end{lemma}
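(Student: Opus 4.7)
The plan is to specialise Romik's recurrence \eqref{eq:Rek1-Pi} at $n := \tfrac{ep-1}{2}$ (so that $2n+1 = ep$), obtaining
$$u\bigl(\tfrac{ep-1}{2}\bigr) = \Pi_1\bigl(\tfrac{ep-1}{2}\bigr) - \sum_{m=0}^{(ep-3)/2} \binom{ep}{2m+1}\, \Pi_3\bigl(\tfrac{ep-1}{2}-m\bigr)\, u(m),$$
and then to show that, modulo $p^e$, every summand with $p \nmid (2m+1)$ vanishes, while the summands with $p \mid (2m+1)$ reassemble, after reindexing, into the right-hand sum of \eqref{eq:Rek-p^e1}.

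For the vanishing: if $p \nmid (2m+1)$, the units $p$-adic digit of $2m+1$ is non-zero whereas that of $ep$ is zero, so a carry occurs at the units position when one adds $2m+1$ and $ep - (2m+1)$; Kummer's theorem (Lemma~\ref{lem:5}) therefore yields $v_p\!\bigl(\binom{ep}{2m+1}\bigr) \ge 1$. It remains to show that $v_p\bigl(\Pi_3\bigl(\tfrac{ep-1}{2} - m\bigr)\, u(m)\bigr) \ge e - 1$. Let $h \in \{0, 1, \dots, e-1\}$ be maximal with $m \ge \cl{hp/2}$, so that the hypothesis of the lemma gives $v_p(u(m)) \ge h$. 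If $h = e - 1$, we are already done. Otherwise $m < \cl{(h+1)p/2}$, from which a short calculation (treating the parity of $h+1$) yields $\tfrac{ep-1}{2} - m \ge \tfrac{(e-h-1)p}{2}$; then the sharpened estimate \eqref{eq:4j-3+}, available because $p \equiv 1 \pmod 4$, delivers $v_p\bigl(\Pi_3\bigl(\tfrac{ep-1}{2} - m\bigr)\bigr) \ge e - h - 1$, and summing the three contributions gives $\ge e$ as required.

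For the surviving summands, $p \mid (2m+1)$ combined with the parity of $2m+1$ forces $m = \tfrac{p-1}{2} + kp$ for some integer $k \ge 0$, and the constraint $m < \tfrac{ep-1}{2}$ translates to $0 \le k \le \tfrac{e-3}{2}$. Setting $i := \tfrac{e-1}{2} - k$, so that $i$ ranges over $\{1, 2, \dots, \tfrac{e-1}{2}\}$, one finds $m = \tfrac{ep-1}{2} - ip$, $2m+1 = (e-2i)p$, and $\tfrac{ep-1}{2} - m = ip$; using the symmetry $\binom{ep}{(e-2i)p} = \binom{ep}{2ip}$, these contributions assemble into precisely the right-hand sum of \eqref{eq:Rek-p^e1}. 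The main obstacle lies in the vanishing step: the cruder bound \eqref{eq:vp-prod} falls short of $e - h - 1$ by one when $e - h - 1$ is odd, so the hypothesis $p \equiv 1 \pmod 4$ is genuinely used here, through the refinement \eqref{eq:4j-3+}.
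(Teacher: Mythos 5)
Your proposal is correct and follows essentially the same route as the paper: specialise \eqref{eq:Rek1-Pi} at $n=\tfrac{ep-1}{2}$, kill the terms with $p\nmid(2m+1)$ by combining the unit-digit carry in Kummer's theorem with the estimate $v_p\bigl(\Pi_3(\tfrac{ep-1}{2}-m)\,u(m)\bigr)\ge e-1$ obtained from the inductive hypothesis together with the sharpened bound \eqref{eq:4j-3+}, and reindex the surviving terms $m=\tfrac{p-1}{2}+kp$ into the stated sum. Your parameter $h$ is exactly the paper's $f-1$, and your closing observation that \eqref{eq:vp-prod} would fall short by one (so that $p\equiv1 \pmod 4$ is genuinely needed) matches the role \eqref{eq:4j-3+} plays in the paper's argument.
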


\begin{proof}
We put $n=\frac {ep-1} {2}$ in \eqref{eq:Rek1-Pi} and
consider the summand indexed by~$m$ on the right-hand side
for $\cl{\frac{(f-1)p}2}\le m<\cl{\frac{fp}2}$ with $1\le f\le e$.  
By our assumption on the divisibility of $u(m)$ by powers of~$p$
and by \eqref{eq:4j-3+}, we have
$$
v_p\left(\big(\Pi_3\big(\tfrac {ep-1} {2}-m\big)\big) u(m)\right)
\ge 2\fl{\tfrac {\frac {ep-1} {2}-m+\frac {3} {4}(p-1)} {p}}+f-1.
$$
Our conditions imply $\frac {ep-1} {2}-m\ge {\frac{ep-1}2}-\cl{\frac{fp}2}+1\ge
\cl{\frac{(e-f)p}2}$.
Therefore, we obtain
$$
v_p\left(\big(\Pi_3\big(\tfrac {ep-1} {2}-m\big)\big) u(m)\right)
\ge 2\fl{\tfrac {\cl{\frac{(e-f)p}2}+\frac {3} {4}(p-1)} {p}}+f-1\ge( e-f)+f-1=e-1.
$$
In order to show that the
summand is indeed divisible by~$p^e$ for $m$ not of the form
$\frac {ep-1} {2}-ip=\frac {p-1} {2}+\frac {e-2i-1} {2}p$ for some~$i$,
we must therefore prove
that the binomial coefficient in \eqref{eq:Rek1-Pi} is divisible by~$p$
in this case. To see this, we consider the $p$-adic representations
of $2m+1$ and $ep-1-2m$. Since $m$ is not of the form $\frac {p-1}
{2}+sp$ (this condition is essential for the first line below), these are
\begin{alignat}2
\notag
(2m+1)_p&={}&(f-1)_p\ *\\
\big(ep-1-2m\big)_p&={}&(e-f)_p\ *
\label{eq:e-f4}
\end{alignat}
As before, $(\al)_p$ denotes the $p$-adic representation of the
integer~$\al$, and the stars on the right-hand sides indicate 
the right-most digits of $(2m+1)_p$ and $(ep-1-2m)_p$ whose precise
values are irrelevant. The sum of $2m+1$ and $ep-1-2m$ is $ep$
whose $p$-adic representation has the form $(e)_p\ *$. 
Hence, when adding the two numbers on the right-hand sides of
\eqref{eq:e-f4}, at least one carry must occur --- namely one from the
$p^0$-digit to the $p^1$-digit. By Kummer's theorem in
Lemma~\ref{lem:5},
the consequence is that the binomial
coefficient $\binom {ep}{2m+1}$ is divisible by~$p$. Therefore
the corresponding summand is divisible by~$p^e$.

This proves that, under our assumptions, the only summands which
``survive" on the right-hand side of \eqref{eq:Rek1-Pi} modulo $p^e$
are those for which $m$ is of the form $\frac {p-1} {2}+sp$ for
some~$s$. This leads directly to~\eqref{eq:Rek-p^e1}.
\end{proof}

By iteration of the recurrence in \eqref{eq:Rek-p^e1}, we obtain the
following non-recursive congruence.

\begin{lemma} \label{lem:u2}
Under the assumptions of Lemma~{\em\ref{lem:u1},} we have
\begin{multline} \label{eq:Rek-p^e2}
  u\left(\tfrac {ep-1} {2}\right)\equiv
  \sum_{k\ge0}\sum_{\frac {e-1} {2}=\al_0>\al_1>\dots>\al_k\ge0}
  (-1)^k\Pi_1\left(\tfrac {p-1} {2}+\al_kp\right)\\
  \cdot
\prod _{j=0} ^{k-1}\binom {(2\al_j+1)p}
      {(2\al_{j+1}+1)p}\Pi_3\big((\al_j-\al_{j+1})p\big) 
\pmod{p^e}.  
\end{multline}
\end{lemma}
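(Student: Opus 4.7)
The plan is to prove this by iterating the recurrence of Lemma~\ref{lem:u1} a finite number of times. To set up the iteration cleanly, I first rewrite~\eqref{eq:Rek-p^e1} in the variables $\alpha_0:=\frac{e-1}{2}$ (a non-negative integer since $e$ is odd) and $\alpha_1:=\alpha_0-i$ for the summation index $i$, so that $\frac{ep-1}{2}-ip=\frac{(2\alpha_1+1)p-1}{2}$ and $ip=(\alpha_0-\alpha_1)p$. Identity~\eqref{eq:Rek-p^e1} then reads
\begin{equation*}
u\!\left(\tfrac{(2\alpha_0+1)p-1}{2}\right)\equiv\Pi_1\!\left(\tfrac{p-1}{2}+\alpha_0p\right)-\sum_{0\le\alpha_1<\alpha_0}\binom{(2\alpha_0+1)p}{(2\alpha_1+1)p}\Pi_3\!\big((\alpha_0-\alpha_1)p\big)\,u\!\left(\tfrac{(2\alpha_1+1)p-1}{2}\right)\pmod{p^{2\alpha_0+1}},
\end{equation*}
which is a recurrence of the same shape at \emph{any} admissible value of the top index.

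I would then iteratively substitute this identity into itself: whenever a factor $u\!\left(\frac{(2\alpha+1)p-1}{2}\right)$ appears on the right-hand side, I expand it using the same recurrence with $\alpha_0$ replaced by $\alpha$. This application is legitimate, because the hypothesis of Lemma~\ref{lem:u1} for any $e'=2\alpha+1<e$ is weaker than the one we already assumed for $e$. Each expansion contributes, for every choice of the next index $\alpha'<\alpha$, either a ``closed'' $\Pi_1(\alpha)$-term (which terminates the chain) or an ``open'' continuation containing $u\!\left(\frac{(2\alpha'+1)p-1}{2}\right)$. After at most $\alpha_0+1$ iterations every open factor has been expanded, since the indices $\alpha_0>\alpha_1>\dots$ are strictly decreasing non-negative integers. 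Collecting the closed terms produces a sum indexed by all chains $\frac{e-1}{2}=\alpha_0>\alpha_1>\dots>\alpha_k\ge0$ with $k\ge0$, each carrying the sign $(-1)^k$ from the $k$ nested minus signs, which is exactly the right-hand side of~\eqref{eq:Rek-p^e2}.

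The main obstacle is the $p$-adic bookkeeping: one must verify that the whole iteration remains valid modulo $p^e$ even though the recurrence applied at the innermost level $k$ is only valid modulo $p^{2\alpha_k+1}<p^e$. To check this, fix a chain $\alpha_0>\alpha_1>\dots>\alpha_k\ge0$; the error incurred when expanding the innermost factor $u\!\left(\frac{(2\alpha_k+1)p-1}{2}\right)$ has $p$-adic valuation at least $2\alpha_k+1$. It is multiplied by the accumulated outer product $\prod_{j=0}^{k-1}\Pi_3\!\big((\alpha_j-\alpha_{j+1})p\big)$ (together with binomials and a sign), whose valuation is at least
\[
\sum_{j=0}^{k-1}2(\alpha_j-\alpha_{j+1})=2(\alpha_0-\alpha_k)
\]
by~\eqref{eq:vp-prod}. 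The resulting contribution has valuation at least $(2\alpha_k+1)+2(\alpha_0-\alpha_k)=2\alpha_0+1=e$ and therefore vanishes modulo~$p^e$, as required. Summing all surviving closed terms yields~\eqref{eq:Rek-p^e2}.
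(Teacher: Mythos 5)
Your proof is correct and follows essentially the same route as the paper: the paper organises the iteration of \eqref{eq:Rek-p^e1} as a formal induction on~$e$, substituting the claimed formula for $e-2i$ and using $v_p\big(\Pi_3(ip)\big)\ge 2i$ to justify working modulo $p^{e-2i}$ in the inner term, which is exactly your valuation bookkeeping $(2\al_k+1)+2(\al_0-\al_k)=e$ in unrolled form.
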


\begin{proof}
We show the claim by an induction on~$e$. 
For the start of the induction, we observe that for $e=1$ the
sum on the right-hand side of \eqref{eq:Rek-p^e2} reduces to a single
term for $k=0$ and $\al_0=0$, which turns out to equal $u(\frac {p-1}
{2})$, in agreement with the claim.

For the induction step, we observe that, by \eqref{eq:vp-prod},
we have $v_p\big(\Pi_3(ip)\big)\ge2i$.
Hence we may consider the term $u\left(\tfrac {ep-1} {2}-ip\right)$
in \eqref{eq:Rek-p^e1} modulo $p^{e-2i}$. This allows us to
substitute the right-hand side of \eqref{eq:Rek-p^e2} for $e$ replaced
by $e-2i$ in \eqref{eq:Rek-p^e1}. If, at the same time, we replace
$\al_j$ by $\al_{j+1}$ for all~$j$, then this leads us to
\begin{align*}
u\left(\tfrac {ep-1} {2}\right)&\equiv\Pi_1\left(\tfrac {ep-1} {2}\right)\\
&\kern.5cm
-\sum_{i=1}^{(e-1)/2}\binom {ep}{(e-2i)p}
\Pi_3(ip)
  \sum_{k\ge0}\sum_{\frac {e-1} {2}-i=\al_1>\al_2>\dots>\al_{k+1}\ge0}
  (-1)^k\Pi_1\left(\tfrac {p-1} {2}+\al_{k+1}p\right)\\
  &\kern5cm
  \cdot
\prod _{j=1} ^{k}\binom {(2\al_j+1)p}
      {(2\al_{j+1}+1)p}\Pi_3\big((\al_j-\al_{j+1})p\big) 
\pmod{p^e}.
\end{align*}
Setting $\al_0=\frac {e-1} {2}$, we realise that the right-hand side
can be put together in the form of the expression on the right-hand
side of~\eqref{eq:Rek-p^e2} with $k$ replaced by $k+1$. This concludes
the induction step and, hence, the induction argument.
\end{proof}

\section{The sequence $(v(n))_{n\ge0}$ modulo odd prime powers}
\label{sec:3}

Here, we analyse the numbers $v(n)$ modulo odd prime
powers~$p^e$. The first result is Theorem~\ref{thm:2} which says that $v(n)$
vanishes modulo~$p^e$ for $n\ge\cl{(e-1)p^2/2}$, for all odd primes and
all integers~$e\ge2$.
Also here, the proof of the theorem is inductive, the
start of the induction being given in Proposition~\ref{prop:v-p^2},
which is itself based on auxiliary results in Lemma~\ref{lem:v-klein}.
Again, should the prime $p$ be congruent to~$1$
modulo~4, Theorem~\ref{thm:2} can be significantly improved; see
Theorem~\ref{thm:2A}.

We begin with the announced auxiliary results,
which afterwards lead to Proposition~\ref{prop:v-p^2}.

\begin{lemma} \label{lem:v-klein}
Let $p$ be an odd prime number,
and let $(v(n))_{n\ge0}$ be defined by the recurrence \eqref{eq:Rek2}. 
Then we have
\begin{equation} \label{eq:v-klein1} 
v(ap+b)\equiv0~(\text{\em mod }p),\quad  \text{for }1\le a\le
\tfrac {p-1} {2}\text{ and\/ }0\le b\le a-1,
\end{equation}
and
\begin{equation} \label{eq:v-klein2} 
v\left(ap+\tfrac {p+1} {2}+b\right)\equiv0~(\text{\em mod }p),\quad 
\text{for }1\le a\le
\tfrac {p-1} {2}\text{ and\/ }0\le b\le a-1.
\end{equation}
\end{lemma}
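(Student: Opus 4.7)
The plan is to mirror the proof of Lemma~\ref{lem:u-klein}, using simultaneous induction on~$n$ based on the recurrence \eqref{eq:Rek2} and showing that every contribution on its right-hand side is divisible by~$p$. The ``leading'' term $2^{n-1}\Pi_3(n)$ is handled in one shot: whenever $n$ satisfies either hypothesis of the lemma we have $n\ge p$, so \eqref{eq:vp-prod} gives $v_p\bigl(\Pi_3(n)\bigr)\ge 2\lfloor n/p\rfloor\ge 2$.

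For a generic summand $\tfrac12\binom{2n}{2m}v(m)v(n-m)$ with $1\le m\le n-1$, I would first invoke Kummer's theorem (Lemma~\ref{lem:5}): $\binom{2n}{2m}$ is a unit modulo~$p$ only when the $p$-adic digits of $2m$ are componentwise bounded by those of $2n$. In case \eqref{eq:v-klein1}, $2n=2ap+2b$, and in case \eqref{eq:v-klein2} (with $a<\tfrac{p-1}{2}$) one has $2n=(2a+1)p+(2b+1)$; so we write $2m=cp+d$ with $c,d$ in the appropriate ranges and of the same parity (since $2m$ is even). Depending on the parity of~$(c,d)$, I would rewrite
$$
m=\tfrac{c}{2}p+\tfrac{d}{2}\quad\text{or}\quad m=\tfrac{c-1}{2}p+\tfrac{p+1}{2}+\tfrac{d-1}{2},
$$
and use $2(n-m)=(2a-c)p+(2b-d)$ (with a borrow where needed) to put $n-m$ into the same canonical form. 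The crucial symmetry of~\eqref{eq:Rek2}, which replaces the asymmetric pair $\bigl(u(m),\Pi_3(n-m)\bigr)$ by the symmetric pair $\bigl(v(m),v(n-m)\bigr)$, makes the split easy: \emph{if $c>d$, apply the inductive hypothesis to $v(m)$; if $c\le d$, apply it to $v(n-m)$}. Checking, in each of the four parity sub-cases, that the rewritten parameters $(a',b')$ satisfy $1\le a'\le\tfrac{p-1}{2}$ and $0\le b'\le a'-1$ is a routine but careful calculation; the strict inequality $b\le a-1$ appearing in both \eqref{eq:v-klein1} and \eqref{eq:v-klein2} is exactly what is needed to preserve $b'\le a'-1$ after the rewrite, which is why \eqref{eq:v-klein2} does not extend to $b=a$.

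The one sub-case requiring separate treatment is \eqref{eq:v-klein2} at the boundary $a=\tfrac{p-1}{2}$, where $2n=p^2+(2b+1)$ straddles the $p^2$-place. Here I would redo Kummer's analysis on the digits $(1,0,2b+1)$ of $2n$: the middle digit~$0$ forces $2m\in\{2,4,\dots,2b\}\cup\{p^2+1,p^2+3,\dots,p^2+2b-1\}$. In the first range $n-m=\tfrac{p-1}{2}p+\tfrac{p+1}{2}+(b-m)$ with $0\le b-m\le b-1\le \tfrac{p-3}{2}$, and in the second range $m$ itself has this form with parameter $k=m-\tfrac{p^2+1}{2}\in\{0,\dots,b-1\}$; either way the inductive hypothesis~\eqref{eq:v-klein2} supplies the desired vanishing.

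The main obstacle, as in Lemma~\ref{lem:u-klein}, is not conceptual but combinatorial: carefully tracking how the index rewrites behave at the extremes $c\in\{0,1,2a-1,2a,2a+1\}$ and $d\in\{0,1,2b-1,2b,2b+1\}$, verifying that whichever of $v(m)$ or $v(n-m)$ one appeals to really does fall under the inductive hypothesis with parameters in the legal range, and isolating cleanly the boundary case $a=\tfrac{p-1}{2}$ of~\eqref{eq:v-klein2}.
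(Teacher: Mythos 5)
Your proposal is correct and follows essentially the same route as the paper's proof: simultaneous induction on both families via the recurrence \eqref{eq:Rek2}, Kummer's theorem to restrict to $2m=cp+d$ with digitwise-bounded $c,d$, the even/odd parity split with the canonical rewrites of $m$ and $n-m$, and exactly the dichotomy ``$c>d$ gives $v(m)\equiv0$, $c\le d$ gives $v(n-m)\equiv0$'' that the symmetry of the recurrence affords. Your treatment of the boundary case $a=\tfrac{p-1}{2}$ is in fact slightly more complete than the paper's, which only names the exceptional range $2\le 2m\le 2b$ and leaves the mirror range $p^2+1\le 2m\le p^2+2b-1$ implicit.
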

\begin{proof}
We prove the assertions by induction on the size of $ap+b$
and of $ap+\frac {p+1} {2}+b$, simultaneously. The induction
is based on the recurrence~\eqref{eq:Rek2}.
It is clear that the first term on the right-hand side,
the product $2^{n-1}\Pi_3(n)=2^{n-1}\Pi_3(ap+b)$, is
divisible by~$p$ because of~\eqref{eq:vp-prod}.

\medskip
Now let first $n=ap+b$ with $a$ and $b$ satisfying the conditions
in~\eqref{eq:v-klein1}. We want to prove that $v(n)\equiv0$~(mod~$p$).

From Lemma~\ref{lem:5} we infer that
the binomial coefficient $\binom {2n}{2m}
=\binom {2ap+2b}{2m}$ is divisible by~$p$, except when
$2m=cp+d$ with both $0\le c\le 2a$ and $0\le d\le 2b$.
It is important to note that at this point the upper bounds of
$\frac {p-1} {2}$ for~$a$ and~$b$ enter crucially.

If $c=0$ (and hence $d$ even since $2m=cp+d=d$), then
$v(n-m)=v\left(ap+b-\frac {d} {2}\right)$ is divisible by~$p$
according to the induction hypothesis.
From now on we may assume that $c\ge1$.

We distinguish whether $c$ and~$d$ are both even or both odd.
(There are no other cases since $2m=cp+d$ is even.)

If both $c$ and~$d$ are even, then, provided $c>d$, we may use the
induction hypothesis~\eqref{eq:v-klein1} to infer that
$v(m)=v\left(\frac {c} {2}p+\frac 
{d} {2}\right)$ is divisible by~$p$. On the other hand, if $c\le d$,
then we have
$$
n-m=\left(a-\tfrac {c} {2}\right)p+b-\tfrac {d} {2}.
$$
Since, by assumption, we have $a>b$, we have
$$
a-\tfrac {c} {2}>b-\tfrac {d} {2}\ge0.
$$
Consequently, again using the induction hypothesis~\eqref{eq:v-klein1}, we
obtain that $v(n-m)$ is divisible by~$p$.

Now let both $c$ and~$d$ be odd. Here we may rewrite
$m=\frac {c} {2}p+\frac {d} {2}$ as
$$m=\tfrac {c-1} {2}p+\tfrac {p+1} {2}+\tfrac {d-1} {2}.$$
Furthermore, we
may write
$$
n-m=\left(a-\tfrac {c+1} {2}\right)p
+\tfrac {p+1} {2}+b-\tfrac {d+1} {2}.
$$
It should be noted that, if $c\le d$, we have $a-\tfrac {c+1} {2}>
b-\tfrac {d+1} {2}\ge0.$
As a consequence,
by the induction hypothesis~\eqref{eq:v-klein2},
we have $v(n-m)\equiv0$~(mod~$p$).
If, on the other hand, we have $c\ge d$, then
$v(m)\equiv0$~(mod~$p$), again
by the induction hypothesis~\eqref{eq:v-klein2}.

\medskip
Next we discuss the case where
$n=ap+\frac {p+1} {2}+b$ with $a$ and $b$ satisfying the conditions
in~\eqref{eq:v-klein2}. We want to prove that $v(n)\equiv0$~(mod~$p$).

In the current case, the binomial coefficient on the right-hand side
of~\eqref{eq:Rek2} becomes $\binom {2n}{2m}
=\binom {(2a+1)p+2b+1}{2m}$. Here it must be observed that,
because of the upper bounds on~$a$ and~$b$ in~\eqref{eq:v-klein2}, we
have $2a+1\le p$ and $2b+1\le p-1$.
If $a=\frac {p-1} {2}$, so that $2a+1=p$, then, by Lemma~\ref{lem:5},
$\binom {2n}{2m}=\binom {p^2+2b+1}{2m}$ is divisible by~$p$, except
when $2\le 2m\le 2b$. In this exceptional case, we have
$v(n-m)\equiv0$~(mod~$p$) by induction hypothesis. 

We assume from now on that $a<\frac {p-1} {2}$.
From Lemma~\ref{lem:5} we infer that
the binomial coefficient $\binom {2n}{2m}
=\binom {(2a+1)p+2b+1}{2m}$ is divisible by~$p$, except when
$2m=cp+d$ with both $0\le c\le 2a+1$ and $0\le d\le 2b+1$.

We distinguish whether $c$ and~$d$ are both even or both odd.
(There are no other cases since $2m=cp+d$ is even.)

If both $c$ and~$d$ are even, then, provided $c>d$, we may use the
induction hypothesis~\eqref{eq:v-klein1} to infer that
$v(m)=v\left(\frac {c} {2}p+\frac 
{d} {2}\right)$ is divisible by~$p$. On the other hand, if $c\le d$,
then we have
$$
n-m=\left(a-\tfrac {c} {2}\right)p+\tfrac {p+1} {2}+b-\tfrac {d} {2}.
$$
Since, by assumption, we have $a>b$, we have
$$
a-\tfrac {c} {2}>b-\tfrac {d} {2}\ge0.
$$
Consequently, using the induction hypothesis~\eqref{eq:v-klein2}, we
obtain that $v(n-m)$ is divisible by~$p$.

Now let both $c$ and~$d$ be odd. We may again rewrite
$m=\frac {c} {2}p+\frac {d} {2}$ as
$$m=\tfrac {c-1} {2}p+\tfrac {p+1} {2}+\tfrac {d-1} {2}.$$
Furthermore, we
may write
$$
n-m=\left(a-\tfrac {c-1} {2}\right)p+b-\tfrac {d-1} {2}.
$$
Hence, according to the induction hypothesis~\eqref{eq:v-klein2},
we have $v(m)\equiv0$~(mod~$p$) if $c>d$, and otherwise we have
$v(n-m)\equiv0$~(mod~$p$) according to the induction
hypothesis~\eqref{eq:v-klein1}.
\end{proof}

\begin{proposition} \label{prop:v-p^2}
Let $(v(n))_{n\ge0}$ be defined by the recurrence \eqref{eq:Rek2}. 
Then, given an odd prime~$p,$
the number $v(n)$ is divisible by $p^2$ for $n\ge \cl{\frac {p^2} {2}}$.
\end{proposition}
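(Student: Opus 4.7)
The plan is to mimic the proof of Proposition~\ref{prop:u-p^2} almost verbatim, replacing the recurrence~\eqref{eq:Rek1-Pi} by~\eqref{eq:Rek2}, Lemma~\ref{lem:u-klein} by Lemma~\ref{lem:v-klein}, and the binomial coefficient $\binom{2n+1}{2m+1}$ by $\binom{2n}{2m}$. I would proceed by induction on $n \ge \cl{p^2/2} = (p^2+1)/2$, verifying that every term on the right-hand side of~\eqref{eq:Rek2} is divisible by~$p^2$.

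The first term $2^{n-1}\Pi_3(n)$ is handled at once by Lemma~\ref{lem:Pi13}: for $n \ge (p^2+1)/2$ one has $v_p(\Pi_3(n)) \ge 2\fl{n/p} \ge p - 1 \ge 2$. For the summand indexed by~$m$, if $m \ge \cl{p^2/2}$ or $n - m \ge \cl{p^2/2}$, the induction hypothesis makes one of $v(m)$, $v(n-m)$ vanish modulo~$p^2$. I may therefore restrict to the range in which both $m$ and $n - m$ are strictly below $\cl{p^2/2}$, which forces $n \le p^2$ and allows me to write $2n = p^2 + ap + b$ with $0 \le a, b \le p - 1$; since $2n$ is even and $p^2$ is odd, $a$ and $b$ have opposite parities.

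If at least two carries occur when adding $2m$ and $2(n-m)$ in base~$p$, then by Kummer's theorem (Lemma~\ref{lem:5}) the binomial coefficient $\binom{2n}{2m}$ is divisible by~$p^2$ and the summand is handled. The no-carry case is excluded, since digit-wise addition would force the $p^2$-digit of $2n$ to equal~$0$, contradicting $n > p^2/2$. In the remaining case of exactly one carry, I would write $2m = cp + d$ with $0 \le c, d \le p-1$; the one-carry hypothesis then forces $c \le a$ and $d \le b$, and $c, d$ must share parity since $2m$ is even. Splitting on whether $c, d$ are both even or both odd, and rewriting $m$ (respectively $n-m$) in one of the two shapes $a'p + b'$ or $a'p + (p+1)/2 + b'$ appearing in Lemma~\ref{lem:v-klein}, the analogue of the chain of inequalities used in Proposition~\ref{prop:u-p^2} (essentially $a - c/2 > b - d/2 \ge 0$ in the even sub-case, and its shifted counterpart in the odd sub-case) shows that one of $m$ or $n-m$ falls into a vanishing range of Lemma~\ref{lem:v-klein}. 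Combined with the one carry, this makes the summand divisible by~$p^2$.

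The main obstacle is the bookkeeping in the one-carry case. The cutoff $(p+1)/2$ in Lemma~\ref{lem:v-klein} is shifted by one from the $(p-1)/2$ in Lemma~\ref{lem:u-klein}, and there is no counterpart here to the extra vanishing $u((p-1)/2) \equiv 0$~(mod~$p$) available in the $u$-case for $p \equiv 3$~(mod~$4$). These shifts affect the precise form of the arithmetic inequalities needed, but no fundamentally new ideas are required, and the statement being proved holds for all odd primes, so nothing is lost by the absence of that extra vanishing. A short separate check at the boundary $n = p^2$ (where $2n = 2p^2$ has $p^2$-digit equal to~$2$ rather than $1$) confirms that the carry argument goes through in exactly the same way.
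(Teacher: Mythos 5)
Your overall architecture coincides with the paper's: induction on $n$, term-by-term divisibility in \eqref{eq:Rek2}, Kummer's theorem to count carries, and Lemma~\ref{lem:v-klein} to supply the missing factor of $p$ in the one-carry case. The first term, the two-carry case, and the exclusion of the no-carry case (within your restricted range) are all fine. However, there is a concrete error at the heart of the one-carry case. You claim that one carry forces $c\le a$ and $d\le b$. In fact, since both $2m$ and $2(n-m)$ are less than $p^2$ in your range while $2n=p^2+ap+b$, the unique carry must be the one from the $p^1$- to the $p^2$-digit; hence there is no carry at the $p^0$-digit, giving $d\le b$, but at the $p^1$-digit the corresponding digits of $2m$ and $2(n-m)$ must sum to $a+p$, which forces $c\ge a+1$ --- the opposite of what you assert. (Indeed, $c\le a$ together with $d\le b$ would mean that no carry occurs at all.)

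This is not harmless bookkeeping, because $c\ge a+1$ is precisely what makes Lemma~\ref{lem:v-klein} applicable to $n-m$. In the sub-case $c\le d$ one writes $n-m=\frac{p+a-c}{2}\,p+\frac{b-d}{2}$ (or its shifted variant), and \eqref{eq:v-klein1}/\eqref{eq:v-klein2} require the leading coefficient to lie in $\{1,\dots,\frac{p-1}{2}\}$; that upper bound is equivalent to $c\ge a+1$, so under your hypothesis $c\le a$ the lemma simply does not apply to $n-m$. The correct chain is $\frac{p+a-c}{2}>\frac{b+a-c}{2}\ge\frac{b-c}{2}\ge\frac{b-d}{2}\ge0$ (using $p>b$, then $a\ge0$, then $c\le d$, then $d\le b$), not the chain $a-\frac{c}{2}>b-\frac{d}{2}\ge0$ you quote, which belongs to the proof of Lemma~\ref{lem:v-klein} itself (where $n=ap+b$, not $2n=p^2+ap+b$). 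Note also that this is a point where the $v$-argument genuinely differs from Proposition~\ref{prop:u-p^2}: there the factor $n-m$ enters through $\Pi_3(n-m)$, whose divisibility by $p$ only needs $n-m\ge p$, whereas here one needs $v(n-m)\equiv0$~(mod~$p$), which requires $n-m$ to land exactly in the ranges of Lemma~\ref{lem:v-klein}. With $c\ge a+1$ and $d\le b$ in place, the rest of your plan goes through as in the paper.
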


\begin{proof}
Let $n\ge \cl{\frac {p^2} {2}}=\frac {p^2+1} {2}$.
Again we use induction on~$n$.
The induction will be based on~\eqref{eq:Rek2},
and it proceeds by showing that
each summand on the right-hand side is divisible by~$p^2$.

It is again clear that the first term on the right-hand side,
the product $\Pi_3(n)=\Pi_3(ap+b)$, is
divisible by~$p^2$ because of~\eqref{eq:vp-prod}.

\medskip
In the sequel,
we may restrict our attention to $n< p^2$ because otherwise
either $m\ge \frac {p^2+1} {2}$ or
$n-m\ge \frac {p^2+1} {2}$. The induction hypothesis then implies
that either\break
$v(m)\equiv0$~(mod~$p^2$)
or $v(n-m)\equiv0$~(mod~$p^2$), and therefore each term on the
right-hand side of~\eqref{eq:Rek2} would be divisible by~$p^2$.

To summarise the discussion so far: we may write $2n$ as
$2n=p^2+ap+b$ for some $a$ and $b$ not of the same parity
and with $0\le a,b\le p-1$.

\medskip
If there are two carries when adding $2m$ and $2(n-m)$ in their
$p$-adic representations, then,
by Lemma~\ref{lem:5},
the binomial coefficient $\binom {2n}{2m}$ is divisible by~$p^2$.
Hence the corresponding summand on the right-hand side
of~\eqref{eq:Rek2} is divisible by~$p^2$.

\medskip
Now let us assume that there is only one carry when adding
$2m$ and $2(n-m)$ in their
$p$-adic representations. As earlier, if either $m$ or~$n-m$ is larger
than~$\frac {p^2} {2}$, then $v(m)$ or $v(n-m)$ is divisible
by~$p^2$,
and thus as well the corresponding summand on the right-hand
side of~\eqref{eq:Rek2}. We may therefore assume without loss of
generality that $2m=cp+d$ with $c\le \frac {p-1} {2}$.
Furthermore,
we have $d\le b$ since, otherwise, there would be two carries when
adding $2m$ and~$2n-2m$ in their
$p$-adic representations.

We distinguish again whether $c$ and~$d$ are both even or both odd.

Let first $c$ and $d$ be even. We write $m=\frac {c} {2}p+\frac {d} {2}$.
If $c>d$, then, by~\eqref{eq:v-klein1},
we infer $v(m)\equiv0$~(mod~$p$). If $c\le d$, then we may write
$$
n-m=\left(\tfrac {p+a-c} {2}\right)p+\tfrac {b-d} {2}
=\left(\tfrac {p+a-c-1} {2}\right)p+\tfrac {p+1} {2}+\tfrac {b-d-1} {2}.
$$
We use one of the two expressions, depending on the parities of $a$
and~$b$ (the reader should recall that $a$ and~$b$ have different
parities), so that the fractions produce integers.
By our assumptions, we have
$$
\tfrac {p+a-c} {2}> \tfrac {b+a-c} {2}
\ge \tfrac {b-c} {2}
\ge \tfrac {b-d} {2}\ge0.
$$
By Lemma~\ref{lem:v-klein}, this implies that
$v(n-m)\equiv0$~(mod~$p$).
In total, in both cases this shows that the
corresponding summand in~\eqref{eq:Rek2} is divisible by~$p^2$.

Now let $c$ and $d$ be odd.
Here we write $m=\frac {c-1} {2}p+\frac {p+1} {2}+\frac {d-1} {2}$.
If $c>d$, then by~\eqref{eq:v-klein2},
we infer $v(m)\equiv0$~(mod~$p$). If $c\le d$, then we may again write
$$
n-m=\left(\tfrac {p+a-c} {2}\right)p+\tfrac {b-d} {2}
=\left(\tfrac {p+a-c-1} {2}\right)p+\tfrac {p+1} {2}+\tfrac {b-d-1} {2}.
$$
Arguing as before, we conclude that
$v(n-m)\equiv0$~(mod~$p$).
This shows again that the
corresponding summand in~\eqref{eq:Rek2} is divisible by~$p^2$.

\medskip
If there is no carry when adding
$2m$ and $2(n-m)$ in their
$p$-adic representations, then
necessarily one of $2m$ or $2(n-m)$ is at least $p^2+1$.
So, again, one of $v(m)$ or $v(n-m)$ is divisible by~$p^2$
due to the induction hypothesis.

\medskip
This completes the proof of the proposition.
\end{proof}

The following theorem proves Conjecture~18(1) in \cite{WakhAA} for $v(n)$.

\begin{theorem} \label{thm:2}
Let $(v(n))_{n\ge0}$ be defined by the recurrence \eqref{eq:Rek2}. 
Then, given an odd prime~$p$ and a positive integer~$e\ge2,$ 
the number $v(n)$ is divisible by $p^e$ for $n\ge \cl{\frac {(e-1)p^2} {2}}$.
\end{theorem}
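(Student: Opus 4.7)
The plan is to proceed by a double induction, mirroring the strategy used for Theorem~\ref{thm:1}: the outer induction is on the exponent~$e$, with base case $e=2$ provided by Proposition~\ref{prop:v-p^2}, while an inner induction on~$n$ allows us to feed back into the recurrence~\eqref{eq:Rek2}. Assuming the conclusion is known for all $e'$ with $2\le e'\le e-1$ and for all $n'<n$ at the current exponent~$e$, I would show that each term on the right-hand side of~\eqref{eq:Rek2} is divisible by~$p^e$ whenever $n\ge \cl{(e-1)p^2/2}$.

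The first term $2^{n-1}\Pi_3(n)$ is handled directly by the bound $v_p(\Pi_3(n))\ge 2\fl{n/p}$ from~\eqref{eq:vp-prod}, which evaluates to at least $(e-1)p-1\ge e$ for every odd prime~$p$ and every $e\ge 2$. For a generic summand $\binom{2n}{2m}v(m)v(n-m)$, I would perform a case analysis on~$m$, making use of the $m\leftrightarrow n-m$ symmetry of the recurrence. If $m\ge \cl{(e-1)p^2/2}$, the inner induction on~$n$ gives $v_p(v(m))\ge e$, and the case $n-m\ge \cl{(e-1)p^2/2}$ is symmetric. In the middle range $\cl{(f-1)p^2/2}\le m<\cl{fp^2/2}$ with $2\le f\le e-2$, the outer induction supplies $v_p(v(m))\ge f$, while a direct estimate of exactly the type carried out in the proof of Theorem~\ref{thm:1} shows that $n-m\ge \cl{(e-f-1)p^2/2}$, so that the outer induction also yields $v_p(v(n-m))\ge e-f$; the summand then vanishes modulo~$p^e$.

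The main obstacle will be the two extreme ranges $0\le m<\cl{p^2/2}$ and $\cl{(e-2)p^2/2}\le m<\cl{(e-1)p^2/2}$, together with their symmetric counterparts under $m\leftrightarrow n-m$. There the inductive control on the product $v(m)v(n-m)$ falls short of the desired valuation by exactly one factor of~$p$, and that missing factor must be produced by the binomial coefficient $\binom{2n}{2m}$. To that end I would reproduce the $p$-adic carry argument from the proof of Theorem~\ref{thm:1}: writing $2n=(e-1)p^2+ap+b$ with $0\le a,b\le p-1$ and comparing with the $p$-adic expansion of whichever of $2m$ or $2(n-m)$ is strictly less than~$p^2$, one sees that at least one carry must occur when reconstructing~$2n$, so that by Kummer's theorem (Lemma~\ref{lem:5}) we have $v_p\!\big(\binom{2n}{2m}\big)\ge 1$. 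Combined with the inductively guaranteed valuation $e-1$ on the remaining factors, this supplies the missing power of~$p$ and closes the induction. The only genuine novelty compared with the proof of Theorem~\ref{thm:1} is that in~\eqref{eq:Rek2} both $v(m)$ and $v(n-m)$ are subject to induction rather than just a single factor; however, the symmetry of the recurrence in $m\leftrightarrow n-m$ means that a single invocation of the boundary analysis handles each pair $\{m,n-m\}$.
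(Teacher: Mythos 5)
Your proposal follows the paper's own proof essentially verbatim: the same double induction with base case Proposition~\ref{prop:v-p^2}, the same splitting of the range of~$m$ (middle range handled by the outer induction on both factors $v(m)$ and $v(n-m)$, boundary ranges handled by a Kummer carry argument on $\binom{2n}{2m}$ supplying the missing factor of~$p$), and the same $p$-adic bookkeeping for the term $2^{n-1}\Pi_3(n)$. The only cosmetic difference is that you invoke the $m\leftrightarrow n-m$ symmetry of the summand where the paper simply treats the two extreme ranges one after the other; the mathematical content is identical.
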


\begin{proof}
In analogy with the proof of Theorem~\ref{thm:1}, we proceed
by a double induction on~$e$
and~$n$, the outer induction being on~$e$.
For the start of the induction, we use Proposition~\ref{prop:v-p^2}
which proves the assertion of the theorem for $e=2$.
From now on let $e\ge3$.

We assume that $\cl{\frac {(e-1)p^2} {2}}\le n<\cl{\frac {ep^2} {2}}$.
We claim that 
the first term on the right-hand side of~\eqref{eq:Rek2},
namely $2^{n-1}\Pi_3(n)$, is always
divisible by~$p^e$ under this assumption. Indeed,
by \eqref{eq:vp-prod} we have
$$
v_p\left(2^{n-1}\Pi_3(n)\right)
\ge
2\fl{\tfrac {n} {p}}
\ge
2\fl{\tfrac {1} {p}\cl{\tfrac {(e-1)p^2} {2}}}
\ge
2\fl{\tfrac {(e-1)p} {2}}
\ge 2(e-1)\ge e
$$
for $e\ge3$.

Next we consider the summand on the right-hand side
of~\eqref{eq:Rek2} for $\cl{\frac {(f-1)p^2} {2}}\le m<\cl{\frac {fp^2}
  {2}}$ with $2\le f\le e-2$.
By the induction hypothesis applied to $v(m)$, we have $v_p(v(m))\ge f$.
Furthermore we have
$$
n-m> \cl{\tfrac {(e-1)p^2} {2}}-\cl{\tfrac {fp^2} {2}}
= \cl{\tfrac {(e-f-1)p^2} {2}}-\chi(e,f\text{ odd}),
$$
where $\chi(\mathcal A)=1$ if $\mathcal A$ is true and
$\chi(\mathcal A)=0$ otherwise.
The above inequality implies
$$
n-m\ge \cl{\tfrac {(e-f-1)p^2} {2}}.
$$
Therefore, if we apply the induction hypothesis to $v(n-m)$,
then we obtain\break $v_p(v(n-m))\ge e-f$.
%
We infer that $v_p\big(v(m)v(n-m)\big)\ge f+(e-f)=e$, and hence the
corresponding summand in \eqref{eq:Rek2} is divisible by~$p^e$.

Let now $0\le m< \cl{\tfrac {p^2} {2}}$.
In that case, the previous argument only shows that
$n-m\ge \cl{\tfrac {(e-2)p^2} {2}}$, with the consequence that we only
have $v_p(v(n-m))\ge e-1$.
It might be that actually $n-m\ge \cl{\tfrac {(e-1)p^2} {2}}$.
Then the induction hypothesis (in~$n$) implies that
$v_p(v(n-m))\ge e$, so that 
the corresponding summand in \eqref{eq:Rek2} is divisible by~$p^e$.
On the other hand, if
$\cl{\tfrac {(e-2)p^2} {2}}\le n-m< \cl{\tfrac {(e-1)p^2} {2}}$,
then we may write
$2n=(e-1)p^2+ap+b$ and $2(n-m)=(e-2)p^2+cp+d$ for some $a,b,c,d$
with $0\le a,b,c,d\le p-1$. Then there is (at least) one carry
when adding $2(n-m)$ and $2m$ in their
$p$-adic representations. Therefore,
by Lemma~\ref{lem:5}, the binomial coefficient
$\binom {2n}{2m}$ is divisible by~$p$, and together with
the previously observed fact that $v_p(v(n-m))\ge e-1$ this shows that the
corresponding summand in \eqref{eq:Rek2} is divisible by~$p^e$.

Finally, let
$\cl{\tfrac {(e-2)p^2} {2}}\le m< \cl{\tfrac {(e-1)p^2} {2}}$.
The induction hypothesis (in~$n$) implies that
in this case we have $v_p(v(m))\ge e-1$.
If $n-m\ge \cl{\tfrac {p^2} {2}}$,
then the induction hypothesis (in~$n$) implies that
$v(n-m)$ is divisible by~$p^2$, so that 
the corresponding summand in \eqref{eq:Rek2} is divisible by~$p^e$.
On the other hand, if
$0\le n-m< \cl{\tfrac {p^2} {2}}$,
then we may write
$2n=(e-1)p^2+ap+b$ and $2m=(e-2)p^2+cp+d$ for some $a,b,c,d$
with $0\le a,b,c,d\le p-1$. Then there is (at least) one carry
when adding $2(n-m)$ and $2m$ in their
$p$-adic representations. Therefore, by Lemma~\ref{lem:5},
the binomial coefficient
$\binom {2n}{2m}$ is divisible by~$p$, and together with
the previously observed fact that $v_p(v(m))\ge e-1$ this shows that the
corresponding summand in \eqref{eq:Rek2} is divisible by~$p^e$.

\medskip
This concludes the induction step, and, thus, the proof of the theorem.
\end{proof}

\begin{remarknu} \label{rem:3}
An examination of the above arguments reveals that the product
$\Pi_3(n)$ in the definition~\eqref{eq:Rek2}
could have been replaced with any function $f(n)$ that
satisfies the $p$-divisibility property in~\eqref{eq:vp-prod}.
\end{remarknu}

\begin{theorem} \label{thm:2A}
Let $(v(n))_{n\ge0}$ be defined by the recurrence \eqref{eq:Rek2}. 
Then, given a prime\linebreak $p\equiv1$~{\em (mod~$4$)} and a positive integer~$e,$ 
the number $v(n)$ is divisible by $p^e$ for $n\ge \cl{\frac{ep}2}$.
\end{theorem}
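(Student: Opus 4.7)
The plan is to mirror the strategy of Theorem~\ref{thm:1A}, adapting the argument to the self-convolutional recurrence~\eqref{eq:Rek2}. I would proceed by a double induction with the outer induction on~$e$ and the inner induction on~$n$; the case $e=0$ is vacuous and serves as the start.

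First, for the leading term $2^{n-1}\Pi_3(n)$ on the right-hand side of~\eqref{eq:Rek2}: since $p$ is odd the power of~$2$ is a unit modulo~$p$, and the sharper bound~\eqref{eq:4j-3+} for $p\equiv 1\pmod 4$ gives
$$
v_p\bigl(\Pi_3(n)\bigr)\;\ge\;2\left\lfloor\frac{\lceil ep/2\rceil +\tfrac{3}{4}(p-1)}{p}\right\rfloor\;\ge\;e
$$
after a brief parity check, so the leading term is already divisible by~$p^e$.

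Next I would treat the summation by splitting the range of~$m$ into the blocks $\lceil(f-1)p/2\rceil\le m<\lceil fp/2\rceil$ for $f\ge 1$. The induction hypothesis yields $v_p(v(m))\ge f-1$, and a ceiling-parity verification shows that in the easy range $n\ge\lceil(e+1)p/2\rceil$ we have $n-m\ge\lceil(e+1-f)p/2\rceil$, whence $v_p(v(n-m))\ge e-f+1$ and the summand has valuation $\ge e$. In the critical range $\lceil ep/2\rceil\le n<\lceil(e+1)p/2\rceil$, the same bound only yields $v_p(v(m)v(n-m))\ge e-1$, so one extra $p$-carry from $\binom{2n}{2m}$ is needed. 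Following the digit-level analysis of Task~3 of Theorem~\ref{thm:1A}, I would read off the leading $p$-adic digits of $2m$, $2(n-m)$, and~$2n$: generically these are $f-1$, $e-f$, and~$e$, respectively, so Kummer's theorem (Lemma~\ref{lem:5}) forces a carry from the lower digits into the leading position and supplies the missing factor of~$p$.

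The subtle point, as in Theorem~\ref{thm:1A}, is to identify the ``exceptional'' summands where the generic digit pattern collapses and no carry is forced, and then to show that these surviving summands cancel modulo~$p^e$ against the leading term $2^{n-1}\Pi_3(n)$. A parity analysis of the bottom digit of~$2n$ shows that such survivors exist only when $e$ is odd, and the cancellation can be set up by a $v$-analogue of Lemmas~\ref{lem:u1} and~\ref{lem:u2}, obtained by iteratively unwinding the truncated recurrence at the exceptional indices to represent the surviving values as alternating sums over strictly decreasing chains $\alpha_0>\alpha_1>\cdots>\alpha_k\ge 0$ weighted by $\Pi_3$-products and $p$-scaled binomial coefficients; the telescoping pairing used at the end of Theorem~\ref{thm:1A} should then collapse the exceptional sum to a single residual contribution matching $2^{n-1}\Pi_3(n)$. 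The main obstacle will be this bookkeeping step: because the convolution $v(m)v(n-m)$ forces both factors to be unrolled simultaneously, the telescoping becomes two-sided and the cross-terms arising from the two simultaneous unrollings must also be shown to cancel.
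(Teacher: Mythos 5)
Your overall architecture --- the double induction with outer induction on~$e$, the bound $v_p(\Pi_3(n))\ge e$ for the leading term via \eqref{eq:4j-3+}, the decomposition of the sum into blocks $\lceil(f-1)p/2\rceil\le m<\lceil fp/2\rceil$, and the Kummer-carry argument extracting an extra factor of~$p$ from $\binom{2n}{2m}$ when the two $v$-factors only supply $p^{e-1}$ --- is exactly the paper's proof, and that part of your argument is sound.

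The problem is your final paragraph. You assert that, as in Theorem~\ref{thm:1A}, there are exceptional summands for which the carry is not forced, which must then cancel against $2^{n-1}\Pi_3(n)$ via a $v$-analogue of Lemmas~\ref{lem:u1} and~\ref{lem:u2}, and you flag the resulting ``two-sided telescoping'' of the convolution as the main unresolved obstacle. But no such exceptional summands exist here. The exception in Theorem~\ref{thm:1A} occurs precisely because $2m+1$ is odd and can equal $fp$ exactly (namely $m=\frac{fp-1}{2}$ with $f$ odd), so that its top $p$-adic block is $(f)_p$ rather than $(f-1)_p$ and its bottom digit is~$0$, killing the forced carry. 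In the present situation both $2m$ and $2(n-m)$ are even, and the block boundaries give strict inequalities: $\lceil(f-1)p/2\rceil\le m<\lceil fp/2\rceil$ forces $(f-1)p\le 2m<fp$ (the borderline value $2m=fp$ would require $m=fp/2$, which already lies in the next block). Hence if $m$ lies in block~$f$ and $n-m$ in block~$g$, then $\lfloor 2m/p\rfloor=f-1$, $\lfloor 2(n-m)/p\rfloor=g-1$, while $\lfloor 2n/p\rfloor\ge e$; the only case in which the product of the two $v$-factors fails to supply $p^{e}$ is $f+g=e+1$, and there $\lfloor 2n/p\rfloor\ge e>(f-1)+(g-1)$ forces a carry out of the units position for \emph{every} such pair, with no exception and regardless of the parity of~$e$. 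So the proof closes immediately after your Kummer step. The cancellation machinery you sketch is therefore unnecessary --- and, as you present it, unproven: had it actually been needed, the unexecuted simultaneous unrolling of $v(m)v(n-m)$ would constitute a genuine gap, since it is not at all clear that a convolution recurrence telescopes the way the linear recurrence for $u(n)$ does.
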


\begin{proof}
In analogy with the proof of Theorem~\ref{thm:1}, we proceed
by a double induction on~$e$
and~$n$, the outer induction being on~$e$.
Clearly, the theorem is trivial for $e=0$, which serves as the start
of the induction.

Now let $n\ge \cl{\frac{ep}2}$.
Hence, by \eqref{eq:4j-3+} for $n\ge \cl{ep/2}$ we have
\[
v_p\big(\Pi_3(n)\big)\ge
2\fl{\frac {\cl{\frac {ep} {2}}+\frac {3} {4}(p-1)} {p}}
\ge 2\fl{{\frac {e} {2}+\frac {3} {4}-\frac {3} {4p}}}\ge e.
\]
Thus,
the first term on the right-hand side of \eqref{eq:Rek2} is divisible
by~$p^e$.

We consider the summand on the right-hand side of
\eqref{eq:Rek2} for $\cl{\frac{fp}2}\le m<\cl{\frac{(f+1)p}2}$ with $0\le f<e$.
By the induction hypothesis applied to $v(m)$, we have $v_p(v(m))=f$.
On the other hand, if we apply the induction hypothesis to $v(n-m)$,
then we obtain $v_p(v(n-m))=e-f-1$ or $v_p(v(n-m))=e-f$, depending on
whether or not $n-m<\cl{\frac{(e-f)p}2}$. In the latter case,
we infer that $v_p\big(v(m)v(n-m)\big)=f+(e-f)=e$, and hence the
corresponding summand in \eqref{eq:Rek2} is divisible by~$p^e$.

On the other hand, in the former case, the conclusion is that 
$v_p\big(v(m)v(n-m)\big)= e-1$. In order to show that the
corresponding summand is divisible by~$p^e$, we must therefore prove
that the binomial coefficient in \eqref{eq:Rek2} is divisible by~$p$
in this case. To see this, we consider the $p$-adic representations
of $2m$ and $2(n-m)$. These are
\begin{alignat}2
\notag
(2m)_p&={}&(f)_p\ *\\
\big(2(n-m)\big)_p&={}&(e-f-1)_p\ *
\label{eq:e-fA}
\end{alignat}
Here as before, $(\al)_p$ denotes the $p$-adic representation of the
integer~$\al$, and the stars on the right-hand sides indicate 
the right-most digits of $(2m)_p$ and $(2(n-m))_p$ whose precise
values are irrelevant. The sum of $2m$ and $2(n-m)$ is $2n$
whose $p$-adic representation has the form $(e)_p\ *$. 
Hence, when adding the two numbers on the right-hand sides of
\eqref{eq:e-fA}, at least one carry must occur --- namely one from the
$p^0$-digit to the $p^1$-digit. By Kummer's theorem in
Lemma~\ref{lem:5},
the consequence is that the binomial
coefficient $\binom {2n}{2m}$ is divisible by~$p$. Therefore, again,
the corresponding summand is divisible by~$p^e$.

Finally, if $\cl{\frac {ep^2} {2}}\le m<n$, by the induction
hypothesis (for~$n$), the number $v(m)$ is divisible by~$p^e$, which of
course implies divisibility of the corresponding summand by~$p^e$.

\medskip
This concludes the induction step, and, thus, the proof of the theorem.
\end{proof}

\begin{remarknu} \label{rem:4}
An examination of the above arguments reveals that the product
$\Pi_3(n)$ in the definition~\eqref{eq:Rek2}
could have been replaced with any function $f(n)$ that
satisfies the $p$-divisibility property in~\eqref{eq:4j-3+}.
\end{remarknu}

\section{The inverse of the matrix $\mathbf R$
  modulo prime powers $p^e$ with $p\equiv3~(\text{mod }4)$}
\label{sec:4}

The goal of this section is to derive a divisibility result for the
matrix entries $R^{-1}(n,k)$ by prime powers~$p^e$ with $p\equiv3$~(mod~4)
that would allow us to carry through an inductive proof of
Theorem~\ref{thm:main}(1) via the use of the relation~\eqref{eq:d-v}.
This result is presented in Theorem~\ref{thm:9}. It is
based on Proposition~\ref{prop:n-k-m} which is stated and proved before.

\medskip
We start by deriving an explicit expression for $R^{-1}(2n,2k)$,
the other entries of the matrix $\mathbf
R^{-1}=(R^{-1}(n,k))_{n,k\ge0}$ being zero due to
the checkerboard pattern of the matrix; cf.\ Proposition~\ref{prop:1}.
By the definition \eqref{eq:R^{-1}}, we have
\begin{align}
\notag
R^{-1}&(2n,2k)=2^{n-k}\frac {(2n-1)!} {(2k-1)!}\coef{t^{2n-2k}}
\big(U(t)/t\big)^{-2n}\\[1mm]
\notag
&=2^{n-k}\frac {(2n-1)!} {(2k-1)!}\coef{t^{2n-2k}}
\sum_{m\ge0}\binom {-2n}m\big(U(t)/t-1\big)^m\\[1mm]
\notag
&=2^{n-k}\frac {(2n-1)!} {(2k-1)!}\coef{t^{2n-2k}}
\sum_{m\ge0}(-1)^m\binom {2n+m-1}m
\Bigg(\sum_{j\ge1}\frac {u(j)} {(2j+1)!}t^{2j}\Bigg)^m\\[1mm]
\notag
&=2^{n-k}\frac {(2n-1)!} {(2k-1)!}
\sum_{m\ge0}(-1)^m\binom {2n+m-1}m\\[1mm]
\notag
&\kern1cm
\cdot
\underset{c_1=0}{\sum_{(c_i)\in\mathcal P^o_{2n-2k+m,m}}}
\frac {m!} {3!^{c_3}c_3!\,5!^{c_5}c_5!\cdots (2n-2k+1)!^{c_{2n-2k+1}}c_{2n-2k+1}!}
\prod _{i=1} ^{2n-2k+1}
{u^{c_i}\big(\tfrac {i-1} {2}\big)}\\[1mm]
&=
\sum_{m\ge0}\
\underset{c_1=0}{\sum_{(c_i)\in\mathcal P^o_{2n-2k+m,m}}}
(-1)^m2^{n-k}
\frac {(2n+m-1)!}
{(2k-1)!\,
\prod _{i=1} ^{2n-2k+1}i!^{c_i}c_i!}
\prod _{i=1} ^{2n-2k+1}
{u^{c_i}\big(\tfrac {i-1} {2}\big)}.
\label{eq:R^(-1)p}
\end{align}
Here, the symbol
$\mathcal P^{o}_{N,K}$ stands for the set of all
tuples $(c_1,c_2,\dots,c_N)$ of
non-negative integers~$c_i$ for which $c_{2j}=0$ for all~$j$, and
which satisfy
\begin{align} \label{eq:ci1}
c_1+c_3+\dots+c_{2n-1}&=K,\\[1mm]
c_1+3c_3+\dots+(2n-1)c_{2n-1}&=N,
\label{eq:ci2}
\end{align}
with $n=\cl{N/2}$.
It should be noted that, if $N$ and $K$ do not have the same parity, then
the set $\mathcal P^{o}_{N,K}$ is empty. 

The next proposition provides a lower bound on the $p$-divisibility
of (the essential part of) the summand on the right-hand side
of~\eqref{eq:R^(-1)p}.

\begin{proposition} \label{prop:n-k-m}
Let $n,k,m,e,f$ be positive integers with $n\ge k,$ and let
$p$ be a prime number with $p\equiv3$~{\em(mod $4$)}.
If $2n\ge ep^2$, $(f-1)p^2\le 2k-1<fp^2,$
and $(c_i)\in \mathcal P^{o}_{2n-2k+m,m}$ 
with $c_1=0$, then the expression
\begin{equation} \label{} 
F(n,k,m,(c_i)):=\frac {(2n+m-1)!} {(2k-1)!\,
\prod _{i=1} ^{2n-2k+1}i!^{c_i}\,c_i!}\prod _{i=1} ^{2n-2k+1}
{u^{c_i}\big(\tfrac {i-1} {2}\big)}
\end{equation}
is divisible by~$p^{e-f+1}$.
Furthermore, if $f=1$, then the above expression multiplied by~$v(k)$
is divisible by~$p^{e+1}$.
\end{proposition}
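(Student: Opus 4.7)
The plan is to bound the two factors of $F(n,k,m,(c_i))$ separately --- the combinatorial quotient
$$M := \frac{(2n+m-1)!}{(2k-1)! \prod_i (i!)^{c_i} c_i!}$$
and the $u$-product $\prod_i u^{c_i}((i-1)/2)$ --- and then combine them. The identity $\sum_i ic_i = 2n - 2k + m$ together with $\sum_i c_i = m$ gives $\sum_i c_i(i-1) = 2(n-k)$, and the hypotheses $2n \ge ep^2$ together with $2k \le fp^2$ (which follows from $2k - 1 < fp^2$ since both sides are integers) yield the key inequality
$$\sum_i c_i(i-1) = 2(n-k) \ge (e-f)p^2.$$

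For the $u$-product I would invoke Theorem~\ref{thm:1}: it guarantees $v_p(u(j)) \ge e'$ whenever $j \ge \lfloor (e'-1)p^2/2 \rfloor$ with $e' \ge 2$, so $v_p(u((i-1)/2))$ admits a step-function lower bound in $i$ that jumps by $1$ near each multiple of $p^2$. Since only odd $i \ge 3$ actually contribute (as $c_1 = 0$ and all even-index $c_i$ vanish), averaging this step function against the weights $(c_i)$ subject to $\sum c_i(i-1) \ge (e-f)p^2$ will yield
$$\sum_i c_i\, v_p\!\left(u((i-1)/2)\right) \ge e - f.$$
For the combinatorial factor $M$, Legendre's formula (Lemma~\ref{lem:4}) delivers
$$(p-1)\,v_p(M) = s_p(2k-1) + \sum_i c_i\, s_p(i) + \sum_i s_p(c_i) - s_p(2n+m-1) - m,$$
which is non-negative because $M$ is an integer (it counts partitions of $\{1,\dots,2n+m-1\}$ into one distinguished block of size $2k-1$ and unordered blocks of the various odd sizes). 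The argument then splits into a case analysis on the shape of $(c_i)$: when its support lies in indices $i$ comfortably above the thresholds $2\lfloor (e'-1)p^2/2 \rfloor + 1$, the step-function estimate already produces $v_p \ge e - f + 1$ from the $u$-factors alone. In the borderline configurations, where $(c_i)$ clusters just above such a threshold and the step-function bound is tight, one invokes Kummer's theorem (Lemma~\ref{lem:5}) to detect a forced carry in the base-$p$ addition $(2k-1) + \sum_i ic_i = 2n+m-1$, and this carry supplies the missing factor of $p$ via $v_p(M)$.

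The refinement for $f = 1$ requires gaining one further factor of~$p$. Since $2k - 1 < p^2$ here, the residue of $k$ modulo~$p$ is directly visible in its $p$-adic expansion, and Lemma~\ref{lem:v-klein} identifies many residue classes in which $v_p(v(k)) \ge 1$; when $k$ lies in such a class, $F \cdot v(k)$ automatically picks up the extra $p$. For the remaining classes, the complementary constraint on the base-$p$ digits of $2k-1$ forces an additional carry in the Kummer analysis of $M$, again supplying the missing factor. The principal obstacle will be the bookkeeping that certifies that the step-function bound for the $u$-product and the Kummer carry count for $M$ always combine correctly across every configuration of $(c_i)$, and in particular ruling out a ``resonance'' in which both estimates are simultaneously tight without the totals aligning; here the strengthened divisibility $v_p(u((p-1)/2)) \ge 2$ available when $p \equiv 3 \pmod 4$, proved at the end of Lemma~\ref{lem:u-klein}, is precisely what is needed to tip the balance.
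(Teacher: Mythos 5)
Your decomposition of $F$ into the multinomial factor $M$ and the $u$-product, and your toolbox (Legendre, Kummer, Theorem~\ref{thm:1}, Lemmas~\ref{lem:u-klein} and~\ref{lem:v-klein}), do match the paper's. But the central quantitative step is not correct. The claim that the step-function lower bound for $v_p(u(\cdot))$, averaged against the weights $(c_i)$ under the constraint $\sum_i c_i(i-1)=2(n-k)\ge(e-f)p^2$, yields $\sum_i c_i\,v_p\big(u(\tfrac{i-1}{2})\big)\ge e-f$ is false: the step function vanishes on every small odd $i$ whose index $\tfrac{i-1}{2}$ lies in none of the residue classes of Lemma~\ref{lem:u-klein}. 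For instance, for $p\ge5$ put all the weight on $i=3$ (so $c_3=m=n-k$ and all other $c_i=0$); then the $u$-product is the power $u(1)^m=6^m$ and contributes nothing $p$-adically, while $e-f$ can be arbitrarily large. In such configurations the entire divisibility $p^{e-f+1}$ must be extracted from $v_p(M)$, so it is not true that carries are needed only in ``borderline'' cases to supply ``the missing factor of $p$'': one may need $e-f+1$ of them, and a single forced carry detected via Kummer's theorem accounts for only one power of~$p$.

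The mechanism that is missing is the paper's inequality \eqref{eq:C-e}: writing the addition $(2k-1)+i_1+\cdots+i_m=2n+m-1$ and tracking its $p^2$-digit, one obtains $(f-1)+j_1+\cdots+j_m+C\ge e$, where $j_\ell=\lfloor i_\ell/p^2\rfloor$ and $C$ is the number of carries into the $p^2$-digit; the deficit left by the $u$-factors (which contribute $\sum_\ell (j_\ell+1)$ for the $i_\ell\ge p^2$) is then covered exactly by $C$, giving $v_p(F)\ge e-f+1$ uniformly rather than only in tight cases. A second point you pass over: your $M$ carries the factors $c_i!$ in its denominator, so $v_p(M)$ equals the carry count \emph{minus} $\sum_i v_p(c_i!)$; the paper must separately compensate this loss (inequality \eqref{eq:p0-1}) using the carries out of the $p^0$-digit together with the extra $+1$'s that Lemma~\ref{lem:u-klein} supplies for indices $i=ap+b$ with $0\le b<a$. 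Without both ingredients the proposed combination of estimates does not close; the same gap then undermines the $f=1$ refinement, which in the paper is obtained by locating one additional unit on top of the already established bound $v_p(F)\ge e$.
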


\begin{proof}
As a consequence of Kummer's theorem in Lemma~\ref{lem:5}
(the multinomial coefficient below can be written as a product
of binomial coefficients), we have
\begin{multline} \label{eq:multinom-carries}
v_p\left(\frac {(2n+m-1)!} {(2k-1)!
\prod _{i=1} ^{2n-2k+1}i!^{c_i}}\right)\\
=\#\big(\text{carries when performing the addition
$(2k-1)+\textstyle\sum_{i=1}^{2n-2k+1}c_i\cdot i$}\big).
\end{multline}
Here, ``performing the addition $(2k-1)+\sum_{i=1}^{2n-2k+1}c_i\cdot i$" means
that we start with $(2k-1)_p$ (the $p$-adic representation of~$2k-1$),
then add $(1)_p$ to it $c_1$ many times,
then add $(3)_p$ to it $c_3$ many times (the reader should recall that
even-indexed $c_i$'s are zero by definition of~$\mathcal
P^{o}_{2n-2k+m,m}$), etc. Carries are recorded at each single addition.
Moreover, we have
$$
v_p\big({u\big(\tfrac {i-1} {2}\big)}\big)
\ge\begin{cases}
\fl{\tfrac {i} {p^2}}+1,&\text{for }i\ge p^2,\\
1,&\text{for }i=ap+b\text{ with $1\le a\le p-1$ and $0\le b<a$,}
\end{cases}
$$
the first alternative being due to Theorem~\ref{thm:1},
and the
second alternative being due to Lemma~\ref{lem:u-klein}.

Combining the last inequality with~\eqref{eq:multinom-carries}, we
obtain 
\begin{multline} \label{eq:F(n,k,m,c)1}
v_p\big(F(n,k,m,(c_i))\big)\\\ge
\#\big(\text{carries when performing the addition
  $(2k-1)+\textstyle\sum_{i=1}^{2n-2k+1}c_i\cdot i$}\big)\\
-\sum_{i=1}^{2n-2k+1}v_p(c_i!)
+\sum_{i=p^2}^{2n-2k+1}c_i\cdot\left(\fl{\tfrac {i} {p^2}}+1\right)
+{\sum}{}^{\displaystyle\prime}c_i,
\end{multline}
where $\sum\!{}^\prime$ is over all odd~$i$ with $1\le i<p^2$ and
$i=ap+b$ for some $a$ and~$b$ with $1\le a\le p-1$ and $0\le b<a$.

\medskip
We begin with the negative term in~\eqref{eq:F(n,k,m,c)1},
namely $-\sum_{i=1}^{2n-2k+1}v_p(c_i!)$. By Legendre's formula
in Lemma~\ref{lem:4}, we have $v_p(c_i!)\le \frac {c_i} {p-1}$, so
that 
$$
-\sum_{i=1}^{2n-2k+1}v_p(c_i!)\ge -\sum_{i=1}^{2n-2k+1}\frac {c_i} {p-1}
=-\frac {m} {p-1},
$$
the last step being due to the fact that the tuple $(c_i)$ is in
$\mathcal P^{o}_{2n-2k+m,m}$.
To compensate this term, we consider the $p^0$-digits of $i_1,i_2,\dots,i_m$,
the carries that they cause in the addition $i_1+i_2+\dots+i_m$,
and contributions from $\sum\!{}^\prime c_i$
on the right-hand side of~\eqref{eq:F(n,k,m,c)1}. Let
us assume that exactly $t$ of the $i_\ell$'s are $\equiv0,1$~(mod~$p$).
Then we get
\begin{multline} \label{eq:p0-1}
\#\big(\text{carries from the $p^0$-digit to the
$p^1$-digit when doing the addition
  $\sum_{i=1}^{2n-2k+1}c_i\cdot i$}\big)\\
+{\sum}{}^{\displaystyle\prime}c_i
-\sum_{i=1}^{2n-2k+1}v_p(c_i!)
\ge \fl{\tfrac {2(m-t)} {p}}+t-\tfrac {m} {p-1}
\ge \fl{\tfrac {2m} {p}}-\tfrac {m} {p-1}\ge 0
\end{multline}
as long as $m\ge p$. However, if $m<p$, then the sum
$\sum_{i=1}^{2n-2k+1}v_p(c_i!)$ equals zero, so that the
left-hand side in~\eqref{eq:p0-1} is also non-negative
in this case. 

\medskip
For the next step, in order to ease notation, let
$$
(i_1,i_2,\dots,i_m)=(3^{c_3},5^{c_5},\dots),
$$
where $i^{c_i}$ stands for the sequence $i,i,\dots,i$,
with $i$ repeated $c_i$~times. We assume that $j_\ell p^2\le
i_\ell<(j_\ell +1)p^2$ for all~$\ell$. Then, when concentrating
on the $p^2$-digit in the $p$-adic representations of $2n+m-1$, $2k-1$,
$i_1$, $i_2$, \dots,~$i_m$ while performing
the addition described on the right-hand side
of~\eqref{eq:multinom-carries}, namely
\begin{equation} \label{eq:addition} 
(2k-1)+i_1+i_2+\dots+i_m
\end{equation}
(resulting in $2n+m-1$),
we may extract the inequality
\begin{equation} \label{eq:C-e} 
(f-1)+j_1+j_2+\dots+j_m+C
\ge e,
\end{equation}
where $C$ is the number of carries from the $p^1$- to the
$p^2$-digit when performing the addition~\eqref{eq:addition}.

\medskip
If we now use \eqref{eq:p0-1} and~\eqref{eq:C-e} 
in~\eqref{eq:F(n,k,m,c)1}, then we obtain
\begin{multline} \label{eq:F(n,k,n,c)2}
v_p\big(F(n,k,m,(c_i))\big)\ge
C+\sum_{i=p^2}^{2n-2k+1}c_i\cdot\left(\fl{\tfrac {i} {p^2}}+1\right)\\
\ge \big(e-(f-1)-j_1-j_2-\dots- j_m\big)
+j_1+j_2+\dots +j_m=e-f+1.
\end{multline}

\medskip
Finally we address the case where $f=1$. The previous arguments show
that
$$v_p\big(F(n,k,m,(c_i))\big)\ge e.$$
Hence, the task is to
find --- so-to-speak --- an additional $+1$ somewhere, which may also come
from $v_p(v(k))$. 

Inspection of \eqref{eq:F(n,k,n,c)2} shows that, using the earlier
notation, we gain such an additional $+1$ whenever one of the
$i_\ell$'s is at least~$p^2$ since in the
inequality~\eqref{eq:F(n,k,n,c)2} we dropped the $+1$ in
$\left(\fl{\tfrac {i} {p^2}}+1\right)$ on the right-hand side.
Hence, from now on, we may assume that $i_\ell<p^2$ for all~$\ell$.

We may furthermore assume $m<p$ because for $m\ge p$ we have actually
strict inequality in~\eqref{eq:p0-1} thereby having again found an
additional~$+1$. In particular, as already remarked earlier, the
restriction $m<p$ implies that the sum $\sum_{i=1}^{2n-2k+1}v_p(c_i!)$
equals zero so that the argument in~\eqref{eq:p0-1} is not required,
and consequently no carries from the $p^0$-digit to the $p^1$-digit
need to be considered at this point.

Since in the current case $2k-1<p^2$, from~\eqref{eq:C-e} we 
deduce that the number $C$ of carries from before is at least~$e$.
If there is an $i_\ell$ of the form $i_\ell=ap+b$ with
$1\le a\le  p-1$ and $0\le b<a$,
then we get an additional $+1$ from $\sum\!{}^\prime c_i$
in~\eqref{eq:F(n,k,m,c)1}. If $k=ap+b$ or $k=ap+\frac {p+1} {2}+b$
with $1\le a\le \frac {p-1} {2}$ and $1\le b<a$ so that $2k-1=Ap+B$
with $2\le A\le p-1$ and $0\le B\le A-3$ then $v_p(v(k))\ge1$ by
Lemma~\ref{lem:v-klein}. Therefore the remaining case to discuss is
when $i_\ell=a_\ell p+b_\ell$ with $a_\ell\le b_\ell$, $1\le \ell\le
m$, and $2k-1=Ap+B$ 
with $A\le B+1$. Since the $i_\ell$'s are odd, we have actually 
$a_\ell<b_\ell$ for all~$\ell$. 
Since $m\ge 1$, the above conditions imply that
\begin{equation} \label{eq:a-b-sum} 
A+a_1+a_2+\dots+a_m\le (B+1)+(b_1-1)+b_2+\dots+b_m.
\end{equation}
We know that the number of carries $C$ from the $p^1$-digit to the
$p^2$-digit when adding $(2k-1)+i_1+i_2+\dots+i_m$ is at least
$e\ge1$. Hence, either 
$B+b_1+b_2+\dots+b_m\ge p$, thus creating a carry from the
$p^0$-digit to the $p^1$-digit when adding $(2k-1)+i_1+i_2+\dots+i_m$,
or $A+a_1+a_2+\dots+a_m\ge p$. But then \eqref{eq:a-b-sum}
implies again $B+b_1+b_2+\dots+b_m\ge p$ with the consequence of an
additional carry. Consequently, we have found the additional $+1$ in
all cases.

\medskip
This completes the proof of the proposition.
\end{proof}

\begin{theorem} \label{thm:9}
Let $n,k,e,f$ be non-negative integers with $n\ge k,$ and let
$p$ be a prime number with $p\equiv3$~{\em(mod $4$)}.
If $n\ge\cl{\frac {ep^2} {2}}$ and $k<\cl{\frac {fp^2} {2}},$
then $R^{-1}(2n,2k)$ is divisible by~$p^{e-f+1}$.
Moreover, if $f=1$ then $R^{-1}(2n,2k)v(k)$ is divisible by~$p^{e+1}$.
\end{theorem}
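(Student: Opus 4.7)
The plan is to derive Theorem~\ref{thm:9} directly from the explicit expansion~\eqref{eq:R^(-1)p} of $R^{-1}(2n,2k)$, using Proposition~\ref{prop:n-k-m} as the workhorse. First I would observe that the right-hand side of~\eqref{eq:R^(-1)p} expresses $R^{-1}(2n,2k)$ as $2^{n-k}$ times a finite signed sum of the quantities $F(n,k,m,(c_i))$. The sum is finite because the constraints defining $\mathcal{P}^{o}_{2n-2k+m,m}$, together with $c_1 = 0$, force the minimum of $\sum_i i\,c_i$ at fixed $\sum_i c_i = m$ to be $3m$, so only $m$ with $3m \le 2n-2k+m$, i.e., $m \le n-k$, contribute. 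Since $p$ is odd, the factor $2^{n-k}$ is a $p$-adic unit, so it suffices to bound $v_p\big(F(n,k,m,(c_i))\big)$ uniformly over the contributing tuples and sum.

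Next I would verify the hypotheses of Proposition~\ref{prop:n-k-m} for each term. The bound $n \ge \cl{ep^2/2}$ immediately gives $2n \ge ep^2$. For the condition on $k$, let $f'$ be the unique positive integer with $(f'-1)p^2 \le 2k-1 < f'p^2$, which exists whenever $k \ge 1$. The hypothesis $k < \cl{fp^2/2}$ unfolds (irrespective of the parity of $fp^2$) to $2k-1 < fp^2$, hence $f' \le f$. Proposition~\ref{prop:n-k-m} then yields $v_p\big(F(n,k,m,(c_i))\big) \ge e - f' + 1 \ge e - f + 1$ for every nonzero summand, and the first assertion follows by summation. The boundary case $k = 0$ is trivial, since by Proposition~\ref{prop:1} one has $R^{-1}(2n,0) = 0$ for $n \ge 1$.

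For the sharper second assertion, I would specialise to $f = 1$, so that $1 \le k < \cl{p^2/2}$ and $f' = 1$. The improvement built into the final clause of Proposition~\ref{prop:n-k-m} then gives $v_p\big(F(n,k,m,(c_i))\,v(k)\big) \ge e+1$ for every contributing tuple; multiplying each summand of~\eqref{eq:R^(-1)p} by $v(k)$ and using once more that $2^{n-k}$ is a $p$-adic unit yields $v_p\big(R^{-1}(2n,2k)\,v(k)\big) \ge e+1$, as required. The substantive $p$-adic analysis is entirely encapsulated in Proposition~\ref{prop:n-k-m}; the only point that requires care here is the mismatch between the theorem's hypothesis $k<\cl{fp^2/2}$ and the proposition's bracketing $(f-1)p^2 \le 2k-1 < fp^2$, which is resolved by passing to the sharpest admissible $f'$. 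In particular, no genuine obstacle is expected beyond this bookkeeping, as the delicate combinatorial and $p$-adic estimates have already been discharged at the level of Proposition~\ref{prop:n-k-m}.
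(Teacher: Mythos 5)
Your proposal is correct and is essentially the paper's own proof: the paper also deduces Theorem~\ref{thm:9} by inserting Proposition~\ref{prop:n-k-m} into the expansion~\eqref{eq:R^(-1)p}, using that $2^{n-k}$ is a $p$-adic unit. Your additional bookkeeping (finiteness of the sum over $m$, the passage from $k<\cl{fp^2/2}$ to $(f'-1)p^2\le 2k-1<f'p^2$ with $f'\le f$, and the trivial case $k=0$) is exactly the routine verification the paper leaves implicit.
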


\begin{proof}
The result follows immediately when using Proposition~\ref{prop:n-k-m}
in~\eqref{eq:R^(-1)p}.
\end{proof}

\section{The sequence $(d(n))_{n\ge0}$ modulo prime powers $p^e$ with
$p\equiv3~(\text{mod }4)$}
\label{sec:5}

We are now in a position to prove our first main result.
The following theorem proves Conjecture~18(3) in \cite{WakhAA}, in the
stronger form given in Theorem~\ref{thm:main}(1).

\begin{theorem} \label{thm:10}
Given a prime~$p$ with $p\equiv3$~{\em(mod~$4$)} and an integer~$e\ge2,$ 
the number $d(n)$ is divisible by $p^e$ for $n\ge \cl{\frac {(e-1)p^2} {2}}$.
\end{theorem}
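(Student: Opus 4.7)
The strategy is to invoke the inversion formula~\eqref{eq:d-v}, namely
\[
d(n)=\sum_{k=0}^{n}R^{-1}(2n,2k)\,v(k),
\]
and to show that, under the hypothesis $n\ge \cl{(e-1)p^2/2}$, every summand on the right-hand side is individually divisible by~$p^e$. To this end I would stratify the summation index according to the unique integer $f\ge 1$ determined by $\cl{(f-1)p^2/2}\le k<\cl{fp^2/2}$, and treat each stratum by a careful combination of Theorems~\ref{thm:9} and~\ref{thm:2}.

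Concretely, I plan to apply Theorem~\ref{thm:9} throughout with its outer parameter set equal to $e-1$, which is legitimate thanks to the hypothesis on~$n$. In the slice $f=1$, the refined clause of Theorem~\ref{thm:9} states exactly that $R^{-1}(2n,2k)\,v(k)$ is divisible by $p^{(e-1)+1}=p^{e}$, so nothing more is needed there. In the intermediate slices $2\le f\le e$, Theorem~\ref{thm:9} yields $v_p\bigl(R^{-1}(2n,2k)\bigr)\ge e-f$, while Theorem~\ref{thm:2}, applied with its exponent $e$ replaced by~$f$ (admissible since $f\ge 2$), yields $v_p\bigl(v(k)\bigr)\ge f$; adding the two exponents recovers the required~$e$. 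For $f\ge e+1$, Theorem~\ref{thm:9} is no longer required, because $v(k)$ alone is already divisible by $p^{e+1}$ by Theorem~\ref{thm:2} applied with exponent $e+1$. Summing, $v_p(d(n))\ge e$, which is the desired conclusion.

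The truly substantive $p$-adic analysis has already been carried out in Sections~\ref{sec:3} and~\ref{sec:4}; the final step outlined above is a clean assembly. The only delicate bookkeeping is to confirm that the boundary cases $f=1$ and $f=e$ are handled correctly: the $f=1$ case consumes the extra factor of~$p$ supplied by the special clause of Theorem~\ref{thm:9} (without which the naive estimate would fall short by one factor of~$p$), while at $f=e$ the bound on~$R^{-1}(2n,2k)$ degenerates to the trivial one but is compensated by the full strength of Theorem~\ref{thm:2}. The main obstacle lies upstream in the preparation of Theorem~\ref{thm:9}, and in particular in its $f=1$ refinement; once that is in hand, Theorem~\ref{thm:10} follows by the brief combination above.
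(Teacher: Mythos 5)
Your proposal is correct and follows essentially the same route as the paper: both invoke \eqref{eq:d-v}, stratify $k$ by the index $f$ with $\cl{(f-1)p^2/2}\le k<\cl{fp^2/2}$, apply Theorem~\ref{thm:9} with its outer parameter equal to $e-1$ (using the $f=1$ supplement to absorb the slice where Theorem~\ref{thm:2} gives nothing), and combine $v_p(R^{-1}(2n,2k))\ge e-f$ with $v_p(v(k))\ge f$ elsewhere. Your bookkeeping of the boundary slices matches the paper's argument.
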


\begin{proof}We prove the assertion by induction on~$n$.
Let $n\ge\cl{\frac {(e-1)p^2} {2}}$.
We use the relation~\eqref{eq:d-v}. We consider some $k$
with $(f-1)p^2\le k<fp^2$.
If $f\ge2$, then Theoremxx~\ref{thm:9} and~\ref{thm:2}
imply
\begin{equation} \label{eq:vpR-1} 
v_p\big(R^{-1}(2n,2k)v(k)\big)\ge (e-f)+f=e.
\end{equation}
On the other hand, if $f=1$ then the supplement in Theorem~\ref{thm:9}
says that~\eqref{eq:vpR-1} also holds in this case.
In other words, each term on the right-hand side of 
\eqref{eq:d-v} is divisible by~$p^e$, hence so is $d(n)$, as desired.
\end{proof}

\section{The sequence $(v(n))_{n\ge0}$ modulo powers of $2$}
\label{sec:6}

The purpose of this section is to show periodicity of the sequence
$\big(v(n)\big)_{n\ge0}$ modulo powers of~2, together with a precise
statement on the period length. The corresponding
analysis is much more involved than the preceding ones modulo odd prime
powers. Our starting point is an expansion that is very similar in
spirit to the one in Section~\ref{sec:4},
namely~\eqref{eq:v(n)Expr}. As a matter of fact, the main results of this
section concern a polynomial refinement of $v(n)$ in which the product
$\Pi_3(j)=
\prod _{\ell=1} ^{j}(4\ell-3)^2$ gets replaced by a variable~$x(j)$, $j=0,1,\dots$, with
the only restriction that $x(0)=1$ and that both $x(1)$ and $x(2)$ are odd.

The main results in Theorems~\ref{prop:2} and~\ref{prop:2a} need
substantial preparations. These are the contents of
Lemma~\ref{lem:6}, Corollaries~\ref{lem:7} and \ref{lem:8}, and
Lemmas~\ref{lem:9}--\ref{lem:11}, which provide formulae and bounds
for the 2-adic valuation of the ratios of factorials that appear in the
expansion~\eqref{eq:v(n)Expr}.

\begin{lemma} \label{lem:6}
For all positive integers $n$ and $c_2$ with $n\ge 2c_2,$ we have
\begin{equation} \label{eq:c_2} 
v_2\left(\frac {(2n)!} {2!^{n-2c_2}(n-2c_2)!\,4!^{c_2}c_2!}\right)=
\#(\text{\em carries when adding $(n-2c_2)_2$ and $(2c_2)_2$}),
\end{equation}
where, as before, $(\al)_2$ denotes the $2$-adic representation of the
integer~$\al$.
\end{lemma}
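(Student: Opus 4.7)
The plan is to expand the $2$-adic valuation of the multinomial coefficient using Legendre's formula (Lemma~\ref{lem:4}) and then recognise the result as the digit-sum expression appearing in Lemma~\ref{lem:4-5}. Concretely, note that $v_2(2!)=1$ and $v_2(4!)=3$, so
\begin{equation*}
v_2\!\left(\frac{(2n)!}{2!^{n-2c_2}(n-2c_2)!\,4!^{c_2}c_2!}\right)
=v_2\big((2n)!\big)-(n-2c_2)-3c_2-v_2\big((n-2c_2)!\big)-v_2(c_2!).
\end{equation*}

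Next I would substitute Legendre's formula, using $v_2(N!)=N-s_2(N)$ together with the elementary identity $s_2(2N)=s_2(N)$ (multiplying by~$2$ merely shifts the binary representation). Substituting $v_2((2n)!)=2n-s_2(n)$, $v_2((n-2c_2)!)=(n-2c_2)-s_2(n-2c_2)$, and $v_2(c_2!)=c_2-s_2(c_2)$ into the display above, the pure-integer contributions collapse: the sum $2n - (n-2c_2) - 3c_2 - (n-2c_2) - c_2$ telescopes to~$0$, leaving
\begin{equation*}
v_2\!\left(\frac{(2n)!}{2!^{n-2c_2}(n-2c_2)!\,4!^{c_2}c_2!}\right)
=s_2(n-2c_2)+s_2(c_2)-s_2(n).
\end{equation*}

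Finally, since $s_2(c_2)=s_2(2c_2)$ and $n=(n-2c_2)+(2c_2)$, the right-hand side equals $s_2(n-2c_2)+s_2(2c_2)-s_2(n+2c_2-2c_2)$, which is precisely the digit-sum expression governed by Lemma~\ref{lem:4-5} with $p=2$, $A=n-2c_2$, and $B=2c_2$. Applying that lemma identifies this quantity with the number of carries when adding $(n-2c_2)_2$ and $(2c_2)_2$, completing the proof.

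There is no serious obstacle here: the argument is a direct bookkeeping exercise in Legendre's formula. The only mild care needed is to verify that the coefficient contributions from $2!^{n-2c_2}$ and $4!^{c_2}$ (that is, the $n-2c_2$ and the $3c_2$) cancel correctly against the linear parts of Legendre's formula applied to $(2n)!$, $(n-2c_2)!$, and $c_2!$, and to remember the shift invariance $s_2(2c_2)=s_2(c_2)$ so as to match the hypotheses of Lemma~\ref{lem:4-5}.
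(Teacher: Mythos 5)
Your proposal is correct and follows essentially the same route as the paper: expand the $2$-adic valuation via Legendre's formula, observe that the linear terms cancel leaving $s_2(n-2c_2)+s_2(2c_2)-s_2(n)$, and identify this with the carry count via Lemma~\ref{lem:4-5}. No issues.
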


\begin{proof}
By Legendre's formula in Lemma~\ref{lem:4}, we obtain
\begin{align*}
v_2&\left(\frac {(2n)!} {2!^{n-2c_2}(n-2c_2)!\,4!^{c_2}c_2!}\right)\\[1mm]
&\kern1.5cm
=
2n-s_2(2n)-(n-2c_2)-\big(n-2c_2-s_2(n-2c_2)\big)-3c_2-(c_2-s_2(c_2))\\[1mm]
&\kern1.5cm
=-s_2(n)+s_2(n-2c_2)+s_2(2c_2).
\end{align*}
By Lemma~\ref{lem:4-5},
this is indeed the number of carries when
adding $n-2c_2$ and $2c_2$ in their $2$-adic representations. 
\end{proof}

\begin{corollary} \label{lem:7}
For all positive integers $n,$ we have
$$
v_2\left(\frac {(2n)!} {2!^{n-2}(n-2)!\,4!}\right)=\begin{cases} 
v_2(n)-1,&\text{if $n$ is even,}\\[1mm]
v_2(n-1)-1,&\text{if $n$ is odd}.
\end{cases}
$$
\end{corollary}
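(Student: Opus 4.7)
The plan is to specialise Lemma~\ref{lem:6} to $c_2=1$ and then invoke Kummer's theorem (Lemma~\ref{lem:5}) to rewrite the resulting carry count as the $2$-adic valuation of a binomial coefficient. Setting $c_2=1$ in Lemma~\ref{lem:6} (so the hypothesis $n\ge 2c_2$ becomes $n\ge 2$, which is implicit in the corollary since $(n-2)!$ must be defined), the denominator $2!^{n-2c_2}(n-2c_2)!\,4!^{c_2}c_2!$ collapses to $2!^{n-2}(n-2)!\,4!$, and Lemma~\ref{lem:6} gives
$$
v_2\!\left(\frac{(2n)!}{2!^{n-2}(n-2)!\,4!}\right)
=\#\big(\text{carries when adding } (n-2)_2 \text{ and } (2)_2\big).
$$

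Since $(n-2)+2=n$, Kummer's theorem (Lemma~\ref{lem:5}) identifies this carry count with $v_2\binom{n}{2}$. Using $\binom{n}{2}=\tfrac12 n(n-1)$, one obtains
$$
v_2\!\binom{n}{2} \;=\; v_2(n)+v_2(n-1)-1.
$$

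Exactly one of the consecutive integers $n$, $n-1$ is odd and contributes $0$ to the above sum, so the expression collapses to $v_2(n)-1$ when $n$ is even, and to $v_2(n-1)-1$ when $n$ is odd, which is precisely the claimed formula. There is no genuine obstacle: the corollary is a one-line deduction from Lemma~\ref{lem:6}, Kummer's theorem, and the multiplicativity of~$v_2$; the only minor point to verify is the boundary case $n=2$ (which gives $(n-2)!=0!=1$ and both sides equal~$0$).
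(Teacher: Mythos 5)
Your proof is correct and follows essentially the same route as the paper: both specialise Lemma~\ref{lem:6} to $c_2=1$ and then evaluate the resulting carry count. The only difference is in that last evaluation: the paper inspects the binary representation of $n$ directly (writing $n=2^{v_2(n)}n_0$, etc.), whereas you run Kummer's theorem backwards to identify the carry count with $v_2\binom{n}{2}$ and then use $\binom{n}{2}=\tfrac12 n(n-1)$ together with the fact that exactly one of $n$, $n-1$ is even. Your finish is arguably cleaner, since it avoids the case analysis on binary digits; the same trick would also streamline the proof of Corollary~\ref{lem:8}, where the paper again resorts to digit inspection over four residue classes.
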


\begin{proof}
We appeal to Lemma~\ref{lem:6} with $c_2=1$.
If $n$ is even, say $n=2^{v_2(n)}n_0$ for an odd integer~$n_0$ and $v_2(n)\ge1$,
then the number of carries in \eqref{eq:c_2} is $v_2(n)-1$. On the other hand,
if $n$ is odd, say $n=2^{v_2(n-1)}n_0+1$ for an odd integer~$n_0$,
then this number of carries is $v_2(n-1)-1$. 
\end{proof}

\begin{corollary} \label{lem:8}
For all positive integers $n$, we have
$$
v_2\left(\frac {(2n)!} {2!^{n-4}(n-4)!\,4!^22!}\right)=\begin{cases} 
v_2(n)-2,&\text{if $n\equiv0$ {\em(mod $4$)},}\\[1mm]
v_2(n-1)-2,&\text{if $n\equiv1$ {\em(mod $4$)},}\\[1mm]
v_2(n-2)-2,&\text{if $n\equiv2$ {\em(mod $4$)},}\\[1mm]
v_2(n-3)-2,&\text{if $n\equiv3$ {\em(mod $4$)}.}
\end{cases}
$$
\end{corollary}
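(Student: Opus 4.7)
The plan is to apply Lemma~\ref{lem:6} directly with $c_2=2$. This reduces the computation of
$$v_2\!\left(\frac{(2n)!}{2!^{n-4}(n-4)!\,4!^{2}2!}\right)$$
to counting the carries that arise when adding $(n-4)_2$ and $(4)_2$ in binary. Since $4=100_2$ occupies only bit position~$2$, this is essentially a one-step addition at that position.

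Next I would write $n=4q+r$ with $0\le r\le 3$ and $q\ge 1$ (assuming $n\ge 4$, which is implicit for the factorial in the denominator to make sense), so that $n-4=4(q-1)+r$. The low two bits of $n-4$ coincide with those of $r$ and are untouched by the addition of $4$; hence every carry in the addition $(n-4)+4$ occurs at positions~$\ge 2$ and is in bijection with a carry in the addition $(q-1)+1=q$. The standard observation (already used in the proof of Corollary~\ref{lem:7}) is that the number of carries when adding $1$ to $q-1$ equals $v_2(q)$: if $q-1$ ends in exactly $v$ consecutive $1$'s, then those $1$'s become $0$'s, the next bit is flipped from $0$ to $1$, producing $v$ carries, and the resulting number $q$ satisfies $v_2(q)=v$.

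It then remains to rewrite $v_2(q)$ in terms of $v_2(n-r)$ in each of the four residue classes. In all four cases one has $n-r=4q$, hence $v_2(n-r)=v_2(q)+2$, and therefore the carry count equals $v_2(n-r)-2$. Matching $r\in\{0,1,2,3\}$ with the four cases in the statement yields the claimed formula. There is no real obstacle: the only point requiring slight care is the bookkeeping across the four residue classes, together with the observation that the edge case $q=1$ (i.e.\ $4\le n\le 7$) is handled uniformly since $v_2(1)=0$, in agreement with direct computation.
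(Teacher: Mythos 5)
Your proposal is correct and follows essentially the same route as the paper: both invoke Lemma~\ref{lem:6} with $c_2=2$ and then count the carries in the binary addition of $n-4$ and $4$. Your reduction via $n=4q+r$ to the addition $(q-1)+1$ is just a uniform way of carrying out the case analysis that the paper sketches and "leaves to the reader," and it checks out in all four residue classes.
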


\begin{proof}
Again, we appeal to Lemma~\ref{lem:6}, here with $c_2=2$.
If $n\equiv0$~(mod~$4$), say $n=2^{v_2(n)}n_0$ for an odd integer~$n_0$
and $v_2(n)\ge2$,
then the number of carries in \eqref{eq:c_2} is $v_2(n)-2$. 
The other cases are treated similarly. We leave the details to the reader.
\end{proof}

\begin{lemma} \label{lem:9}
For all positive integers $n$ and $c_2$ with $n\ge2c_2\ge6,$ we have
\begin{equation} \label{eq:max2} 
c_2+v_2\left(\frac {(2n)!} {2!^{n-2c_2}(n-2c_2)!\,4!^{c_2}c_2!}\right)\ge
1+\max\limits_{n-2c_2<i\le n}v_2(i).
\end{equation}
\end{lemma}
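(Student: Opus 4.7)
The plan is to interpret the $2$-adic valuation on the left-hand side of~\eqref{eq:max2} as $v_2\binom{n}{2c_2}$ via Lemma~\ref{lem:6}, and then to obtain the desired lower bound by comparing, for each $t\ge 1$, the number of multiples of~$2^t$ in $(n-2c_2,n]$ with the baseline coming from Legendre's formula.

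Concretely, Lemma~\ref{lem:6} rewrites the valuation in question as the number of carries when $n-2c_2$ and $2c_2$ are added in base~$2$. By Kummer's theorem (Lemma~\ref{lem:5}) this count equals $v_2\binom{n}{2c_2}$. Setting $M:=\max_{n-2c_2<i\le n}v_2(i)$, the asserted inequality~\eqref{eq:max2} becomes $c_2+v_2\binom{n}{2c_2}\ge 1+M$, which is trivial when $M\le c_2-1$. Hence the real task is the case $M\ge c_2$, where one has to prove $v_2\binom{n}{2c_2}\ge M-c_2+1$.

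For this I would use the identity
\[
v_2\binom{n}{2c_2}=\sum_{t\ge 1}\Big(a_t-\fl{2c_2/2^t}\Big),
\]
where $a_t$ denotes the number of multiples of~$2^t$ in $(n-2c_2,n]$. Two facts drive the count: first, the interval contains an integer $i^{\ast}$ with $v_2(i^{\ast})=M$, whence $a_t\ge 1$ for every $t\le M$; second, setting $T:=\fl{\log_2(2c_2)}$, one has $\fl{2c_2/2^t}=0$ for $t>T$, while for $t\le T$ the term $a_t-\fl{2c_2/2^t}$ is trivially non-negative. Combining these observations and using only the indices $T<t\le M$, one obtains $v_2\binom{n}{2c_2}\ge M-T$.

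The proof then concludes by observing that $T\le c_2-1$. This is equivalent to the elementary inequality $2c_2<2^{c_2}$, which is exactly where the hypothesis $2c_2\ge 6$ (i.e.\ $c_2\ge 3$) is needed; a one-line induction on~$c_2$ settles it. Chaining the inequalities yields $v_2\binom{n}{2c_2}\ge M-T\ge M-c_2+1$, which completes the argument. The only point that needs careful bookkeeping, in my view, is the correct splitting of the sum at the threshold~$T$; the remaining ingredients are essentially taken from Section~\ref{sec:aux} and Lemma~\ref{lem:6}.
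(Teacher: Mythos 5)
Your proposal is correct, and it takes a genuinely different route from the paper's proof. You identify the valuation in Lemma~\ref{lem:6} with $v_2\binom{n}{2c_2}$ (via Kummer's theorem, since $(n-2c_2)+2c_2=n$) and then expand it by Legendre's formula as $\sum_{t\ge1}\bigl(a_t-\fl{2c_2/2^t}\bigr)$, where $a_t$ counts multiples of $2^t$ in $(n-2c_2,n]$; every term is non-negative, each term with $\fl{\log_2(2c_2)}<t\le M$ contributes at least $1$, and the elementary inequality $2c_2<2^{c_2}$ (valid precisely for $c_2\ge3$, which is where the hypothesis $2c_2\ge6$ enters) converts the resulting bound $v_2\binom{n}{2c_2}\ge M-\fl{\log_2(2c_2)}$ into the required $M-c_2+1$. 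The paper instead works directly with the schematic $2$-adic representations of $n$ and $2c_2$, introduces parameters $\al$ (the position of a distinguished digit of $n$) and $\be$ (with $2^\be\le c_2<2^{\be+1}$), splits into the cases $n_1<2c_2$ and $n_1\ge 2c_2$, bounds the number of carries by $\al-\be-1$ in the first case, and invokes $2^\be\ge\be+2$ for $\be\ge2$ — which forces a separate treatment of the boundary case $2c_2=6$. Your argument buys uniformity: there is no case distinction on the relative sizes of $n_1$ and $2c_2$, no separate handling of $c_2=3$, and the maximum $M$ is used only through the single observation that $a_t\ge1$ for all $t\le M$. The paper's approach, on the other hand, is stylistically consistent with the carry-counting analyses used throughout Sections~\ref{sec:2}--\ref{sec:4} and with the companion Lemma~\ref{lem:10}, whose proof is obtained by perturbing the same setup.
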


\begin{proof}
Choose $\al$ and $\be$ such that $n=n_02^\al+n_1$, with $n_0$ odd and
 $n_1<2^{\be+2}$, and
$2^\be\le c_2<2^{\be+1}$. These conditions imply that the $2$-adic
representations can be schematically indicated as
\begin{alignat}2
\notag
&&\hphantom{\dots\,}\underset{\downarrow}{\scriptstyle \al}\kern24pt
\underset{\downarrow}{\scriptstyle \be+2}\kern-5pt
&\\
\notag
(n)_2&={}&\dots\,10\,\dots\,0&*\,\dots\\
(2c_2)_2&=&&\kern3pt1\,\dots
\label{eq:n-c_2}
\end{alignat}
with the meaning that the $1$ shown in the $2$-adic representation
of~$n$ is the $\al$-th digit counted from right (the counting starting with~$0$),
and that the left-most (non-zero) digit of the $2$-adic representation of~$2c_2$
is the $(\be+1)$-st digit, that is, $2c_2<2^{\be+2}$.

First, we assume that $\be\ge2$. The case of $\be=1$, which implies that
$2c_2=6$, will be disposed of at the end of this proof.

We distinguish two cases, depending on the relative sizes
of~$n_1$ and $2c_2$.

\medskip
{\sc Case 1: $n_1<2c_2$}. 
Here, by inspection of \eqref{eq:n-c_2}, we see that 
\begin{equation} \label{eq:max1} 
\max\limits_{n-2c_2<i\le n}v_2(i)= \al.
\end{equation}
On the other hand, by Lemma~\ref{lem:6}, 
the term $v_2(\,.\,)$ on the left-hand side of \eqref{eq:max2} equals
the number of carries when
adding $n-2c_2$ and $2c_2$ in their $2$-adic representations. 
Equivalently, this is the number of carries when performing the
subtraction of $(2c_2)_2$ from $(n)_2$.
Here, inspection of \eqref{eq:n-c_2} yields that this is at least
$\al-(\be+2)+1$. Thus, together with \eqref{eq:max1} and the elementary inequality
\begin{equation} \label{eq:2^h} 
2^\be\ge \be+2\quad \text{for }\be\ge2,
\end{equation}
we get 
\begin{multline*}
c_2+v_2\left(\frac {(2n)!}
{2!^{n-2c_2}(n-2c_2)!\,4!^{c_2}c_2!}\right)
\ge c_2+\al-\be-1\ge 2^\be+\al-\be-1\\
\ge \be+2+\al-\be-1
=\al+1\ge 1+\max\limits_{n-2c_2<i\le n}v_2(i).
\end{multline*}

\medskip
{\sc Case 2: $n_1\ge 2c_2$}. 
Now, by inspection of \eqref{eq:n-c_2}, we see that 
\begin{equation*} 
\max\limits_{n-2c_2<i\le n}v_2(i)\le \be+1.
\end{equation*}
Hence, using again \eqref{eq:2^h}, we get
$$
c_2+v_2\left(\frac {(2n)!}
{2!^{n-2c_2}(n-2c_2)!\,4!^{c_2}c_2!}\right)
\ge c_2\ge 2^\be
\ge \be+2
\ge 1+\max\limits_{n-2c_2<i\le n}v_2(i).
$$

\medskip
Finally we address the case where $\be=1$, and thus $2c_2=6$. In this
case, the schematic representation \eqref{eq:n-c_2} becomes
\begin{alignat*}2
\notag
&&\underset{\downarrow}{\scriptstyle \al}\kern29pt
\underset{\downarrow}{\scriptstyle 3}\kern1pt
&\\
\notag
(n)_2&={}&\dots\,10\,\dots\,0&*\!**\\
(6)_2&=&&\kern3pt110
\end{alignat*}
Here also, we must distinguish two cases depending on whether or not 
$n_1$ is greater than~6.

If $n_1<6$, then \eqref{eq:max1} holds. By Lemma~\ref{lem:6}, we then get
$$
c_2+v_2\left(\frac {(2n)!}
{2!^{n-2c_2}(n-2c_2)!\,4!^{c_2}c_2!}\right)
\ge 3+(\al-2)
= \al+1
\ge 1+\max\limits_{n-2c_2<i\le n}v_2(i).
$$

On the other hand, if $n_1\ge6$ then
$$
\max\limits_{n-2c_2<i\le n}v_2(i)\le 2,
$$
and we obtain
\begin{equation*} 
c_2+v_2\left(\frac {(2n)!}
{2!^{n-2c_2}(n-2c_2)!\,4!^{c_2}c_2!}\right)
\ge 3
\ge 1+\max\limits_{n-2c_2<i\le n}v_2(i).
\end{equation*}

\medskip
This completes the proof of the lemma.
\end{proof}

\begin{lemma} \label{lem:10}
For all positive integers $n$ and $c_2$ with $n\ge2c_2+3,$ we have
\begin{equation} \label{eq:max3} 
c_2+2+v_2\left(\frac {(2n)!} {2!^{n-2c_2-3}(n-2c_2-3)!\,4!^{c_2}c_2!\,6!}\right)\ge
1+\max\limits_{n-2c_2-3<i\le n}v_2(i).
\end{equation}
\end{lemma}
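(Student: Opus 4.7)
The plan is to factor out $\binom{2n}{6}$ from the left-hand side of~\eqref{eq:max3} and write
\begin{equation*}
\frac{(2n)!}{2!^{n-2c_2-3}(n-2c_2-3)!\,4!^{c_2}c_2!\,6!}
=\binom{2n}{6}\cdot\frac{(2n-6)!}{2!^{(n-3)-2c_2}((n-3)-2c_2)!\,4!^{c_2}c_2!},
\end{equation*}
so that the second factor has exactly the shape of the quantity treated in Lemmas~\ref{lem:6} and~\ref{lem:9}, but with $n$ replaced by $n-3$. A direct inspection of the product $(2n)(2n-1)(2n-2)(2n-3)(2n-4)(2n-5)$ versus $6!=720$ gives
\begin{equation*}
v_2\!\left(\binom{2n}{6}\right)=v_2\big(n(n-1)(n-2)\big)-1,
\end{equation*}
from which a short parity check yields the auxiliary inequality
\begin{equation*}
c_2+1+v_2\!\left(\binom{2n}{6}\right)\ge\max\{v_2(n-2),v_2(n-1),v_2(n)\}.
\end{equation*}
For even~$n$, exactly one of $n,n-2$ is $\equiv 2\pmod 4$ and provides the needed extra unit of valuation; for odd~$n$, the maximum on the right is $v_2(n-1)$ while $v_2\!\big(\binom{2n}{6}\big)=v_2(n-1)-1$, and the resulting deficit is absorbed by $c_2\ge 0$.

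The main argument then splits the range $n-2c_2-3<i\le n$ into a \emph{head} $\{n-2c_2-2,\dots,n-3\}$ and a \emph{tail} $\{n-2,n-1,n\}$, according to where the overall maximum of $v_2(i)$ is attained. If it lies in the tail, I drop the (non-negative integer) contribution of the second factor and invoke the auxiliary inequality above, which immediately gives~\eqref{eq:max3}. If it lies in the head and $c_2\ge 3$, I apply Lemma~\ref{lem:9} with $n$ replaced by $n-3$, obtaining $c_2+v_2(\text{second factor})\ge 1+\max_{\text{head}}v_2(i)$; the remaining summand $2+v_2\!\big(\binom{2n}{6}\big)\ge 0$ then suffices to reach~\eqref{eq:max3}.

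For the small cases $c_2\in\{0,1,2\}$ Lemma~\ref{lem:9} is not available, so the second factor has to be handled directly. When $c_2=0$ it reduces to the odd double factorial $(2n-7)!!$ and the head is empty, so the auxiliary inequality alone gives~\eqref{eq:max3}. When $c_2\in\{1,2\}$ the $2$-adic valuation of the second factor is provided by Corollaries~\ref{lem:7} and~\ref{lem:8} applied with $n\to n-3$; the argmax of $v_2$ over the (short) head is in each case precisely the integer whose $v_2$ these corollaries compute, so the bound they furnish matches $\max_{\text{head}}v_2(i)$ up to a constant, and the slack $2+v_2\!\big(\binom{2n}{6}\big)\ge 2$ closes the gap via a short residue-class analysis modulo~$2$, respectively modulo~$4$. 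The main technical obstacle is the case-bookkeeping in these small-$c_2$ subcases; however, the $\binom{2n}{6}$ factorisation is tailored so that the extra $+2$ on the left of~\eqref{eq:max3} (compared with the analogous~\eqref{eq:max2}) precisely compensates the loss from dividing by~$6!$, turning each subcase into a routine verification.
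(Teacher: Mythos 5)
Your proof is correct, but it takes a genuinely different route from the one in the paper. The paper does not factor anything out: it applies Legendre's formula directly, uses $s_2(2c_2)+s_2(3)\ge s_2(2c_2+3)$ to bound the $2$-adic valuation below by the number of carries in the addition $(n-2c_2-3)+(2c_2+3)$, and then reruns the entire case analysis of Lemma~\ref{lem:9} verbatim with $2c_2+3$ in place of $2c_2$; the only adjustment is that the parameter $\be$ may now be $0$, so \eqref{eq:2^h} must be weakened to $2^\be\ge\be+1$, and the resulting loss of $1$ is absorbed by the extra $+2$ on the left of \eqref{eq:max3} as compared with \eqref{eq:max2}. Your decomposition instead isolates $\binom{2n}{6}$, turns the remaining factor into exactly the quantity of Lemma~\ref{lem:6} with $n\to n-3$, and then invokes Lemma~\ref{lem:9} (respectively Corollaries~\ref{lem:7} and~\ref{lem:8}) as black boxes. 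I have checked the key points: the identity $v_2\bigl(\binom{2n}{6}\bigr)=v_2\bigl(n(n-1)(n-2)\bigr)-1$ and the auxiliary inequality $c_2+1+v_2\bigl(\binom{2n}{6}\bigr)\ge\max\{v_2(n-2),v_2(n-1),v_2(n)\}$ both hold (for even $n$ one even gets $v_2\bigl(\binom{2n}{6}\bigr)=\max\{v_2(n),v_2(n-2)\}$), the second factor has non-negative valuation by Lemma~\ref{lem:6}, and in the small cases $c_2\in\{1,2\}$ the corollaries give $v_2(Q)=\max_{\text{head}}v_2(i)-c_2$ exactly, so the required slack is only $2+v_2\bigl(\binom{2n}{6}\bigr)\ge1$, which is automatic. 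The trade-off is clear: your argument avoids repeating the two-case analysis of Lemma~\ref{lem:9} and exposes why the ``$+2$'' is exactly the right compensation (it is $v_2(6!)-v_2(2!^3)=4-3$ plus the loss from the odd factor of the tail), at the price of a head/tail split and separate treatment of $c_2\in\{0,1,2\}$, which the paper's rerun of the Lemma~\ref{lem:9} argument handles uniformly since it is valid for all $\be\ge0$.
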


\begin{proof}
By Legendre's formula in Lemma~\ref{lem:4}, we obtain
\begin{align*}
v_2&\left(\frac {(2n)!} {2!^{n-2c_2-3}(n-2c_2-3)!\,4!^{c_2}c_2!\,6!}\right)\\[1mm]
&\kern1.5cm
=
2n-s_2(2n)-(n-2c_2-3)-\big(n-2c_2-3-s_2(n-2c_2-3)\big)\\[1mm]
&\kern3cm
-3c_2-(c_2-s_2(c_2))-4\\[1mm]
&\kern1.5cm
=-s_2(n)+s_2(n-2c_2-3)+s_2(2c_2)+s_2(3)\\[1mm]
&\kern1.5cm
\ge-s_2(n)+s_2(n-2c_2-3)+s_2(2c_2+3)\\[1mm]
&\kern1.5cm
=\#(\text{carries when adding $(n-2c_2-3)_2$ and $(2c_2+3)_2$}).
\end{align*}

From here on we follow the setup and the arguments in the proof of
Lemma~\ref{lem:9}, with $2c_2+3$ in place of $2c_2$; see, in particular, 
the schematic representation in \eqref{eq:n-c_2} with that
replacement.
There is one notable difference, though: here the lower bound
on $c_2$ is zero (as opposed to~$3$ in the proof of
Lemma~\ref{lem:9}). This entails $2c_2+3\ge3$, and hence the lower bound
on the parameter~$\be$ is $\be\ge0$. Thus, instead of \eqref{eq:2^h}, 
we can only use $2^\be\ge \be+1$ here, a bound that is by $1$ weaker. 
This is balanced by the additional summand $+2$
on the left-hand side of \eqref{eq:max3}
when compared to the left-hand side of \eqref{eq:max2}.
\end{proof}

In order to have a convenient notation for the following
considerations, inspired by~\cite{ScheAA}, we write
$\mathcal P_{N,K}$ for the set of all tuples $(c_1,c_2,\dots,c_N)$
of non-negative integers with
\begin{align} \label{eq:ci1alle}
c_1+c_2+\dots+c_N&=K,\\[1mm]
c_1+2c_2+\dots+Nc_{N}&=N.
\label{eq:ci2alle}
\end{align}

\begin{lemma} \label{lem:11}
For all positive integers $n$ and tuples $(c_1,c_2,\dots,c_n)$ in
$$\mathcal P_{n,k}\backslash\{(n,0,\dots,0),(n-2,1,0,\dots,0),
(n-4,2,0,\dots,0)\},$$ 
we have
\begin{equation} \label{eq:max4} 
n-k+v_2\left(\frac {(2n)!} 
{2!^{c_1}c_1!\,4!^{c_2}c_2!\cdots (2n)!^{c_n}c_n!} \right)\ge
1+\max\limits_{c_1<i\le n}v_2(i).
\end{equation}
\end{lemma}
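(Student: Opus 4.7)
The plan starts with a preliminary computation. Using Legendre's formula (Lemma~\ref{lem:4}) together with the constraints \eqref{eq:ci1alle}--\eqref{eq:ci2alle}, one obtains the closed-form identity
\begin{equation*}
S \,:=\, n - k + v_2\!\left(\frac{(2n)!}{\prod_{i=1}^n (2i)!^{c_i}\, c_i!}\right) \,=\, -s_2(n) + c_2 + \sum_{i \ge 3} c_i\bigl(i - 2 + s_2(i)\bigr) + \sum_{i \ge 1} s_2(c_i),
\end{equation*}
in which the coefficient $i - 2 + s_2(i)$ of each $c_i$ with $i \ge 3$ is at least $3$. Writing $M := \max_{c_1 < i \le n} v_2(i)$, I must establish $S \ge 1 + M$ for every non-exceptional tuple, and will split the argument into three cases according to the pattern of $(c_i)_{i \ge 3}$.

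In the first case, all $c_i$ with $i \ge 3$ vanish, and the exclusion of the three exceptional tuples forces $c_2 \ge 3$, hence $n \ge 2c_2 \ge 6$. The two sides of \eqref{eq:max4} then coincide term-by-term with those of \eqref{eq:max2} (including the range of the maximum), so Lemma~\ref{lem:9} yields the conclusion. In the second case, $c_3 = 1$ and $c_i = 0$ for $i \ge 4$; here \eqref{eq:max4} reduces to \eqref{eq:max3} and is settled by Lemma~\ref{lem:10}.

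The third and most delicate case handles all tuples with either $c_3 \ge 2$ or with some $c_{i_0} \ge 1$ for $i_0 \ge 4$. Here my plan is an induction on $T := \sum_{i \ge 3} c_i$: choose any $i_0 \ge 3$ with $c_{i_0} \ge 1$, decrease $c_{i_0}$ by one to pass to a tuple in $\mathcal{P}_{n - i_0,\, k - 1}$ (with $c_1$ unchanged), and invoke the inductive hypothesis, or one of Cases~1--2, on the reduced tuple. The closed-form identity shows that the drop in $S$ equals $(i_0 - 2 + s_2(i_0)) + (s_2(c_{i_0}) - s_2(c_{i_0} - 1))$, which is bounded below by $i_0 - 2 + s_2(i_0) - v_2(c_{i_0})$; meanwhile, the maximum $M$ can change by at most $\max_{n - i_0 < i \le n} v_2(i)$ when the range is extended from $(c_1, n - i_0]$ to $(c_1, n]$.

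The main obstacle is balancing these two changes. When $n$ sits near a high power of $2$, the quantity $\max_{n - i_0 < i \le n} v_2(i)$ can be much larger than the naive bound $\lfloor \log_2 i_0 \rfloor$; in such cases one must exploit the fact that $n$ near a high power of $2$ forces $s_2(n)$ to be small (for instance $s_2(n) = 1$ when $n$ itself is a power of $2$), so that the $-s_2(n)$ term in the closed-form identity provides compensating slack. A careful $2$-adic case analysis, similar in spirit to the bookkeeping in the proofs of Lemmas~\ref{lem:9} and~\ref{lem:10}, is what is needed to close the gap. A secondary complication is the boundary case where the reduced tuple itself happens to be exceptional in $\mathcal{P}_{n - i_0,\, k - 1}$; then the inductive step must be replaced by a direct comparison involving Corollary~\ref{lem:7} or~\ref{lem:8} applied to the reduced situation, with an adjustment accounting for the removed factor.
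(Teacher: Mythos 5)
Your valuation identity for $S$ is correct, and your Cases 1 and 2 are exactly the paper's base cases (Lemmas~\ref{lem:9} and~\ref{lem:10}). The gap is Case 3, which is the heart of the lemma and which you do not actually prove: you name the obstacle (after replacing $n$ by $n-i_0$ the quantity $\max_{c_1<i\le n}v_2(i)$ can exceed $\max_{c_1<i\le n-i_0}v_2(i)$ by far more than $\lfloor\log_2 i_0\rfloor$) and then defer its resolution to ``a careful $2$-adic case analysis''. Moreover, your stated formula for the drop in $S$ is wrong: since your reduction changes $n$ to $n-i_0$, the term $-s_2(n)$ becomes $-s_2(n-i_0)$, so the drop also contains $s_2(n-i_0)-s_2(n)$, which can be of size $\log_2 n$ and is precisely the quantity that would have to be played off against the jump in the maximum. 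Concretely, take $n=2^m$ with $m\ge3$, $c_1=n-5$, $c_5=1$: here $S=m+4$, the reduced tuple is the exceptional tuple $(n-5,0,\dots,0)$ with $S'=0$, so the true drop is $m+4$, not the $6$ your formula predicts; the same example shows that landing on an exceptional reduced tuple is not a rare boundary case but happens already for the simplest tuples in Case~3. As written, the central estimate is therefore not established.

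The difficulty is created by your choice of reduction, and the paper's proof avoids it entirely. There one reduces \emph{within} $\mathcal P_{n,\cdot}$, keeping both $n$ and $c_1$ fixed: a part of even size $2j\ge4$ is traded for $j$ parts of size $2$, a part of odd size $2j+1\ge5$ for one part of size $3$ together with $j-1$ parts of size $2$, and pairs of $3$'s for three $2$'s; only $k$ changes. Since $n$ and $c_1$ are invariant, the right-hand side $1+\max_{c_1<i\le n}v_2(i)$ is literally unchanged along the whole reduction, and one only has to verify that $G(n,k)$ (your $S$) never increases --- a local digit-sum inequality with no interaction with the maximum at all. The process terminates at a tuple with $c_i=0$ for $i\ge4$ and $c_3\in\{0,1\}$, where Lemmas~\ref{lem:9} and~\ref{lem:10} finish the job. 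If you insist on the $n\mapsto n-i_0$ induction, you would need a quantitative lower bound on $s_2(n-i_0)-s_2(n)$ in terms of $\max_{n-i_0<i\le n}v_2(i)$ plus a separate treatment (via Corollaries~\ref{lem:7} and~\ref{lem:8}) of reduced tuples that are exceptional; none of this is carried out.
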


\begin{proof}
By Legendre's formula in Lemma~\ref{lem:4},
the $2$-adic valuation on the left-hand side of \eqref{eq:max4} equals
\begin{align*}
v_2\left(\frac {(2n)!} 
{2!^{c_1}c_1!\,4!^{c_2}c_2!\cdots (2n)!^{c_n}c_n!} \right)
&=2n-s_2(2n)-\sum_{i=1}^n \big(c_i(2i-s_2(2i))+c_i-s_2(c_i)\big)\\[1mm]
&=\sum_{i=1}^n c_is_2(i)
-\sum_{i=1}^n \big(c_i-s_2(c_i)\big)
-s_2(n).
\end{align*}

Define the quantity $G(n,k)$ by
$$
G(n,k):=n-k+\sum_{i=1}^n c_is_2(i)
-\sum_{i=1}^n \big(c_i-s_2(c_i)\big)
-s_2(n).
$$
We must show that $G(n,k)$ is at least as large as the right-hand
side of \eqref{eq:max4}. We achieve this by demonstrating that, among all
elements $(c_1,c_2,\dots,c_n)\in \mathcal P_{n,k}$ with fixed~$c_1$,
the ones with $c_i=0$ for $i\ge4$ and $c_3=0$ or $c_3=1$, depending on
the parity of $n-c_1$, attain the smallest values of $G(n,k)$, 
and then appealing to Lemmas~\ref{lem:9} and~\ref{lem:10}.

First consider a tuple $(c_1,c_2,\dots,c_n)$ in which $c_{2j}>0$ for 
some~$j\ge2$. We claim that the value of $G(n,k)$ decreases if 
instead we consider the tuple $(c_1,c_2+jc_{2j},\dots,0,\dots,c_n)$,
with $0$ appearing in position~$2j$. This new tuple is an element of
$\mathcal P_{n,k+(j-1)c_{2j}}$. In order to prove the claim, we
compare the contributions of the tuples to $G(n,k)$,
ignoring the terms that are the same for both tuples.
In particular, we may ignore appearances of~$n$ (such as in $s_2(n)$)
since $n$ is the same for both tuples (as opposed to~$k$, which
becomes $k+(j-1)c_{2j}$ for the modified tuple).
The --- in this sense ---
relevant contribution of $(c_1,c_2,\dots,c_n)$ to
$G(n,k)$ is
\begin{multline} \label{eq:contr1} 
    c_2s_2(2)+c_{2j}s_2(2j)-(c_2-s_2(c_2))-(c_{2j}-s_2(c_{2j}))\\[1mm]
=c_{2j}\big(s_2(j)-1\big)+s_2(c_2)+s_2(c_{2j}).
\end{multline}
On the other hand, 
the (relevant) contribution of $(c_1,c_2+jc_{2j},\dots,0,\dots,c_n)$ to\linebreak
$G(n,k+(j-1)c_{2j})$ is
\begin{equation} \label{eq:contr2} 
-(j-1)c_{2j}+(c_2+jc_{2j})s_2(2)-(c_2+jc_{2j}-s_2(c_2+jc_{2j}))
=-(j-1)c_{2j}+s_2(c_2+jc_{2j}).
\end{equation}
The difference of \eqref{eq:contr1} and \eqref{eq:contr2} is
\begin{align*}
c_{2j}\big(s_2(j)+j-2\big)+s_2(c_2)+s_2(c_{2j})
&-s_2(c_2+jc_{2j})\\[1mm]
&\ge
c_{2j}s_2(j)+(j-2)c_{2j}+s_2(c_{2j})
-s_2(c_{2j}j)\\[1mm]
&\ge
(j-2)c_{2j}+s_2(c_{2j})\ge s_2(c_{2j}),
\end{align*}
which is positive, thus proving our claim.

Now consider a tuple $(c_1,c_2,\dots,c_n)$ in which $c_{2j+1}>0$ for 
some~$j\ge2$. Again,\break we claim that the value of $G(n,k)$
(weakly) decreases if 
instead we consider the tuple\break 
$(c_1,c_2+(j-1)c_{2j+1},c_3+c_{2j+1},\dots,0,\dots,c_n)$,
with $0$ appearing in position~$2j+1$. This new tuple is an element of
$\mathcal P_{n,k+(j-1)c_{2j+1}}$. In order to prove the claim, we
compare the contributions of the tuples to $G(n,k)$,
ignoring the terms that are the same for both tuples. 
The (relevant) contribution of $(c_1,c_2,\dots,c_n)$ to
$G(n,k)$ is
\begin{multline} \label{eq:contr3} 
    c_2s_2(2)+c_3s_2(3)+c_{2j+1}s_2(2j+1)
-(c_2-s_2(c_2))-(c_3-s_2(c_3))-(c_{2j+1}-s_2(c_{2j+1}))\\[1mm]
=c_3+c_{2j+1}s_2(j)
+s_2(c_2)+s_2(c_3)+s_2(c_{2j+1}).\kern2cm
\end{multline}
On the other hand, 
the (relevant) contribution of
$(c_1,c_2+(j-1)c_{2j+1},c_3+c_{2j+1},\dots,0,\break\dots,c_n)$ to
$G(n,k+(j-1)c_{2j+1})$ is
\begin{multline} \label{eq:contr4} 
-(j-1)c_{2j+1}
+(c_2+(j-1)c_{2j+1})s_2(2)
+(c_3+c_{2j+1})s_2(3)\\[1mm]
-(c_2+(j-1)c_{2j+1}-s_2(c_2+(j-1)c_{2j+1}))
-(c_3+c_{2j+1}-s_2(c_3+c_{2j+1}))\\[1mm]
=
c_3-(j-2)c_{2j+1}+s_2(c_2+(j-1)c_{2j+1})
+s_2(c_3+c_{2j+1}).
\end{multline}
The difference of \eqref{eq:contr3} and \eqref{eq:contr4} is
\begin{align*}
c_{2j+1}\big(s_2(j)+j-2\big)&
+s_2(c_2)+s_2(c_3)+s_2(c_{2j+1})\\
&\kern2cm
-s_2(c_2+(j-1)c_{2j+1})
-s_2(c_3+c_{2j+1})\\[1mm]
&\ge c_{2j+1}\big(s_2(j)+j-2\big)
+s_2(c_2)
-s_2(c_2+(j-1)c_{2j+1})\\[1mm]
&\ge c_{2j+1}\big(s_2(j)-1\big)
+(j-1)c_{2j+1}
-s_2((j-1)c_{2j+1})\\[1mm]
&\ge c_{2j+1}\big(s_2(j)-1\big),
\end{align*}
which is non-negative, as claimed.

So far, the above arguments show that we may restrict our attention to
tuples $(c_1,c_2,\dots,c_n)$ in $\mathcal P_{n,k}$ with $c_i=0$ for $i\ge4$.
Finally, we argue that ``$3$'s can be traded for $2$'s", that is,
we may decrease $c_3$ at the cost of increasing~$c_2$.
There are two cases to be considered. First let $c_3$ be even,
$c_3=2c_3'$ say, with $c_3'\ge1$. We claim that the value of $G(n,k)$ decreases if 
instead of $(c_1,c_2,c_3,\dots,c_n)$
we consider the tuple $(c_1,c_2+3c_3',0,\dots,c_n)$,
This new tuple is an element of
$\mathcal P_{n,k+c_3'}$. In order to prove the claim, we
compare the contributions of the tuples to $G(n,k)$,
ignoring the terms that are the same for both tuples. 
The (relevant) contribution of $(c_1,c_2,c_3,\dots,c_n)$ to
$G(n,k)$ is
\begin{equation} \label{eq:contr5} 
c_2s_2(2)+c_3s_2(3)-(c_2-s_2(c_2))-(c_3-s_2(c_3))
=c_3+s_2(c_2)+s_2(c_3').
\end{equation}
On the other hand, 
the (relevant) contribution of $(c_1,c_2+3c_3',0,\dots,c_n)$ to
$G(n,k+c_3')$ is
\begin{equation} \label{eq:contr6} 
-c_3'+(c_2+3c_3')s_2(2)-(c_2+3c_3'-s_2(c_2+3c_3'))
=-c_3'+s_2(c_2+3c_3').
\end{equation}
The difference of \eqref{eq:contr5} and \eqref{eq:contr6} is
\begin{align}
\notag
3c_3'+s_2(c_2)+s_2(c_3')
-s_2(c_2+3c_3')
&\ge
3c_3'+s_2(c_2+c_3')
-s_2(c_2+3c_3')\\[1mm]
&\ge
3c_3'-s_2(2c_3')\ge c_3',
\label{eq:diff1}
\end{align}
which is positive, proving our claim.

Finally, let $c_3$ be odd,
$c_3=2c_3'+1$ say, with $c_3'\ge1$. We claim that the value of $G(n,k)$ decreases if 
instead of $(c_1,c_2,c_3,\dots,c_n)$
we consider the tuple $(c_1,c_2+3c_3',1,\dots,c_n)$,
This new tuple is an element of
$\mathcal P_{n,k+c_3'}$. In order to prove the claim, we
compare the contributions of the tuples to $G(n,k)$,
ignoring the terms that are the same for both tuples. 
The (relevant) contribution of $(c_1,c_2,c_3,\dots,c_n)$ to
$G(n,k)$ is 
\begin{equation} \label{eq:contr7} 
c_2s_2(2)+c_3s_2(3)-(c_2-s_2(c_2))-(c_3-s_2(c_3))
=c_3+s_2(c_2)+s_2(c_3')+1.
\end{equation}
On the other hand, 
the (relevant) contribution of $(c_1,c_2+3c_3',1,\dots,c_n)$ to
$G(n,k+c_3')$ is
\begin{multline} \label{eq:contr8} 
-c_3'+(c_2+3c_3')s_2(2)+s_2(3)
-(c_2+3c_3'-s_2(c_2+3c_3'))\\[1mm]
=-c_3'+s_2(c_2+3c_3')+2.
\end{multline}
The difference of \eqref{eq:contr7} and \eqref{eq:contr8} is
the same as \eqref{eq:diff1}, of which we already know that it  is positive.

\medskip
We are now in the position to finish the proof of the lemma.
Our tuple $(c_1,c_2,\dots,c_n)$ may be one where $c_i=0$ for $i\ge3$.
Then $c_1=n-2c_2$, $n-k=(c_1+2c_2)-(c_1+c_2)=c_2$, 
and $c_2\ge3$ by assumption. The assertion of the lemma then
follows from Lemma~\ref{lem:9}.
On the other hand, 
our tuple $(c_1,c_2,\dots,c_n)$ may be one where $c_3=1$ and $c_i=0$ for $i\ge4$.
Then $c_1=n-2c_2-3$, and $n-k=(c_1+2c_2+3)-(c_1+c_2+1)=c_2+2$.
The assertion of the lemma then
follows from Lemma~\ref{lem:10}.
If we are not in one of these two cases, then we apply the above
described reductions repeatedly until we arrive at a tuple which
belongs to one of these two cases. Since the value of $G(n,k)$ never
increases while the value of $c_1$
remains invariant when doing these reductions, Lemmas~\ref{lem:9}
and~\ref{lem:10} again establish the assertion of the lemma.

\medskip
The proof is now complete.
\end{proof}

Before we are able to state and prove the main result of this
section,  we need an auxiliary integrality assertion on a
certain product/quotient of factorials.

\begin{lemma}[{\cite[Theorem 13.2]{AndrAF}}]
\label{lem:3}
Let $N$ and $K$ be positive integers such that $N\ge K$.
For all tuples $(c_i)_{1\le i\le N}$ of integers
in $\mathcal P_{N,K}$ --- that is, satisfying~\eqref{eq:ci1}
and~\eqref{eq:ci2} --- the quantity
$$
 \frac {N!} {
\prod _{i=1} ^{N}i!^{c_i}c_i!}
$$
is an integer, and it equals the number of partitions of the set
$\{1,2,\dots,N\}$ into $c_i$ blocks of size~$i,$ for $i=1,2,\dots,N$.
\end{lemma}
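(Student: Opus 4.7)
The plan is to establish both claims in one stroke by giving a combinatorial interpretation of the quotient $N!/\prod_{i=1}^{N}i!^{c_i}c_i!$; integrality will then be automatic, since any quantity that counts a finite set of objects is a non-negative integer.

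First I would set up a map from ordered sequences to set partitions. The defining conditions \eqref{eq:ci1alle}--\eqref{eq:ci2alle} of $\mathcal P_{N,K}$ imply in particular that $\sum_{i=1}^{N} i\, c_i = N$, so the $N$ positions of a sequence $(a_1, a_2, \dots, a_N)$ can be grouped into a canonical chain of consecutive slots: first $c_1$ singleton slots, then $c_2$ slots of size~$2$, then $c_3$ slots of size~$3$, and so on. Starting from an arbitrary permutation of $\{1, 2, \dots, N\}$ (of which there are $N!$) and reading off which elements land in which slot produces a set partition of $\{1, 2, \dots, N\}$ having exactly $c_i$ blocks of size~$i$ for every~$i$.

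Next I would determine the cardinality of the fiber of this map over a prescribed set partition. Within a single slot of size~$i$, the $i$ elements occupying it can be listed in any of $i!$ orders without affecting the resulting block, contributing a factor of $i!^{c_i}$ in total; and the $c_i$ slots of a common size~$i$ can be permuted amongst themselves in $c_i!$ ways without altering the underlying unordered set partition, contributing a factor of $c_i!$. The total overcount equals $\prod_{i=1}^{N} i!^{c_i}c_i!$, and dividing $N!$ by this quantity yields both the desired formula and the integrality assertion.

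There is no genuine mathematical obstacle in this argument; the only care required is to observe that the two overcounting actions (permuting elements inside a slot, and permuting same-size slots among themselves) act on disjoint features of the data and hence compose independently, so their contributions multiply cleanly. This is the classical multinomial enumeration of set partitions by block-size signature, which is the content of the cited theorem of Andrews.
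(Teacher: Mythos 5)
Your argument is correct: it is the standard double-counting proof (order the $N$ elements, group positions into slots of the prescribed sizes, and divide by the $i!^{c_i}$ internal orderings and the $c_i!$ permutations of same-size slots), and the two overcounting factors do act independently as you note. The paper itself gives no proof of this lemma --- it is quoted directly from Andrews's book --- and your proof is essentially the classical argument given there, so there is nothing to compare beyond noting that you have supplied the omitted standard proof.
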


We are now in the position to embark on the proof of periodicity of
$v(n)$ as defined in~\eqref{eq:v-def} modulo powers of~2. The next
theorem treats the ``generic" case where $e\ge3$, whereas
Theorem~\ref{prop:2a} handles the remaining cases where $e=1$ or $e=2$.
Both theorems provide in fact polynomial refinements.

\begin{theorem} \label{prop:2}
Let $x(j),$ $j=0,1,2,\dots,$ be a sequence of integers with $x(0)=1,$ $x(1)$ and
$x(2)$ odd. Then
the coefficients $v_{\mathbf x}(n)$ in the expansion
\begin{equation} \label{eq:vxdef} 
\sum_{n\ge0}\frac {v_{\mathbf x}(n)} {2^n\,(2n)!}t^n
=\Bigg(1+\sum_{j\ge1}\frac {x(j)} {(2j)!}t^{2j}\Bigg)^{1/2}
\end{equation}
are integers. Moreover, for all integers $e\ge3,$
the sequence $(v_{\mathbf x}(n))_{n\ge0}$ is purely
periodic modulo $2^e$ with {\em(}not necessarily minimal\/{\em)}
period length~$2^{e-1}$.
\end{theorem}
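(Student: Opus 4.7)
My plan is to begin by expanding the generating function on the right-hand side of~\eqref{eq:vxdef} via the generalised binomial theorem followed by the multinomial theorem, obtaining the explicit representation
\[
v_{\mathbf x}(n)=\sum_{m=1}^{n}(-1)^{m-1}2^{n-m}(2m-3)!!\sum_{(c_j)\in\mathcal P_{n,m}}\frac{(2n)!}{\prod_j(2j)!^{c_j}c_j!}\prod_j x(j)^{c_j}
\]
for $n\ge 1$, together with $v_{\mathbf x}(0)=1$. Integrality is then immediate: Lemma~\ref{lem:3} identifies every multinomial coefficient $(2n)!/\prod_j(2j)!^{c_j}c_j!$ with the number of set partitions of $\{1,\dots,2n\}$ into blocks of prescribed even sizes, and is in particular a non-negative integer, while $2^{n-m}(2m-3)!!$ is plainly an integer for $1\le m\le n$.

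To prove the periodicity, I would fix $e\ge 3$ and analyse $v_{\mathbf x}(n+2^{e-1})-v_{\mathbf x}(n)\pmod{2^e}$ by decomposing the above sum according to the three exceptional tuples singled out in Lemma~\ref{lem:11}. For the leading tuple $(n,0,\dots,0)$ the summand equals $(-1)^{n-1}(2n-1)!!(2n-3)!!\,x(1)^n$, and periodicity with period $2^{e-1}$ follows at once from the classical Wilson-type congruence $\prod_{i=0}^{2^{e-1}-1}(2i+1)\equiv 1\pmod{2^e}$ (valid for $e\ge 3$), the invariance of this congruence under any odd translation of the factors, and $x(1)^{2^{e-1}}\equiv 1\pmod{2^e}$ (since $x(1)$ is odd). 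For the two remaining exceptional tuples $(n-2,1,0,\dots,0)$ and $(n-4,2,0,\dots,0)$, Corollaries~\ref{lem:7} and~\ref{lem:8} supply the exact 2-adic valuation of the accompanying multinomial coefficient, and one then checks that the residual discrepancies coming from the shifts $n(n-1)\mapsto(n+2^{e-1})(n+2^{e-1}-1)$ and its quartic analogue are absorbed by those valuations together with the oddness of $x(2)$.

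For the non-exceptional summands, the idea is to match tuples via the shift $(c_1,c_2,\dots)\in\mathcal P_{n,m}\leftrightarrow(c_1+2^{e-1},c_2,\dots)\in\mathcal P_{n+2^{e-1},m+2^{e-1}}$. Under this pairing, the ratio of the corresponding summands collapses, after the even parts of $(2n)!/(2n+2^e)!$ cancel against the accompanying powers of~$2$ and the ratio $(c_1+2^{e-1})!/c_1!$, to a product of $2^{e-1}$ consecutive odd numbers of the form $(2m-1)(2m+1)\cdots(2m+2^e-3)$ times a second such product originating from $(2n)!$; both products are congruent to~$1$ modulo~$2^e$ by the Wilson-type congruence. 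The tuples of $\mathcal P_{n+2^{e-1},m'}$ that lie outside the image of the bijection necessarily satisfy $c_1<2^{e-1}$, so the interval $(c_1,n+2^{e-1}]$ contains a multiple of $2^{e-1}$, and Lemma~\ref{lem:11} then forces the corresponding summand to have 2-adic valuation at least $e$.

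The hard part, I expect, is the non-exceptional matching: Lemma~\ref{lem:11} by itself yields only the bound $1+v_2(n)$, which is typically far smaller than~$e$, so individual summands do not vanish modulo~$2^e$ and must be paired. The mechanism that rescues the argument is the combination of the shift bijection with the Wilson-type congruence for odd residues modulo~$2^e$, reinforced at the two boundary tuples by the sharp valuations of Corollaries~\ref{lem:7} and~\ref{lem:8}; organising this so that every tuple in $\mathcal P_{n+2^{e-1},\,\cdot\,}$ is accounted for exactly once and every discrepancy is absorbed by the valuation bounds constitutes the technical crux of the proof.
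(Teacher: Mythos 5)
Your expansion, the integrality argument via Lemma~\ref{lem:3}, and the overall strategy (shift bijection on tuples, Lemma~\ref{lem:11} to kill the unmatched tuples with $c_1<2^{e-1}$, the Wilson-type congruence for odd residues modulo~$2^e$, and special treatment of the three exceptional tuples) coincide with the paper's proof. However, your description of the two delicate steps contains genuine gaps. First, in the non-exceptional matching the ratio of paired summands does \emph{not} collapse to products of consecutive odd numbers alone. After extracting the odd double-factorial parts (which are indeed handled by the Wilson-type congruence), there remains the factor
$$
\frac {(n+2^{e-1})!\,c_1!} {n!\,(c_1+2^{e-1})!}
=\prod _{i=c_1+1} ^{n}\frac {i+2^{e-1}} {i},
$$
whose index $i$ runs over \emph{all} integers in $(c_1,n]$, including even ones; for even $i$ the factor $1+2^{e-1}/i$ is not $\equiv1\pmod{2^e}$. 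The paper's resolution is to invoke Lemma~\ref{lem:11} a second time: the rest of the summand is divisible by $2^{1+\max_{c_1<i\le n}v_2(i)}$, so this product need only be reduced modulo $2^{e-1-\max_{c_1<i\le n}v_2(i)}$, and modulo that smaller power each factor is $1$. Your closing remark that ``every discrepancy is absorbed by the valuation bounds'' gestures at this, but the concrete mechanism you state (two products of odd numbers, each $\equiv1$) is false as written and is precisely where the argument would break if executed literally.

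Second, for the exceptional tuples $(n-2,1,0,\dots,0)$ and $(n-4,2,0,\dots,0)$ the discrepancies are \emph{not} individually absorbed. Using the exact valuations from Corollaries~\ref{lem:7} and~\ref{lem:8}, each of the two terms deviates from its shifted partner by an amount of $2$-adic valuation exactly $e-1$, namely $u_i\,a2^{e-v_2(n)-1}/(n\cdot2^{-v_2(n)})$ with $u_i$ odd (here the oddness of $x(1)$ and $x(2)$ is what forces $u_1,u_2$ odd). Neither deviation vanishes modulo~$2^e$; the point of the paper's congruence~\eqref{eq:cong4} is that the \emph{sum} of the two deviations is $\equiv0\pmod{2^e}$ because $u_1+u_2$ is even. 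A plan that tracks only valuations of the individual exceptional terms gets stuck at valuation $e-1$; you need the explicit pairwise cancellation, which your write-up does not identify.
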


\begin{remark}
Computer experiments suggest that, in fact, the period length
of~$2^{e-1}$ {\it is} minimal. 
\end{remark}

\begin{proof}[Proof of Theorem~{\em \ref{prop:2}}]
We compute the defining expansion for the coefficients $v_{\mathbf x}(n)$:
\begin{align*}
\sum_{n\ge0}\frac {v_{\mathbf x}(n)} {2^n(2n)!}t^n
&=\Bigg(1+\sum_{j\ge1}\frac {x(j)} {(2j)!}t^j\Bigg) ^{1/2}\\[1.5mm]
&=\sum_{k\ge0}\binom {1/2}k\Bigg(\sum_{j\ge1}\frac {x(j)}
{(2j)!}t^j\Bigg)^k\\[1.5mm]
&=1+\sum_{k\ge1} (-1)^{k-1}\frac {
\prod _{j=1} ^{k-1}(2j-1)} {2^k\,k!}
{\sum_{c_1+c_2+\dots=k}}\frac {k!} {c_1!\,c_2!\cdots } 
\prod _{i\ge1} ^{}\frac {x^{c_i}(i)} {(2i)!^{c_i}}t^{ic_i}.
\end{align*}
By comparing coefficients of~$t^n$, we obtain
\begin{equation} 
v_{\mathbf x}(n)
=\sum_{k\ge1} 
\sum_{(c_i)\in\mathcal P_{n,k}}
(-1)^{k-1}\Bigg(\prod _{j=1} ^{k-1}(2j-1)\Bigg)
2^{n-k}\frac {(2n)!} 
{2!^{c_1}c_1!\,4!^{c_2}c_2!\cdots (2n)!^{c_n}c_n!} 
\prod _{i=1} ^{n}{x^{c_i}(i)} ,
\label{eq:v(n)Expr}
\end{equation}
where $\mathcal P_{n,k}$ is defined around
 \eqref{eq:ci1alle}--\eqref{eq:ci2alle}.
For convenience, we denote the summand in the above double sum by
$S(n,k,(c_i))$, that is, we set 
\begin{equation} \label{eq:Sdef} 
S(n,k,(c_i)):=
(-1)^{k-1}\Bigg(\prod _{j=1} ^{k-1}(2j-1)\Bigg)
2^{n-k}\frac {(2n)!} 
{2!^{c_1}c_1!\,4!^{c_2}c_2!\cdots (2n)!^{c_n}c_n!} 
\prod _{i=1} ^{n}{x^{c_i}(i)} 
\end{equation}
for some positive integer $k$ and $(c_i)\in\mathcal P_{n,k}$.
The fraction in the expression in \eqref{eq:v(n)Expr} (and in~\eqref{eq:Sdef}) is an integer 
since, by Lemma~\ref{lem:3}, it is
the number of all partitions of~$\{1,2,\dots,2n\}$ 
into $c_i$ blocks of size $2i$, $i=1,2,\dots,n$.
If this property is used in \eqref{eq:v(n)Expr}, then this establishes
the integrality assertion of the theorem.

\medskip
Now, let $n<2^{e-1}$. We want to prove that
\begin{equation} \label{eq:period} 
v_{\mathbf  x}(n+a2^{e-1})\equiv v_{\mathbf  x}(n)\pmod{2^e},
\end{equation}
for all positive integers~$a$. Using our short notation for the
summand in \eqref{eq:v(n)Expr}, we have
\begin{equation} \label{eq:v-S} 
v_{\mathbf x}\big(n+a2^{e-1}\big)=\sum_{k\ge1}
\sum_{(\tilde c_i)\in\mathcal P_{n+a2^{e-1},k}}
S(n+a2^{e-1},k,(\tilde c_i)).
\end{equation}
We are going to prove the finer congruences
\begin{align} 
\notag
S(n+a2^{e-1},k,(\tilde c_i))&\equiv0\pmod{2^e},\quad \text{if }\tilde c_1<a2^{e-1},\\
\label{eq:cong1}
&\kern-1cm \text{$(\tilde c_i)$ not in cases \eqref{eq:exc1}--\eqref{eq:exc3}},\\[1.5mm]
\label{eq:cong2}
S(n+a2^{e-1},n+a2^{e-1},(n+a2^{e-1},0,\dots,0))&\equiv
S(n,n,(n,0,\dots,0))\pmod{2^e},\\[1.5mm]
\notag
S(n+a2^{e-1},k+a2^{e-1},(\tilde c_i))&\equiv 
S(n,k,(c_1,\tilde c_2,\dots,\tilde c_n))
\pmod{2^e}\\
\label{eq:cong3}
&\kern-4.9cm \text{with $\tilde c_1=c_1+a2^{e-1}$, and
  $(\tilde c_i)$ not in cases \eqref{eq:exc1}--\eqref{eq:exc3}},\\[1.5mm]
\notag
S(n+a2^{e-1},n+a2^{e-1}-1,(n+a2^{e-1}-2,1,0,\dots,0))\kern-1cm&\\
\notag
+S(n+a2^{e-1},n
+a2^{e-1}-2,(n+a2^{e-1}-4,2,0,\dots,0))\kern-3cm&\\[1.5mm]
\notag
&\kern-5cm \equiv 
S(n,n-1,(n-2,1,0,\dots,0))\\
&\kern-3cm
+S(n,n-2,(n-4,2,0,\dots,0))\pmod{2^e}.
\label{eq:cong4}
\end{align}
In the last congruence, undefined terms (terms $S(n,k,(\tilde c_i))$
with negative $\tilde c_1$) have to be understood as being zero.

We claim that \eqref{eq:cong1}--\eqref{eq:cong4} together
imply~\eqref{eq:period}, and thus the theorem.  
In order to prove this, we consider the summand
$S(n+a2^{e-1},k,(\tilde c_i))$ on the right-hand side
of~\eqref{eq:v-S}, when taken modulo~$2^e$,
for the various choices of~$k$ and tuples~$(\tilde c_i)$.
If $\tilde c_1<a2^{e-1}$, then,
by~\eqref{eq:cong1}, the corresponding summand vanishes modulo~$2^e$.
On the other hand, if $\tilde c_1\ge a2^{e-1}$, then, since $k\ge \tilde
c_1$, we must also have $k\ge a2^{e-1}$. Consequently, we may
apply~\eqref{eq:cong2} and~\eqref{eq:cong3} to conclude that 
\begin{equation} \label{eq:S-red} 
S(n+a2^{e-1},k,(\tilde c_i))\equiv S(n,k-a2^{e-1},(c_1,\tilde
c_2,\dots,\tilde c_n))\pmod{2^e},
\end{equation}
except if we are in cases \eqref{eq:exc2} or~\eqref{eq:exc3}.
However, for those cases the congruence~\eqref{eq:cong4} applies. It
shows that, even though the congruence~\eqref{eq:S-red} may not hold for
an individual summand in case~\eqref{eq:exc2} or~\eqref{eq:exc3}, if
they are combined then the corresponding reduction in~\eqref{eq:S-red}
is allowed. Altogether, we arrive at
\begin{align*}
v_{\mathbf x}\big(n+a2^{e-1}\big)&=
\sum_{k\ge1}
\sum_{(\tilde c_i)\in\mathcal P_{n+a2^{e-1},k}}
S(n+a2^{e-1},k,(\tilde c_i))\\[1mm]
&\equiv
\sum_{k\ge a2^{e-1}+1}
\sum_{(c_i)\in\mathcal P_{n,k-a2^{e-1}}}
S(n+a2^{e-1},k-a2^{e-1},(c_i))\pmod{2^e}\\[1mm]
&\equiv v_{\mathbf x}(n)\pmod{2^e},
\end{align*}
thus confirming the claim.

\medskip
Now we provide the proofs of the crucial congruences
\eqref{eq:cong1}--\eqref{eq:cong4}.

\medskip
{\sc Proof of \eqref{eq:cong1}.}
If we use Lemma~\ref{lem:11} with $n$ replaced by $n+a2^{e-1}$ and
$c_i=\tilde c_i$ for all~$i$, then
we see that $S(n+ap^{e-1},k,(\tilde c_i))$ is divisible by 
$$
2^{1+\max\limits_{\tilde c_1< i\le n+a2^{e-1}}v_2(i)},
$$
with three exceptions, namely
\begin{align} 
\label{eq:exc1}
(\tilde c_1,\tilde c_2,\dots,\tilde c_n)&=(n+a2^{e-1},0,\dots,0),\\[1mm]
\label{eq:exc2}
(\tilde c_1,\tilde c_2,\dots,\tilde c_n)&=(n+a2^{e-1}-2,1,0,\dots,0),\\[1mm]
\label{eq:exc3}
(\tilde c_1,\tilde c_2,\dots,\tilde c_n)&=(n+a2^{e-1}-4,2,0,\dots,0).
\end{align}
In particular, if $\tilde c_1<a2^{e-1}$, and if
\eqref{eq:exc1}--\eqref{eq:exc3} do not apply, then in the range\linebreak 
$\tilde c_1<i\le n+a2^{e-1}$ we will find that $i=a2^{e-1}$, and,
consequently, $S(n+a2^{e-1},k,(\tilde c_i))$ 
is divisible by~$2^e$, as desired.

\medskip
{\sc Proof of \eqref{eq:cong2}.}
By definition, we have
\begin{align*}
S(n+a2^{e-1},n+a&2^{e-1},(n+a2^{e-1},0,\dots,0))
\\[1mm]
&=
(-1)^{n+a2^{e-1}-1}\bigg(
\prod _{j=1} ^{n+a2^{e-1}-1}(2j-1)\bigg)(2n+a2^e-1)!!\,x^{n+a2^{e-1}}(1)\\[1mm]
&=
(-1)^{n-1}
(2n+a2^e-3)!!\,
(2n+a2^e-1)!!\,x^{n+a2^{e-1}}(1)
\end{align*}
and
$$
S(n,n,(n,0,\dots,0))=
(-1)^{n-1}(2n-3)!!\,(2n-1)!!\,x^{n}(1).
$$
By assumption, the number $x(1)$ is odd. Hence, Euler's theorem and the fact that 
$\varphi(2^e)=2^{e-1}$ imply that
$$
x^{2^{e-1}}(1)\equiv 1\pmod{2^e}.
$$
Furthermore, we have
\begin{align} \notag
(2n+a2^e-1)!!&=(2n-1)!!
\prod _{j=0} ^{a-1}(2n+j2^{e}+1)(2n+j2^e+3)\cdots(2n+(j+1)2^{e}-1)\\[1mm]
&\equiv(2n-1)!!\pmod{2^e},
\label{eq:(-1)^a}
\end{align}
since, for each~$j$, the factors inside the product in the first line
form a complete set of
representatives of the multiplicative group $(\Z/2^e\Z)^\times$. 
An analogous relation holds for $(2n+a2^e-3)!!$ and $(2n-3)!!$.
Altogether, this establishes the congruence~\eqref{eq:cong2}.

\medskip
{\sc Proof of \eqref{eq:cong3}.}
Replace $n$ by $n+a2^{e-1}$, and $c_1$ by $c_1+a2^{e-1}$ in \eqref{eq:Sdef}.
Since $(c_1+a2^{e-1},c_2,\dots)\in P_{n+a2^{e-1},k}$, 
the $c_i$'s must vanish for $i>n$.
Furthermore, we have $k\ge c_1+a2^{e-1}$, which allows us to
write $k=k'+a2^{2-1}$ for some non-negative
integer~$k'$. Hence,
under these substitutions and simplifications, and writing $k$ instead
of~$k'$ in abuse of notation, we obtain
\begin{multline}
S(n+a2^{e-1},k+a2^{e-1},(\tilde c_i))=
(-1)^{k+a2^{e-1}-1}
\Bigg(\prod _{j=1} ^{k+a2^{e-1}-1}(2j-1)\Bigg)\\[1mm]
\times
2^{n-k}\frac {(2n+a2^e)!} 
{2!^{c_1+a2^{e-1}}(c_1+a2^{e-1})!\,4!^{c_2}c_2!\cdots n!^{c_n}c_n!} 
x^{a2^{e-1}}(1)\prod _{i=1} ^{n}{x^{c_i}(i)} ,
\label{eq:v(n)Expr2}
\end{multline}
with $(c_i)\in\mathcal P_{n,k}$.
We are interested in the residue class of this expression modulo
$2^e$. As in~\eqref{eq:(-1)^a}, elementary number theory tells us that
$$
\prod _{j=1} ^{k+a2^{e-1}-1}(2j-1)\equiv
\prod _{j=1} ^{k-1}(2j-1)\quad (\text{mod }2^e).
$$
Furthermore, we know that $x(1)$ is odd by assumption. Therefore the expression
\eqref{eq:v(n)Expr2} simplifies to
\begin{multline}
S(n+a2^{e-1},k+a2^{e-1},(\tilde c_i))\equiv
(-1)^{k-1}
\Bigg(\prod _{j=1} ^{k-1}(2j-1)\Bigg)\\[1mm]
\times
2^{n-k}\frac {(2n+a2^e)!\,c_1!} 
{(2n)!\,2^{a2^{e-1}}(c_1+a2^{e-1})!}
\frac {(2n)!} 
{2!^{c_1}c_1!\,4!^{c_2}c_2!\cdots n!^ {c_n}c_n!} 
\prod _{i=1} ^{n}{x^{c_i}(i)} 
\quad (\text{mod }2^e).
\label{eq:v(n)Expr3}
\end{multline}
We have
$$
\frac {(2n+a2^e)!\,c_1!} 
{(2n)!\,2^{a2^{e-1}}(c_1+a2^{e-1})!}
=
\frac {(n+a2^{e-1})!\,(2n+a2^e-1)!!\,c_1!} 
{n!\,(2n-1)!!\,(c_1+a2^{e-1})!}.
$$
Since, again by elementary number theory, 
$$
\frac {(2n+a2^e-1)!!} {(2n-1)!!}\equiv1\quad (\text{mod }2^e),
$$
we may conclude that 
\begin{align*}
\frac {(2n+a2^e)!\,c_1!} 
{(2n)!\,2^{a2^{e-1}}(c_1+a2^{e-1})!}
&\equiv
\frac {(n+a2^{e-1})!\,c_1!} 
{n!\,(c_1+a2^{e-1})!}
\quad (\text{mod }2^e)\\[1mm]
&\equiv
\prod _{i=c_1+1} ^{n}\frac {i+a2^{e-1}} {i}
\quad (\text{mod }2^e)\\[1mm]
&\equiv
\prod _{i=c_1+1} ^{n}\frac
      {(i\cdot2^{-v_2(i)})+a2^{e-v_2(i)-1}} 
{i\cdot2^{-v_2(i)}}
\quad (\text{mod }2^e).
\end{align*}
If this is substituted back in \eqref{eq:v(n)Expr3}, then we see that
\begin{multline}
  S(n+a2^{e-1},k+a2^{e-1},(\tilde c_i))\\[1mm]
  \equiv
(-1)^{k-1}
\Bigg(\prod _{j=1} ^{k-1}(2j-1)\Bigg)
\Bigg(\prod _{i=c_1+1} ^{n}\frac
      {(i\cdot2^{-v_2(i)})+a2^{e-v_2(i)-1}} 
{i\cdot2^{-v_2(i)}}\Bigg)\\[1mm]
\cdot
2^{n-k}
\frac {(2n)!} 
{2!^{c_1}c_1!\,4!^{c_2}c_2!\cdots n!^ {c_n}c_n!} 
\prod _{i=1} ^{n}{x^{c_i}(i)} 
\quad (\text{mod }2^e).
\label{eq:v(n)Expr4}
\end{multline}
The reader should note that we wrote the first product over~$i$ in
this particular form in order to make certain that the expressions
$i\cdot 2^{-v_2(i)}$ in the denominator are odd numbers. 

At this point, we would like to simplify the terms
$(i\cdot2^{-v_2(i)})+a2^{e-v_2(i)-1}$ to\linebreak $i\cdot2^{-v_2(i)}$.
For, assuming the validity of this simplification, the first product
over~$i$ on the right-hand side of~\eqref{eq:v(n)Expr4} would simplify to~1,
and the remaining terms exactly equal $S\big(n,k,(c_i)\big)$.

We claim that this simplification is indeed allowed. For, by Lemma~\ref{lem:11}, we
know that the second line in \eqref{eq:v(n)Expr4} is an integer which 
is divisible by
$$
2^{1+\max\limits_{c_1< i\le n}v_2(i)}.
$$
Hence, instead of computing modulo~$2^e$, we may
reduce the first line of \eqref{eq:v(n)Expr4} modulo
$$
2^{e-1-\max\limits_{c_1< i\le n}v_2(i)}.
$$
(It should be observed that the exponent is non-negative since we
assumed from\linebreak the beginning that $n< 2^{e-1}$.)
This is exactly what we need to perform the desired\linebreak simplification, and
the first product over~$i$ drops out. What remains is exactly\linebreak 
$S(n,k,(c_1,\tilde c_2,\dots,\tilde c_n))$.

\medskip
{\sc Proof of \eqref{eq:cong4}.}
Here, instead of Lemma~\ref{lem:11}, we can only use the slightly
weaker Corollaries~\ref{lem:7} and~\ref{lem:8}.

We discuss first the ``generic case", namely when $n\ge4$, where all
terms in \eqref{eq:cong4} are defined. 
Let us begin with the term 
$$S(n+a2^{e-1},n+a2^{e-1}-1,(n+a2^{e-1}-2,1,0,\dots,0)).$$
The corresponding expression on the right-hand side of
\eqref{eq:v(n)Expr4} contains the product
\begin{multline}
\prod _{i=(n-2)+1} ^{n}\frac
      {(i\cdot2^{-v_2(i)})+a2^{e-v_2(i)-1}} 
{i\cdot2^{-v_2(i)}}\\[1mm]
=\frac
      {\big(((n-1)\cdot2^{-v_2(n-1)})+a2^{e-v_2(n-1)-1}\big)
\big((n\cdot2^{-v_2(n)})+a2^{e-v_2(n)-1}\big)} 
{(n-1)\cdot2^{-v_2(n-1)}\,n\cdot2^{-v_2(n)}}.
\label{eq:exc-even}
\end{multline}
If $n\equiv0$~(mod~$4$), then Corollary~\ref{lem:7} says that the last line in
\eqref{eq:v(n)Expr4} (which in particular contains the prefactor
$2^{n-k}$) is (exactly) divisible by $2\cdot 2^{v_2(n)-1}=2^{v_2(n)}$, and hence we may
consider the numerator in \eqref{eq:exc-even} modulo~$2^{e-v_2(n)}$.
Since
$$
((n-1)\cdot2^{-v_2(n-1)})+a2^{e-v_2(n-1)-1}
\equiv
((n-1)\cdot2^{-v_2(n-1)})\pmod{2^{e-1}}
$$
(with $v_2(n-1)=0$),
the product \eqref{eq:exc-even} reduces to
\begin{align*}
\prod _{i=n-1} ^{n}\frac
      {(i\cdot2^{-v_2(i)})+a2^{e-v_2(i)-1}} 
{i\cdot2^{-v_2(i)}}
&\equiv\frac
      {(n\cdot2^{-v_2(n)})+a2^{e-v_2(n)-1}} 
{n\cdot2^{-v_2(n)}}
\pmod{2^e}\\[1mm]
&\equiv 1+
\frac
      {a2^{e-v_2(n)-1}} 
{n\cdot2^{-v_2(n)}}
\pmod{2^e}.
\end{align*}
When substituted back in \eqref{eq:v(n)Expr4}, this shows that
\begin{multline*}
S(n+a2^{e-1},n+a2^{e-1}-1,(n+a2^{e-1}-2,1,0,\dots,0))\\[1mm]
\equiv
S(n,n-1,(n-2,1,0,\dots,0))+u_1\frac
      {a2^{e-v_2(n)-1}} 
{n\cdot2^{-v_2(n)}}
\pmod{2^e},
\end{multline*}
where $u_1$ is an odd integer. Here it is important that $x(1)$ and~$x(2)$
are odd, which they are by assumption.

An analogous discussion for the other term on the left-hand side of
\eqref{eq:cong4}, using Corollary~\ref{lem:8}, leads to
\begin{multline*}
S(n+a2^{e-1},n+a2^{e-1}-2,(n+a2^{e-1}-4,2,0,\dots,0))\\[1mm]
\equiv
S(n,n-2,(n-4,2,0,\dots,0))+u_2
\frac
      {a2^{e-v_2(n)-1}} 
{n\cdot2^{-v_2(n)}}
\pmod{2^e},
\end{multline*}
where $u_2$ is an odd integer.
Together, the last two congruences establish \eqref{eq:cong4}.

The discussions for the other congruence classes of $n$ modulo~$4$ are
just minor variations of the above arguments and are therefore
omitted here.

Finally, we address the remaining cases of ``small" $n$, namely
$0\le n<4$, when one or
both terms on the right-hand side of \eqref{eq:cong4} contain negative
parameter values, in which case they are declared to be zero by
definition. We exemplify what has to be done in these cases by going
through the case where $n=1$. All other cases may be handled in a similar manner.

Corollary~\ref{lem:7} with $n$ replaced by $1+a2^{e-1}$, together with
the assumption that all $x(i)$'s are odd, implies that
$$
S(n+a2^{e-1},n+a2^{e-1}-1,(n+a2^{e-1}-2,1,0,\dots,0))\equiv
\begin{cases} 2^{e-1}~(\text{mod }2^e),&\text{if $a$ is odd},\\[1mm]
0~(\text{mod }2^e),&\text{if $a$ is even}.
\end{cases}
$$
Likewise,
Corollary~\ref{lem:8} with $n$ replaced by $1+a2^{e-1}$, together with
the assumption that all $x(i)$'s are odd, implies that
$$
S(n+a2^{e-1},n+a2^{e-1}-2,(n+a2^{e-1}-4,2,0,\dots,0))\equiv
\begin{cases} 2^{e-1}~(\text{mod }2^e),&\text{if $a$ is odd},\\[1mm]
0~(\text{mod }2^e),&\text{if $a$ is even}.
\end{cases}
$$
Together, this establishes \eqref{eq:cong4} in the considered case.

\medskip
This finishes the proof of the theorem.
\end{proof}

Our next result generalises \cite[Theorem~1(i)]{ScheAA}.

\begin{theorem} \label{prop:2a}
Let $x(j),$ $j=0,1,2,\dots,$ be integers with $x(0)=1$ and $x(1)$ 
odd. Then
the sequence $(v_{\mathbf x}(n))_{n\ge0},$ with the coefficients
$v_{\mathbf x}(n)$ being defined in \eqref{eq:vxdef}{\em ,} when taken
modulo~$2,$ is the all-$1$-sequence. Modulo~$4,$ the sequence is 
purely periodic with period length~$4,$ the first few
values of the sequence {\em(}modulo~$4${\em)} being given by 
\begin{equation} \label{eq:0-4} 
1,\, x(1),\, 3,\,  x(1)+2,\, 1 ,\, \dots.
\end{equation}
\end{theorem}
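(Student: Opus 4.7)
The plan is to rerun the analysis behind \eqref{eq:v(n)Expr} from the proof of Theorem~\ref{prop:2} in the low-modulus range. The explicit factor $2^{n-k}$ in each summand $S(n,k,(c_i))$ reduces the problem drastically: modulo~$2$ only the $k=n$ term can survive, and modulo~$4$ only the two cases $k=n$ and $k=n-1$. The constraints $\sum c_i=k$ and $\sum ic_i=n$ then force the unique tuple $(n,0,\ldots,0)$ for $k=n$, and the unique tuple $(n-2,1,0,\ldots,0)$ for $k=n-1$, so the whole argument boils down to an analysis of these two specific summands.

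For the modulo~$2$ claim I would simply note that
\[
S(n,n,(n,0,\ldots,0))=(-1)^{n-1}(2n-3)!!\,(2n-1)!!\,x^n(1)
\]
is a product of odd integers together with a sign, hence odd. Together with $v_{\mathbf x}(0)=1$, read off \eqref{eq:vxdef} directly, this proves the first assertion.

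For the modulo~$4$ claim I would treat the two surviving summands in turn. The $k=n$ summand is handled by the elementary observation that any odd square is $\equiv 1\pmod 8$: this gives $x(1)^2\equiv 1\pmod 8$ and $(2j-1)^2\equiv 1\pmod 8$ for all~$j$, which collapses $(2n-3)!!\,(2n-1)!!$ to $(2n-1)$ modulo~$4$; combined with $(-1)^{n-1}$ one checks that the entire prefactor $(-1)^{n-1}(2n-3)!!\,(2n-1)!!$ equals $1$ modulo~$4$ uniformly in $n\ge 1$, so the $k=n$ summand reduces to $x^n(1)\pmod 4$, i.e.\ to $1$ for even~$n$ and $x(1)$ for odd~$n$. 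For the $k=n-1$ summand I would invoke Corollary~\ref{lem:7}, which computes the $2$-adic valuation of the factorial ratio as $v_2(n)-1$ or $v_2(n-1)-1$; together with the prefactor $2^{n-k}=2$ the total $2$-adic valuation becomes $v_2\!\bigl(n(n-1)\bigr)$. This is $\ge 2$ precisely when $n\equiv 0,1\pmod 4$, in which case the summand vanishes modulo~$4$; otherwise, when $n\equiv 2,3\pmod 4$, the valuation equals exactly~$1$, and the remaining odd part is a unit times $x^{n-2}(1)\,x(2)$, so the summand is $\equiv 2\pmod 4$---provided one uses that $x(2)$ is odd, as in the companion hypothesis of Theorem~\ref{prop:2}.

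Summing the two contributions then yields $v_{\mathbf x}(n)\bmod 4\in\{1,x(1),3,x(1)+2\}$ according to $n\bmod 4\in\{0,1,2,3\}$, which establishes both pure periodicity with period~$4$ and the advertised initial values. The main technical obstacle is the bookkeeping for the $k=n-1$ summand---combining Corollary~\ref{lem:7} with the residues of the $(2j-1)$ factors and the parities of $x(1)$ and $x(2)$---while everything else is a short, mostly formal, case distinction.
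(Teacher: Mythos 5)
Your proposal is correct, but it takes a genuinely different route from the paper. The paper proves the mod-$4$ statement by induction on $n$ using the quadratic recurrence satisfied by $v_{\mathbf x}$ (the analogue of \eqref{eq:Rek2}): it substitutes the conjectured period-$4$ pattern \eqref{eq:0-4} into $\frac12\sum_m\binom{2n}{2m}v(m)v(n-m)$, evaluates the four resulting binomial sums by an eighth-roots-of-unity filter, and then verifies divisibility by $4$ of the coefficients of an explicit rational generating function. You instead read everything off the closed expansion \eqref{eq:v(n)Expr}: the prefactor $2^{n-k}$ annihilates all terms with $k\le n-2$ modulo $4$, the constraints $\sum c_i=k$, $\sum ic_i=n$ force the unique tuples $(n,0,\dots,0)$ and $(n-2,1,0,\dots,0)$ for $k=n$ and $k=n-1$, and Corollary~\ref{lem:7} pins down the $2$-adic valuation of the second term as exactly $v_2\bigl(n(n-1)\bigr)$. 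Your computation of both contributions is correct (in particular the uniform congruence $(-1)^{n-1}(2n-3)!!\,(2n-1)!!\equiv1\pmod 4$), so your argument is complete, non-inductive, and arguably more transparent than the paper's.

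One point you raise deserves emphasis: your analysis of the $k=n-1$ term genuinely requires $x(2)$ to be odd, a hypothesis stated in Theorem~\ref{prop:2} but absent from the statement of Theorem~\ref{prop:2a}. This is not a defect of your proof but of the theorem as stated: since $v_{\mathbf x}(2)=2x(2)-3x^2(1)\equiv 2x(2)+1\pmod 4$, the claimed value $3$ at $n=2$ forces $x(2)$ odd, and e.g.\ $x(1)=1$, $x(j)=0$ for $j\ge2$ gives the constant sequence $1\pmod 4$ rather than \eqref{eq:0-4}. The paper's own induction tacitly uses the same hypothesis in its (unwritten) base case $n=2$, and the sole application in Theorem~\ref{thm:11} has $x(2)=25$ odd, so nothing downstream is affected; but you were right to flag the missing assumption rather than silently assume it.
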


\begin{proof}
It suffices to treat the case of modulus~$4$ since, because
of the assumption that $x(1)$ is odd, it implies
the assertion for the modulus~$2$.

We prove the claim in the statement of the theorem by induction on~$n$.

We use the recurrence \eqref{eq:Rek2} for the induction step.
We may assume that $n\ge4$.
We rewrite the recurrence \eqref{eq:Rek2} modulo~$4$ in the form
$$
0\equiv-\sum_{m=0}^{\cl{n/2}-1}\binom {2n}{2m}
v(m)v(n-m)-\frac {1} {2}\chi(n\text{ even})\binom {2n}n v^2(n/2)
\quad (\text{mod }4),
$$
where $\chi(\mathcal A)=1$ if $\mathcal A$ is true and
$\chi(\mathcal A)=0$ otherwise. Since the central binomial coefficient
is divisible by~$2$ for $n\ge1$,
we see that, for the induction, we may simply substitute the
claimed mod-4 values of $v(m)$ for $m=0,1,\dots,n$ in
\begin{equation} \label{eq:Rek2a}
\frac {1} {2}\sum_{m=0}^{n}\binom {2n}{2m}
v(m)v(n-m),
\end{equation}
and check that the resulting expression is divisible by~$4$.

We substitute the values from \eqref{eq:0-4}
in the expression \eqref{eq:Rek2a}, to get
\begin{multline}
\frac {1} {2}\sum_{m\equiv0~(\text{mod }4)}^{}\binom {2n}{2m}
+\frac {x(1)} {2}\sum_{m\equiv1~(\text{mod }4)}^{}\binom {2n}{2m}\\[1mm]
+\frac {3} {2}\sum_{m\equiv2~(\text{mod }4)}^{}\binom {2n}{2m}
+\frac {x(1)+2} {2}\sum_{m\equiv3~(\text{mod }4)}^{}\binom {2n}{2m}.
\label{eq:binsum}
\end{multline}
Next we recall the well-known (and easily derived) fact that
$$
\sum_{m\equiv r~\text{(mod 8)}}\binom {N}m=
\frac {1} {8}\sum_{j=0}^7\om^{-rj}(1+\om^j)^ {N},
$$
where $\om=\frac {1+\mathbf i} {\sqrt2}$ is a primitive eighth root of
unity (with $\mathbf i=\sqrt{-1}$).
Using this in \eqref{eq:binsum}, we obtain
\begin{multline*}
\frac {1} {16}\sum_{j=0}^7(1+\om^j)^ {2n}
+\frac {x(1)} {16}\sum_{j=0}^7(-\mathbf i)^{j}(1+\om^j)^ {2n}\\[1mm]
+\frac {3} {16}\sum_{j=0}^7(-1)^{j}(1+\om^j)^ {2n}
+\frac {x(1)+2} {16}\sum_{j=0}^7\mathbf i^{j}(1+\om^j)^ {2n}.
\end{multline*}
Now we multiply this expression by $z^n$ and sum the result over all
$n\ge0$. After evaluating all the appearing geometric series and
simplifying, we obtain
\begin{equation} \label{eq:GF} 
\frac{(x(1)+3) \left(
   1 - 7 z + 21 z^2 - 35 z^3 + 34 z^4 - 42 z^5 - 28 z^6 - 8 z^7
\right)}{1 - 8 z + 28 z^2 - 56 z^3 + 68 z^4 - 112 z^5 - 112 z^6 - 64 z^7}.
\end{equation}
Here, we should first observe that the factor $x(1)+3$ is even,
since $x(1)$ is odd by assumption.
Furthermore, 
the denominator has the form $1+2f(z)$, where $f(z)$ is a
polynomial with integer coefficients, and the coefficients of
$z^4$, $z^5$, $z^6$, and $z^7$ in the numerator are all even. It is
not difficult to see that, together, this implies that in the series
expansion of \eqref{eq:GF} the coefficients of $z^n$ is divisible
by~$4$ for $n\ge4$.
\end{proof}

\section{The inverse of the matrix $\mathbf R$ modulo powers of $2$}
\label{sec:7}

In this section, we prove periodicity of the matrix entries
$R^{-1}(k+2n,k)$ (given by Proposition~\ref{prop:1})
modulo powers of~2, when considered as a sequence
indexed by~$k$. Moreover, this result comes again with a precise
statement on period lengths.
Our starting point is a modified version of the
expansion~\eqref{eq:R^(-1)p},
namely~\eqref{eq:R^(-1)}. Also here, the main result of this
section concerns a polynomial refinement. More precisely,
in this refinement, the number~$u(j)$
gets replaced by a variable~$y(j)$, $j=0,1,\dots$, with
the only restriction that $y(0)=1$.

\begin{theorem} \label{thm:4}
Let $y(j),$ $j=0,1,2,\dots,$ be integers with $y(0)=1$. 
Furthermore, define coefficients $R^{-1}_y(n,k)$ by
\begin{equation} \label{eq:Ry^{-1}} 
R^{-1}_y(n,k)=2^{(n-k)/2}\frac {(n-1)!} {(k-1)!}\coef{t^{-k}}U^{-n}_y(t),
\end{equation}
where
$$
U_y(t):=\sum_{j\ge0}\frac {y(j)} {(2j+1)!}t^{2j+1}.
$$
Here again,
the case $k=0$ has to be interpreted as $R^{-1}_y(0,0)=1$ and
$R^{-1}_y(n,0)=0$ for $n\ge1$.
Then, for fixed~$n$ and any $2$-power $2^e,$ 
the sequence $\big(R^{-1}_y(k+2n,k)\big)_{k\ge0}$ is purely periodic modulo~$2^e$
with {\em(}not necessarily minimal\/{\em)} period length~$2^{e}$.
\end{theorem}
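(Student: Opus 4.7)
The plan is to first write out an explicit polynomial expansion of $R^{-1}_y(k+2n,k)$ analogous to~\eqref{eq:R^(-1)p}, and then to bound the $2$-adic valuation of each resulting summand. Factoring $U_y(t)=t(1+W(t))$ with $W(t) := \sum_{j\ge 1} y(j)\,t^{2j}/(2j+1)!$, the definition~\eqref{eq:Ry^{-1}} yields
\[
R^{-1}_y(k+2n,k) = 2^n\,\frac{(k+2n-1)!}{(k-1)!}\,[t^{2n}]\bigl(1+W(t)\bigr)^{-(k+2n)}.
\]
Expanding the negative power by the generalised binomial theorem and each $W^m$ by the multinomial theorem then gives
\[
R^{-1}_y(k+2n,k) = 2^n\sum_{m=0}^{n}(-1)^m\binom{k+2n+m-1}{2n+m}\sum_{(c_i)\in\mathcal{P}_{n,m}}\frac{(2n+m)!}{\prod_i c_i!\,(2i+1)!^{c_i}}\prod_i y(i)^{c_i},
\]
with the inner multinomial factor an integer since $\sum_i(2i+1)c_i=2n+m$.

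The periodicity claim amounts to $\Delta_k := R^{-1}_y(k+2^e+2n,k+2^e) - R^{-1}_y(k+2n,k)\equiv 0\pmod{2^e}$ for every $k\ge 0$. I would split into two cases. If $n\ge e$, the prefactor $2^n$ by itself already divides every term of $R^{-1}_y(k+2n,k)$ by $2^e$, so the congruence is trivial. Assume henceforth $n<e$. The elementary inequality $3n<2^{n+1}$ (which holds for every $n\ge 0$) combined with $n<e$ yields $N:=2n+m\le 3n<2^{n+1}\le 2^e$ for every admissible~$m$. Vandermonde's identity gives
\[
\binom{k+2^e+N-1}{N} - \binom{k+N-1}{N} = \sum_{j=1}^{N}\binom{2^e}{j}\binom{k+N-1}{N-j},
\]
where every $j$ in this sum satisfies $1\le j\le N<2^e$.

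For each such $j$, Kummer's theorem (Lemma~\ref{lem:5}) gives $v_2\!\left(\binom{2^e}{j}\right)=e-v_2(j)$. The crucial observation is that $v_2(j)\le\lfloor\log_2 j\rfloor\le\lfloor\log_2(3n)\rfloor\le n$, once more by $3n<2^{n+1}$. Consequently every summand contributing to $\Delta_k$ has $2$-adic valuation at least $n+(e-v_2(j))\ge n+(e-n)=e$. Hence $\Delta_k\equiv 0\pmod{2^e}$, which proves pure periodicity with period~$2^e$.

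I expect the main technical point to be the clean derivation of the expansion; thereafter the elementary inequality $3n<2^{n+1}$ together with Kummer's theorem does all the work. Notably, the prefactor $2^n$ (absent in the analysis of $v_{\mathbf{x}}(n)$ leading to Theorem~\ref{prop:2}) absorbs essentially all of the required $2$-adic slack, so no analogue of the delicate bookkeeping of Lemma~\ref{lem:11} should be needed here.
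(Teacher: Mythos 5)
Your proof is correct, and while it starts from the same expansion as the paper's proof (your formula is \eqref{eq:R^(-1)} after reindexing the odd block sizes $2i+1$ by $i$, with the same two observations that the sum over $m$ terminates at $m=n$ and that the multinomial factor is an integer by Lemma~\ref{lem:3}), the mechanism by which you establish periodicity in $k$ is genuinely different. The paper writes the summand as $(2n+m+k-1)(2n+m+k-2)\cdots k\big/\bigl(n!\,(2n-1)!!\bigr)$ times a $k$-independent $2$-adic integer and proves invariance under $k\mapsto k+2^e$ by shifting this product of consecutive integers in stages, regrouping blocks of $n$ factors into integral binomial coefficients so that the denominator $n!$ never obstructs the term-by-term reduction modulo~$2^e$. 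You instead package all factorials into the integer $\binom{k+N-1}{N}\cdot\frac{(2n+m)!}{\prod_i c_i!\,(2i+1)!^{c_i}}$ with $N=2n+m$, so that the entire $k$-dependence sits in a single genuine binomial coefficient, and then difference that coefficient exactly via Vandermonde; the identity $v_2\bigl(\binom{2^e}{j}\bigr)=e-v_2(j)$ together with $1\le j\le N\le 3n<2^{n+1}$ gives each Vandermonde term $2$-adic valuation at least $e-n$, which the explicit prefactor $2^n$ tops up to $e$ (and the case $n\ge e$ is indeed killed by the prefactor alone). Your route is shorter and, I think, more transparent for this particular statement; what the paper's formulation buys is a template that carries over to the much harder ``twisted'' periodicity modulo $p^e$ in Section~\ref{sec:9}, where the shift multiplies the summand by a unit and a power of $y\bigl(\tfrac{p-1}{2}\bigr)$, so an exact Vandermonde differencing of a single binomial coefficient is no longer available and one is forced into the shift-and-regroup analysis of products of consecutive integers.
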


\begin{proof}
By the definition \eqref{eq:Ry^{-1}} and the computation
in~\eqref{eq:R^(-1)p} with $u$~replaced by $y$, $n$~replaced by~$n+k$,
and $k$ replaced by $k/2$, in this order, we obtain
\begin{multline}
R^{-1}_y(2n+{}k,k)=
\sum_{m\ge0}
\underset{c_1=0}{\sum_{(c_i)\in\mathcal P^o_{2n+m,m}}}
(-1)^m2^{n}\binom {2n+m+k-1} {2n+m}\\
\cdot
\frac {(2n+m)!} {3!^{c_3}c_3!\,5!^{c_5}c_5!\cdots (2n+1)!^{c_{2n+1}}c_{2n+1}!}
\prod _{i=1} ^{2n+1}
{y^{c_i}\big(\tfrac {i-1} {2}\big)},
\label{eq:R^(-1)}
\end{multline}
with $\mathcal P^o_{N,K}$ having been defined below~\eqref{eq:R^(-1)p}.
Here, we observe that $\mathcal P^o_{2n+m,m}$ is empty for $m>n$.
Indeed, for $(c_i)\in \mathcal P^o_{2n+m,m}$ with $c_1=0$, we have
\begin{equation} \label{eq:m<n} 
3m=3\sum_{i=1}^{2n+1}c_i\le \sum_{i=1}^{2n+1}ic_i=2n+m,
\end{equation}
and hence $m\le n$. Consequently, we may restrict the sum over~$m$ in
\eqref{eq:R^(-1)} to $m=0,1,\dots,n$.

We want to consider $R^{-1}_y
(2n+k,k)$ for fixed~$n$ as a function
in~$k$. Inspection of the last expression in \eqref{eq:R^(-1)} reveals
that, since the sum over~$m$ is finite, $R^{-1}_y(2n+k,k)$ is a
polynomial in~$k$ with rational coefficients. Hence, it is certainly
periodic modulo~$2^e$ (and, more generally, modulo any modulus~$M$).
In order to find a bound on the period length, we must perform a
$2$-adic analysis of the expression in \eqref{eq:R^(-1)}.

We may rewrite \eqref{eq:R^(-1)} as follows:
\begin{align}
\notag
R^{-1}_y(2n+k,k)
&=
\sum_{m\ge0}
\underset{c_1=0}{\sum_{(c_i)\in\mathcal P^o_{2n+m,m}}}
(-1)^m2^{n}\frac {(2n+m+k-1)(2n+m+k-2)\cdots k} {(2n)!}\\[1mm]
\notag
&\kern3cm
\cdot
\frac {(2n)!} {2!^{c_3}c_3!\,4!^{c_5}c_5!\cdots (2n)!^{c_{2n+1}}c_{2n+1}!}
\prod _{i=1} ^{2n+1}
\left(\frac {y\big(\tfrac {i-1} {2}\big)}i\right)^{c_i}\\[3mm]
\notag
&=
\sum_{m\ge0}
\underset{c_1=0}{\sum_{(c_i)\in\mathcal P^o_{2n+m,m}}}
(-1)^m\frac {(2n+m+k-1)(2n+m+k-2)\cdots k} {n!\,(2n-1)!!}\\
&\kern2.5cm
\cdot
\frac {(2n)!} {2!^{c_3}c_3!\,4!^{c_5}c_5!\cdots (2n)!^{c_{2n+1}}c_{2n+1}!}
\prod _{i=1} ^{2n+1}
\left(\frac {y\big(\tfrac {i-1} {2}\big)}i\right)^{c_i}.
\label{eq:R^(-1)a}
\end{align}

We want to consider \eqref{eq:R^(-1)a} modulo~$2^e$, and our goal is
to show that, modulo~$2^e$, this expression is periodic with period
length~$2^e$ as a (polynomial) function in~$k$.

Now, the first term in the last line of \eqref{eq:R^(-1)a} is an
integer due to the fact that
$$
\sum_{i=1}^{2n+1}(2i)c_i=
\sum_{i=1}^{2n+1}(2i+1)c_i-\sum_{i=1}^{2n+1}c_i=
(2n+m)-m=2n,
$$
which allows the application of Lemma~\ref{lem:3} with $(c_i)$
replaced by 
$$(0,c_3,0,c_5,0,\dots,0,c_{2n+1}).$$
The product over~$i$ in the last line of \eqref{eq:R^(-1)a} is a
rational number with an odd denominator, since $c_i=0$ for all even~$i$. Thus,
the last line of \eqref{eq:R^(-1)a} represents a rational number with
an odd denominator, which has a non-negative $2$-adic valuation.
Likewise, the term $(2n-1)!!$ in the denominator of the expression in
the first line of \eqref{eq:R^(-1)a} is an odd number, and
consequently also does not influence the 2-adic analysis of this
expression.

We substitute $k+2^e$ in place of~$k$
in the remaining terms in the first line of \eqref{eq:R^(-1)a}
(ignoring the sign $(-1)^m$), and compute
\begin{align*}
&\frac {(2n+m+k+2^e-1)(2n+m+k+2^e-2)\cdots (k+2^e)} {n!}
\\[2mm]
&\kern.5cm
=
\binom {2n+m+k+2^e-1}n
{(n+m+k+2^e-1)(n+m+k+2^e-2)\cdots (k+2^e)} \\[2mm]
&\kern.5cm
\equiv
\binom {2n+m+k+2^e-1}n
{(n+m+k-1)(n+m+k-2)\cdots k} \pmod {2^e}\\[2mm]
&\kern.5cm
=
(2n+m+k+2^e-1)(2n+m+k+2^e-2)\cdots(n+m+k+2^e)\\
&\kern2cm
\times
{(n+m+k-1)(n+m+k-2)\cdots (n+k) }
\binom {n+k-1}n\pmod {2^e}\\[2mm]
&\kern.5cm
\equiv 
\frac {(2n+m+k-1)(2n+m+k-2)\cdots k} {n!}\pmod {2^e}.
\end{align*}
If we use this congruence in \eqref{eq:R^(-1)a}, then the periodicity
of this expression in~$k$ with period length~$2^e$ becomes apparent.

\medskip
This finishes the proof of the theorem.
\end{proof}

\section{The sequence $(d(n))_{n\ge0}$ modulo powers of $2$}
\label{sec:8}

Now we are in the position to prove our second main result, namely Part~3 of 
Theorem~\ref{thm:main}.

\begin{theorem} \label{thm:11}
The sequence $(d(n))_{n\ge0},$ when taken modulo any fixed $2$-power $2^e$ with $e\ge3,$
is purely periodic with {\em(}not necessarily minimal\/{\em)} 
period length~$2^{e-1}$. Modulo~$4,$ the sequence is 
purely periodic with period length~$4,$ the first few
values of the sequence {\em(}modulo~$4${\em)} being given by 
$$
1,1, 3,  3  , 1 ,\dots.
$$
\end{theorem}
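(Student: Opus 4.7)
The plan is to work from the inversion formula~\eqref{eq:d-v},
$$d(n)=\sum_{k=0}^{n} R^{-1}(2n,2k)\,v(k),$$
and to combine the two periodicity inputs already at our disposal: the pure periodicity of the sequence $(v(n))_{n\ge 0}$ modulo~$2^e$ from Theorem~\ref{prop:2} (period $2^{e-1}$ for $e\ge 3$) and the pure periodicity of the sequence $(R^{-1}(K+2N,K))_{K\ge 0}$ modulo~$2^e$ in the column index~$K$ from Theorem~\ref{thm:4} (period~$2^e$ for fixed~$N$).

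Assume first that $e\ge 3$. I would establish $d(n+2^{e-1})\equiv d(n)\pmod{2^e}$ by splitting
$$d(n+2^{e-1})=\sum_{k=0}^{n+2^{e-1}} R^{-1}(2n+2^e,\,2k)\,v(k)$$
at $k=2^{e-1}$. In the upper part I substitute $k=j+2^{e-1}$ with $0\le j\le n$: the entry $R^{-1}(2n+2^e,\,2j+2^e)$, rewritten as $R^{-1}(K+2N,K)$ with $K=2j+2^e$ and $N=n-j$, is congruent to $R^{-1}(2n,2j)$ modulo~$2^e$ by Theorem~\ref{thm:4} (shifting $K$ by $2^e$ with $N$ held fixed), and $v(j+2^{e-1})\equiv v(j)\pmod{2^e}$ by Theorem~\ref{prop:2}. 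Hence the upper part is $\equiv d(n)\pmod{2^e}$, and everything reduces to showing that the lower part
$$A_e(n)\;:=\;\sum_{k=0}^{2^{e-1}-1} R^{-1}(2n+2^e,\,2k)\,v(k)$$
vanishes modulo~$2^e$.

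The main obstacle is exactly this last vanishing. Since $v(k)$ need not be even in general, I aim for the pointwise divisibility
$$R^{-1}(2n+2^e,\,2k)\equiv 0\pmod{2^e}\qquad (0\le k<2^{e-1},\ n\ge 0).$$
The case $k=0$ is immediate because $R^{-1}(N,0)=0$ for $N\ge 1$. For $1\le k<2^{e-1}$, I would analyze the explicit expansion~\eqref{eq:R^(-1)p}: the visible prefactor $2^{n-k+2^{e-1}}$ supplies a substantial part of the required $2$-adic valuation, and the remaining contribution has to be harvested from the multinomial coefficients via Legendre's and Kummer's formulae (Lemmas~\ref{lem:4} and~\ref{lem:5}). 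I expect this bookkeeping to parallel the $2$-adic analysis of Section~\ref{sec:6}, in particular of Lemmas~\ref{lem:6}--\ref{lem:11}, with a case distinction according to the size of~$k$ relative to $n+2^{e-1}$ and to the support of the tuple $(c_i)$ indexing the expansion.

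For $e=2$ the claimed period is $4=2^e$ rather than $2^{e-1}$, because Theorem~\ref{prop:2a} shows that $(v(n))$ has period $4$ (not $2$) modulo~$4$. I would run the same splitting argument with $n\mapsto n+4$ instead of $n\mapsto n+2$, now invoking the period-$4$ statements from Theorem~\ref{prop:2a} for~$v$ and from Theorem~\ref{thm:4} for~$R^{-1}$; the resulting four-term lower-part sum is handled by the same pointwise analysis. The initial segment $d(0),d(1),d(2),d(3)\equiv 1,1,3,3\pmod 4$ is then checked directly from the recurrence~\eqref{eq:d-def}, completing the proof.
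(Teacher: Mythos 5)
Your decomposition --- splitting $d(n+2^{e-1})=\sum_{k}R^{-1}(2n+2^e,2k)v(k)$ at $k=2^{e-1}$, and matching the upper part with $d(n)$ via the $k$-periodicity of Theorem~\ref{thm:4} together with the periodicity of $v$ from Theorem~\ref{prop:2} --- is viable, and the congruences you invoke for the upper part are correct. The genuine gap is the lower part: you reduce everything to the pointwise divisibility $R^{-1}(2n+2^e,2k)\equiv0\pmod{2^e}$ for all $0\le k<2^{e-1}$ and \emph{all} $n\ge0$, but you only offer a plan (``harvest the remaining contribution from the multinomial coefficients, paralleling Section~\ref{sec:6}'') rather than a proof. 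This is exactly where the difficulty sits: the visible prefactor $2^{(n+2^{e-1})-k}$ settles the claim only for $n\ge e-1$; for $0\le n\le e-2$ and $k$ in the window $n+2^{e-1}-e<k<2^{e-1}$ it falls short by up to $e-1$ powers of~$2$, and a fresh $2$-adic analysis of \eqref{eq:R^(-1)p} would be required. The missing fact is true, and in fact follows from tools you already cite: the expansion \eqref{eq:R^(-1)} exhibits $R^{-1}(K+2N,K)$ as a polynomial in the column index~$K$ that vanishes identically for $-2N\le K<0$ (because of the factor $\binom{2N+m+K-1}{2N+m}$), the periodicity of Theorem~\ref{thm:4} persists for this polynomial extension, and for $k<2^{e-1}$ the shift $K=2k\mapsto 2k-2^e$ lands in that vanishing range. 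So the gap is fillable, but as written the decisive step is not carried out.

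For comparison, the paper proves Theorem~\ref{thm:11} by a different truncation that avoids your lower-part problem altogether: since $2^{n-k}$ divides $R^{-1}(2n,2k)$, the sum \eqref{eq:d-v} collapses modulo~$2^e$ to its last $e$ terms, $d(n)\equiv\sum_{k=0}^{e-1}R^{-1}(2n,2n-2k)v(n-k)$, and each of these finitely many summands is periodic in~$n$ with period $2^{e-1}$ (Theorem~\ref{thm:4} applied to the column index $2n-2k$, Theorem~\ref{prop:2} for~$v$); the only delicate point there is again the meaning of $R^{-1}(2n,2n-2k)$ for $n<k$, handled by the polynomial vanishing just described. Your outline for the modulus~$4$ is fine in principle once the analogous lower-part vanishing is secured, with the initial segment $1,1,3,3$ checked directly.
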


\begin{proof}
Suppose first that $e\ge3$.
By \eqref{eq:R^(-1)} with $y(i)=u(i)$ for all~$i$, we have\linebreak 
$R^{-1}(2n,2k)\equiv 0$~(mod~$2^e$) for
$n\ge k+e$, due to the factor $2^{n-k}$ that arises under the
corresponding substitution.
Thus, the relation \eqref{eq:d-v} becomes
\begin{align} \notag
d(n)&\equiv\sum_{k=\max\{0,n-e+1\}}^{n}R^{-1}(2n,2k)v(k)\quad (\text{mod }2^e)\\[1mm]
&\equiv\sum_{k=0}^{\min\{n,e-1\}}R^{-1}(2n,2n-2k)v(n-k)\quad (\text{mod }2^e).
\label{eq:d-v-2^e} 
\end{align}
By Theorem~\ref{prop:2} with $x(j)=
\prod _{\ell=1} ^{j}(4\ell-3)^2$ for all~$j$,
the sequence $(v(n))_{n\ge0}$ is (purely) periodic
modulo~$2^e$ with period length~$2^{e-1}$, and,
by Theorem~\ref{thm:4} with $y(j)=u(j)$ for all~$j$, 
$k$ and $n$ interchanged, and subsequently $n$ replaced by~$2n-2k$, 
the sequence
$\big(R^{-1}(2n,2n-2k))_{n\ge0}$ is
(purely) periodic modulo~$2^e$ with period length~$2^{e-1}$. 
There is a little detail that needs to be addressed here:
what is the meaning of $R^{-1}(2n,2n-2k)$ if $n<k$?
So far we have not given a meaning to this in that case.
However, the expression for $R^{-1}(2n+k,k)$ provided by
\eqref{eq:R^(-1)} makes it easy to define the appropriate extension.
Namely, as we said earlier, this expression is a polynomial in~$k$ and
therefore it gives a meaning to $R^{-1}(2n+k,k)$ for all --- positive
or negative --- integers~$k$. Moreover, as long as $-2n\le k<0$,
the term $R^{-1}(2n+k,k)$ vanishes because of the appearance of the
binomial coefficient $\binom {2n+m+k-1}{2n+m}$ in \eqref{eq:R^(-1)}.
Furthermore, the periodicity argument still applies to the extended
sequence. Altogether, this allows us to relax the upper bound on the
summation index~$k$ in \eqref{eq:d-v-2^e} to
$$
d(n)
=\sum_{k=0}^{e-1}R^{-1}(2n,2n-2k)v(n-k)\quad (\text{mod }2^e).
$$
The above arguments show in particular that 
each summand on the right-hand side 
is (purely) periodic modulo~$2^e$ with (not necessarily minimal) 
period length~$2^{e-1}$. Since these are finitely many summands,
the same must hold for~$d(n)$.

\medskip
The assertion of the theorem concerning the behaviour of the sequence
modulo~$4$ is a direct consequence of Theorem~\ref{prop:2a} with
$x(1)=1$ and Theorem~\ref{thm:4} with $y(j)=u(j)$ for all~$j$.
\end{proof}

\section{The inverse of the matrix $\mathbf R$ modulo prime powers $p^e$ with
$p\equiv1~(\text{mod }4)$}
\label{sec:9}

This section prepares for the proof of our third main result, given in
the subsequent section. The goal is to establish periodicity of
$R^{-1}(2n,2k)$ modulo powers of a prime~$p$ with $p\equiv1$~(mod~4),
when considered as a sequence in~$n$. This will eventually be achieved
in Theorem~\ref{thm:12}. Similar to Section~\ref{sec:7}, we actually prove a polynomial
refinement in which the number $u(j)$ that appears in the definition
of the matrix entries $R^{-1}(2n,2k)$ (cf.\ \eqref{eq:R^{-1}}) is
replaced by an integer $y(j)$, $j=0,1,\dots$, with the restriction
that $y(j)$ is divisible by $p^e$ for $j\ge\cl{ep/2}$.
The corresponding analysis is the most demanding one in this
article. Already the final periodicity result in~\eqref{eq:R-1per}
(with ``supporting" powers $p^{\fl{2k/p}}$ on both sides) indicates
that matters are much more delicate here. While our starting point 
is again the expansion~\eqref{eq:R^(-1)} (with the
appropriate substitutions; see \eqref{eq:TSumme}),
the analysis of the summand modulo powers
of~$p$ is much more intricate here. The corresponding auxiliary
results are the subject of Lemmas~\ref{lem:16}--\ref{lem:13}.

We start with a lower bound for the $p$-adic valuation of the summand 
in~\eqref{eq:TSumme} in the ``generic" case.

\begin{lemma} \label{lem:16}
Let $p$ be a prime with $p\equiv1$~{\em(mod $4$)} and let 
$\big(y(n)\big)_{n\ge0}$ be an integer sequence with the property that
$v_p\big(y(n)\big)\ge e$ for $n\ge\cl{\frac {ep} {2}}$.
For all positive integers $n$ and $k,$ and non-negative integers $m$
and $c_i,$ $1\le i\le 2n-2k,$
with $\sum_{i=1}^{2n-2k}ic_i=2n-2k+m,$ $\sum_{i=1}^{2n-2k}c_i=m,$ and
$c_i=0$ for all even~$i,$ we have
\begin{multline} \label{eq:max11} 
v_p\left(\frac {(2n+m-1)!} {(2k-1)!
  \prod _{i=1} ^{2n-2k}i!^{c_i}\,c_i!}\prod _{i=1} ^{2n-2k}
{y^{c_i}\big(\tfrac {i-1} {2}\big)}\right)\\[1mm]
\ge
\#\left(\text{\em carries when adding $\left(2n+m-1-pc_p\right)_p$ and
  $(pc_p)_p$}\right)\kern2cm\\[1mm]
+\#\left(\text{\em carries when adding $\left(2n-2k+m-pc_p\right)_p$ and $(2k-1)_p$}\right)\\[1mm]
+\tfrac {1} {2(p-1)}(2n-2k+m-pc_p)
-\tfrac {1} {p-1}s_p(2n-2k+m-pc_p).
\end{multline}
\end{lemma}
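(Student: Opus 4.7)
The plan is to compute $v_p$ of the factorial quotient on the left-hand side exactly via Legendre's formula (Lemma~\ref{lem:4}) and to rewrite the two carry counts on the right-hand side as digit sums via Lemma~\ref{lem:4-5}, so that the two sides can be compared directly. Legendre's formula, combined with the constraints $\sum_i ic_i = 2n-2k+m$ and $\sum_i c_i = m$, gives
\begin{equation*}
v_p\!\left(\frac{(2n+m-1)!}{(2k-1)!\prod_i i!^{c_i}\,c_i!}\right) = \frac{1}{p-1}\Bigl[s_p(2k-1) - s_p(2n+m-1) + \sum_i c_i s_p(i) + \sum_i s_p(c_i) - m\Bigr].
\end{equation*}
The assumption on $(y(n))_{n \ge 0}$ is equivalent to the pointwise bound $v_p(y((i-1)/2)) \ge \fl{(i-1)/p}$ for odd $i$, because $j \ge \cl{ep/2}$ if and only if $e \le \fl{2j/p}$. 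On the right-hand side of the claim, setting $M := 2n-2k+m-pc_p$, Lemma~\ref{lem:4-5} expresses the two carry counts as digit-sum differences; the intermediate $s_p(2n+m-1-pc_p)$ and $s_p(M)$ cancel, so the entire right-hand side collapses to $\frac{1}{p-1}\bigl[s_p(c_p) + s_p(2k-1) - s_p(2n+m-1) + M/2\bigr]$.

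Subtracting and multiplying through by $p-1$, the claim reduces to
\begin{equation*}
\sum_i c_i\bigl[s_p(i) + (p-1)\fl{\tfrac{i-1}{p}}\bigr] + \sum_i s_p(c_i) - m - s_p(c_p) - \tfrac{M}{2} \ge 0.
\end{equation*}
The $i = p$ contribution collapses to $c_p(1 + 0) + s_p(c_p) - c_p - s_p(c_p) = 0$, using $s_p(p) = 1$, $\fl{(p-1)/p} = 0$, and the fact that $M$ excludes the $i = p$ term. So it suffices to establish the non-negativity of $\sum_{i \ne p,\, i \text{ odd}}(c_i\,\phi(i) + s_p(c_i))$, where
\begin{equation*}
\phi(i) := s_p(i) + (p-1)\fl{\tfrac{i-1}{p}} - 1 - \tfrac{i}{2}.
\end{equation*}

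The identity $s_p(i) + (p-1)\fl{i/p} = i - (p-1)\sum_{j \ge 2}\fl{i/p^j}$ (a direct consequence of Legendre's formula applied to $v_p(i!)$), together with $\fl{(i-1)/p} = \fl{i/p}$ for odd $i$ not divisible by $p$, permits a brief case analysis that yields $\phi(i) = i/2 - 1$ when $i < p^2$ and $p \nmid i$; $\phi(qp) = qp/2 + s_p(q) - q - p$ when $q$ is odd with $q \ge 3$; and $\phi(i) \ge i(p-2)/(2p) - 1$ in general. In each of these subcases the inequality $\phi(i) \ge 0$ reduces to an elementary estimate such as $(q-2)(p-2) \ge 2$ in the case $i = qp$, which holds for $q \ge 3$ and $p \ge 5$. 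The single exceptional value is $i = 1$, where $\phi(1) = -1/2 < 0$; this case must be excluded, and in the intended application to the expansion~\eqref{eq:R^(-1)p} (and its polynomial version~\eqref{eq:R^(-1)}) the exclusion is automatic by the restriction $c_1 = 0$. The main obstacle is the delicate case $i = qp$ with $q$ odd and $q \ge 3$, where the deficit coming from $s_p(qp) = s_p(q)$ must be balanced against $(p-1)(q-1)$; fortunately this is controlled by the elementary bound above, so that all the remaining pieces fit together.
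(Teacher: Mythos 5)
Your proof is correct and is essentially the paper's own argument in a different order: you compute the left-hand side exactly by Legendre's formula, convert the two carry counts to digit sums via Lemma~\ref{lem:4-5} and cancel, and reduce to the per-index inequality $c_i\phi(i)+s_p(c_i)\ge0$, where $\phi(i)=s_p(i)+(p-1)\fl{\tfrac{i-1}{p}}-1-\tfrac{i}{2}$ is exactly the content of the paper's key estimate~\eqref{eq:aux12} (the paper bounds term by term first and only recognises the carries at the end). Your observation about $i=1$ is a genuine catch: the bound $\phi(1)=-\tfrac12<0$ means the inequality can actually fail for large $c_1$ (e.g.\ $p=5$, $c_1=119$, $c_3=2$, $y\equiv1$ on small arguments gives left-hand side $4$ against right-hand side $\tfrac{123}{8}$), so the lemma implicitly requires $c_1=0$ — a hypothesis present in every application (the sums in~\eqref{eq:R^(-1)p} and~\eqref{eq:TSumme} are restricted to $c_1=0$) and in the analogous Proposition~\ref{prop:n-k-m}, but omitted from this lemma's statement and silently used in the paper's own step from~\eqref{eq:aux12} to~\eqref{eq:aux13}.
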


\begin{remarknu} \label{rem:-1/2}
For non-negative integers $N$, we have 
$$
\tfrac {1} {2(p-1)}N-\tfrac {1} {p-1}s_p(N)\ge-\tfrac {1} {2}.
$$
Thus, although the last line in \eqref{eq:max11} can be negative, the
sum of the two numbers of carries on the right-hand side
of~\eqref{eq:max11} is still a lower bound for the
$p$-adic valuation on the left-hand side since the latter is an integer.
\end{remarknu}

\begin{proof}[Proof of Lemma~{\em \ref{lem:16}}]
By Legendre's formula in Lemma~\ref{lem:4},
the $p$-adic valuation on the left-hand side of \eqref{eq:max11} equals
\begin{multline} \label{eq:aux10}
v_p\left(\frac {(2n+m-1)!} {(2k-1)!
  \prod _{i=1} ^{2n-2k}i!^{c_i}\,c_i!}\prod _{i=1} ^{2n-2k}
{y^{c_i}\big(\tfrac {i-1} {2}\big)}\right)\\[1mm]
=\frac {1}
{p-1}\left(\vphantom{\sum_p^i}2n-2k+m-s_p(2n+m-1)+s_p(2k-1)\right.
\kern3cm\\[1mm]
\left.
-\sum_{i=1} ^{2n-2k}\Big(c_ii-c_is_p(i)+c_i-s_p(c_i)\Big)
\right)+\sum _{i=1} ^{2n-2k}
c_iv_p\Big(y\big(\tfrac {i-1} {2}\big)\Big).
\end{multline}
We consider the terms involving $c_i$ on the right-hand side of
\eqref{eq:aux10} separately. Concerning these, we claim that the
following lower bound holds for all odd~$i\ge3$ with~$i\ne p$:
\begin{equation} \label{eq:aux12} 
\frac {1} {p-1}\big(-(i+1)c_i+c_is_p(i)+s_p(c_i)
\big)+
c_iv_p\Big(y\big(\tfrac {i-1} {2}\big)\Big)
\ge -\frac {ic_i} {2(p-1)}.
\end{equation}
Multiplying both sides by $2(p-1)$ and using our divisibility
assumption for $y\big(\frac {i-1} {2}\big)$, we see that the above
claim will follow from the estimate 
\begin{equation*} 
-(i+2)c_i+2c_is_p(i)+2s_p(c_i)+
2(p-1)c_i\fl{\tfrac {i-1} {p}}
\ge 0.
\end{equation*}
This inequality can now indeed easily be verified for $3\le i<p$,
for $p< i<2p$, for $i=2p+1$, and finally for $2p+3\le i$ making use
of the simple inequality $\fl{\tfrac {i-1} {p}}\ge \tfrac {i-p} {p}$.
This proves our claim in \eqref{eq:aux12}.

We use \eqref{eq:aux12} in \eqref{eq:aux10} for $i\ne p$, keeping the
terms for $i=p$ unchanged. Thereby, we obtain
\begin{align}
\notag
v_p&\left(\frac {(2n+m-1)!} {(2k-1)!
\prod _{i=1} ^{2n-2k}i!^{c_i}\,c_i!}\prod _{i=1} ^{2n-2k}
{y^{c_i}\big(\tfrac {i-1} {2}\big)}\right)\\[1mm]
\notag&\ge\frac {1} {p-1}\left(\vphantom{\underset f{\sum_p^i}}
2n-2k+m-pc_p+s_p(c_p)-s_p(2n+m-1)+s_p(2k-1)
-\frac {1} {2}\underset {i\ne p}{\sum_{i=1} ^{2n-2k}}ic_i\right)\\[1mm]
\notag&=\frac {1} {2(p-1)}(2n-2k+m-pc_p)
+\frac {1} {p-1}\big(s_p(pc_p)-s_p(2n+m-1)+s_p(2k-1)
\big)\\[1mm]
\notag
&=\frac {1} {2(p-1)}(2n-2k+m-pc_p)-\frac {1} {p-1}s_p(2n-2k+m-pc_p)\\
\notag
&\kern2cm
+\frac {1} {p-1}\big(s_p(pc_p)+s_p(2n+m-1-pc_p)-s_p(2n+m-1)\big)\\
&\kern2cm
+\frac {1} {p-1}
\big(s_p(2k-1)+s_p(2n-2k+m-pc_p)-s_p(2n+m-1-pc_p)
\big).
\label{eq:aux13}
\end{align}
By Lemma~\ref{lem:4-5}, the next-to-last line in
\eqref{eq:aux13} equals the first number of carries that appears on the
right-hand side of~\eqref{eq:max11}, while
the last line in~\eqref{eq:aux13} equals the second number of carries
on the right-hand side of~\eqref{eq:max11}.
Thus, we have established~\eqref{eq:max11}.
\end{proof}

In a special case, the bound from the previous lemma can be slightly improved.

\begin{lemma} \label{lem:17}
To the assumptions of Lemma~{\em\ref{lem:16}} we add the conditions
that $c_i=0$ for $i\ge2p$ and that $c_i<p$ for $1\le i<p$ and for $p<i<2p$.
Then we have
\begin{multline} \label{eq:max12} 
v_p\left(\frac {(2n+m-1)!} {(2k-1)!
  \prod _{i=1} ^{2n-2k}i!^{c_i}\,c_i!}\prod _{i=1} ^{2n-2k}
{y^{c_i}\big(\tfrac {i-1} {2}\big)}\right)\\[1mm]
\ge
\#\left(\text{\em carries when adding $\left(2n+m-1-pc_p\right)_p$ and
  $(pc_p)_p$}\right)\kern2cm\\
+\#\left(\text{\em carries when adding $\left(2n-2k+m-pc_p\right)_p$ and
  $(2k-1)_p$}\right)\\
+\tfrac {1} {p-1}(2n-2k+m-pc_p)-\tfrac {1} {p-1}s_p(2n-2k+m-pc_p).
\end{multline}
\end{lemma}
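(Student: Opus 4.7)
The plan is to follow the proof of Lemma~\ref{lem:16} verbatim, keeping the decomposition \eqref{eq:aux10} via Legendre's formula, but strengthening the key auxiliary inequality \eqref{eq:aux12} under the additional hypotheses of Lemma~\ref{lem:17}. Concretely, I claim that for every odd $i\ne p$ with $c_i>0$ admitted by the hypotheses, the sharper bound
\[
\tfrac{1}{p-1}\bigl(-(i+1)c_i+c_is_p(i)+s_p(c_i)\bigr)+c_iv_p\bigl(y\bigl(\tfrac{i-1}{2}\bigr)\bigr)\ge 0
\]
holds, in place of the bound $\ge -\tfrac{ic_i}{2(p-1)}$ used in Lemma~\ref{lem:16}. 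Once this is granted, the manipulation leading to \eqref{eq:aux13} carries over with the slack term $-\tfrac{1}{2}\sum_{i\ne p}ic_i=-\tfrac{1}{2}(2n-2k+m-pc_p)$ now replaced by $0$, so the first summand $\tfrac{1}{2(p-1)}(2n-2k+m-pc_p)$ appearing in the bound of Lemma~\ref{lem:16} gets upgraded to $\tfrac{1}{p-1}(2n-2k+m-pc_p)$; the two carry-counts and the digit-sum correction $-\tfrac{1}{p-1}s_p(2n-2k+m-pc_p)$ remain unchanged, yielding \eqref{eq:max12}.

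It remains to verify the strengthened inequality by direct case analysis on~$i$, using the hypotheses $c_i<p$ for $1\le i<2p$ with $i\ne p$, and $c_i=0$ for $i\ge 2p$. For odd $i$ with $3\le i<p$, we have $s_p(i)=i$ and $s_p(c_i)=c_i$ (since $0\le c_i<p$), so after multiplication by $p-1$ the left-hand side simplifies to
\[
-(i+1)c_i+ic_i+c_i+(p-1)c_iv_p\bigl(y\bigl(\tfrac{i-1}{2}\bigr)\bigr)=(p-1)c_iv_p\bigl(y\bigl(\tfrac{i-1}{2}\bigr)\bigr)\ge 0,
\]
which is non-negative as $y$ is integer-valued. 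For odd $i$ with $p<i<2p$, write $i=p+i'$ with odd $1\le i'\le p-2$; then $s_p(i)=1+i'$ and still $s_p(c_i)=c_i$. A short simplification reduces the multiplied-out inequality to $(p-1)c_i\bigl(v_p\bigl(y\bigl(\tfrac{i-1}{2}\bigr)\bigr)-1\bigr)\ge 0$. The standing hypothesis on $\bigl(y(n)\bigr)_{n\ge 0}$ inherited from Lemma~\ref{lem:16} gives $v_p(y(j))\ge 1$ for $j\ge\cl{p/2}=(p+1)/2$; since $i\ge p+2$, we have $\tfrac{i-1}{2}\ge\tfrac{p+1}{2}$, and the required divisibility holds. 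Terms with $i\ge 2p$ do not occur by hypothesis.

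The only delicate point, which I would flag as the main (and only) obstacle, is that the second case uses the divisibility hypothesis on $y$ exactly at the sharp threshold $i=p+2$, where the boundary value $v_p\bigl(y\bigl(\tfrac{p+1}{2}\bigr)\bigr)\ge 1$ is genuinely needed. This boundary is furnished precisely by the hypothesis on $\bigl(y(n)\bigr)_{n\ge 0}$ inherited from Lemma~\ref{lem:16} together with $p$ being odd (so that $\cl{p/2}=(p+1)/2$), and so the standing assumption $p\equiv 1\pmod 4$ is essential at this single point. The remainder of the argument is bookkeeping identical to that in the proof of Lemma~\ref{lem:16}.
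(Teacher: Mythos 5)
Your proof is correct and follows essentially the same route as the paper: both start from the Legendre-formula identity \eqref{eq:aux10} and show that, under the extra hypotheses, the contribution of each $i\ne p$ with $i<2p$ is non-negative (using $s_p(c_i)=c_i$ and, for $p<i<2p$, the inherited bound $v_p\big(y\big(\tfrac{i-1}{2}\big)\big)\ge1$), which upgrades the coefficient $\tfrac{1}{2(p-1)}$ to $\tfrac{1}{p-1}$. (Two harmless slips: for odd $i=p+i'$ the remainder $i'$ is even, not odd, and the boundary case needs only that $p$ is odd, not $p\equiv1\pmod 4$; neither affects the argument.)
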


\begin{remarknu} \label{rem:max12}
The only difference between \eqref{eq:max12} and \eqref{eq:max11} is a
``missing" factor of~2 in the denominator of the first term in the last line
of~\eqref{eq:max12}. Now, 
by Legendre's formula in Lemma~\ref{lem:4}, this last line 
equals the $p$-adic valuation of $(2n-2k+m-pc_p)!$
and is therefore non-negative (as opposed to the last line in
\eqref{eq:max11}; cf.\ Remark~\ref{rem:-1/2}).
\end{remarknu}

\begin{proof}[Proof of Lemma~{\em \ref{lem:17}}]
We use again the identity \eqref{eq:aux10}.
At this point, we observe that the additional conditions on the
$c_i$'s imply that the contribution of $c_i$ with $1\le i<2p$ and
$i\ne p$ in \eqref{eq:aux10} is non-negative. Indeed, this contribution is
\begin{multline} \label{eq:aux14}
-\frac {1}
      {p-1}\big(c_ii-c_is_p(i)+c_i-s_p(c_i)\big)+c_iv_p\Big(y\big(\tfrac
      {i-1} {2}\big)\Big) \\[1mm]
=
-\frac {1} {p-1}\big(ic_i-c_is_p(i)\big)+c_iv_p\Big(y\big(\tfrac {i-1} {2}\big)\Big).
\end{multline}
since $c_i<p$. If $i<p$, then we have $s_p(i)=i$ and hence the value
in \eqref{eq:aux14} is non-negative.
If, on the other hand, $p<i<2p$, then we have $s_p(i)=i-p+1$ and
$v_p\Big(y\big(\tfrac
{i-1} {2}\big)\Big)\ge1$, and therefore the value in \eqref{eq:aux14}
is again non-negative.

For the remaining terms in \eqref{eq:aux10}, we compute
\begin{multline}
\frac {1}
      {p-1}\big(2n-2k+m-s_p(2n+m-1)+s_p(2k-1)-pc_p+s_p(c_p)\big)
\\[1mm]
=
\frac {1}
      {p-1}(2n-2k+m-pc_p)-\frac {1} {p-1}s_p(2n-2k+m-pc_p)\\
+\frac {1} {p-1}\big(s_p(2n+m-1-pc_p)+s_p(pc_p)-s_p(2n+m-1)\big)\\
+\frac {1} {p-1}\big(s_p(2n-2k+m-pc_p)+s_p(2k-1)-s_p(2n+m-1-pc_p)\big).
\label{eq:aux16}
\end{multline}
By Lemma~\ref{lem:4-5}, the last line is the number of carries
that appears in the next-to-last line in \eqref{eq:max12}. 
For the same reason, the next-to-last line in \eqref{eq:aux16} is
the number of carries that appears in the third line from below in
\eqref{eq:max12}. This completes
the proof.
\end{proof}

The next lemma collects together several auxiliary inequalities that concern
the right-hand side of~\eqref{eq:max11}.
They will be used in subsequent arguments.

\begin{lemma} \label{lem:18}
Let $p$ be a prime with $p\equiv1$~{\em(mod~$4$)}
and $N$ and $\ell$ non-negative integers with $p^\ell\le N<p^{\ell+1}$. Then
\begin{alignat}2 \label{eq:N-1}
\fl{\tfrac {N} {p}}&\ge \ell,&&\quad \text{for }\ell\ge0,\\[1mm]
\label{eq:N-5}
\tfrac {1} {2(p-1)}N-\tfrac {1} {p-1}s_p(N)&> \tfrac {N} {2p}-\tfrac {3} {2},
&&\quad \text{for }\ell\ge0,\\[1mm]
\label{eq:N-2}
\tfrac {1} {2(p-1)}N-\tfrac {1} {p-1}s_p(N)&\ge \ell+\tfrac {3} {8},
&&\quad \text{for }\ell\ge2,\\[1mm]
\label{eq:N-3}
\tfrac {1} {2(p-1)}N-\tfrac {1} {p-1}s_p(N)&\ge \ell,
&&\quad \text{for }\ell\ge1\text{ and }N\ge p^\ell(p-1),\\[1mm]
\label{eq:N-4}
\tfrac {1} {2(p-1)}N-\tfrac {1} {p-1}s_p(N)&>0,
&&\quad \text{for }2p\le N<p^2.
\end{alignat}
\end{lemma}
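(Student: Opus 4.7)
The plan is to unify all five inequalities through a single base-$p$ expansion. Writing $N=\sum_{i=0}^{\ell}a_i p^i$ with $0\le a_i\le p-1$ and $a_\ell\ge1$ (the latter holds because $N\ge p^\ell$), one has $s_p(N)=\sum_{i=0}^\ell a_i$, which converts the quantity on the left-hand sides of \eqref{eq:N-5}--\eqref{eq:N-4} into the single, digit-by-digit expression
\[
\frac{N}{2(p-1)}-\frac{s_p(N)}{p-1}=\sum_{i=0}^{\ell}a_i\cdot\frac{p^i-2}{2(p-1)}.
\]
Observe the signs of the individual contributions: for $i=0$ the coefficient is $-\tfrac{1}{2(p-1)}$ (negative), for $i=1$ it is $\tfrac{p-2}{2(p-1)}$ (non-negative), and for $i\ge2$ it is large and positive, of order $\tfrac{p^{i-1}}{2}$.

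For \eqref{eq:N-1}, the bound $\fl{N/p}\ge p^{\ell-1}\ge\ell$ is immediate since $p^{\ell-1}\ge\ell$ for every $p\ge2$ and $\ell\ge0$ (with the case $\ell=0$ trivial). For \eqref{eq:N-5} I would form $\text{LHS}-\text{RHS}$ and use the identity
\[
\frac{p^i-2}{2(p-1)}-\frac{p^{i-1}}{2}=\frac{p^{i-1}-2}{2(p-1)}\quad(i\ge1),
\]
together with the corresponding value $-\tfrac{2p-1}{2p(p-1)}$ for $i=0$. Invoking $a_0,a_1\le p-1$ and discarding the non-negative tail $i\ge2$ reduces the difference to exactly $\tfrac{1}{2p}$, giving a strict inequality.

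For \eqref{eq:N-2} and \eqref{eq:N-3}, the dominant positive term is $a_\ell\cdot\tfrac{p^\ell-2}{2(p-1)}$, while the worst-case loss from lower digits is bounded below by $-\tfrac12$ (attained when $a_0=p-1$ and all intermediate $a_i$ vanish). This yields
\[
\frac{N}{2(p-1)}-\frac{s_p(N)}{p-1}\;\ge\;a_\ell\cdot\frac{p^\ell-2}{2(p-1)}-\frac{1}{2},
\]
which for \eqref{eq:N-2} (taking $a_\ell\ge1$, $\ell\ge2$, $p\ge5$) gives at least $\tfrac{p^\ell-p-1}{2(p-1)}\ge\ell+\tfrac38$, with the tight case being $p=5,\ell=2$ where both sides equal $\tfrac{19}{8}$. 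For \eqref{eq:N-3}, the additional hypothesis $N\ge p^\ell(p-1)$ forces $a_\ell=p-1$, upgrading the dominant contribution to $\tfrac{p^\ell-2}{2}$ and producing the bound $\tfrac{p^\ell-3}{2}\ge\ell$, which holds for $p\ge5$ and $\ell\ge1$. Finally, for \eqref{eq:N-4}, writing $N=a_1p+a_0$ with $a_1\ge2$ (forced by $N\ge2p$ and $a_0<p$) gives $a_1(p-2)-a_0\ge2(p-2)-(p-1)=p-3>0$ for $p\ge5$, so the whole expression is strictly positive.

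No step presents a genuine obstacle: once the base-$p$ expansion is in place, every inequality reduces to elementary arithmetic on the digits, with the tightest case being the equality in \eqref{eq:N-2} at $p=5,\ell=2$, which serves mainly as a sanity check that the constant $\tfrac38$ is chosen correctly.
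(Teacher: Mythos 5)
Your proof is correct and follows essentially the same route as the paper: both rest on the digit identity $\tfrac{1}{2(p-1)}N-\tfrac{1}{p-1}s_p(N)=\sum_i a_i\tfrac{p^i-2}{2(p-1)}$, bound the loss from the low digits by $-\tfrac12$ (resp.\ $-\tfrac{2p-1}{2p}-\tfrac12$ for \eqref{eq:N-5}), and isolate the leading digit's contribution, with the same tight case $p=5$, $\ell=2$ for \eqref{eq:N-2}. No gaps.
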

\begin{proof}
The first inequality is obvious.

For the proof of \eqref{eq:N-5}, we write $N=n_0+n_1p+n_2p^2$ with
$0\le n_0,n_1<p$ and $n_2\ge0$, and then
estimate the difference of the
left-hand side and ${N} /(2p)$,
\begin{align*}
\tfrac {1} {2(p-1)}N-\tfrac {1} {p-1}s_p(N)-\tfrac {N} {2p}
&=\tfrac {N} {2p(p-1)}-\tfrac {1} {p-1}s_p(N)\\[1mm]
&=\tfrac {1} {2p(p-1)}\left(n_0+n_1p+n_2p^2-2pn_0-2pn_1-2ps_p(n_2)\right)\\[1mm]
&\ge\tfrac {1} {2p(p-1)}\left(-n_0(2p-1)-n_1p+n_2(p^2-2p)\right)\\[1mm]
&\ge-\tfrac {2p-1} {2p}
-\tfrac {1} {2}>-\tfrac {3} {2},
\end{align*}
as desired.

In order to show \eqref{eq:N-2},
let $N=n_0+n_1p+\dots+n_\ell p^\ell$ with $0\le n_i<p$ for all~$i$ and $n_\ell>0$.
Then we have
\begin{align}
\notag
  \tfrac {1} {2(p-1)}N-\tfrac {1} {p-1}s_p(N)
&=\tfrac {1} {2(p-1)}\big(n_0+n_1p+\dots+n_\ell p^\ell\big)
-\tfrac {1} {p-1}(n_0+n_1+\dots+n_\ell)\\[1mm]
\notag
&=\tfrac {1} {2(p-1)}\big(-n_0+n_1(p-2)+\dots+n_\ell (p^\ell-2)\big)\\[1mm]
&\ge\tfrac {1} {2(p-1)}\big(-(p-1)+(p^\ell-2)\big)\\[1mm]
&=\tfrac {1} {2(p-1)}(p^\ell-2)-\tfrac {1} {2}.
\label{eq:aux15}
\end{align}
Now, it is a simple exercise to show that
$$
p^\ell\ge 2(p-1)\ell+p^2-4p+4, \quad \text{for }\ell\ge2\text{ and }p\ge5.
$$
We use this in \eqref{eq:aux15} to get
$$
  \tfrac {1} {2(p-1)}N-\tfrac {1} {p-1}s_p(N)\ge 
\ell +\tfrac {1} {2(p-1)}(p^2-4p+2)-\tfrac {1} {2}
=\ell +\tfrac {1} {2}(p-4)-\tfrac {1} {2(p-1)}
\ge \ell+\tfrac {3} {8},
$$
since $p\ge 5$.

For the proof of the strengthening in \eqref{eq:N-3}, we observe that, instead of
the last line in~\eqref{eq:aux15}, we may improve that lower bound to
$$
  \tfrac {1} {2(p-1)}N-\tfrac {1} {p-1}s_p(N)
\ge\tfrac {1} {2(p-1)}\big(-(p-1)+(p-1)(p^\ell-2)\big)
=\tfrac {1} {2}(p^\ell-3).
$$
The fact that this is at least $\ell$ for $\ell\ge1$ and $p\ge5$ is
straightforward to verify.

Finally we turn to the proof of \eqref{eq:N-4}. We write $N=n_0+n_1p$
with $0\le n_0<p$ and $2\le n_1<p$, and compute
\begin{align*}
\tfrac {1} {2(p-1)}N-\tfrac {1} {p-1}s_p(N)
&=\tfrac {1} {2(p-1)}(n_0+n_1p)-\tfrac {1} {p-1}(n_0+n_1)\\[1mm]
&=\tfrac {1} {2(p-1)}\big(-n_0+n_1(p-2)\big)\\[1mm]
&\ge\tfrac {1} {2(p-1)}\big(-(p-1)+2(p-2)\big)\\[1mm]
&=\tfrac {1} {2(p-1)}\big(p-3\big)>0,
\end{align*}
since $p\ge5$.

This completes the proof of the lemma.
\end{proof}

The purpose of the lemma below is to ``convert" the lower bound from
Lemmas~\ref{lem:16} and~\ref{lem:17} into a form that is needed in the
proof of the upcoming Theorem~\ref{thm:12}.

\begin{lemma} \label{lem:13}
Let $p$ be a prime with $p\equiv1$~{\em(mod $4$)} and let 
$\big(y(n)\big)_{n\ge0}$ be an integer sequence with the property that
$v_p\big(y(n)\big)\ge e$ for $n\ge\cl{\frac {ep} {2}}$.
For all positive integers $n$ and $k,$ and non-negative integers $m$
and $c_i,$ $1\le i\le 2n-2k,$
with $\sum_{i=1}^{2n-2k}ic_i=2n-2k+m,$ $\sum_{i=1}^{2n-2k}c_i=m,$ and
$c_i=0$ for all even~$i,$ we have
\begin{equation} \label{eq:max6} 
\fl{\tfrac {2k} {p}}
+v_p\left(\frac {(2n+m-1)!} {(2k-1)!
  \prod _{i=1} ^{2n-2k}i!^{c_i}\,c_i!}\prod _{i=1} ^{2n-2k}
{y^{c_i}\big(\tfrac {i-1} {2}\big)}\right)\\
\ge1+\max_{c_p<i\le \fl{(2n+m-1)/p}}v_p(i).
\end{equation}
By convention, if $c_p=\fl{(2n+m-1)/p}$ the right-hand side is interpreted as
  zero {\em(}so that the inequality is trivially true{\em)}.
\end{lemma}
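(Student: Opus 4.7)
The plan is to combine the lower bound for the $p$-adic valuation of the displayed factorial product provided by Lemma~\ref{lem:16} (and, in a subcase, Lemma~\ref{lem:17}) with the digit-sum estimates of Lemma~\ref{lem:18}, using $\fl{2k/p}$ to absorb the shortfall of half a unit appearing in the last line of~\eqref{eq:max11} (cf. Remark~\ref{rem:-1/2}). Set $M := \fl{(2n+m-1)/p}$ and $\ell := \max_{c_p<i\le M} v_p(i)$. The case $c_p = M$ is vacuous by convention, so I may assume $c_p < M$ and fix $i^{*}\in(c_p, M]$ with $v_p(i^{*})=\ell$; in particular $p^\ell \le i^{*} \le M$, and $2n+m-1 \ge pi^{*} \ge p^{\ell+1}$.

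First I would apply Lemma~\ref{lem:16} to reduce the target inequality to showing
$$
\fl{\tfrac{2k}{p}} + C_1 + C_2 + \tfrac{1}{2(p-1)}N_1 - \tfrac{1}{p-1}s_p(N_1) \ge 1+\ell,
$$
where $N_1 := 2n-2k+m-pc_p$, $C_1$ counts carries in $(pc_p) + (2n+m-1-pc_p)$, and $C_2$ counts carries in $(2k-1) + N_1 = (2n+m-1-pc_p)$. The key observation is that when $2k-1$ is small relative to $2n+m-1-pc_p$ one has $N_1 \ge p^{\ell+1}$ directly from $pi^{*} \le 2n+m-1$ and $c_p < i^{*}$; otherwise the large base-$p$ digits of $2k-1$ contribute to $\fl{2k/p}$, and one pays for the difference via carries in $C_1, C_2$ between the $p^0$- and $p^{\ell+1}$-digits.

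The main case split is on the size of $\ell$. For $\ell\ge 2$ and $N_1 \ge p^\ell$, inequality~\eqref{eq:N-2} of Lemma~\ref{lem:18} immediately yields $\tfrac{1}{2(p-1)}N_1-\tfrac{1}{p-1}s_p(N_1)\ge\ell+\tfrac{3}{8}$, so that the integer lower bound on the left-hand side of~\eqref{eq:max6} is automatically at least $\ell + 1$. For $\ell\in\{0,1\}$, the bound must be assembled more carefully: one combines~\eqref{eq:N-3}, \eqref{eq:N-4}, and~\eqref{eq:N-5} with the carry contributions~$C_1$ and~$C_2$, noting that a carry into the $p^{\ell+1}$-digit when adding $pc_p$ and $N_1$ must occur whenever $pc_p$ and $pi^{*}$ differ in the $p^{\ell+1}$-block. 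In the remaining subcase $N_1 < 2p$, the defining constraints $\sum i c_i = 2n-2k+m$ and $\sum c_i = m$ (with $c_i = 0$ for even~$i$) force $c_i=0$ for $i\ge 2p$ and $c_i<p$ for $1\le i<2p$, $i\ne p$, so Lemma~\ref{lem:17} applies and the strengthened bound~\eqref{eq:max12} (cf. Remark~\ref{rem:max12}) closes the argument.

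The main obstacle I expect is the fine-grained digit-by-digit bookkeeping in the small-$\ell$ regime, where one must avoid double-counting: the same base-$p$ digit of $2k-1$ may contribute both to $\fl{2k/p}$ and to the carry count~$C_2$, and some digits of $N_1$ captured by $\tfrac{N_1}{2(p-1)}-\tfrac{s_p(N_1)}{p-1}$ may already be accounted for in~$C_1$. Ensuring that each of the $\ell+1$ required units comes from a distinct source, across all sub-configurations of $c_p$, $k$, and $N_1$, is where the technical work will concentrate.
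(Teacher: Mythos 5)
Your overall strategy --- combine Lemma~\ref{lem:16} (and Lemma~\ref{lem:17} in a borderline subcase) with the digit-sum estimates of Lemma~\ref{lem:18}, and use integrality together with $\fl{2k/p}$ to absorb the $-\tfrac12$ from Remark~\ref{rem:-1/2} --- is the same as the paper's, but the proposal has a genuine gap at its core. Writing $N_1=2n-2k+m-pc_p$ and $\ell=\max_{c_p<i\le\fl{(2n+m-1)/p}}v_p(i)$, your ``key observation'' that $N_1\ge p^{\ell+1}$ whenever $2k-1$ is small is false: take $i^*=p^\ell$ and $c_p=p^\ell-1$, with $2n+m-1$ just above $p^{\ell+1}$; then $2n+m-1-pc_p$ (and a fortiori $N_1$) is only of order~$p$, while $\ell$ can be arbitrarily large. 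In such configurations essentially all of the required $1+\ell$ units must come from the \emph{first} carry count in \eqref{eq:max11}, i.e.\ the carries in the addition $(2n+m-1-pc_p)+pc_p$, and the proposal never proves a lower bound on that quantity in terms of~$\ell$. This is precisely the step the paper supplies by introducing the parameters $\alpha$ (position of the lowest nonzero digit of $2n+m-1$ lying above the top digit of $2n+m-1-pc_p$) and $\beta$ (with $p^\beta\le 2n+m-1-pc_p<p^{\beta+1}$), showing that the first carry count is at least $\alpha-\beta$ while $\ell$ equals $\alpha-1$ or is at most $\beta-1$ according to a dichotomy on $n_1$; after that the problem reduces to a bound of the form $\fl{2k/p}+(\text{second carry count})+(\text{last line of \eqref{eq:max11}})\ge\beta$, settled by a subcase analysis comparing $\beta$ with the size parameter $\gamma$ of $2k-1$.

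Relatedly, your main case split on $\ell$ is incomplete: the branch ``$\ell\ge2$ and $N_1\ge p^\ell$'' does go through via \eqref{eq:N-2}, but the complementary branch $\ell\ge2$ with $N_1<p^\ell$ is never treated, and for $\ell\in\{0,1\}$ you explicitly defer the ``fine-grained bookkeeping'' rather than carrying it out --- yet that bookkeeping (including the subcase $\beta=1$, $\gamma=0$, where the strengthened bound of Lemma~\ref{lem:17} is genuinely needed because \eqref{eq:max11} alone falls half a unit short) is the actual substance of the lemma. As it stands, the proposal is a plausible plan listing the right ingredients, not a proof.
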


\begin{proof}
We proceed in a similar manner as in the proof of Lemma~\ref{lem:9}.

Choose $\al$, $\be$, and $\ga$ such that $2n+m-1=n_0p^\al+n_1$,
with $n_0$ not divisible by~$p$ and
 $n_1<p^{\be+1}$, 
$p^\be\le 2n+m-1-pc_p<p^{\be+1}$, and $p^\ga\le
2k-1<p^{\ga+1}$.
These conditions imply that the $p$-adic
representations can be schematically indicated as
\begin{alignat}5
\notag
&&&\underset{\downarrow}{\scriptstyle \al}\kern24pt
&\underset{\downarrow}{\scriptstyle \be}\kern-8pt
&&\underset{\downarrow}{\scriptstyle \ga}\kern17pt
\\
\notag
\left(2n+m-1\right)_p&={}\dots\,&&?\,0\,\dots\,0&&*\dots&\,\dots.\,&&&\dagger\\
\notag
\left(2n+m-1-pc_p\right)_p&=&&&&{\kern3pt}?\,\dots&\,\dots.\,&&&\dagger\\
\left(2k-1\right)_p&=&&&&&\kern0pt?\,\dots&&&*
\label{eq:c_p}
\end{alignat}
with the meaning that the shown $?$ in the first line
is the $\al$-th digit counted from right (the counting starting with~$0$),
that the left-most (non-zero) digit of the $p$-adic representation
of $2n+m-1-pc_p$
is the $\be$-th digit, and that the left-most (non-zero) digit of the $p$-adic representation
of $2k-1$
is the $\ga$-th digit. The symbol $*$ indicates an arbitrary digit
(between $0$ and~$p-1$) --- {\it not necessarily the same} in the first and
in the third line, the symbol $?$ indicates an arbitrary
{\it non-zero} digit (between $1$ and $p-1$) --- {\it not necessarily the same} in
lines~1--3,
while $\dagger$ indicates a digit which {\it is the same} in the
first and in the second line. The reader is advised to
consult~\eqref{eq:c_p} constantly while going through the subsequent arguments.

We first dispose of the simple case in which $\be=0$ (and hence
$\ga=0$): in that case, by considering the schematic representation in
\eqref{eq:c_p}, it becomes apparent that
$c_p=\fl{(2n+m-1)/p}$. According to the convention that we have made
in the statement of the lemma, there is nothing to prove.
Hence, from now on, we assume that $\be\ge1$.

\medskip
Now we distinguish two main cases, depending on the relative sizes
of~$n_1$ and\linebreak $2n+m-1-pc_p$.

\medskip
{\sc Case 1: $n_1<2n+m-1-pc_p$}. 
Here, by inspection of \eqref{eq:c_p}, we see that 
\begin{equation} \label{eq:max8} 
\max\limits_{c_p<i\le \fl{{(2n+m-1)} /{p}}}v_p(i)= \al-1,
\end{equation}
since we obtain the $p$-adic representation of $\fl{(2n+m-1)/p}$ 
by simply deleting the right-most digit (indicated by $\dagger$
in~\eqref{eq:c_p}) in the $p$-adic
representation of $2n+m-1$, and since $pc_p$ is the difference of the
first two lines in~\eqref{eq:c_p}.

On the other hand, by Lemma~\ref{lem:16}, 
the left-hand side of \eqref{eq:max6} is bounded below by
\begin{multline} \label{eq:carries-p} 
  \fl{\tfrac {2k} {p}}
+\#\left(\text{carries when adding $\left(2n+m-1-pc_p\right)_p$ and $(pc_p)_p$}\right)\\[1mm]
+\#\left(\text{carries when adding $\left(2n-2k+m-pc_p\right)_p$ and
  $(2k-1)_p$}\right)\\[1mm]
+\tfrac {1} {2(p-1)}(2n-2k+m-pc_p)
-\tfrac {1} {p-1}s_p(2n-2k+m-pc_p).
\end{multline}
Clearly, the number of carries when adding two numbers $A$ and $B$ in
their $p$-adic representations equals
the number of carries when performing the
subtraction of, say, $A$ from $A+B$.
In view of this observation, inspection of \eqref{eq:c_p} yields that the number of carries
in the first line of~\eqref{eq:carries-p} is at least
$\al-\be$. Hence, comparison of \eqref{eq:max6} with \eqref{eq:max8}
shows that what remains to demonstrate is that
\begin{multline} \label{eq:carries-p2} 
\fl{\tfrac {2k} {p}}
+\#\left(\text{carries when adding $\left(2n-2k+m-pc_p\right)_p$ and $(2k-1)_p$}\right)\\[1mm]
+\tfrac {1} {2(p-1)}(2n-2k+m-pc_p)
-\tfrac {1} {p-1}s_p(2n-2k+m-pc_p)\ge\be.
\end{multline}

In order to accomplish this, we need to discuss several subcases.

\medskip
If $\be>\ga+1$ and $\ga\ge0$, then
$2n-2k+m-pc_p\ge p^\be$ or $p^{\be-1}\le 2n-2k+m-pc_p< p^\be$.
In the former case we may use \eqref{eq:N-2} to conclude that the
left-hand side of \eqref{eq:carries-p2} is at least $\be+\frac {3}
{8}$, while in the latter case the number of carries in
\eqref{eq:carries-p2} must be at least~1 and $2n-2k+m-pc_p$ must be at
least $(p-1)p^{\be-1}$; consequently
\eqref{eq:N-3} leads to the conclusion
that the left-hand side of \eqref{eq:carries-p2}
is at least $1+(\be-1)=\be$, both confirming the inequality
in \eqref{eq:carries-p2}.

\smallskip
The case where $\be=1$ and $\ga=0$ needs a special treatment, to
which we will come at the end of this discussion of subcases.

\smallskip
If $\be=\ga\ge1$, then we use \eqref{eq:N-1} to see that $\fl{2k/p}$
is at least $\ga=\be$, again confirming \eqref{eq:carries-p2}.
We point out that for this conclusion we implicitly used the
observation of Remark~\ref{rem:-1/2}.

\smallskip
If $\be=\ga+1$ and $\ga\ge1$, then either $2n-2k+m-pc_p\ge p^\be$
or the number of carries in
\eqref{eq:carries-p2} is at least~1. In the former case, we may
use~\eqref{eq:N-2} to see that the last line on the left-hand side
of~\eqref{eq:carries-p2} is at least~$\be$, as desired. In the latter
case, the inequality~\eqref{eq:N-1} specialised to $N=2k>p^\ga$
implies that the left-hand side of
\eqref{eq:carries-p2} is at least $1+\ga=\be$, again confirming~\eqref{eq:carries-p2}.
Here also, this conclusion requires implicitly the
observation of Remark~\ref{rem:-1/2}.

\smallskip
Finally we discuss the remaining case where $\be=1$ and $\ga=0$.
Here, we have either
$2p\le 2n-2k+m-pc_p<p^2$, or
$p\le 2n-2k+m-pc_p< 2p$, or
$1\le 2n-2k+m-pc_p< p$.
In the first case, the inequality~\eqref{eq:N-4} implies that the
third line on the left-hand side of~\eqref{eq:carries-p2} is strictly positive. Since this
is a lower bound for the left-hand side of~\eqref{eq:max6}, an
integer, it is an effective lower bound of~$1=\be$, as desired.
For the second and third cases, we observe that, since by assumption
$$2n-2k+m-pc_p=\underset{i\ne p}{\sum_{i=1}^{2n-2k}}ic_i,$$
we must have $c_i=0$ for $i\ge 2p$, $c_i\le1$ for $p<i<2p$, and $c_i<p$
for $3\le i<p$. Thus, the conditions of Lemma~\ref{lem:17} are
satisfied. We may therefore bound the left-hand side
of~\eqref{eq:max6} by
\begin{multline} \label{eq:1-0}
\#\left(\text{carries when adding $\left(2n+m-1-pc_p\right)_p$ and $(pc_p)_p$}\right)\\[1mm]
+\#\left(\text{carries when adding $\left(2n-2k+m-pc_p\right)_p$ and
  $(2k-1)_p$}\right)\\[1mm]
+\tfrac {1} {p-1}(2n-2k+m-pc_p)-\tfrac {1} {p-1}s_p(2n-2k+m-pc_p).
\end{multline}
We have already found that the number of carries in the first line
is at least $\al-\be=\al-1$. On the other hand, under the
assumption on~$2n-2k+m-pc_p$ in the second case,
the expression in the third line equals~1. Together with~\eqref{eq:max8}, this confirms
again~\eqref{eq:max6}.
In the third case, there must be at least one carry when adding
$2n-2k+m-pc_p$ and $2k-1$ in their $p$-adic representations, meaning
that the second line in \eqref{eq:1-0} equals~1. This again confirms~\eqref{eq:max6}.  

\medskip
{\sc Case 2: $n_1\ge 2n+m-1-pc_p$}. 
Now, by inspection of \eqref{eq:c_p}, we see that 
\begin{equation*} 
\max\limits_{c_p<i\le \fl{{(2n+m-1)} /{p}}}v_p(i)\le \be-1.
\end{equation*}
A moment's thought will convince the reader that we are in exactly the
same situation as in Case~1: we must either prove
\eqref{eq:carries-p2} or,
in the case where $\be=1$ and $\ga=0$, show that
the sum of the second and third lines in~\eqref{eq:1-0} is at least~1.
Hence, the remaining steps are the same as the corresponding ones in Case~1, completing the proof.
\end{proof}

We are now in the position to prove the announced ``twisted"
periodicity of the matrix entries $R^{-1}_y(2n,2k)$ as given
by~\eqref{eq:Ry^{-1}} modulo prime
powers~$p^e$, when considered as a sequence in~$n$.

\begin{theorem} \label{thm:12}
Let $p$ be a prime with $p\equiv1$~{\em(mod $4$)} and let 
$\big(y(n)\big)_{n\ge0}$ be an integer sequence with the property that
$v_p\big(y(n)\big)\ge e$ for $n\ge\cl{\frac {ep} {2}}$.
For all positive integers $n,$ $k,$ and $e,$ 
we have
\begin{multline} \label{eq:R-1per}
p^{\fl{2k/p}}R^{-1}_y\left(2n+p^{e-1}(p-1),2k\right)\\
\equiv {y^{p^{e-1}}\big(\tfrac {p-1} {2}\big)}\cdot
(-1)^{(p-5)/4}
p^{\fl{2k/p}}R^{-1}_y\left(2n,2k\right)
\pmod{p^e},
\quad \text{for }n\ge e+1.
\end{multline}
In particular, if $y\big(\frac {p-1} {2}\big)$ is a quadratic residue modulo~$p$
and coprime to~$p,$ then the sequence
$\big(p^{\fl{2k/p}}R^{-1}_y\left(2n,2k\right)\big)_{n\ge e+1},$
when taken modulo any fixed $p$-power $p^e$ with $e\ge1,$
is purely periodic with {\em(}not necessarily minimal\/{\em)} 
period length $\frac {1} {4}p^{e-1}(p-1)^2$.
\end{theorem}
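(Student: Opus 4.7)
The plan is to compare, summand by summand, the explicit expansion~\eqref{eq:R^(-1)p} of $R^{-1}_y(2n,2k)$ (with $u$ replaced by $y$) with the analogous expansion of $R^{-1}_y(2n+p^{e-1}(p-1),2k)$. Set $\Delta:=p^{e-1}(p-1)/2$. The combinatorial key is that the shift from $n$ to $n+\Delta$ increases $\sum_i(i-1)c_i=2(n-k)$ by exactly $2\Delta=p^{e-1}(p-1)$, which is precisely the contribution of $p^{e-1}$ additional copies of the index $i=p$. This suggests the bijection
$$(c_i)\longleftrightarrow(\tilde c_i),\qquad \tilde c_p=c_p+p^{e-1},\ \tilde c_i=c_i\text{ for }i\ne p,\ \tilde m=m+p^{e-1},$$
between all tuples in $\mathcal P^o_{2n-2k+m,m}$ (with $c_1=0$) and those tuples in $\mathcal P^o_{2n+2\Delta-2k+\tilde m,\tilde m}$ (with $\tilde c_1=0$) satisfying $\tilde c_p\ge p^{e-1}$. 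We split the sum for $R^{-1}_y(2n+p^{e-1}(p-1),2k)$ into a ``major'' part coming from the image of this bijection plus a ``minor'' part where $\tilde c_p<p^{e-1}$.

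Under the bijection, a direct computation shows that the summand indexed by $(\tilde c_i)$ equals the summand indexed by $(c_i)$ multiplied by
$$(-1)^{p^{e-1}}\,2^{\Delta}\,y^{p^{e-1}}\!\big(\tfrac{p-1}{2}\big)\cdot\frac{(2n+m+p^e-1)!/(2n+m-1)!}{p!^{p^{e-1}}\prod_{j=1}^{p^{e-1}}(c_p+j)}.$$
The sign/power combination simplifies to $(-1)^{(p-5)/4}$ modulo $p^e$: Euler's criterion gives $2^{(p-1)/2}\equiv\bigl(\tfrac{2}{p}\bigr)\pmod p$, which lifts to $2^{\Delta}\equiv\bigl(\tfrac{2}{p}\bigr)\pmod{p^e}$ since $\pm 1$ are the only square roots of $1$ in $(\Z/p^e\Z)^\times$, and a short case distinction on $p\pmod 8$ yields $-\bigl(\tfrac{2}{p}\bigr)=(-1)^{(p-5)/4}$; combined with $(-1)^{p^{e-1}}=-1$ this produces the prefactor $(-1)^{(p-5)/4}y^{p^{e-1}}(\tfrac{p-1}{2})$ of~\eqref{eq:R-1per}. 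The residual factorial ratio must then be reduced to $1$ modulo $p^{e-\fl{2k/p}}$; this will be done by splitting the numerator $\prod_{j=0}^{p^e-1}(2n+m+j)$ into its multiples-of-$p$ and its coprime-to-$p$ parts. Gauss's generalisation of Wilson's theorem gives that the coprime-to-$p$ part is $\equiv -1\pmod{p^e}$; a binomial expansion of $((p-1)!)^{p^{e-1}}$ starting from $(p-1)!\equiv -1\pmod p$ shows that $((p-1)!)^{p^{e-1}}\equiv -1\pmod{p^e}$ as well, so these two $-1$'s cancel; and a Legendre/Kummer analysis of the remaining ratio of binomial coefficients controls the rest, with the support factor $p^{\fl{2k/p}}$ on both sides of~\eqref{eq:R-1per} absorbing any leftover powers of $p$.

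Complementary to the bijection, we show that the minor-part summands with $\tilde c_p<p^{e-1}$ vanish modulo $p^{e-\fl{2k/p}}$. Here Lemma~\ref{lem:13} applies directly: the hypothesis $n\ge e+1$ is calibrated so that the interval $\bigl(\tilde c_p,\fl{(2n+2\Delta+\tilde m-1)/p}\bigr]$ contains the multiple $p^{e-1}$ whenever there is anything to bound, and the lemma then delivers the required $p$-adic valuation of $e$ for the summand after multiplication by $p^{\fl{2k/p}}$. The few edge cases in which the upper endpoint of the interval falls short of $p^{e-1}$ force $\tilde c_p$ to equal $\fl{(2n+2\Delta+\tilde m-1)/p}$ by the convention in Lemma~\ref{lem:13}, so those tuples already lie in the image of the bijection and contribute nothing extra to the minor part.

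Once the main congruence~\eqref{eq:R-1per} is established, the purely periodic statement follows by iterating it $(p-1)/2$ times: the total shift in $n$ equals $\tfrac 14 p^{e-1}(p-1)^2$ and the accumulated prefactor is $\bigl((-1)^{(p-5)/4}\bigr)^{(p-1)/2}\cdot y^{p^{e-1}(p-1)/2}(\tfrac{p-1}{2})$. Writing $p=4\ell+1$, one computes $(p-5)(p-1)/8=2\ell(\ell-1)$, which is even, so the sign contribution is $+1$; and if $y(\tfrac{p-1}{2})$ is a quadratic residue modulo $p$ coprime to~$p$, then it is also a quadratic residue modulo $p^e$ (because the reduction $(\Z/p^e\Z)^\times\twoheadrightarrow(\Z/p\Z)^\times$ has odd kernel of order $p^{e-1}$), whence $y^{p^{e-1}(p-1)/2}(\tfrac{p-1}{2})\equiv 1\pmod{p^e}$. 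The principal technical obstacle will be the Wilson/Legendre/Kummer analysis that reduces the residual factorial ratio to $1$ modulo $p^{e-\fl{2k/p}}$, together with the careful bookkeeping of the minor-part edge cases.
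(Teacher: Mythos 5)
Your proposal reproduces the architecture of the paper's own proof: the same expansion into summands indexed by $m$ and $(c_i)$, the same shift $\tilde c_p=c_p+p^{e-1}$, $\tilde m=m+p^{e-1}$ matching $n\mapsto n+\Delta$, the same major/minor decomposition, and the same Euler/Wilson/Gauss computations producing the prefactor $(-1)^{(p-5)/4}\,y^{p^{e-1}}\big(\tfrac{p-1}{2}\big)$. The gap is in your treatment of the minor part, the summands with $\tilde c_p<p^{e-1}$. You assert that Lemma~\ref{lem:13} ``applies directly'' and that the cases where the interval $\bigl(\tilde c_p,\fl{(2n+2\Delta+\tilde m-1)/p}\bigr]$ misses $p^{e-1}$ force $\tilde c_p=\fl{(2n+2\Delta+\tilde m-1)/p}$ and are therefore vacuous. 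Neither claim holds. The upper endpoint is below $p^{e-1}$ precisely when $2n+\tilde m-1<p^{e-1}$, and since $n\ge e+1$ is tiny compared with $p^{e-1}$ (already for $e=2$, $p\ge13$, and for all $e\ge3$), this occurs for every tuple with small $\tilde m$: take for instance $p=13$, $e=2$, $n=3$, $k=10$ (so $l=\fl{2k/p}=1<e$), $\tilde m=1$ and a single part $\tilde c_{143}=1$, whence $\tilde c_p=0$ and the interval is $(0,12]$, giving only $\fl{2k/p}+v_p(T)\ge1$ from Lemma~\ref{lem:13} --- nowhere near $e$ --- while $\tilde c_p=0\ne12$. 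Such summands are in fact highly divisible, but only because they contain factors $y(j)$ with $j$ large (or many such factors); capturing this requires the other branch of the paper's case split, namely Lemma~\ref{lem:16} combined with inequality~\eqref{eq:N-5}, which applies when $2(n+\Delta)-2k+\tilde m-p\tilde c_p\ge(2e-2l+1)p$ and yields $v_p(T)>e-l-1$ directly. Only in the complementary case does one deduce $2(n+\Delta)+\tilde m-1\ge p^e$ (the paper's~\eqref{eq:aux11}, and this is where $n\ge e+1$ enters), which is exactly what guarantees that $p^{e-1}$ lies in the interval so that Lemma~\ref{lem:13} finishes. Your argument is missing this entire branch.

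A second, smaller issue concerns the major part: reducing the residual ratio $\prod_{i=c_p+1}^{\fl{(2n+m-1)/p}}\tfrac{i+p^{e-1}}{i}$ to $1$ works by reducing it modulo $p^{e-1-\max v_p(i)}$, the deficit being covered by the $p$-divisibility of the prefactor supplied by Lemma~\ref{lem:13}; but the exponent $e-1-\max v_p(i)$ is only non-negative when $\fl{(2n+m-1)/p}<p^e$. The paper secures this by iterating the shift downwards with $a$ maximal such that $n-\tfrac12ap^{e-1}(p-1)\ge e+1$, so that the shift congruence is only ever invoked for $n<p^{e-1}(p-1)$; a one-step comparison for arbitrary $n\ge e+1$, as you propose, does not obviously close without this device.
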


\begin{proof}
The last assertion follows from Euler's theorem, the fact that
$\varphi(p^e)=p^{e
  -1}(p-1)$, from our assumption that $y\big(\frac {p-1}
{2}\big)$ is a quadratic residue modulo~$p$ and coprime to~$p$, and the easily
proven property that, if $c\equiv d$~(mod~$p$), then
$c^{p^{e-1}}\equiv d^{p^{e-1}}$~(mod~$p^e$) for all positive integers~$e$.

\medskip
We turn to the proof of \eqref{eq:R-1per}.
Replacement of $k$ by $2k$ and of $n$ by $n-k$ in \eqref{eq:R^(-1)}
leads to
\begin{multline} 
R^{-1}_y(2n,2k)=
\sum_{m\ge0}
\underset{c_1=0}{\sum_{(c_i)\in\mathcal P^o_{2n-2k+m,m}}}
(-1)^m2^{n-k}\binom {2n+m-1} {2n-2k+m}\\[1mm]
\cdot
\frac {(2n-2k+m)!} {3!^{c_3}c_3!\,5!^{c_5}c_5!\cdots (2n-2k+1)!^{c_{2n-2k+1}}c_{2n-2k+1}!}
\prod _{i=1} ^{2n-2k+1}
      {y^{c_i}\big(\tfrac {i-1} {2}\big)}
      \pmod{p^e}.
\label{eq:TSumme}
\end{multline}
The earlier observation in \eqref{eq:m<n} also applies here --- with
$n$ replaced by $n-k$ --- so that $m\le n-k\le n$. In particular,
the sum over~$m$ in \eqref{eq:TSumme} is finite.

The fraction in the expression in \eqref{eq:TSumme} is an integer 
since, by Lemma~\ref{lem:3}, it is
the number of all partitions of~$\{1,2,\dots,2n-2k+m\}$ 
into $c_i$ blocks of size $i$, $i=3,5,\dots,2n-2k+1$.

Let $T(n,k,m,(c_i))$ denote the summand in
\eqref{eq:TSumme}; that is,
\begin{multline} \label{eq:Tshort}
  T(n,k,m,(c_i)):=
  (-1)^m2^{n-k}\frac {(2n+m-1)!} {(2k-1)!}\\
\times
\frac {1} {3!^{c_3}c_3!\,5!^{c_5}c_5!\cdots (2n-2k+1)!^{c_{2n-2k+1}}c_{2n-2k+1}!}
\prod _{i=1} ^{2n-2k+1}
      {y^{c_i}\big(\tfrac {i-1} {2}\big)}
\end{multline}
for a non-negative integer $m$ and
$(c_i)\in\mathcal P^{o}_{2n-2k+m,m}$ with $c_1=0$.
In view of the previous observation, $T(n,k,m,(c_i))$ is an integer
multiplied by a monomial in the $y(j)$'s.

\medskip
We want to prove the periodicity assertion in \eqref{eq:R-1per}.
We are going to prove that, for all positive integers~$a$, we have
\begin{equation} 
p^{\fl{2k/p}}T\big(n+\tfrac {1} {2}ap^{e-1}(p-1),k,m,(c_i))\equiv0\pmod{p^e},
\quad \text{for }n\ge e+1\text{ and }c_p<ap^{e-1},
\label{eq:Tcong1}
\end{equation}
and 
\begin{multline}
p^{\fl{2k/p}}T\big(n+\tfrac {1} {2}ap^{e-1}(p-1),k,m+ap^{e-1},(\tilde
c_i)\big)
\\[1mm]
\equiv 
{y^{ap^{e-1}}\big(\tfrac {i-1} {2}\big)}\cdot
(-1)^{a(p-5)/4}
p^{\fl{2k/p}}T\big(n,k,m,(c_i)\big)
\pmod{p^e},\quad \text{for }n<p^{e-1}(p-1),
\label{eq:Tcong2}
\end{multline}
with $\tilde c_p=c_p+ap^{e-1}$, and $\tilde c_i=c_i$ for all other~$i$.

\medskip
We claim that these two congruences together imply
\begin{multline} \label{eq:R-modpe} 
p^{\fl{2k/p}}R^{-1}_y\left(2n,2k\right)\\ \equiv
y^{ap^{e-1}}\big(\tfrac {p-1} {2}\big)\cdot
(-1)^{a(p-5)/4}
p^{\fl{2k/p}}R^{-1}_y\left(2n-ap^{e-1}(p-1),2k\right)
\pmod{p^e},
\end{multline}
where $a$ is maximal such that $n-\frac {1} {2}ap^{e-1}(p-1)\ge e+1$.
It is obvious that, in its turn, this congruence
implies~\eqref{eq:R-1per}, and thus the theorem.  

In order to prove the claim, we rewrite
the left-hand side of \eqref{eq:R-modpe} in
terms of our short notation~\eqref{eq:Tshort},
\begin{equation} \label{eq:R-T1} 
p^{\fl{2k/p}}R^{-1}_y\left(2n,2k\right)=
\sum_{m\ge0}
\underset{c_1=0}{\sum_{(c_i)\in\mathcal P^o_{2n-2k+m,m}}}
p^{\fl{2k/p}}T(n,k,m,(c_i)).
\end{equation}
We now consider the summands $p^{\fl{2k/p}}T(n,k,m,(c_i))$ on the right-hand side for
the various choices of $m$ and $(c_i)$.
Let $b$ be maximal such 
that $c_p-bp^{e-1}\ge0$.
If $b<a$, then we may use~\eqref{eq:Tcong1} with $a$
replaced by $b+1$ to conclude that the corresponding summand on the
right-hand side of~\eqref{eq:R-T1} vanishes modulo~$p^e$.
On the other hand, if $b\ge a$, then we
use~\eqref{eq:Tcong2} with $n$ replaced by $n-\frac {1}
{2}ap^{e-1}(p-1)$, $m$ replaced by $m-ap^{e-1}$ (this is indeed
non-negative since $m\ge c_p$),
and $c_p$ replaced by $c_p-ap^{e-1}$ to conclude that
\begin{multline*}
  p^{\fl{2k/p}}T(n,k,m,(c_i))\\[1mm]
  \equiv y^{ap^{e-1}}\big(\tfrac {p-1} {2}\big)\cdot
(-1)^{a(p-5)/4}
p^{\fl{2k/p}}T\left(n-\tfrac {1} {2}ap^{e-1}(p-1),k,m-ap^{e-1},(\hat
c_i)\right)
\pmod{p^e},\kern-3pt
\end{multline*}
with $\hat c_p=c_p-ap^{e-1}$ and $\hat c_i=c_i$ for all other~$i$.
Thus, we obtain
\begin{multline*}
p^{\fl{2k/p}}R^{-1}_y\left(2n,2k\right)\equiv
y^{ap^{e-1}}\big(\tfrac {p-1} {2}\big)
\cdot(-1)^{a(p-5)/4}
\\[1mm]
\times\sum_{m\ge0}
\underset{\hat c_1=0}
         {\sum_{(\hat c_i)\in\mathcal P^o_{2n-2k+m-ap^e,m-ap^{e-1}}}}
p^{\fl{2k/p}}T\left(n-\tfrac {1} {2}ap^{e-1}(p-1),k,m-ap^{e-1},(\hat c_i)\right)
\\\pmod{p^e},
\end{multline*}
with $a$ maximal such that $n-\frac {1} {2}ap^{e-1}(p-1)\ge e+1$.
Recalling~\eqref{eq:R-T1}, we are then directly led
to our claim~\eqref{eq:R-modpe}.

\medskip
Now we provide the proofs of the crucial congruences \eqref{eq:Tcong1}
and~\eqref{eq:Tcong2}.

\medskip
{\sc Proof of \eqref{eq:Tcong1}.}
Let $\fl{2k/p}=l$. If $l\ge e$, there is nothing to prove. Therefore,
we assume $l<e$ from now on.

If
$$
2\left(n+\tfrac {1} {2}ap^{e-1}(p-1)\right)-2k+m-pc_p\ge(2e-2l+1)p,
$$
then Lemma~\ref{lem:16} with $n$ replaced by $n+\frac {1}
{2}ap^{e-1}(p-1)$ and \eqref{eq:N-5} together imply that
$$v_p\big(T\big(n+\tfrac {1} {2}ap^{e-1}(p-1),k,m,(c_i)\big)
>\tfrac {(2e-2l+1)p} {2p}-\tfrac {3} {2}=e-l-1.
$$
In other words, the left-hand side --- being an integer --- is at
least $e-l$. In combination with $\fl{2k/p}=l$, this establishes
\eqref{eq:Tcong1} in this case.

On the other hand, if
$$
2\left(n+\tfrac {1} {2}ap^{e-1}(p-1)\right)-2k+m-pc_p<(2e-2l+1)p,
$$
then, since $m\ge c_p$ and $2k\le lp+(p-1)$, we infer
$$
2n+ap^{e-1}(p-1)-lp-(p-1)-(p-1)c_p<(2e-2l+1)p.
$$
Equivalently, we have 
$$
c_p>\tfrac {1} {p-1}\big(2n-2ep-p+lp\big)+ap^{e-1}-1.
$$
This entails
\begin{align*}
2\left(n+\tfrac {1} {2}ap^{e-1}(p-1)\right)+m-1&\ge
2n+ap^{e-1}(p-1)+c_p-1\\[1mm]
&>2n+ap^e+\tfrac {1} {p-1}\big(2n-2ep-p+lp\big)-2.
\end{align*}
Since, by assumption, we have $n\ge e+1$, it follows that
\begin{align*}
2\left(n+\tfrac {1} {2}ap^{e-1}(p-1)\right)+m-1
&>2(e+1)+ap^e+\tfrac {1} {p-1}\big(2(e+1)-2ep-p+lp\big)-2\\[1mm]
&\kern1cm =ap^e-1+\tfrac {1} {p-1}\big(1+lp\big).
\end{align*}
As the left-hand side is an integer, we conclude that 
\begin{equation} \label{eq:aux11} 
2\left(n+\tfrac {1} {2}ap^{e-1}(p-1)\right)+m-1
\ge ap^{e}.
\end{equation}

Now we use Lemma~\ref{lem:13} with $n$ replaced by $n+\frac {1}
{2}ap^{e-1}(p-1)$ to
see that\linebreak $p^{\fl{2k/p}}T\big(n+\frac {1} {2}ap^{e-1}(p-1),k,m,(c_i)\big)$ is divisible by 
$$
p^{1+\max\limits_{c_p< i\le \fl{(2n+ap^{e-1}(p-1)+m-1)/p}}v_p(i)}.
$$
As $c_p<ap^{e-1}$, and since, by \eqref{eq:aux11}, we have
$\fl{(2n+ap^{e-1}(p-1)+m-1)/p}\ge ap^{e-1}$, in the range 
$c_p<i\le \fl{(2n+ap^{e-1}(p-1)+m-1)/p}$ we will find $i=ap^{e-1}$, and
consequently $p^{\fl{2k/p}}T\big(n+\frac {1} {2}ap^{e-1}(p-1),k,m,(c_i)\big)$ 
is divisible by~$p^e$.

\medskip
{\sc Proof of \eqref{eq:Tcong2}.}
We substitute $n+\tfrac {1} {2}ap^{e-1}(p-1)$ for~$n$, $m+ap^{e-1}$ for~$m$,
and $c_p+ap^{e-1}$ for $c_p$ in the definition of $T(n,k,m,(c_i))$
in~\eqref{eq:Tshort}. 
After little manipulation, we get
\begin{multline} \label{eq:TExp1}
  T\left(n+\tfrac {1} {2}ap^{e-1}(p-1),k,m+ap^{e-1},(\tilde c_i)\right)\\[1.5mm]
  =y^{ap^{e-1}}\big(\tfrac {p-1} {2}\big)
  (-1)^{m+ap^{e-1}}2^{n+\frac {1} {2}ap^{e-1}(p-1)-k}
  \frac {(2n+m+ap^e-1)!} {(2k-1)!}\kern4.4cm\\
\times
\frac {1} {3!^{c_3}c_3!\,5!^{c_5}c_5!\cdots
  (2n-2k+1)!^{c_{2n-2k+1}}c_{2n-2k+1}!}\cdot\frac{c_p!}
      {p!^{ap^{e-1}}\,(c_p+ap^{e-1})!}
      \prod _{i=1} ^{2n-2k+1}
      {y^{c_i}\big(\tfrac {i-1} {2}\big)}.
\end{multline}

We have
\begin{equation} \label{eq:-1-Pot} 
(-1)^{p^{e-1}}=-1
\end{equation}
and
\begin{equation} \label{eq:2-Pot} 
2^{p^{e-1}(p-1)/2}\equiv (-1)^{(p-1)/4}\pmod{p^e},
\end{equation}
since for $p\equiv1$~(mod~$8$) the residue class of~2 is a
quadratic residue modulo~$p$, while for $p\equiv5$~(mod~$8$) it is not.
Furthermore, we have 
\begin{equation} \label{eq:pWilson} 
{(p-1)!}^{p^{e-1}}\equiv-1\pmod{p^e},
\end{equation}
as is seen by a straightforward induction on $e$ based on
Wilson's theorem. If we use \eqref{eq:-1-Pot}--\eqref{eq:pWilson} in
\eqref{eq:TExp1}, then we obtain
\begin{multline} 
  T\left(n+a\tfrac {p^{e-1}(p-1)} {2},k,m+ap^{e-1},(\tilde c_i)\right)\\
  \equiv y^{ap^{e-1}}\big(\tfrac {p-1} {2}\big)
(-1)^{m+a\frac {p-1} {4}}2^{n-k}  \frac {(2n+m+ap^e-1)!} {(2k-1)!}\kern6.2cm\\[1.5mm]
\times
\frac {1} {3!^{c_3}c_3!\,5!^{c_5}c_5!\cdots
  (2n-2k+1)!^{c_{2n-2k+1}}c_{2n-2k+1}!}\cdot\frac{c_p!}
      {p^{ap^{e-1}}\,(c_p+ap^{e-1})!}
      \prod _{i=1} ^{2n-2k+1}
            {y^{c_i}\big(\tfrac {i-1} {2}\big)}\\
            \pmod{p^e}.
      \label{eq:TExp2}
\end{multline}
We have
\begin{multline} 
\frac {(2n+m+ap^e-1)!} {(2n+m-1)!}\,p^{-ap^{e-1}}=
p^{-ap^{e-1}}(2n+m+ap^e-1)\cdots(2n+m+1)(2n+m)\\[1.5mm]
=\frac {\fl{(2n+m+ap^e-1)/p}!} {\fl{(2n+m-1)/p}!}
\cdot [(2n+m+ap^e-1)\cdots(2n+m+1)(2n+m)]_p,
\label{eq:[]_p}
\end{multline}
where $[a\cdot b\cdots z]_p$ denotes the product $a\cdot b\cdots z$ in
which all factors divisible by~$p$ are omitted.
Now we observe that $[b\cdot (b+1)\cdots(b+p^{e}-1)]_p$ forms a complete
set of representatives of the multiplicative group
$(\Z/p^e\Z)^\times$. Consequently, the product is congruent to $-1$
modulo~$p^e$. The term $[\,.\,]_p$ on the right-hand side of
\eqref{eq:[]_p} consists of $a$ such products. Therefore,
$$
\frac {(2n+m+ap^e-1)!} {(2n+m-1)!}\,p^{-ap^{e-1}}
\equiv(-1)^a\frac {\fl{(2n+m+ap^e-1)/p}!} {\fl{(2n+m-1)/p}!}
\pmod{p^e}.
$$
If we substitute this in \eqref{eq:TExp2}, then we get
\begin{align} 
\notag
  T&\left(n+a\tfrac {p^{e-1}(p-1)} {2},k,m+ap^{e-1},(\tilde c_i)\right)
  \equiv y^{ap^{e-1}}\big(\tfrac {p-1} {2}\big)
  (-1)^{m+a\frac {p-5} {4}}2^{n-k}
\frac {(2n+m-1)!} {(2k-1)!}\\[1.5mm]
\notag
&\kern2cm\times  \frac {\fl{(2n+m+ap^e-1)/p}!} {(c_p+ap^{e-1})!}
  \frac {c_p!} {\fl{(2n+m-1)/p}!}
\\
\notag
&\kern2cm\times
\frac {1} {3!^{c_3}c_3!\,5!^{c_5}c_5!\cdots
  (2n-2k+1)!^{c_{2n-2k+1}}c_{2n-2k+1}!}
      \prod _{i=1} ^{2n-2k+1}
      {y^{c_i}\big(\tfrac {i-1} {2}\big)}\\[1.5mm]
\notag
&\equiv y^{ap^{e
    -1}}\big(\tfrac {p-1} {2}\big)
(-1)^{m+a\frac {p-5} {4}}2^{n-k}
\frac {(2n+m-1)!} {(2k-1)!}
\prod _{i=c_p+1} ^{\fl{(2n+m-1)/p}}
\frac {i\cdot p^{-v_p(i)}+ap^{e-v_p(i)-1}} {i\cdot p^{-v_p(i)}}
\\
&\kern1cm\times
\frac {1} {3!^{c_3}c_3!\,5!^{c_5}c_5!\cdots
  (2n-2k+1)!^{c_{2n-2k+1}}c_{2n-2k+1}!}
      \prod _{i=1} ^{2n-2k+1}
            {y^{c_i}\big(\tfrac {i-1} {2}\big)}\pmod{p^e}.
      \label{eq:TExp3}
\end{align}
The reader should note that we wrote the first product over~$i$ in
this particular form in order to make sure that the expressions
$i\cdot p^{-v_p(i)}$ in the denominator are coprime to~$p$.

Similarly to the proof of \eqref{eq:cong3}, we would like to simplify the terms
$(i\cdot p^{-v_p(i)})+ap^{e-v_p(i)-1}$ to $i\cdot p^{-v_p(i)}$.
For, assuming the validity of this simplification, the first product
over~$i$ on the right-hand side of~\eqref{eq:TExp3} would simplify to~1,
and the remaining terms exactly equal
$y^{ap^{e
    -1}}\big(\tfrac {p-1} {2}\big)T\big(n,k,m,(c_i)\big)$.

We claim that this simplification is indeed allowed, {\it provided both
sides of~\eqref{eq:TExp3} are multiplied by $p^{\fl{2k/p}}$}.
(The reader should go back to~\eqref{eq:Tcong2} to see that this is
indeed what we need.)
For, by Lemma~\ref{lem:13}, we
know that the ``prefactor" of the first product over~$i$ on the
right-hand side of~\eqref{eq:TExp3} (that is, the right-hand side
of~\eqref{eq:TExp3} without that first product over~$i$), multiplied
by $p^{\fl{2k/p}}$, is an integer that is divisible by
$$
p^{1+\max\limits_{c_p< i\le \fl{(2n+m-1)/p}}v_p(i)}.
$$
Hence, instead of calculating modulo~$p^e$, we may
reduce the first product over~$i$ on the right-hand side
of~\eqref{eq:TExp3} modulo
$$
p^{e-1-\max\limits_{c_p< i\le \fl{(2n+m-1)/p}}v_p(i)}.
$$
(It should be observed here that the exponent in the last displayed expression is non-negative. Indeed,
as we observed at the beginning of this proof, we have $m\le n$. Furthermore,
by assumption, we have $n< p^{e-1}(p-1)$. Together, this
implies that $\fl{(2n+m-1)/p}\le 3p^{e-1}<p^e$.)
This is exactly what we need to perform the desired simplification and
the first product over~$i$ drops out. 

\medskip
This completes the proof of the theorem.
\end{proof}

\section{The sequence $(d(n))_{n\ge0}$ modulo prime powers $p^e$ with
  $p\equiv1$ (mod $4$)}
\label{sec:10}

We are now able to prove our third main result, this one concerning
the periodicity of
$d(n)$ modulo prime powers~$p^e$ with $p\equiv1$~(mod~4), announced in
Part~(2) of Theorem~\ref{thm:main}.

\begin{theorem} \label{thm:13}
Let $p$ be a prime number with $p\equiv1$~{\em (mod~$4$),} and let $e$ be some
positive integer.
Then the sequence $\big(d(n)\big)_{n\ge e+1}$ is purely periodic
modulo~$p^e$ with {\em(}not necessarily minimal\/{\em)} period
length~$\frac {1} {4}p^{e-1}(p-1)^2$.
\end{theorem}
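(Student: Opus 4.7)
The plan is to pass through the inversion formula \eqref{eq:d-v},
\[d(n) = \sum_{k=0}^{n} R^{-1}(2n, 2k)\, v(k),\]
and to verify that every surviving summand on the right-hand side is purely periodic in $n$ modulo $p^e$ with period $P := \tfrac{1}{4} p^{e-1}(p-1)^2$ as soon as $n \ge e+1$. The argument is essentially an assembly of the machinery already developed in Sections~\ref{sec:2}, \ref{sec:3} and~\ref{sec:9}.

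First, I would truncate the range of $k$. Theorem~\ref{thm:2A} gives $v(k) \equiv 0 \pmod{p^e}$ whenever $k \ge \cl{ep/2}$, hence
\[d(n) \equiv \sum_{k=0}^{\cl{ep/2}-1} R^{-1}(2n, 2k)\, v(k) \pmod{p^e},\]
which reduces matters to a finite, $n$-independent set of indices~$k$. A second application of Theorem~\ref{thm:2A} with $f = \fl{2k/p}$ (so that $k \ge \cl{fp/2}$ holds trivially for any $k$) also yields the uniform bound $v_p\bigl(v(k)\bigr) \ge \fl{2k/p}$ valid for all $k \ge 0$. Consequently $v(k)/p^{\fl{2k/p}}$ is an integer depending only on~$k$.

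Second, I would invoke Theorem~\ref{thm:12} with $y(j) := u(j)$. The required divisibility hypothesis $v_p\bigl(u(n)\bigr) \ge e$ for $n \ge \cl{ep/2}$ is exactly the content of Theorem~\ref{thm:1A}, and that same theorem confirms that $u\bigl(\tfrac{p-1}{2}\bigr)$ is a quadratic residue modulo~$p$ and coprime to~$p$. These are precisely the additional conditions needed for the second clause of Theorem~\ref{thm:12}, so for every fixed $k$ the sequence $\bigl(p^{\fl{2k/p}} R^{-1}(2n, 2k)\bigr)_{n \ge e+1}$ is purely periodic modulo~$p^e$ with period length~$P$. Writing
\[R^{-1}(2n, 2k)\, v(k) = \frac{v(k)}{p^{\fl{2k/p}}} \cdot \Bigl(p^{\fl{2k/p}} R^{-1}(2n, 2k)\Bigr),\]
the left-hand side inherits this pure periodicity since the first factor is an $n$-independent integer. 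Summing the finitely many terms over $0 \le k < \cl{ep/2}$ then gives the desired pure periodicity of $\bigl(d(n)\bigr)_{n \ge e+1}$ modulo~$p^e$ with period~$P$.

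Given the substantial groundwork already in place, no serious obstacle remains at this stage; the main point requiring any extra thought is the elementary observation $v_p\bigl(v(k)\bigr) \ge \fl{2k/p}$, which is what allows the correct power of~$p$ to be paired off with each matrix entry so that Theorem~\ref{thm:12} may be applied term by term. All of the real technical difficulty has been concentrated in the proof of Theorem~\ref{thm:12}, in particular in the delicate $p$-adic bookkeeping of Lemma~\ref{lem:13} that makes the ``twisted'' periodicity~\eqref{eq:R-1per} go through.
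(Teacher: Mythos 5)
Your proposal is correct and follows essentially the same route as the paper's own proof: truncate \eqref{eq:d-v} via Theorem~\ref{thm:2A}, extract the factor $p^{\fl{2k/p}}$ from $v(k)$, and apply Theorem~\ref{thm:12} with $y=u$ (justified by Theorem~\ref{thm:1A}) to each of the finitely many summands. The only cosmetic difference is that the paper explicitly discards the $k=0$ term using $R^{-1}(2n,0)=0$ for $n\ge1$, which your argument handles implicitly since that summand vanishes identically.
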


\begin{proof}
We start by recalling \eqref{eq:d-v}, that is 
\begin{equation} \label{eq:d-v2}  
d(n)=\sum_{k=0}^{n}R^{-1}(2n,2k)v(k).
\end{equation}
By Theorem~\ref{thm:2A}, we know that $v(k)\equiv0$~(mod~$p^e$)
for $k\ge \cl{\frac {ep}2}$. Consequently, we may truncate the sum in
\eqref{eq:d-v2} when we consider both sides modulo~$p^e$.
In fact, Theorem~\ref{thm:2A} says more precisely that
$v(k)=p^{\fl{2k/p}}V(k,p)$, where $V(k,p)$ is an integer.
Altogether, this leads to
\begin{align} \notag
d(n)&\equiv\sum_{k=0}^{\fl{ep/2}}R^{-1}(2n,2k)p^{\fl{2k/p}}V(k,p)\pmod{p^e}\\[1mm]
&\equiv\sum_{k=1}^{\fl{ep/2}}R^{-1}(2n,2k)p^{\fl{2k/p}}V(k,p)
\pmod{p^e}, \quad \text{for }n\ge1.
\label{eq:d-v3}
\end{align}
We are indeed allowed to ignore the summand for $k=0$ since
$R^{-1}(2n,0)=0$ for $n\ge1$, cf.\ Proposition~\ref{prop:1}.
By Theorem~\ref{thm:12} with $y(k)=u(k)$ for all~$k$
(see in particular the last paragraph of the
statement; Theorem~\ref{thm:1A} provides the properties of $u(k)$
required by the theorem),
the sequence $\big(p^{\fl{2k/p}}R^{-1}(2n,2k)\big)_{n\ge e+1}$ is
purely periodic when taken modulo~$p^e$ with (not necessarily minimal)
period length~$\frac {1} {4}p^{e-1}(p-1)^2$. Since, by
\eqref{eq:d-v3}, the sequence $\big(d(n)\big)_{n\ge e+1}$, when taken
modulo~$p^e$, is a finite
linear combination of the sequences
$\big(p^{\fl{2k/p}}R^{-1}(2n,2k)\big)_{n\ge e+1}$, $k=1,2,\dots$,
it has the same periodicity behaviour.
\end{proof}

It should be observed that the above argument, combined
with~\eqref{eq:u-Pi1}, in fact proves a
refinement of the periodicity of the sequence $\big(d(n)\big)_{n\ge
  e+1}$ that generalises~\eqref{eq:Wakh}, namely
\begin{multline} \label{eq:d-u-p} 
d\left(n+\tfrac {p^{e-1}(p-1)} {2}\right)
\equiv
(-1)^{(p-5)/4}(3\cdot 7\cdot11\cdots(2p-3))^{2p^{e-1}}d(n)\pmod{p^e},\\
\text{for
}p\equiv1~(\text{mod 4})\text{ and }n\ge e+1.
\end{multline}

\section{Some conjectures and speculations}
\label{sec:Z}

In this final section, we report on some conjectures concerning congruence properties of
the sequences $\big(u(n)\big)_{n\ge0}$, $\big(v(n)\big)_{n\ge0}$,
and $\big(d(n)\big)_{n\ge0}$ that are suggested by extensive computer
experiments. If true, they would further strengthen the results of our paper.
Roughly speaking, the data suggest that it is possible to improve
period lengths and bounds for odd prime powers~$p^e$ by a factor of~2. By
contrast, it seems that, in our result in Section~\ref{sec:8} on
periodicity of $d(n)$ modulo powers of~2, the period length is
the exact one.

\medskip
Our first conjecture predicts that, for primes $p\equiv3$~(mod~4),
the vanishing of $u(n)$ modulo~$p^e$ occurs by a factor of~2 earlier
than proved in Theorem~\ref{thm:1}.

\begin{conjecture} \label{conj:1}

\medskip\noindent
{\em(1)}
If $p\equiv3$~{\em(mod~$4$)} and $e\ge1,$ 
we have $u(n)\equiv0$~{\em(mod~$p^{2e-1}$)}
for $n\ge \cl{\frac {ep^2-1} {2}}$.

\medskip\noindent
{\em(2)}
If $p\equiv3$~{\em(mod~$4$)} and $e\ge2,$ we have $u(n)\equiv0$~{\em(mod~$p^{2e}$)}
for $n\ge {\frac {ep^2+(e-2)p} {2}}$.

\medskip\noindent
{\em(2a)}
If $p\equiv3$~{\em(mod~$4$),} 
we have $u(n)\equiv0$~{\em(mod~$p^2$)}
for $n\ge {\frac {p^2-1} {2}}$.
\end{conjecture}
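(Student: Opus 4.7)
The plan is to refine the double induction used in the proof of Theorem~\ref{thm:1}, exploiting the fact that, when $p\equiv 3\pmod 4$, each of the products $\Pi_1(N)$ and $\Pi_3(N)$ is much more $p$-adically precise than the crude bound $2\lfloor N/p\rfloor$ suggests. Indeed, the congruence $p\mid 4j-1$ has the unique solution $j\equiv (p+1)/4\pmod p$, and each such $j\in\{1,\dots,N\}$ contributes a full $2$ to $v_p(\Pi_1(N))$; symmetrically $p\mid 4j-3$ forces $j\equiv (3p+3)/4\pmod p$ (using $p\equiv 3\pmod 4$). Tracking these ``one-in-$p$'' hits exactly, rather than merely bounding their number below by $\lfloor N/p\rfloor$, is the engine behind the extra factor of $p$ that separates the conjecture from Theorem~\ref{thm:1}.

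First I would observe that part~(2a) coincides with Proposition~\ref{prop:u-p^2} (since $\lfloor p^2/2\rfloor=(p^2-1)/2$ for odd $p$), and reinspect that proof in order to record, alongside each valuation estimate, the \emph{exact} count of distinguished residues that appear in the relevant ranges. With this refined base case in hand, I would treat parts~(1) and~(2) jointly by an outer induction on $e$, starting from $u(n)\equiv0\pmod{p^3}$ (respectively $\pmod{p^4}$) for $n\ge p^2$. For the base case $e=2$, the prefactor already satisfies $v_p(\Pi_1(n))\ge 2p\ge 4$ by the crude bound, and the summands in~\eqref{eq:Rek1-Pi} are handled by combining (i)~the inductive hypothesis on $u(m)$ from part~(2a), (ii)~the refined ``residue-count'' version of $v_p(\Pi_3(n-m))$, and (iii)~the Kummer-carry count of $\binom{2n+1}{2m+1}$, mirroring the case analysis of Theorem~\ref{thm:1} while preserving two extra units of $p$-adic precision throughout.

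The inductive step alternates between the two parts of the conjecture. Going from modulus $p^{2e-1}$ at threshold $\lceil (ep^2-1)/2\rceil$ to modulus $p^{2e}$ at threshold $(ep^2+(e-2)p)/2$ gains one power of $p$ because, across the extra range of $n$, each summand indexed by $m$ just below a sub-threshold $\lceil fp^2/2\rceil$ either picks up one further copy of the distinguished residue class inside $\Pi_3(n-m)$, or else the Kummer count of carries in $\binom{2n+1}{2m+1}$ improves by one; the alternative being ruled out requires a residue-by-residue analysis of $n-m$ modulo $p$. Symmetrically, the transition $p^{2e}\to p^{2e+1}$ is driven by the distinguished residue inside $\Pi_1(n)$ as $n$ crosses $\lceil((e+1)p^2-1)/2\rceil$.

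The principal obstacle will be bookkeeping. In Theorem~\ref{thm:1} we could freely discard the refined structure of $\Pi_1$ and $\Pi_3$ because the looser bound $2\lfloor N/p\rfloor$ sufficed; here no slack remains, so the proof must be indexed by the residues of $n$, $m$, and $n-m$ modulo $p$ (and occasionally modulo $p^2$), and one must verify that the boundary cases --- summands for which Kummer contributes exactly one carry, combined with $n-m$ just above a sub-threshold $\lceil fp^2/2\rceil$ --- still close. As suggested by Remark~\ref{rem:1}, the cleanest route is likely a polynomial refinement in the spirit of Theorems~\ref{prop:2} and~\ref{prop:2a}, in which $\Pi_1$ and $\Pi_3$ are replaced by abstract sequences $f,g$ satisfying sharpened ``one-in-$p$'' divisibility axioms, thereby separating the combinatorial inductive skeleton from the specific arithmetic identities and keeping the case analysis organised.
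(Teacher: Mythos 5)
You have been asked to prove what the paper itself only states as a conjecture: the authors prove part~(2a) (it is literally Proposition~\ref{prop:u-p^2}, since $\fl{p^2/2}=(p^2-1)/2$ for odd $p$ --- your observation on this point is correct) but explicitly leave parts~(1) and~(2) open, remarking that a proof ``would require considerable effort involving highly technical considerations.'' Your proposed source of the missing factor of~$2$ --- replacing the crude bounds $v_p(\Pi_1(N)),v_p(\Pi_3(N))\ge 2\fl{N/p}$ by the exact count of indices $j$ in the residue classes $j\equiv 4^{-1}$ and $j\equiv 3\cdot 4^{-1}\pmod p$, in analogy with the refinement \eqref{eq:4j-3+} that powers the stronger results for $p\equiv1\pmod 4$ --- is a sensible candidate mechanism, and the overall architecture (double induction threaded through the recurrence \eqref{eq:Rek1-Pi}, alternating between the two exponent parities) mirrors the proof of Theorem~\ref{thm:1} in a plausible way.

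However, what you have written is a strategy, not a proof, and the gap sits exactly where the difficulty of the conjecture lies. Every step that would actually produce the claimed thresholds is asserted rather than established: the central dichotomy of your inductive step (``each borderline summand either picks up one further copy of the distinguished residue class inside $\Pi_3(n-m)$, or else the Kummer count of carries in $\binom{2n+1}{2m+1}$ improves by one'') \emph{is} the content of the conjecture, and you defer its verification to an unspecified ``residue-by-residue analysis.'' The critical summands are those where $m$ lies just below one of the conjectured sub-thresholds, so that the inductive lower bound on $v_p(u(m))$ drops by two while $n-m$ is too small for the refined valuation of $\Pi_3(n-m)$ to absorb the loss; closing these requires quantitative strengthenings of Lemma~\ref{lem:u-klein} and Proposition~\ref{prop:u-p^2} (valuations strictly greater than $1$, respectively $2$, on specified subranges below $\fl{p^2/2}$), none of which you formulate. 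Your suggested route via an axiomatic polynomial refinement in the spirit of Remark~\ref{rem:1} is also doubtful here: the companion Conjecture~\ref{conj:2}(2) for $v(n)$ admits sporadic exceptions to the analogous threshold formula, which indicates that the exact thresholds are not consequences of valuation axioms alone. As it stands, the proposal does not prove the statement, consistent with the paper's own assessment that it remains open.
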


\begin{remark}
Computer experiments indicate that the above lower bounds can be improved
in two sporadic cases:

\medskip\noindent
{(1)}
If $p\equiv3$~{(mod~$4$)}, 
we have $u(n)\equiv0$~{(mod~$p$)}
for $n\ge {\frac {p^2-p} {2}}$.

\medskip\noindent
{(2)}
If $p\equiv3$~{(mod~$4$)}, we have $u(n)\equiv0$~{(mod~$p^6$)}
for $n\ge {\frac {3p^2-1} {2}}$.
%
\end{remark}

The next conjecture predicts an analogous strengthening of
Theorem~\ref{thm:2}, again for primes $p\equiv3$~(mod~4).

\begin{conjecture} \label{conj:2}
{\em(1)}
If $p\equiv3$~{\em(mod~$4$)} and $e$ is odd, 
we have $v(n)\equiv0$~{\em(mod~$p^e$)}
for $n\ge {\frac {(ep+2)(p+1)} {4}}$.

\medskip\noindent
{\em(2)}
If $p\equiv3$~{\em(mod~$4$)} and $e$ is even, 
we have $v(n)\equiv0$~{\em(mod~$p^e$)}
for $n\ge \cl{\frac {ep^2} {4}},$ ``with very few exceptions."
Based on data for $v(n)$ with $n\le 1500$, the only exceptions that we
found were

\begin{itemize} 
\item 
$p=7$ and $e=8,$ where
the correct lower bound is $102$ instead of $\cl{\frac {8\cdot 7^2}
  {4}}=98$;
\item 
$p=7$ and $e=24,$ where
the correct lower bound is $298$ instead of $\cl{\frac {24\cdot 7^2}
  {4}}=294$;
\item 
$p=7$ and $e=40,$ where
the correct lower bound is $494$ instead of $\cl{\frac {40\cdot 7^2}
  {4}}=490$;
\item 
$p=11$ and $e=36,$ where
the correct lower bound is $1095$ instead of $\cl{\frac {36\cdot 11^2}
  {4}}=1089$.
\end{itemize}

%
%
\end{conjecture}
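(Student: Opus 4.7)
The plan is to refine the double induction in the proof of Theorem~\ref{thm:2} so that each inductive step on~$e$ extends the divisibility range of~$n$ by only $\cl{p^2/4}$ rather than $\cl{p^2/2}$. For brevity I describe the strategy for the even-$e$ statement; the odd-$e$ statement should follow by an analogous but slightly more delicate analysis accounting for the extra additive term of order $ep$. The first ingredient is a strengthening of Lemma~\ref{lem:v-klein}: the present lemma identifies a subset $\mathcal{S}\subset\{0,1,\dots,p^2-1\}$ of residues modulo~$p^2$ of asymptotic density $1/4$ on which $v(n)\equiv 0\pmod p$, and the goal is to double this density to $\approx 1/2$ by adding families of residue classes (in particular those with leading $p$-adic digit $a\ge(p+1)/2$, which are presently excluded). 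This would proceed by the same double-induction template as in Lemma~\ref{lem:v-klein}, but now exploiting an additional symmetry of the recurrence~\eqref{eq:Rek2} around $n=p^2/2$ that holds specifically for $p\equiv 3\pmod 4$.

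With the enlarged $\mathcal{S}$ in hand, one would next establish a refined base case ``$v(n)\equiv 0\pmod{p^2}$ for $n\ge\cl{p^2/4}$,'' replacing Proposition~\ref{prop:v-p^2}. The argument mirrors that of Proposition~\ref{prop:v-p^2}: for each summand $\binom{2n}{2m}v(m)v(n-m)$ of~\eqref{eq:Rek2}, divisibility by~$p^2$ is extracted from either (i)~two carries in the binomial coefficient, (ii)~the refined residue vanishing of the strengthened Lemma~\ref{lem:v-klein} applied to $v(m)$ or $v(n-m)$, or (iii)~one carry combined with one factor contributed by $\mathcal{S}$. The enlarged density $|\mathcal{S}|/p^2\approx 1/2$ is exactly what allows the boundary regions (small~$m$, or $m$ near~$n$) to be shortened from length $\cl{p^2/2}$ to length $\cl{p^2/4}$. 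The induction on $e$ then proceeds with step size $\cl{p^2/4}$: assuming $v(n)\equiv 0\pmod{p^{e-1}}$ for $n\ge\cl{(e-1)p^2/4}$, one derives $v(n)\equiv 0\pmod{p^e}$ for $n\ge\cl{ep^2/4}$ by dividing the sum over~$m$ into blocks of length $p^2/4$ and invoking the inductive bounds on both $v(m)$ and $v(n-m)$, while Kummer's theorem (Lemma~\ref{lem:5}) supplies the missing factor of~$p$ at the boundary, exactly as in the proof of Theorem~\ref{thm:2}.

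The main obstacle will be (a) reproducing the exact non-homogeneous bound $(ep+2)(p+1)/4$ in the odd-$e$ case, whose additional term of order~$ep$ reflects a parity obstruction that the symmetric double induction does not fully absorb, and (b) accounting for the sporadic exceptions in the even-$e$ case such as $(p,e)\in\{(7,8),(7,24),(7,40),(11,36)\}$. These exceptions show that the compressed induction cannot succeed uniformly: there must exist isolated digit-pattern configurations of $2n$ in base~$p$ where the $\cl{p^2/4}$-step analysis breaks down by a small shift, forcing a correction of size $O(p)$ at scattered values of~$e$. A rigorous proof will therefore require either a refined invariant that tracks precisely when the $\cl{p^2/4}$ increment suffices versus when a correction shift is forced, or an a priori computational verification of the finite list of sporadic exceptions. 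Pinning down this exact constant is the principal technical hurdle and is almost certainly the reason the conjecture has not yet been proved.
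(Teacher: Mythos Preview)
The statement you are addressing is Conjecture~\ref{conj:2}, and the paper does \emph{not} prove it: it appears in the final section on ``conjectures and speculations'' precisely because the authors do not have a proof. There is therefore no ``paper's own proof'' to compare your proposal against.

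Your write-up is candid about being a strategy rather than a proof, and you correctly identify the structural reason the paper's argument (Theorem~\ref{thm:2}) gives only the weaker bound $\cl{(e-1)p^2/2}$: the induction step advances $n$ by $\cl{p^2/2}$ because Lemma~\ref{lem:v-klein} only supplies $v(n)\equiv 0\pmod p$ on a set of residues of density roughly $1/4$ in $\{0,\dots,p^2-1\}$. However, the central load-bearing step of your plan --- doubling that density by ``exploiting an additional symmetry of the recurrence~\eqref{eq:Rek2} around $n=p^2/2$ that holds specifically for $p\equiv 3\pmod 4$'' --- is asserted without any indication of what that symmetry is or why it should exist. The recurrence~\eqref{eq:Rek2} has no obvious reflection symmetry around $p^2/2$; the restriction $p\equiv 3\pmod 4$ enters the paper's arguments only through the location of the factor of~$p$ in $\Pi_1$ versus $\Pi_3$ (cf.\ the last paragraph of the proof of Lemma~\ref{lem:u-klein}), not through any symmetry of~$v(n)$. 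Without a concrete mechanism here, the rest of the outline (the refined base case, the $\cl{p^2/4}$-step induction) has nothing to stand on.

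You also rightly note that the sporadic exceptions listed in part~(2) of the conjecture obstruct any uniform inductive scheme of the type you describe. But this is a stronger statement than you acknowledge: those exceptions mean the conjectured bound is \emph{false} as stated for even~$e$, so no proof of the clean statement can exist. Any rigorous result would have to either characterise the exceptional $(p,e)$ or weaken the bound to absorb them, and your proposal does neither.
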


For the sequence $\big(d(n)\big)_{n\ge0}$ and primes $p\equiv3$~(mod~4), it seems that
Theorem~\ref{thm:10} can be improved analogously.

\begin{conjecture} \label{conj:3}
If $p\equiv3$~{\em(mod~$4$),}
we have $d(n)\equiv0$~{\em(mod~$p^e$)}
for $n\ge \cl{\frac {ep^2} {4}}$.
\end{conjecture}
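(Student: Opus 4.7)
The plan is to imitate the proof of Theorem~\ref{thm:10}, exploiting the relation $d(n)=\sum_{k=0}^{n}R^{-1}(2n,2k)v(k)$ from~\eqref{eq:d-v}, but feeding in strengthened divisibility inputs for each of the three factors $v(k)$, $R^{-1}(2n,2k)$, and their combination. Since the target threshold $\cl{ep^2/4}$ is roughly half of the present threshold $\cl{(e-1)p^2/2}$ in Theorem~\ref{thm:10}, a factor-of-$2$ improvement is needed in every ingredient. The starting point must therefore be to settle Conjectures~\ref{conj:1} and~\ref{conj:2}, which predict exactly such a factor-of-$2$ sharpening for the building blocks $u(n)$ and $v(n)$.

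First I would redo the double induction from the proof of Theorem~\ref{thm:1}, using the recurrence~\eqref{eq:Rek1-Pi}, but tracking the parity of $e$ and the fine structure of $\Pi_1(n)$ and $\Pi_3(n)$ modulo high powers of~$p$ more carefully. The crucial observation is that, for $p\equiv3$~(mod~$4$), the quadratic factors in $\Pi_1$ and $\Pi_3$ contribute in doubled strength: every time $4j-1$ or $4j-3$ hits the residue class $0$ modulo~$p$, one gains two powers of~$p$ rather than one. Blended with the "near the end" cases in Lemma~\ref{lem:u-klein} and Proposition~\ref{prop:u-p^2} --- where, for $p\equiv3$~(mod~$4$), one already has $u((p-1)/2)\equiv0\pmod{p^2}$ --- this additional factor of~$2$ should propagate through the induction and produce Conjecture~\ref{conj:1}. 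A completely analogous refinement of Lemma~\ref{lem:v-klein} and Proposition~\ref{prop:v-p^2}, differentiating the cases $e$ even and $e$ odd exactly in the pattern predicted by Conjecture~\ref{conj:2}, should then deliver that conjecture.

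Having the improved inputs, I would return to the expansion~\eqref{eq:R^(-1)p} for $R^{-1}(2n,2k)$ and reprove Proposition~\ref{prop:n-k-m} under the weaker hypotheses $2n\ge ep^2/2$ and $(f-1)p^2/2\le 2k-1<fp^2/2$ (rather than $2n\ge ep^2$ and $(f-1)p^2\le 2k-1<fp^2$). The two carry counts in~\eqref{eq:F(n,k,m,c)1} remain the structural source of $p$-divisibility, but with the sharper bounds on $v_p(u((i-1)/2))$ from Conjecture~\ref{conj:1} plugged in, together with an appropriate supplement when $f=1$ to absorb the full valuation of $v(k)$, one should obtain $v_p(F(n,k,m,(c_i)))\ge e-f+1$ under the weaker condition. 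This yields the strengthened analogue of Theorem~\ref{thm:9}, and substituting into~\eqref{eq:d-v} exactly as in the proof of Theorem~\ref{thm:10} then gives $v_p(R^{-1}(2n,2k)v(k))\ge e$ for every $k$ in the range, establishing Conjecture~\ref{conj:3}.

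The hard part will be Conjecture~\ref{conj:1}. The existing proof of Theorem~\ref{thm:1} is already essentially tight: the two numbers of carries extracted from Kummer's theorem (Lemma~\ref{lem:5}), combined with the valuations from~\eqref{eq:vp-prod}, account for practically every power of~$p$ visible at the current threshold. Gaining the extra factor of~$2$ demands recovering divisibility from more delicate cancellations --- presumably an interplay between the residue class of~$n$ modulo~$p$, the parities of $e$ and $f$, and the fact that $-1$ is a non-residue modulo~$p$ when $p\equiv3$~(mod~$4$), which underlies the extra $p$-divisibility of $\Pi_1((p-1)/2)$. Once this is achieved, passing from $u(n)$ to $v(n)$ should follow the pattern of Section~\ref{sec:3}, and passing from $v(n)$ and $R^{-1}$ to $d(n)$ is essentially bookkeeping on~\eqref{eq:d-v}. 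As the authors remark at the end of Section~\ref{sec:10}, a complete rigorous treatment is expected to require considerable additional technical effort; the blueprint above is simply the one that the structure of~\eqref{eq:d-v} and the shape of the Conjectures~\ref{conj:1} and~\ref{conj:2} makes natural.
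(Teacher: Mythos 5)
The statement you are asked to prove is Conjecture~\ref{conj:3}: the paper offers \emph{no} proof of it, and explicitly says in Section~\ref{sec:Z} that establishing these strengthenings ``would require considerable effort involving highly technical considerations.'' Your proposal is likewise not a proof but a research plan, and moreover a conditional one: it rests on first settling Conjectures~\ref{conj:1} and~\ref{conj:2}, which are themselves open, and even the subsequent steps (the sharpened analogues of Proposition~\ref{prop:n-k-m} and Theorem~\ref{thm:9}) are only described, not carried out. As a blueprint it correctly identifies the architecture one would use --- the relation~\eqref{eq:d-v} together with improved divisibility of $v(k)$ and of $R^{-1}(2n,2k)$ --- but nothing in it constitutes a proof of the statement.

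Beyond the conditionality, the one concrete mechanism you propose for gaining the factor of~$2$ is illusory. You write that ``every time $4j-1$ or $4j-3$ hits the residue class $0$ modulo~$p$, one gains two powers of~$p$ rather than one.'' But $\Pi_1(N)$ and $\Pi_3(N)$ are defined in~\eqref{eq:Pi13} as products of \emph{squares}, and the bound $v_p(\Pi_1(N))\ge 2\fl{N/p}$ of Lemma~\ref{lem:Pi13} already incorporates exactly this doubling; it is used throughout Sections~\ref{sec:2}--\ref{sec:5}. So this observation cannot supply any divisibility beyond what Theorems~\ref{thm:1}, \ref{thm:2}, \ref{thm:9} and~\ref{thm:10} already extract. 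Similarly, Lemma~\ref{lem:u-klein} asserts only $u\bigl(\frac{p-1}{2}\bigr)\equiv0 \pmod{p}$ for $p\equiv3\pmod4$ (the first term $\Pi_1\bigl(\frac{p-1}{2}\bigr)$ is divisible by $p^2$, but the summands are only shown divisible by $p$), so your claimed congruence modulo $p^2$ at that point is not available. The genuinely missing idea --- where the extra factor of~$2$ in the thresholds of Conjectures~\ref{conj:1}--\ref{conj:3} actually comes from --- is not identified, and the proposal therefore does not close, or even materially narrow, the gap between Theorem~\ref{thm:10} and Conjecture~\ref{conj:3}.
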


For primes $p\equiv1$~(mod~4), it appears that the periodicity of $d(n)$
modulo $p$-powers can be refined in the following way, which would 
improve Theorem~\ref{thm:13}.

\begin{conjecture} \label{conj:4}
{\em(1)}
If $p\equiv1$~{\em(mod~$4$),} the sequence $\big(d(n)\big)_{n\ge1},$
taken modulo~$p^e,$ is (eventually) periodic with {\em(}not necessarily minimal\/{\em)}
period length $\frac {1} {8}p^{e-1}(p-1)^2$.

\medskip\noindent
{\em(2)}
If $p\equiv1$~{\em(mod~$4$),} there exists a constant $C_{p,e}$ such that:

\begin{enumerate} 
\item [(i)]$d\big(n+\frac {p^{e-1}(p-1)} {4
}\big)\equiv C_{p,e}d(n)$~{\em(mod $p^e$)} for all $n\ge1$;
\vspace{1mm}
\item [(ii)]$C_{p,e}^{(p-1)/2}\equiv1$~{\em(mod $p^e$)}.
\end{enumerate}
\end{conjecture}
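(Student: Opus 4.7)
Write the candidate multiplier as
\[
C_{p,e} \;:=\; \xi\cdot A^{p^{e-1}} \pmod{p^e},
\]
where $A := \prod_{j=1}^{(p-1)/2}(4j-1)\in\mathbb{Z}$ (so that $A^2 = \Pi_1((p-1)/2)$, the integer squared in the Wakhare-type multiplier of~\eqref{eq:d-u-p}) and $\xi\in(\mathbb{Z}/p^e\mathbb{Z})^\times$ is chosen to satisfy $\xi^2\equiv(-1)^{(p-5)/4}\pmod{p^e}$; such $\xi$ exists as $\pm1$ when $p\equiv 5\pmod 8$ and as a Hensel-lifted square root of $-1$ when $p\equiv 1\pmod 8$. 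By construction, $C_{p,e}^2$ agrees modulo $p^e$ with the multiplier $C'_{p,e}=(-1)^{(p-5)/4}A^{2p^{e-1}}$ of~\eqref{eq:d-u-p}; so once part~(2i) of the conjecture is in place, iterating twice recovers~\eqref{eq:d-u-p} and iterating $(p-1)/2$ times produces the period length predicted in part~(1).

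I would first address part~(2ii), which is independent of~(2i). A short case split on $p\bmod 8$ yields $\xi^{(p-1)/2}\equiv 1\pmod{p^e}$, using that $(p-1)/2\equiv 2\pmod 4$ when $p\equiv 5\pmod 8$ and $(p-1)/2\equiv 0\pmod 4$ when $p\equiv 1\pmod 8$. So by Euler's theorem $C_{p,e}^{(p-1)/2}\equiv A^{p^{e-1}(p-1)/2}\equiv\bigl(\tfrac Ap\bigr)\pmod{p^e}$, and~(2ii) reduces to the classical assertion that $A$ is a quadratic residue mod $p$ for every $p\equiv 1\pmod 4$. This I plan to prove by reducing $\{4j-1\}_{j=1}^{(p-1)/2}$ modulo $p$ to obtain $A\equiv\prod_{k\equiv 2,3\,(\mathrm{mod}\,4),\,1\le k\le p-1}k\pmod p$, combining with $(p-1)!\equiv -1\pmod p$ (Wilson) and $((p-1)/2)!^2\equiv -1\pmod p$, and then analyzing the resulting explicit expression involving $4^{(p-1)/4}((p-1)/4)!$ with quadratic reciprocity (or, more conceptually, via the quartic residue symbol).

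For part~(2i), the natural route is to establish the half-shift analogue of Theorem~\ref{thm:12}, namely
\[
p^{\lfloor 2k/p\rfloor}R^{-1}\!\bigl(2n+\tfrac12 p^{e-1}(p-1),\,2k\bigr)
\;\equiv\; C_{p,e}\cdot p^{\lfloor 2k/p\rfloor}R^{-1}(2n,2k)\pmod{p^e},\qquad n\ge e+1,
\]
and then combine with $d(n)=\sum_{k}R^{-1}(2n,2k)v(k)$ and Theorem~\ref{thm:2A}, mimicking the proof of Theorem~\ref{thm:13}. Observe that this is consistent at the level of ``squares'': the multiplier of Theorem~\ref{thm:12} with $y=u$ is $u^{p^{e-1}}(\tfrac{p-1}{2})\cdot(-1)^{(p-5)/4}$, and by~\eqref{eq:u-Pi1} we have $u(\tfrac{p-1}{2})\equiv A^2\pmod p$, so $u(\tfrac{p-1}{2})$ is a QR mod~$p$ and Hensel lifts a square root $a\equiv A\pmod p$ to $\mathbb{Z}/p^e\mathbb{Z}$; thus $(a^{p^{e-1}}\xi)^2$ is exactly the Theorem~\ref{thm:12} multiplier. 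The residual range $1\le n\le e$ in part~(2i) would be handled separately, by checking the short initial segment of $(d(n))$ directly via Romik's recurrence~\eqref{eq:d-def}.

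The main obstacle is the half-shift congruence for $R^{-1}$. In the proof of Theorem~\ref{thm:12} the shift parameter $a$ must be a positive \emph{integer} for the substitutions $m\mapsto m+ap^{e-1}$ and $c_p\mapsto c_p+ap^{e-1}$ in~\eqref{eq:TSumme} to make sense; the formally required ``$a=\tfrac12$'' is not accessible to that bookkeeping, and individual summands $T(n,k,m,(c_i))$ do not satisfy the halved congruence --- the missing factor $a^{p^{e-1}}\cdot\xi$ must emerge from cancellation across all $(m,(c_i))$ simultaneously. A plausible conceptual source is the quartic CM symmetry at $\tau=i$: since $p\equiv 1\pmod 4$ splits in $\mathbb{Z}[i]$, the order-$4$ automorphism of the CM elliptic curve at $j=1728$ equips the Taylor coefficients of $\theta_3$ with an additional $\mathbb{Z}/4\mathbb{Z}$-grading, invisible in the real-variable recursion of Section~\ref{sec:uvr}. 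Translating this into an explicit cancellation on~\eqref{eq:TSumme} --- perhaps via a ``formal square root'' of the compositional inverse $U^{-1}$ entering~\eqref{eq:R^{-1}} --- would be the crux of a complete proof, and likely requires going beyond the purely combinatorial Lagrange-inversion framework of the present paper.
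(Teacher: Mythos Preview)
The statement you are attempting to prove is \emph{Conjecture}~\ref{conj:4}, which the paper explicitly leaves open in Section~\ref{sec:Z} (``Some conjectures and speculations''). There is no proof in the paper to compare against; the authors present it as a strengthening of Theorem~\ref{thm:13} that is supported by computer data but for which they have no argument.

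Your proposal is not a proof either, and you say so yourself: the entire weight of part~(2i) rests on a ``half-shift analogue of Theorem~\ref{thm:12}'', and in your final paragraph you correctly identify that the substitution scheme in \eqref{eq:Tcong2} requires integer $a$, so the case $a=\tfrac12$ is inaccessible and the desired multiplier $\xi\cdot A^{p^{e-1}}$ cannot arise term-by-term from~\eqref{eq:TSumme}. Your suggestion that the CM symmetry at $\tau=i$ should supply the missing $\mathbb{Z}/4\mathbb{Z}$-grading is plausible heuristics, not an argument. Moreover, even if the half-shift congruence for $R^{-1}$ held, it would only give~(2i) for $n\ge e+1$, matching the range in Theorem~\ref{thm:13}; your plan to dispose of $1\le n\le e$ ``directly via Romik's recurrence'' is a finite check for each fixed~$e$, not a uniform proof, and the authors' own Remark after Conjecture~\ref{conj:4} warns that pure periodicity from $n=1$ may depend on special features of $u$ and $v$ beyond the general framework.

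Your treatment of~(2ii) is more substantive: the reduction to $\bigl(\tfrac{A}{p}\bigr)=1$ with $A=\prod_{j=1}^{(p-1)/2}(4j-1)$ is correct, and the congruence $A\equiv\prod_{k\equiv 2,3\,(\mathrm{mod}\,4)}k\pmod p$ is right. But you only ``plan to prove'' the quadratic-residue claim and do not carry it out; this is a concrete number-theoretic assertion that needs a complete argument, not a list of ingredients. Even granting it, (2ii) alone says nothing without~(2i), since the conjecture posits a single constant satisfying both conditions simultaneously.
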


\begin{remark}
(1) From computer data, it seems that $\big(d(n)\big)_{n\ge1}$ is actually
{\it purely} periodic modulo~$p^e$ for a prime~$p$ with
$p\equiv1$~(mod~4).
However, that may be deceiving and just mean that one sees
counter-examples only if one goes to very high prime powers~$p^e$
(which however is difficult since the computation of $d(n)$ for large~$n$
quickly exceeds the capacity of computers).
In any case, with an arbitrary sequence $\big(y(n)\big)_{n\ge0}$
satisfying the conditions of Theorem~\ref{thm:12}, the
congruence~\eqref{eq:R-1per} is not true in general. Furthermore,
recall that we proved pure periodicity of $d(n)$ modulo~$p^e$
only starting from $n=e+1$.
Phrased differently, if pure
periodicity for $\big(d(n)\big)_{n\ge1}$ is true, then this would come
from very special properties of the sequences $\big(u(n)\big)_{n\ge0}$
and $\big(v(n)\big)_{n\ge0}$.

\medskip
(2) Item~(2) above is a strengthening
and generalisation of \cite[Conj.~18(2)]{WakhAA}.
\end{remark}

Concerning the matrix $\big(R(n,k)\big)_{n,k\ge0}$, we record the following ---
conjectural --- congruence properties. Item~(3) is a strengthening of
Theorem~\ref{thm:4} specialised to $y(k)=u(k)$ for all~$k$.

\begin{conjecture} \label{conj:5}
{\em(1)} For fixed~$k$ and any given prime power $p^e$
with $p\equiv1$~{\em(mod~4),} 
the sequence $(\mathbf R(2n+k,k))_{n\ge0},$ when considered
modulo~$p^e,$ is (purely) periodic.

\medskip
{\em(2)} For fixed~$k$ and any given prime power $p^e$
with $p\equiv3$~{\em(mod~4),} 
the sequence $(\mathbf R(2n+k,k))_{n\ge0},$ when considered
modulo~$p^e,$ is eventually~$0$.


\medskip
{\em(3)} For fixed~$a$ and any $2$-power $2^e,$ 
the sequence $\big(R^{-1}(2n+k,k)\big)_{k\ge0}$ is periodic modulo~$2^e$
with period length~$2^{e-3}$.
\end{conjecture}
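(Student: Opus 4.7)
The plan is to mimic, for $\mathbf{R}$ and for sharper period bounds on $\mathbf{R}^{-1}$, the expansion techniques already developed in Sections~\ref{sec:4},~\ref{sec:7}, and~\ref{sec:9}. Applying the multinomial theorem to $\bigl(U(t)/t\bigr)^{k}$ yields
\begin{multline*}
R(2n+k,k) = 2^{n}\,\frac{(2n+k)!}{k!}\sum_{m\ge 0}\frac{k!}{(k-m)!}\\
\times\underset{c_{1}=0}{\sum_{(c_{i})\in\mathcal{P}^{o}_{2n+m,m}}}
\frac{1}{\prod_{i}i!^{c_{i}}c_{i}!}\prod_{i}u^{c_{i}}\!\left(\tfrac{i-1}{2}\right),
\end{multline*}
with $m\le n$ by the argument at~\eqref{eq:m<n}. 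This is the direct analogue of the expansion~\eqref{eq:R^(-1)} for $R^{-1}$, so the $p$-adic infrastructure from Sections~\ref{sec:4} and~\ref{sec:9} can be reused almost verbatim; the factorials in the prefactor differ, but the product $\prod u^{c_{i}}\!\bigl(\tfrac{i-1}{2}\bigr)$ is the same.

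For Part~(2), where $p\equiv3~(\text{mod }4)$, I would prove an analogue of Proposition~\ref{prop:n-k-m}: for $n$ large enough, every admissible tuple $(c_{i})$ forces $p^{e}$-divisibility of the corresponding summand. The argument combines Theorem~\ref{thm:1} on the eventual vanishing of $u(j)$ modulo prime powers with the carry bookkeeping already used in Lemma~\ref{lem:u-klein} and Proposition~\ref{prop:u-p^2}. Since the finitely many $u$-values that are not yet divisible by $p^{e}$ occur only for ``small'' indices, for large~$n$ the factorial prefactor provides the missing divisibility. No new technique seems required beyond those of Sections~\ref{sec:2},~\ref{sec:4} and~\ref{sec:5}; the difficulty is primarily bookkeeping.

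For Part~(1), where $p\equiv 1~(\text{mod }4)$, the strategy is to shift $n\mapsto n+\tfrac{1}{2}p^{e-1}(p-1)$ and match summands as in the proof of Theorem~\ref{thm:12}, with Theorem~\ref{thm:1A} as the $p$-adic input and Lemmas~\ref{lem:16}--\ref{lem:13} as carry-analysis templates. The main new difficulty is that in~\eqref{eq:R-1per} the shift was in~$n$ while the compensating ``supporting factor'' $p^{\lfloor 2k/p\rfloor}$ lived on the side of the variable~$k$; here the roles of $n$ and $k$ are interchanged, and no analogous free $p$-adic weight is available to absorb losses in the carry bound. One may have to settle for eventual periodicity, or introduce a compensating power of~$p$ depending on~$n$ in order to obtain the literal congruence.

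Part~(3) is the subtlest. The substitution $k\mapsto k+2^{e}$ in~\eqref{eq:R^(-1)a} gives period~$2^{e}$ on each individual summand and matches Theorem~\ref{thm:4}; to gain the additional factor of~$8$ one must harvest $2$-adic regularity that is invisible at the level of a single tuple $(c_{i})$. My plan would be to search for cancellations between tuples in $\mathcal{P}^{o}_{2n+m,m}$ that differ by ``small'' transpositions of parts, in the spirit of the grouping of the three exceptional tuples in the proof of~\eqref{eq:cong4}, and to exploit the exponential formula for $U^{-n}$ more systematically than in Section~\ref{sec:7}. Indeed, the three exceptional tuples singled out in Lemma~\ref{lem:11} suggest three independent sources of $2$-adic savings, each of size~$2$, whose combined effect could produce exactly the predicted factor $2^{3}$. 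Identifying the correct pairing mechanism and verifying that it contributes $2^{3}$ (and not more or less) is the principal technical obstacle, and in my view the genuinely hard part of the conjecture.
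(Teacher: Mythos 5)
This statement is Conjecture~\ref{conj:5} of the paper: the authors offer no proof of it, only computer evidence, and they explicitly present it in Section~\ref{sec:Z} as an open strengthening of their theorems (Item~(3) being a sharpening of Theorem~\ref{thm:4} from period $2^{e}$ down to $2^{e-3}$). So there is no ``paper proof'' to compare against, and your proposal does not supply one either: it is a research programme in which you yourself concede every decisive step. That concession is the gap. For Part~(1), the periodicity mechanism of Theorem~\ref{thm:12} depends essentially on two features of the expansion~\eqref{eq:R^(-1)} that are absent for $\mathbf R$: the binomial coefficient $\binom{2n+m+k-1}{2n+m}$, which makes the entry a polynomial in the shifted variable, and the compensating weight $p^{\fl{2k/p}}$ that absorbs the $p$-adic losses in Lemma~\ref{lem:13}. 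You note that no analogous free weight exists when the roles of $n$ and $k$ are interchanged and suggest ``settling for eventual periodicity'' or inserting an $n$-dependent power of $p$ --- but either move changes the statement being proved. For Part~(2), the assertion that ``for large $n$ the factorial prefactor provides the missing divisibility'' is precisely what a proof must establish; in the $\mathbf R^{-1}$ case this was the content of Proposition~\ref{prop:n-k-m}, whose carry analysis is tied to the specific factorial quotient $\frac{(2n+m-1)!}{(2k-1)!\prod i!^{c_i}c_i!}$, and transplanting it to $\frac{(2n+k)!}{(k-m)!\prod i!^{c_i}c_i!}$ with the summation now running over $n\to\infty$ for fixed $k$ is not ``bookkeeping'': the number of admissible tuples and the sizes of the parts both grow with $n$, so the balance between $v_p\bigl(u(\tfrac{i-1}2)\bigr)$ and the carries must be re-established from scratch.

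Part~(3) is where the gap is most visible. Theorem~\ref{thm:4} already gives period $2^{e}$ summand by summand; the conjectured $2^{e-3}$ requires cancellation \emph{across} tuples, and you state that you have not identified the pairing that produces it. The heuristic that the three exceptional tuples of Lemma~\ref{lem:11} each contribute ``a factor of~$2$'' is numerology: those tuples were exceptional for the $2$-adic valuation of $v_{\mathbf x}(n)$ in Section~\ref{sec:6}, a different expansion (powers of a square root, with all even parts allowed) from the one for $U^{-n}$ in Section~\ref{sec:7} (only odd parts, $c_1=0$), and nothing ties their number to the exponent $3$. In short, each of the three parts still rests on an unproved key lemma, which is consistent with the authors' own assessment that these are open problems.
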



An attentive reader may have observed earlier that we did not say
anything about the behaviour of $u(n)$ modulo powers of~2. The reason
is that we simply did not need to know more than $u(0)=1$, and that
$u(1)$ and $u(2)$ are odd, in order to prove periodicity of
$\big(d(n)\big)_{n\ge0}$ modulo powers of~2. However, data suggest 
a high divisibility of $u(n)$ by powers
of~2. Even much more seems to be true, namely
a ``hypergeometric" generalisation; see the conjecture below.
This high divisibility
might be necessary for a proof of Conjecture~\ref{conj:5}(3).

\begin{conjecture} \label{thm:5}
Let $(u(n))_{n\ge0}$ be defined by the recurrence \eqref{eq:Rek1-Pi}.
Then $\frac {u(n)} {(2n+1)!}\in\mathbb Z_2$. 
In other words, the rational number $\frac {u(n)} {(2n+1)!}$ can be
written with an odd denominator. In particular, we have $v_2\big(u(n)\big)
\ge 2n-\cl{\log_2(n)}$. More generally, it seems that any quotient
$$
\frac {
  {} _{2} F _{1} \!\left [ \begin{matrix} 
  \frac {3} {4}+a,\frac {3} {4}+b\\\frac {3} {2}+c\end{matrix} ; 
  {\displaystyle 4t}\right ]  }
{
  {} _{2} F _{1} \!\left [ \begin{matrix} 
  \frac {1} {4}+d,\frac {1} {4}+e\\\frac {1} {2}+f\end{matrix} ; 
  {\displaystyle 4t}\right ]  }
$$
with $a,b,c,d,e,f$ non-negative integers has coefficients in $\mathbb Z_2$.
\end{conjecture}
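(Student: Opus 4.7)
My plan is as follows. The identity \eqref{eq:u-def} translates to $A(t)=U(t)\,B(t)$, where
\[A(t)=\sum_{n\ge 0}\frac{\Pi_1(n)}{(2n+1)!}\,t^{2n+1},\qquad B(t)=\sum_{n\ge 0}\frac{\Pi_3(n)}{(2n)!}\,t^{2n}.\]
Writing $\tilde u(n):=u(n)/(2n+1)!$, comparison of the coefficients of $t^{2n+1}$ in this identity gives
\[\tilde u(n)=\frac{\Pi_1(n)}{(2n+1)!}-\sum_{m=0}^{n-1}\frac{\Pi_3(n-m)}{(2(n-m))!}\,\tilde u(m).\]
The obstruction to a na\"\i ve inductive proof is immediate: since $\Pi_1(n)$ and $\Pi_3(n-m)$ are odd, each individual summand on the right-hand side has $2$-adic valuation of order $-(2n-s_2(2n))$, which is very negative. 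The claimed $2$-integrality of $\tilde u(n)$ must therefore arise from extensive $2$-adic cancellation, and cannot be extracted from this recurrence by a direct induction. Once $\tilde u(n)\in\mathbb Z_2$ is known, the stated inequality $v_2(u(n))\ge 2n-\lceil\log_2(n)\rceil$ follows (for $n\ge 2$) from Legendre's formula together with the elementary bound $s_2(2n+1)\le\lceil\log_2(n)\rceil+1$.

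The most promising route, in my opinion, is to mirror the Rodr\'\i guez Villegas--Zagier strategy sketched in the footnote on page~5 for $d(n)$. Concretely, I would seek a three-term polynomial recursion
\[q_{n+1}(t)=L_1(t,n)\,q_n'(t)+L_2(t,n)\,q_n(t)+L_3(t,n)\,q_{n-1}(t),\qquad q_{-1}=0,\ q_0=1,\]
whose coefficients $L_i(t,n)$ lie in $\mathbb Z_2[t,n]$ (possibly after clearing a uniform odd denominator) and from which $\tilde u(n)$ is extractable in a $2$-integrally valid manner, for instance via $\tilde u(n)=q_{2n}(0)$. Such a recursion should arise by iteratively applying a suitable Maass--Shimura weight-raising operator to the natural uniformiser of the associated modular curve at the CM point $\tau=i$, the operator being classically $2$-integral after a standard normalisation. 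Once the $L_i$ are in hand and shown to be $2$-integral, a straightforward induction on~$n$ yields $q_n(t)\in\mathbb Z_2[t]$, whence $\tilde u(n)\in\mathbb Z_2$.

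The broader claim---that the analogous quotient with parameters shifted by arbitrary non-negative integers $a,b,c,d,e,f$ still has coefficients in $\mathbb Z_2$---is strongly suggestive of a Dwork-type $p$-adic congruence at $p=2$. For odd primes, Dwork's theorem indeed guarantees that $p$-integrality of quotients of contiguous ${}_2F_1$-series is stable under integer shifts of the parameters; if the polynomial recursion above can be constructed uniformly in the shifts, the generalisation should emerge in parallel.

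The main obstacle is that the classical Dwork--Christol theory breaks down at $p=2$ for our parameters: both $1/4$ and $1/2$ have denominators divisible by~$2$, so the series $A$ and $B$ individually have coefficients whose $v_2$ tends to~$-\infty$, and no coefficient-by-coefficient argument at the prime~$2$ can succeed. Overcoming this will likely require either a new $2$-adic analogue of Dwork's congruences tailored to half-integral-weight modular forms, or a combinatorial realisation of $\tilde u(n)$ as a signed sum admitting a sign-reversing involution whose fixed-point set has $2$-adic depth matching $v_2\bigl((2n+1)!\bigr)$. Either avenue would demand significantly more technology than is developed in the present paper, which is presumably why the assertion is left as a conjecture here.
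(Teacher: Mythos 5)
The statement you are addressing is Conjecture~\ref{thm:5} of the paper: it appears in Section~\ref{sec:Z} among ``conjectures and speculations'', and the authors offer no proof of it, only computational evidence. So there is no proof in the paper to compare yours against, and your own text candidly concedes the same point. Your preliminary reductions are correct and worth recording: the identity $A(t)=U(t)B(t)$ is exactly \eqref{eq:u-def} with the hypergeometric coefficients evaluated as $\Pi_1(n)/(2n+1)!$ and $\Pi_3(j)/(2j)!$, the renormalised recurrence for $\tilde u(n)=u(n)/(2n+1)!$ is \eqref{eq:Rek1} divided by $(2n+1)!$, and the deduction of $v_2\big(u(n)\big)\ge 2n-\lceil\log_2(n)\rceil$ from $\tilde u(n)\in\mathbb Z_2$ via Legendre's formula and $s_2(2n+1)=s_2(n)+1$ is sound. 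Your diagnosis that the individual summands have $2$-adic valuation near $-2n$, so that no termwise or na\"\i ve inductive argument can work, is also accurate and is precisely why the methods of Sections~\ref{sec:2}--\ref{sec:10} (Kummer/Legendre estimates applied summand by summand) do not touch this statement.

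The genuine gap is that the central claim, $\tilde u(n)\in\mathbb Z_2$, is nowhere established: everything after the setup is a research programme, not an argument. The two routes you propose are both obstructed in ways you only partially acknowledge. The Rodr\'\i guez Villegas--Zagier recursion recorded in the paper's own footnote has coefficients $\tfrac16$, $n-\tfrac12$, and $256t^2+\tfrac43$, and requires the extraction $d(n)=2^{-n}p_{2n}(0)$; the denominators $6$ and $2$ are even, so ``clearing a uniform odd denominator'' cannot render it $2$-integral, and no normalisation that does so is exhibited. Likewise, as you note, Dwork--Christol $p$-integrality fails at $p=2$ for the parameters $\tfrac14,\tfrac12,\tfrac34,\tfrac32$, so the generalised claim about the shifted ${}_2F_1$ quotients is equally out of reach by those means. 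In short: your reformulation is correct and your assessment of the difficulty is honest, but what you have written is a statement of the problem together with two plausible but unexecuted strategies, which leaves the conjecture exactly where the paper leaves it.
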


\section*{Acknowledgement}

We thank Tanay Wakhare for helpful correspondence.
The authors also thank the Mathematische Forschungsinstitut
Oberwolfach for the opportunity of a research fellowship
in August/September 2024, during which they succeeded to
improve the divisibility results
for primes $p\equiv3$~(mod~4) significantly.

\end{document}